\definecolor{our-color}{rgb}{0.15,0.15,0.6}%dark blue
\numberwithin{equation}{section}
\let\c@equation\c@subsection
\let\c@subsubsection\c@subsection
\DeclareMathOperator{\Der}{D}
\DeclareMathOperator{\id}{id}
\DeclareMathOperator{\opname}{op}
\newcommand{\Stovicek}{\v{S}\v{t}ov\'{i}\v{c}ek}
\newcommand{\KQ}{\mathrm{KQ}}
\newcommand{\overbar}[1]{\mkern 1.5mu\overline{\mkern-1.5mu#1\mkern-1.5mu}\mkern 1.5mu}
\newcommand{\op}{^{\opname}}
\newcommand{\adjto}{\rightleftarrows}
\newcommand{\Mid}		{\,\big|\,}
\newcommand{\SET}[2]	{\big\{\,#1\Mid#2\,\big\}}
\newcommand{\Hom}       {\operatorname{Hom}}
\newcommand{\ihom}      {\mathsf{hom}}
\newcommand{\spec}      {\operatorname{Spec}}
\newcommand{\Spc}       {\operatorname{Spc}}
\newcommand{\supp}      {\mathrm{supp}}
\newcommand{\Supp}      {\mathrm{Supp}}
\newcommand{\Fun}       {\operatorname{Fun}}
\newcommand{\CAlg}      {\operatorname{CAlg}}
\newcommand{\Tot}       {\operatorname{Tot}}
\newcommand{\Ho}        {\operatorname{Ho}}
\newcommand{\Pos}		{\mathsf{Pos}}
\newcommand{\Lat}       {\mathsf{Lat}}
\newcommand{\DLat}      {\mathsf{DLat}}
\newcommand{\CFr}       {\mathsf{CFrm}}
\newcommand{\Frm}       {\mathsf{Frm}}
\newcommand{\loc}       {\mathrm{loc}}
\newcommand{\perf}      {\mathrm{perf}}
\newcommand{\idem}      {\mathrm{idem}}
\newcommand{\Sp}        {{\mathrm{Sp}}}
\renewcommand{\Pr}      {{\mathsf{Pr}^{L}_{\mathrm{st}}}}
\newcommand{\Cat}       {{\mathsf{Cat}}}
\newcommand{\Poset}     {{\mathsf{Poset}}}
\newcommand{\Set}       {{\mathsf{Set}}}
\newcommand{\Top}       {{\mathsf{Top}}}
\newcommand{\Spec}		{{\mathsf{Spec}}}
\newcommand{\Ab}		{{\mathsf{Ab}}}
\newcommand{\unit}{\mathbbm{1}}
\newcommand{\unitS}{\unit_{\cat S}}
\newcommand{\unitT}{\unit_{\cat T}}
\newcommand{\cat}{\mathscr}
\newcommand{\im}{\operatorname{im}}
\newcommand{\CP}        {{\mathcal{P}}}
\newcommand{\CQ}        {{\mathcal{Q}}}
\newcommand{\CU}        {{\mathcal{U}}}
\newcommand{\Z}         {{\mathbb{Z}}}
\newcommand{\C}         {{\mathscr{C}}}
\newcommand{\D}         {{\mathscr{D}}}
\newcommand{\U}         {{\mathscr{U}}}
\newcommand{\Loc} 			{\operatorname{Loc}}
\newcommand{\colim}  {\operatornamewithlimits{\underset{\longrightarrow}{lim}}}
\renewcommand{\mod}[1]{\mathrm{Mod}_{#1}}
\newcommand{\thick}[1]{\mathrm{thick}\langle{#1}\rangle}
\newcommand{\thickid}[1]{\mathrm{thickid}\langle {#1}\rangle}
\renewcommand{\loc}[1]{\mathrm{loc}\langle{#1}\rangle}
\newcommand{\locid}[1]{\mathrm{locid}\langle {#1}\rangle}
\DeclareMathOperator{\Locideal}{Locid}
\DeclareMathOperator{\Thickideal}{Thickid}
\DeclareMathOperator{\Smashideal}{Smashid}
\DeclareMathOperator{\RadThickideal}{RadThickid}
\newcommand{\Locid}{\Locideal}
\newcommand{\Thickid}{\Thickideal}
\newcommand{\Smashid}{\Smashideal}
\newcommand{\RadThickid}{\RadThickideal}
\newcommand{\ZarT}{\RadThickid(\cat T)}
\newcommand{\ZarTf}{\ZarT^f}
\newcommand{\ZarTY}{\RadThickid(\cat T;Y)}
\newcommand{\ZarTYf}{\ZarTY^f}
\newcommand{\Cl}{\operatorname{Cl}}
\newcommand\noloc{%
  \nobreak
  \mspace{6mu plus 1mu}
  {:}
  \nonscript\mkern-\thinmuskip
  \mathpunct{}
  \mspace{2mu}
}
\newtheorem{theorem}[equation]{Theorem}
\newtheorem{lemma}[equation]{Lemma}
\newtheorem{proposition}[equation]{Proposition}
\newtheorem{corollary}[equation]{Corollary}
\theoremstyle{definition}
\newtheorem{remark}[equation]{Remark}
\newtheorem{definition}[equation]{Definition}
\newtheorem{example}[equation]{Example}
\newtheorem{recollection}[equation]{Recollection}
\newtheorem{construction}[equation]{Construction}
\newtheorem{notation}[equation]{Notation}
\theoremstyle{plain}
\newtheorem{thmx}{Theorem}
\author[Barthel]{Tobias Barthel}
\author[Castellana]{Nat{\`a}lia Castellana}
\author[Heard]{Drew Heard}
\author[Naumann]{Niko Naumann}
\author[Pol]{Luca Pol}
\author[Sanders]{Beren Sanders}
\date{\today}
\address{Tobias Barthel, Max Planck Institute for Mathematics, Vivatsgasse 7, 53111 Bonn, Germany}
\email{tbarthel@mpim-bonn.mpg.de}
\urladdr{https://sites.google.com/view/tobiasbarthel/home}
\address{Nat{\`a}lia Castellana, Departament de Matem\`atiques, Universitat Aut\`onoma de Barcelona, 08193 Bellaterra, Spain}
\email{natalia@mat.uab.cat}
\urladdr{http://mat.uab.cat/$\sim$natalia}
\address{Drew Heard, Department of Mathematical Sciences, Norwegian University of Science and Technology, Trondheim}
\email{drew.k.heard@ntnu.no}
\urladdr{https://folk.ntnu.no/drewkh/}
\address{Niko Naumann, Fakult\"{a}t f\"{u}r Mathematik, Universit\"{a}t Regensburg, 93040 Regensburg, Germany}
\email{niko.naumann@mathematik.uni-regensburg.de}
\urladdr{https://homepages.uni-regensburg.de/~nan25776/}
\address{Luca Pol, Fakult\"{a}t f\"{u}r Mathematik, Universit\"{a}t Regensburg, 93040 Regensburg, Germany}
\email{luca.pol@ur.de}
\urladdr{https://sites.google.com/view/lucapol/}
\address{Beren Sanders, Mathematics Department, UC Santa Cruz, 95064 CA, USA}
\email{beren@ucsc.edu}
\urladdr{http://people.ucsc.edu/$\sim$beren/}
\title{Descent in tensor triangular geometry}
\begin{document}

\begin{abstract}
	We investigate to what extent we can descend the classification of localizing, smashing and thick ideals in a presentably symmetric monoidal stable $\infty$-category $\C$ along a descendable commutative algebra $A$. We establish equalizer diagrams relating the lattices of localizing and smashing ideals of~$\C$ to those of $\mod{A}(\C)$ and $\mod{A\otimes A}(\C)$. If $A$ is compact, we obtain a similar equalizer for the lattices of thick ideals which, via Stone duality, yields a \mbox{coequalizer} diagram of Balmer spectra in the category of spectral spaces. We then give conditions under which the telescope conjecture and stratification descend from $\mod{A}(\C)$ to $\C$.  The utility of these results is demonstrated in the case of faithful Galois extensions in tensor triangular geometry.
\end{abstract}

\subjclass[2020]{18F70, 18F99, 18G80, 18N60, 55U35}
% 18F70 // Frames and locales, pointfree topology, Stone duality
% 18F99 // None of the above, but in this section
% 18G80 // derived categories, triangulated categories
% 18N60 // $(\infty,1)$-categories (quasi-categories, Segal spaces, etc.); $\infty$-topoi, stable $\infty$-categories
% 55U35 // abstract and axiomatic homotopy theory in algebraic topology

% 55P91 // equivariant homotopy theory in algebraic topology
% 55P42 // stable homotopy theory, spectra
% 14F08 // Derived categories of sheaves, dg categories, and related constructions in algebraic geometry

\thanks{TB is supported by the European Research Council (ERC) under Horizon Europe (grant No.~101042990). NC is partially supported by Spanish State Research Agency project PID2020-116481GB-I00, the Severo Ochoa and María de Maeztu Program for Centers and Units of Excellence in R$\&$D (CEX2020-001084-M), and the CERCA Programme/Generalitat de Catalunya. DH is supported by grant number TMS2020TMT02 from the Trond Mohn Foundation. NN and LP are supported by the SFB 1085 Higher Invariants in Regensburg. BS is supported by NSF grant~DMS-1903429. Finally, we would like to thank the Hausdorff Research Institute for Mathematics for their hospitality in the context of the Trimester program \emph{Spectral Methods in Algebra, Geometry, and Topology} funded by the Deutsche Forschungsgemeinschaft under Germany’s Excellence Strategy – EXC-2047/1 – 390685813.}

\maketitle
{\hypersetup{linkcolor=black}%beren: makes the table of contents legible; you can still click on the links
	\vspace*{-2.5\baselineskip}
\tableofcontents
\vspace*{-2.5\baselineskip}
%\vspace{-1em}
}
%\addtocontents{toc}{\vspace{-1ex}}
\newpage

\section{Introduction}
Let $\C$ be a tensor triangulated (``tt'') category, i.e., a triangulated category equipped with a compatible symmetric monoidal structure. There are a number of ways we can study the tensor triangular geometry of $\C$; for example, we could ask for a classification of the thick ideals, a classification of the localizing ideals, or a classification of the smashing ideals of $\C$. One approach to achieving this is to find a tensor triangulated functor $F\colon \C \to \D$ to a simpler tensor triangulated category~$\D$ and reduce the classification problems in $\C$ to those in $\D$ via descent methods.

One way to produce such functors is to work at the level of underlying $\infty$-categories (i.e., assume $\C$ is a \emph{tt-$\infty$-category}) and consider the base change functor \[F\colon\C \to \mod{A}(\C)\] associated to a highly structured commutative algebra $A$ in $\C$. In fact, under mild assumptions on $\cat C$ and $\cat D$, any functor $F \colon \cat C \to \cat D$ whose right adjoint is conservative is, up to equivalence, base change along a commutative algebra object in $\cat C$ (see \Cref{prop:basechange}). 

An important first step in understanding descent in tt-geometry was taken by Balmer \cite{Balmer2016}, who studied extensions along \emph{separable} commutative algebras.\footnote{This does not require the $\infty$-categorical machinery due to the special nature of separable algebras; see \Cref{comparison-with-Balmer} and \cite{Balmer2011}.} These are commutative algebras~$A$ in $\cat C$ whose multiplication $\mu \colon A \otimes A \to A$ has a bimodule section. In order to state Balmer's theorem, recall that the (radical) thick ideals of an essentially small tt-category $\cat K$ are classified by a spectral topological space $\Spc(\cat K)$ called the Balmer spectrum. Then, given a separable commutative algebra of finite degree $A$ in $\cat K$, Balmer shows that there is a coequalizer diagram of topological spaces \begin{equation}\label{eq:balmer-coequalizer} \begin{tikzcd}[column sep=large] \Spc(\mod{A\otimes A}(\cat K))\arrow[r, shift left] \arrow[r, shift right]& \Spc(\mod{A}(\cat K)) \arrow[r, ""] & \supp(A). \end{tikzcd} \end{equation} In particular, if $A$ is \emph{descendable}\footnote{Balmer uses the terminology \emph{nil-faithful}.} (\cref{def:descendable}) then $\supp(A) = \Spc(\cat K)$ and this coequalizer computes the spectrum of $\cat K$ itself.

In general, the assumptions in Balmer's theorem that the algebra $A$ is separable and in some sense ``small'' (reflected by the assumption that $A \in \cat K$) cannot be removed. For example, the case where $\cat K$ is the category of $K(2)$-local dualizable spectra was studied in detail by a subset of the authors in \cite{BHN}. Morava $E$-theory provides a descendable algebra $A$ in the bigger category of all $K(2)$-local spectra which is not dualizable (i.e., is not contained in $\cat K$) and \cite[Proposition 5.11]{BHN} shows that the analog of the above coequalizer is not a coequalizer of topological spaces, but rather of \emph{spectral spaces}. As that example amply demonstrates, coequalizers, and more generally colimits,  in spectral spaces can behave quite differently than colimits in topological spaces; see also \Cref{ex:spectral-vs-top-colimit}.

It is a classical result, essentially due to Stone, that the category of spectral spaces is anti-equivalent to the category of distributive lattices. The above results, along with previous work on the Balmer spectrum (for example \cite{Kock-Pitsch}), suggest a lattice-theoretic approach to the problem of descent in tt-geometry. This is the approach we take in this paper. 

Throughout, we work in the context of a rigidly-compactly generated tt-$\infty$-category~$\cat C$; that is, a presentably symmetric monoidal stable $\infty$-category $\cat C$ whose homotopy category is rigidly-compactly generated; see the precise definitions in \cref{sec:categories}. The full subcategory of compact objects is denoted $\cat C^c$ (which, in examples, corresponds to the $\cat K$ above). For example $\cat C$ could be the derived $\infty$-category $\Der(R)$ of unbounded chain complexes of $R$-modules for a commutative ring $R$. We study descent of the following posets:
	\begin{enumerate}
		\item The poset $\Locid(\cat C)$ of localizing ideals of $\cat C$. 
		\item The poset $\Smashid(\cat C)$ of smashing localizing ideals of $\cat C$. 
		\item The poset $\Thickid(\cat C^c)$ of thick ideals of compact objects of $\cat C$.
	\end{enumerate}
Correspondingly, we obtain the following results, which are obtained in \Cref{prop-equalizer,cor-smashing-eq,thm-equualizer-thick-ideals}.
\begin{thmx}
	Let $\cat C$ be a rigidly-compactly generated tt-$\infty$-category and $A \in \CAlg(\cat C)$ a descendable commutative algebra.
	\begin{enumerate}
		\item There is a split equalizer of posets
			\[
			\begin{tikzcd}
				\Locid(\cat C) \arrow[r] & \Locid(\mod{A}(\C)) \arrow[r, shift right] \arrow[r, shift left] & \Locid(\mod{A\otimes A}(\C)).
			\end{tikzcd}
			\]
		\item There is an equalizer of posets
			\[
			\begin{tikzcd}
				\Smashid(\cat C) \arrow[r] & \Smashid(\mod{A}(\C)) \arrow[r, shift right] \arrow[r, shift left] & \Smashid(\mod{A\otimes A}(\C)).
			\end{tikzcd}
			\]
		\item If $A$ is compact, then there is an equalizer of posets
			\[
			\begin{tikzcd}
				\Thickid(\cat C^c) \arrow[r] & \Thickid(\mod{A}(\C)^c) \arrow[r, shift right] \arrow[r, shift left] & \Thickid(\mod{A\otimes A}(\C)^c).
			\end{tikzcd}
			\]
	\end{enumerate}
\end{thmx}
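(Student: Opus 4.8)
I would organize the proof of all three parts around the base-change adjunction $F = A \otimes (-) \colon \cat C \adjto \mod{A}(\cat C) \noloc U$, with $U$ the forgetful functor, and two standard consequences of descendability of $A$ (\cref{def:descendable}), i.e.\ of $\unit \in \thick{A}$: the functor $F$ is conservative (if $A \otimes X = 0$ then $X = \unit \otimes X \in \thick{A \otimes X} = 0$), and $X \in \thick{A \otimes X}$ for all $X \in \cat C$ — apply $(-) \otimes X$ to $\unit \in \thick{A}$ — together with its relative analogue over $A$ and, when $A$ is compact, its restriction to the compacts. Write $\iota_0, \iota_1 \colon A \to A \otimes A$ for the two coface maps and $d^0_!, d^1_! \colon \mod{A}(\cat C) \to \mod{A \otimes A}(\cat C)$ for extension of scalars along them; in each diagram the two parallel arrows are $\cat J \mapsto \locid{d^i_!(\cat J)}$ (resp.\ $\thickid{d^i_!(\cat J)}$) and the first arrow is $\Phi \colon \cat I \mapsto \locid{F(\cat I)}$ (resp.\ $\thickid{F(\cat I)}$). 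That only the $2$-truncated Amitsur complex is needed is conceptual: the invariants $\Locid$, $\Smashid$, $\Thickid((-)^c)$ carry the descent tower $\mod{A^{\otimes \bullet + 1}}(\cat C)$ to a cosimplicial poset, and the $\Delta$-limit of a cosimplicial poset is the equalizer of its two coface maps; but I would argue directly.

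The first step is an identity: $\locid{F(\cat I)} = \SET{M \in \mod{A}(\cat C)}{U(M) \in \cat I}$ for every localizing ideal $\cat I \subseteq \cat C$. Here ``$\supseteq$'' holds because the right-hand side is a localizing ideal containing $F(\cat I)$, and ``$\subseteq$'' follows from the monadic bar resolution $M \simeq \colim_{[n] \in \Delta\op} F(A^{\otimes n} \otimes U(M))$, whose terms all lie in $\cat I$ as it is an ideal and $U(M) \in \cat I$. Put $\Psi(\cat J) := \SET{X \in \cat C}{F(X) \in \cat J}$, a localizing ideal; the identity gives $\Psi \Phi(\cat I) = \SET{X}{A \otimes X \in \cat I} = \cat I$, the last equality by descendability (the nontrivial inclusion uses $X \in \thick{A \otimes X}$). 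So $\Phi$ is a split monomorphism of posets, in particular an order-embedding; and since $d^0_! F \simeq d^1_! F$ (both are the free $A \otimes A$-module functor) it lands in the equalizer of $d^0_!, d^1_!$.

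The main step is that $\Phi$ surjects onto this equalizer. Let $\cat J$ be a localizing ideal with $\locid{d^0_!(\cat J)} = \locid{d^1_!(\cat J)}$ and set $\cat I := \Psi(\cat J)$; then $\Phi(\cat I) = \locid{F(\Psi(\cat J))} \subseteq \cat J$ is automatic. For the converse I would first show $\cat J$ is closed under the comonad $F U$: for $M \in \cat J$ one has $d^1_!(M) \in \locid{d^1_!(\cat J)} = \locid{d^0_!(\cat J)}$, and applying restriction of scalars $\iota_0^{*}$ and the projection formula $\iota_0^{*}(d^0_!(M') \otimes_{A \otimes A} Q) \simeq M' \otimes_A \iota_0^{*}(Q)$ puts $\iota_0^{*}(d^1_!(M)) \simeq F U M$ into the localizing subcategory generated by the objects $M' \otimes_A \iota_0^{*}(Q)$ with $M' \in \cat J$, all of which are in $\cat J$; hence $F U M \in \cat J$. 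Iterating gives $(F U)^{n}(F U M) \simeq F(A^{\otimes n} \otimes U(M)) \in \cat J$ for all $n$, so the bar resolution realizes any $M \in \cat J$ as a colimit of free modules on objects of $\Psi(\cat J)$, whence $M \in \locid{F(\Psi(\cat J))} = \Phi(\cat I)$. Thus $\Phi(\cat I) = \cat J$, and $\Phi$ is an isomorphism onto the equalizer; this establishes part~(1) as an equalizer. I expect this step to be the main obstacle: the cocycle condition has to be converted into $F U$-closure of $\cat J$, and because $F$ is not monoidally conservative and $U$ is not monoidal one cannot detect membership in $\cat J$ directly but must shuttle objects through $\mod{A \otimes A}(\cat C)$ along the Amitsur structure — which is exactly where descendability, rather than mere conservativity of $U$, is used, to make the bar and cobar resolutions converge rapidly enough.

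Finally, for the split-equalizer refinement of part~(1) I would take the section $\sigma := \Psi$ and the retraction $\tau(\cat K) := \SET{M}{d^0_!(M) \in \cat K}$: one gets $\sigma \Phi = \id$, and $\tau \circ \locid{d^0_!(-)} = \id$ because $\iota_0$ is itself a descendable algebra map (apply $F$ to $\unit \in \thick{A}$), so the argument of the previous step makes $\locid{d^0_!(-)}$ an order-embedding, and $\tau \circ \locid{d^1_!(-)} = \Phi \Psi$ by the Beck--Chevalley identities $\iota_1^{*} d^0_!(M) \simeq F U M$ and $d^1_!(F U M) \simeq F^{(2)}(U(M))$ (the free $A \otimes A$-module on $U(M)$) — these are the split-equalizer relations; the naive candidate built from the multiplication $\mu \colon A \otimes A \to A$ does not work, since $\mu_! \circ d^i_! = \id \neq \Phi \Psi$. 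Part~(2) follows by repeating everything inside $\Smashid$, using that $F$ and the $d^i_!$ preserve idempotent algebras and hence smashing ideals, and that $\cat I$ is smashing iff $\Phi(\cat I)$ is (both being cut out by vanishing of $(-) \otimes_A f$ for the complementary idempotent $f$); one claims only an equalizer because $\tau$ need not preserve smashingness, restriction of scalars not being strong monoidal. Part~(3) is the same argument inside $\Thickid((-)^c)$ with $A$ compact, so that $F$ and $U$ preserve compacts; the only change is that the infinite bar resolution of a compact module must be replaced by a finite skeleton of which $M$ is a retract.
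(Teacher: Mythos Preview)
Your arguments for parts (1) and (3) are correct and take a route genuinely different from the paper's. For (1), you build the split-equalizer sections as \emph{preimages} $\sigma=F^{-1}$ and $\tau=(d^0_!)^{-1}$, resting on the identity $\locid{F(\cat I)}=U^{-1}(\cat I)$ (which is correct, via the bar resolution as you sketch); the paper instead takes the sections as \emph{pushforwards} $U_*(\cat L)=\locid{U(\cat L)}$ and $V_*(\cat K)=\locid{V(\cat K)}$ induced by restriction of scalars, and verifies the split-fork identities directly from the projection formula and the equivalence $VH\simeq FU$ of \cref{rem:VHFU}. Both are short; your preimage formulation has the advantage of making the equalizer condition transparent as the pullback condition $FU(M)\in\cat J$. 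For (3), the paper does not repeat the argument inside $\Thickid$ but instead embeds $\Thickid$ into the already-established $\Locid$ equalizer via the Miller--Neeman map $\cat I\mapsto\loc{\cat I}$ and applies the abstract diagram lemma \cref{lem-mega-lemma}, using compactness of $A$ only to ensure that $U$ preserves compacts so that $\locid{U(\cat J)}$ is compactly generated. Your direct argument via the bar resolution---$M$ compact, hence a retract of some finite skeleton, each stage lying in $\thick{F(\cat I)}$---works as well and is pleasantly self-contained.

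Part (2), however, has a genuine gap. Your argument reduces to the claim that $\cat I$ is smashing whenever $\Phi(\cat I)$ is, but the justification offered---``both being cut out by vanishing of $(-)\otimes_A f$ for the complementary idempotent''---is circular. You have $\Phi(\cat I)=\ker(e\otimes_A-)$ for an idempotent algebra $e\in\mod{A}(\cat C)$, whence $\cat I=F^{-1}(\Phi(\cat I))=\{X:X\otimes U(e)=0\}$ by the projection formula and conservativity of $U$; but $U(e)\in\cat C$ is in general \emph{not} idempotent (restriction of scalars is only lax monoidal, as you yourself note), so this does not exhibit $\cat I$ as the kernel of a smashing localization. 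What is missing is a descent step for the idempotent itself: one must show that $e$, together with the cocycle datum $d^0_!(e)\simeq d^1_!(e)$, descends to an idempotent algebra $e_0\in\CAlg(\cat C)$ with $F(e_0)\simeq e$. The paper supplies this via a different and more conceptual route: it proves (\cref{cor-smashing-commutes-limits}) that the functor $\CAlg^{\idem}\colon\CAlg(\Pr)\to\widehat{\Cat}_\infty$ preserves all limits, then applies this to the comonadic descent equivalence $\cat C\simeq\Tot(\mod{A^{\otimes\bullet+1}}(\cat C))$ and uses that the $\Delta$-limit of a cosimplicial object in a $1$-category is computed by the first equalizer. To rescue your line of argument you would need to invoke descent for commutative algebras along $A$ (a consequence of the same comonadicity) and then observe that conservativity of $F$ forces the descended algebra to be idempotent---but this is essentially the paper's argument in disguise, not a repetition of your part-(1) strategy.
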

Recall that one formulation of the telescope conjecture for tt-categories is that the map $\sigma \colon \Thickid(\cat C^c) \to \Smashid(\cat C)$ given by $\cat J \mapsto \Locid(\cat J)$ is bijective. In \cref{thm-telescope} we deduce the following descent result for the telescope conjecture:
\begin{thmx}
	Let $\cat C$ be a rigidly-compactly generated tt-$\infty$-category and $A \in \CAlg(\cat C)$ a compact descendable commutative algebra.  If the telescope conjecture holds for $\mod{A}(\C)$, then it holds for $\C$ too. 
\end{thmx}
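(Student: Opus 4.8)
The plan is to promote the comparison map $\sigma$ to a morphism between the two equalizer diagrams of \Cref{thm-equualizer-thick-ideals} and \Cref{cor-smashing-eq}, and then to invoke the elementary fact that in a morphism of equalizers whose middle term is bijective and whose right-hand term is injective, the left-hand term is automatically bijective.

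First I would record that all three tt-$\infty$-categories in play are rigidly-compactly generated, so that $\sigma$ is defined for each of them. Since $A$ is compact it is dualizable, hence $\mod{A}(\cat C)$ is again rigidly-compactly generated and the extension-of-scalars functor $F\colon\cat C\to\mod{A}(\cat C)$ preserves compact objects; therefore $A\otimes A=F(A)$ is a compact commutative algebra of $\mod{A}(\cat C)$, and $\mod{A\otimes A}(\cat C)\simeq\mod{A\otimes A}(\mod{A}(\cat C))$ is rigidly-compactly generated as well. Consequently, for $\cat K\in\{\cat C,\ \mod{A}(\cat C),\ \mod{A\otimes A}(\cat C)\}$ we have the map $\sigma_{\cat K}\colon\Thickid(\cat K^c)\to\Smashid(\cat K),\ \cat J\mapsto\Locid(\cat J)$; it indeed takes values in $\Smashid(\cat K)$ because a localizing ideal generated by compact objects is smashing, and it is injective because $\Locid(\cat J)\cap\cat K^c=\cat J$ by the Neeman--Thomason localization theorem. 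The hypothesis of the theorem is exactly that $\sigma_{\mod{A}(\cat C)}$ is bijective.

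Next I would assemble the ladder
\[
\begin{tikzcd}
\Thickid(\cat C^c)\arrow[r]\arrow[d,"\sigma"'] & \Thickid(\mod{A}(\cat C)^c)\arrow[r,shift left]\arrow[r,shift right]\arrow[d,"\sigma"] & \Thickid(\mod{A\otimes A}(\cat C)^c)\arrow[d,"\sigma"]\\
\Smashid(\cat C)\arrow[r] & \Smashid(\mod{A}(\cat C))\arrow[r,shift left]\arrow[r,shift right] & \Smashid(\mod{A\otimes A}(\cat C))
\end{tikzcd}
\]
whose rows are the equalizer diagrams of parts (c) and (b) of Theorem~A, and whose horizontal maps are all extension of scalars (along $A$ for the first map, along the two coface maps $A\rightrightarrows A\otimes A$ for the parallel pair). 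The squares commute because $\sigma$ is natural with respect to extension of scalars: for such a symmetric monoidal, colimit-preserving, compact-object-preserving functor $G\colon\cat K\to\cat L$ and a thick ideal $\cat J\subseteq\cat K^c$, one has $\sigma_{\cat L}(\thickid{G(\cat J)})=\locid{G(\cat J)}=\locid{G(\sigma_{\cat K}(\cat J))}$, since $G$ carries the localizing ideal generated by $\cat J$ into, and its generators onto, the localizing ideal generated by $G(\cat J)$. In particular, under the identifications furnished by the two left-hand equalizer presentations, $\sigma_{\cat C}$ becomes the restriction of $\sigma_{\mod{A}(\cat C)}$ to the equalizer subposet.

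The proof then concludes with the diagram chase. Being a restriction of the injective map $\sigma_{\mod{A}(\cat C)}$, the map $\sigma_{\cat C}$ is injective. For surjectivity, take $\cat I\in\Smashid(\cat C)$ and view it, via the injective left-hand map $\Smashid(\cat C)\hookrightarrow\Smashid(\mod{A}(\cat C))$, as an element of the bottom equalizer. By bijectivity of $\sigma_{\mod{A}(\cat C)}$ there is some $\cat J\in\Thickid(\mod{A}(\cat C)^c)$ with $\Locid(\cat J)=\cat I$; applying the two parallel top maps to $\cat J$, followed by the injective $\sigma_{\mod{A\otimes A}(\cat C)}$, and using that $\cat I$ lies in the bottom equalizer, one finds that the two images of $\cat J$ in $\Thickid(\mod{A\otimes A}(\cat C)^c)$ agree, so $\cat J$ lies in the top equalizer and hence arises from some $\cat J_0\in\Thickid(\cat C^c)$. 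Then $\sigma_{\cat C}(\cat J_0)$ and $\cat I$ have the same image in $\Smashid(\mod{A}(\cat C))$, whence $\sigma_{\cat C}(\cat J_0)=\cat I$. Thus $\sigma_{\cat C}$ is bijective, i.e.\ the telescope conjecture holds for $\cat C$. I expect the one genuinely load-bearing step to be the naturality of $\sigma$ under extension of scalars that makes the ladder commute and thereby links the two equalizer diagrams; once that is in place, the argument is the standard equalizer chase together with the classical injectivity of $\sigma$.
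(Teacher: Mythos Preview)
Your proposal is correct and follows essentially the same approach as the paper: both assemble the ladder between the thick-ideal and smashing-ideal equalizer diagrams via the split-injective maps $\sigma$, and both deduce bijectivity of $\sigma_{\cat C}$ from bijectivity of $\sigma_{\mod{A}(\cat C)}$ by a diagram chase. The only cosmetic difference is that the paper packages the chase into an abstract set-theoretic lemma (\Cref{lem-diagram-chase}), whereas you spell it out by hand.
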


For example, this theorem applies to the category of modules over the 2-local connective spectrum of topological modular forms; see \cref{ex:tmf}.

Inspired by Balmer's descent result for separable commutative algebras, we also investigate the Zariski frame of principal radical thick ideals. Translating our result from frames back into topological spaces, we prove the following in \Cref{cor:coeq-of-spectral}:

\begin{thmx}\label{thmx:c}
	Let $\cat C$ be a rigidly-compactly generated tt-$\infty$-category and $A\in \CAlg(\cat C)$ a compact commutative algebra. Then the diagram induced by base change
		\[
		\begin{tikzcd}[column sep=large]
			\Spc(\mod{A\otimes A}(\C)^c )\arrow[r, shift left] \arrow[r, shift right]& \Spc(\mod{A}(\C)^c) \arrow[r] & \supp(A)
		\end{tikzcd}
		\]
	is a coequalizer in the category of spectral spaces. 
\end{thmx}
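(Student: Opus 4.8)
The plan is to deduce this from a statement about distributive lattices by Stone duality, in the spirit of the earlier results. Recall that the category of spectral spaces is anti-equivalent to the category of bounded distributive lattices, a spectral space $X$ corresponding to its lattice $K\mathcal{O}(X)$ of quasi-compact opens; under this anti-equivalence coequalizer diagrams of spectral spaces correspond to equalizer diagrams of distributive lattices. Moreover, for an essentially small tt-category $\cat K$ the Balmer spectrum $\Spc(\cat K)$ corresponds to the distributive lattice $\RadThickid(\cat K)^{f}$ of principal — equivalently, finitely generated — radical thick $\otimes$-ideals, that is, the compact elements of the Zariski frame $\RadThickid(\cat K)$. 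Since $A$ is compact, $\supp(A)$ is Thomason-closed in $\Spc(\cat C^c)$, its complement $\{\p : A\in\p\}$ being quasi-compact open, and hence is a spectral subspace; under Balmer's identification $\Spc(\cat C^c_{\supp(A)})\simeq\supp(A)$, where $\cat C^c_{\supp(A)}=\{x\in\cat C^c : \supp(x)\subseteq\supp(A)\}$ is the radical thick $\otimes$-ideal generated by $A$, the space $\supp(A)$ corresponds to the distributive lattice $\RadThickid(\cat C^c;\supp(A))^{f}$.

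Via this dictionary, \Cref{thmx:c} becomes the assertion that
\[
\RadThickid(\cat C^c;\supp(A))^{f}\longrightarrow\RadThickid(\mod{A}(\cat C)^c)^{f}\rightrightarrows\RadThickid(\mod{A\otimes A}(\cat C)^c)^{f}
\]
is an equalizer of distributive lattices, the maps being induced by base change (on compact elements, $\sqrt{\langle x\rangle}\mapsto\sqrt{\langle f^{*}x\rangle}$). The two parallel maps agree after the first one, because the two structure functors $\mod{A}(\cat C)\rightrightarrows\mod{A\otimes A}(\cat C)$ become equivalent — both $(A\otimes A)\otimes(-)$ — after precomposition with base change from $\cat C$; so the content is the equalizer identification.

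When $A$ is descendable one has $\supp(A)=\Spc(\cat C^c)$, and the identification follows from \Cref{thm-equualizer-thick-ideals} by restricting the equalizer of \emph{all} thick $\otimes$-ideals provided there to the sublattices of principal radical thick $\otimes$-ideals. The nontrivial point is that $f^{*}\cat I$ is principal and radical if and only if $\cat I$ is: radicality is reflected because the injectivity of $\cat I\mapsto f^{*}\cat I$ forces $f^{*}$ to detect membership in thick $\otimes$-ideals, and finite generation descends along the surjective, closed spectral map $\Spc(f^{*})$. For general compact $A$, one reduces to this situation by localizing: the forgetful functor $\mod{A}(\cat C)\to\cat C$ factors through the smashing ideal $\locid{A}=\Gamma_{\supp(A)}\cat C$, so the relevant descent data is supported on $\supp(A)$; identifying $\RadThickid(\cat C^c;\supp(A))$ with the honest Zariski frame $\RadThickid(\cat C^c_{\supp(A)})$ — in particular a coherent frame whose compact elements form $\RadThickid(\cat C^c;\supp(A))^{f}$, which also guarantees that passing to compact elements carries the equalizer of Zariski frames to an equalizer of distributive lattices — one pushes the descent argument through. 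Applying Stone duality then returns the stated coequalizer of spectral spaces.

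I expect the main obstacle to be the general, non-descendable case — specifically, reconciling the reflection statement ``$f^{*}\cat I$ principal radical $\iff$ $\cat I$ principal radical'' with the failure of $\Spc(f^{*})$ to be surjective. This is where one needs that $\supp(A)$ is Thomason-closed, Balmer's theorem that $\Spc(f^{*})$ has closed image $\supp(A)$ for compact $A$, and the surjectivity of the comparison map $\Spc(\mod{A\otimes A}(\cat C)^c)\to\Spc(\mod{A}(\cat C)^c)\times_{\Spc(\cat C^c)}\Spc(\mod{A}(\cat C)^c)$, together with the point-set topology of spectral spaces (that $\supp(A)$-saturated quasi-compact opens descend along the closed surjection $\Spc(\mod{A}(\cat C)^c)\twoheadrightarrow\supp(A)$).
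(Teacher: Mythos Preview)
Your overall framework is right—Stone duality reduces the problem to showing that
\[
\Thickid(\C^c;\supp(A))^f \xrightarrow{F^c_*} \Thickid(\mod{A}(\C)^c)^f \rightrightarrows \Thickid(\mod{A\otimes A}(\C)^c)^f
\]
is an equalizer of distributive lattices—and the paper proceeds exactly this way. But your execution takes a significantly more complicated route than necessary and has genuine gaps.

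The paper does \emph{not} split into descendable and non-descendable cases, nor does it deduce the result by restricting the equalizer of all thick ideals from \Cref{thm-equualizer-thick-ideals}. Instead it shows directly that the above fork is a \emph{split fork in the category of sets}, with sections $U^c_*$ and $V^c_*$ induced by the forgetful functors $U\colon\mod{A}(\C)\to\C$ and $V\colon\mod{A\otimes A}(\C)\to\mod{A}(\C)$. The point is that compactness of $A$ makes $U$ and $V$ preserve compact objects (\cref{cor:weakly-closed-is-closed}), so $U^c_*(\sqrt{x})=\sqrt{U(x)}$ and $V^c_*(\sqrt{y})=\sqrt{V(y)}$ are well-defined maps of \emph{sets} on principal ideals. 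One then checks $U^c_*F^c_*=\id$ (this is \cref{lem:split-Balmer}, using $\sqrt{a}=\sqrt{A\otimes a}$ when $\supp(a)\subseteq\supp(A)$), $V^c_*G^c_*=\id$ (since $g\colon A\to A\otimes A$ is split by multiplication hence descendable), and $V^c_*H^c_*=F^c_*U^c_*$ (from the natural equivalence $VH\simeq FU$). A split fork is an absolute equalizer, so the forgetful functor $\DLat\to\Set$ creating limits finishes the proof. No descendability of $A$ itself is ever invoked.

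Your proposed route—first obtain the equalizer of all thick ideals via \Cref{thm-equualizer-thick-ideals}, then restrict to principal ones, then reduce the non-descendable case by localizing to $\supp(A)$—runs into trouble at several points. \Cref{thm-equualizer-thick-ideals} requires $A$ descendable, so the restriction argument is unavailable in general; your localization step is only sketched; and the claims you flag as obstacles (reflection of principality, surjectivity of the comparison map to the fiber product) are neither proved in the paper nor needed. The clean observation you are missing is that the sections of the split fork already live at the level of principal ideals, precisely because the forgetful functors preserve compacts when $A$ is compact.
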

We note again that if $A$ is descendable, then $\supp(A) = \Spc(\C^{c}) $, and so in this case we obtain a coequalizer computing $\Spc(\C^{c})$. 

In comparison to Balmer's result \eqref{eq:balmer-coequalizer}, note that we have removed the requirement that the commutative algebra is separable at the expense of having to work in the category of \emph{spectral spaces} instead of topological spaces. As noted above, coequalizers in spectral spaces and topological spaces need not agree. It is a consequence of Balmer's work that under the assumption that $A$ is separable of finite degree the two coequalizers do agree (see \Cref{comparison-with-Balmer}).

Our next result involves descent for \emph{stratification} in the sense of \cite{BHS2021}. We recall that this is a condition that gives a classification of localizing ideals of $\cat C$ in terms of subsets $\Spc(\cat C^c)$. Partial results in this direction have been obtained in \cite{BCHS-costratification}. Our results can be summarized as follows; the results given in the body of the document (\Cref{thm-stratification-descent}) are slightly stronger. 
\begin{thmx}
	Let $\cat C$ be a rigidly-compactly generated tt-$\infty$-category and let $A \in \CAlg(\C)$ be descendable with base change functor $F_A \colon \C \to \mod{A}(\C)$. Suppose that $\mod{A}(\C)$ is stratified and $\Spc(\mod{A \otimes A}(\C)^c)$ is weakly noetherian. Then the following are equivalent:
	\begin{enumerate}
		\item $\C$ is stratified. 
		\item The identity $\Supp(F_A(x))=\varphi^{-1}(\Supp(x))$ holds for all $x \in \cat C$. 
		\item The diagram induced by base change
			\[
			\begin{tikzcd}[column sep=large]
				\Spc(\mod{A\otimes A}(\C)^{c} )\arrow[r, shift left] \arrow[r, shift right]& \Spc(\mod{A}(\C)^{c}) \arrow[r] & \Spc(\C^{c})
			\end{tikzcd}
			\]
			is a coequalizer in the category of sets. 
	\end{enumerate}
\end{thmx}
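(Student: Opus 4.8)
The plan is to derive all three equivalences from the four implications $(1)\Rightarrow(2)$, $(2)\Rightarrow(1)$, $(1)\Rightarrow(3)$ and $(3)\Rightarrow(2)$. Write $\varphi\colon\Spc(\mod{A}(\C)^c)\to\Spc(\C^c)$ for the map induced by $F_A$ and $\psi_1,\psi_2\colon\Spc(\mod{A\otimes A}(\C)^c)\to\Spc(\mod{A}(\C)^c)$ for the maps induced by the two base change functors $f_1,f_2\colon\mod{A}(\C)\to\mod{A\otimes A}(\C)$ (all of these are defined, as base change preserves compact objects). First I would record the facts I expect to use throughout: as $A$ is descendable we have $\unit\in\thickid{A}$, so $\supp(A)=\Spc(\C^c)$ and $\varphi$ is surjective; moreover $F_A$ is conservative and, since $A\otimes A$ is the base change of $A$ along $\unit\to A$ and hence descendable in $\mod{A}(\C)$, so are $f_1,f_2$. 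For Thomason-closed $V$ one has $F_A(\Gamma_V\unit)\simeq\Gamma_{\varphi^{-1}V}\unit$ and $F_A(L_V\unit)\simeq L_{\varphi^{-1}V}\unit$, and likewise for $f_1,f_2$; consequently $F_A\circ\Gamma_\p\simeq\Gamma_{\varphi^{-1}(\p)}\circ F_A$ for weakly visible $\p$, $\Supp(F_A\Gamma_\p\unit)=\varphi^{-1}(\p)$, and — since $\Gamma_\q\circ\Gamma_{\varphi^{-1}(\varphi(\q))}\simeq\Gamma_\q$ — the elementary containment $\Supp(F_A x)\subseteq\varphi^{-1}(\Supp x)$ holds for all $x$. (I take for granted that $\Spc(\C^c)$ and $\Spc(\mod{A}(\C)^c)$ are weakly noetherian so that the big supports entering $(1)$ and $(2)$ are defined; obtaining this from the stated hypotheses is a point I flag at the end.)

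For $(1)\Rightarrow(2)$ I would prove and twice apply the following: \emph{if $\cat D,\cat E$ are rigidly-compactly generated tt-$\infty$-categories with weakly noetherian spectra, $\cat D$ is stratified, and $G\colon\cat D\to\cat E$ is a conservative base change functor, then $\Supp(Gx)=\psi^{-1}(\Supp x)$ for all $x$, where $\psi=\Spc(G^c)$.} The containment $\subseteq$ is the elementary one. For $\supseteq$, fix $\p\in\Supp x$; minimality of $\Gamma_\p(\cat D)$ together with $\Gamma_\p x\neq 0$ gives $\locid{\Gamma_\p x}=\Gamma_\p(\cat D)\ni\Gamma_\p\unit$, so applying the symmetric monoidal, colimit-preserving functor $G$ yields $G(\Gamma_\p\unit)\in\locid{G(\Gamma_\p x)}\subseteq\locid{Gx}$, whence $\psi^{-1}(\p)=\Supp(G\Gamma_\p\unit)\subseteq\Supp(Gx)$; letting $\p$ range over $\Supp x$ finishes it. Taking $\cat D=\C$ (stratified by $(1)$) and $G=F_A$ proves $(1)\Rightarrow(2)$; taking $\cat D=\mod{A}(\C)$ (stratified by hypothesis) and $G=f_i$ proves the unconditional identity
\[
	\Supp(f_i M)=\psi_i^{-1}(\Supp M)\qquad\text{for all }M\in\mod{A}(\C),\ i=1,2,
\]
which I refer to as $(\star)$.

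For $(2)\Rightarrow(1)$ I would verify the two parts of stratification. Local-to-global for $\C$ holds because $\Spc(\C^c)$ is weakly noetherian. For minimality, fix $\p$ and $0\neq y\in\Gamma_\p(\C)$; then $\Supp y=\{\p\}$ by local-to-global, so $F_A y\neq 0$ and, by $(2)$, $\Supp(F_A y)=\varphi^{-1}(\p)=\Supp(F_A\Gamma_\p\unit)$. As $\mod{A}(\C)$ is stratified, localizing ideals are detected by supports, so the base changes along $F_A$ of the localizing ideals $\locid{y}$ and $\Gamma_\p(\C)=\locid{\Gamma_\p\unit}$ of $\C$ agree (both equal $\locid{\Gamma_{\varphi^{-1}(\p)}\unit}$); since the first map in the split equalizer of \Cref{prop-equalizer} is injective, $\locid{y}=\Gamma_\p(\C)$, as wanted. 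For $(1)\Rightarrow(3)$ I would feed the stratification bijections $\Supp\colon\Locid(\C)\xrightarrow{\sim}\mathcal{P}(\Spc(\C^c))$ and the analogous one for $\mod{A}(\C)$ into \Cref{prop-equalizer}. By the naturality above, under these bijections the base-change embedding $\Locid(\C)\hookrightarrow\Locid(\mod{A}(\C))$ becomes $S\mapsto\varphi^{-1}(S)$; and, writing a localizing ideal $\cat J$ of $\mod{A}(\C)$ in terms of its Koszul generators, its base change to $\mod{A\otimes A}(\C)$ is $\Gamma_{\psi_i^{-1}(\Supp\cat J)}(\mod{A\otimes A}(\C))$, of support $\psi_i^{-1}(\Supp\cat J)$ — and here it is crucial that one does \emph{not} need $\mod{A\otimes A}(\C)$ to be stratified, only that its spectrum is weakly noetherian. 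So the equalizer of $\Locid(\mod{A}(\C))\rightrightarrows\Locid(\mod{A\otimes A}(\C))$ corresponds to the subsets $W$ with $\psi_1^{-1}(W)=\psi_2^{-1}(W)$, i.e.\ those saturated for the equivalence relation $E$ on $\Spc(\mod{A}(\C)^c)$ generated by the pair $(\psi_1,\psi_2)$. Comparing with \Cref{prop-equalizer} and the description $S\mapsto\varphi^{-1}(S)$ shows that the $E$-saturated and the $\varphi$-saturated subsets coincide, which forces $E=\ker(\varphi)$. Since $f_1\circ F_A\simeq f_2\circ F_A$ (both being base change along $\unit\to A\otimes A$), the diagram in $(3)$ is a fork with $\varphi$ surjective whose pair of maps generates $E$, so $E=\ker(\varphi)$ is precisely the statement that it is a coequalizer of sets.

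Finally, $(3)\Rightarrow(2)$: $\subseteq$ is elementary; for $\supseteq$, note that $f_1\circ F_A\simeq f_2\circ F_A$, so $(\star)$ gives $\psi_1^{-1}(\Supp F_A x)=\psi_2^{-1}(\Supp F_A x)$, i.e.\ $\Supp(F_A x)$ is $E$-saturated. Given $\q$ with $\varphi(\q)=:\p\in\Supp x$, conservativity of $F_A$ and $F_A\Gamma_\p\simeq\Gamma_{\varphi^{-1}(\p)}F_A$ force $\Gamma_{\varphi^{-1}(\p)}(F_A x)\neq 0$, hence some $\q'\in\varphi^{-1}(\p)$ lies in $\Supp(F_A x)$; as $\q$ and $\q'$ have the same image under $\varphi$ and $(3)$ identifies $\ker(\varphi)$ with $E$, the $E$-saturation of $\Supp(F_A x)$ puts $\q$ in $\Supp(F_A x)$. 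The step I expect to be the real obstacle is the one inside $(1)\Rightarrow(3)$: translating the equalizer of \Cref{prop-equalizer} into a condition on supports, which means computing the two maps $\Locid(\mod{A}(\C))\rightrightarrows\Locid(\mod{A\otimes A}(\C))$ on supports \emph{without} the crutch of stratification on $\mod{A\otimes A}(\C)$ — this works only because the relevant extended ideals are recognisably Koszul ideals $\Gamma_W$ — and then converting the combinatorial identity $E=\ker(\varphi)$ into the desired coequalizer statement. A secondary technical point, which I would settle first, is that weak noetherianity of $\Spc(\mod{A\otimes A}(\C)^c)$, together with surjectivity of $\psi_1,\psi_2,\varphi$ (from descendability), propagates to $\Spc(\mod{A}(\C)^c)$ and $\Spc(\C^c)$, so that $(1)$ and $(2)$ are even well-posed.
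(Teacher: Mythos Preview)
Your overall architecture---comparing the split equalizer of localizing ideals from \Cref{prop-equalizer} with a power-set equalizer built from the spectra---is essentially the paper's, which packages the same comparison into the abstract diagram-chase \Cref{lem-diagram-chase} together with \Cref{lem-power-set-creates-lim}. But two steps of yours do not go through as written.

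First, in $(2)\Rightarrow(1)$ you write that ``local-to-global for $\C$ holds because $\Spc(\C^c)$ is weakly noetherian''. This is false: it is \emph{noetherianity} of the spectrum that forces the local-to-global principle (\cite{BHS2021}*{Theorem 3.22}), and weak noetherianity is strictly weaker. The correct argument is the one the paper gives in \Cref{prop:LGP-descend}: since $A$ is descendable we have $\unit\in\locid{A}=\locid{U_A(\unit_{\mod{A}})}$, and $\mod{A}(\C)$ satisfies the local-to-global principle (being stratified), so the principle descends to $\C$. Relatedly, your claim that weak noetherianity ``propagates'' along the surjections $\psi_i,\varphi$ is not justified; the paper instead observes that $\Spc(\mod{A}(\C)^c)$ is a \emph{retract} of $\Spc(\mod{A\otimes A}(\C)^c)$ via the multiplication map, which does transfer weak noetherianity (\cite{BHS2021}*{Remark 2.6}), and for $\Spc(\C^c)$ the precise theorem in the body simply assumes it.

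Second, in $(1)\Rightarrow(3)$ you assert that the base-change $(f_i)_*(\cat J)$ of a localizing ideal $\cat J\subseteq\mod{A}(\C)$ is the Koszul ideal $\Gamma_{\psi_i^{-1}(\Supp\cat J)}(\mod{A\otimes A}(\C))$. Your identity~$(\star)$ only computes the \emph{support} of $(f_i)_*(\cat J)$; to upgrade this to an equality of ideals one needs the local-to-global principle for $\mod{A\otimes A}(\C)$ (this is exactly the content of \Cref{lem:commutativity}, and is why the body version \Cref{thm-stratification-descent} assumes it as hypothesis~(ii)). Without it you cannot conclude that $\psi_1^{-1}(\Supp\cat J)=\psi_2^{-1}(\Supp\cat J)$ forces $(f_1)_*(\cat J)=(f_2)_*(\cat J)$, so your identification of the equalizer with the $E$-saturated subsets breaks down. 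The paper's use of \Cref{lem-diagram-chase} makes this dependency transparent: condition~(iv) there is precisely the commutativity supplied by \Cref{lem:commutativity}, which in turn requires the local-to-global principle on the targets $\mod{A}(\C)$ and $\mod{A\otimes A}(\C)$.
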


As an application of this result, we prove that stratification satisfies a version of Galois descent (\Cref{cor-descent-strat-fin-group}). 
\begin{thmx}\label{thmx:e}
	Let $A$ be a faithful $G$-Galois extension in a rigidly-compactly generated tt-$\infty$-category $\C$ where $G$ is a finite group. Suppose that $\mod{A}(\C)$ is stratified with noetherian Balmer spectrum. Then $\C$ is also stratified with noetherian Balmer spectrum given by $\Spc(\mod{A}(\C)^c)/G$.
\end{thmx}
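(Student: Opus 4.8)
The plan is to deduce this from the stratification descent theorem \Cref{thm-stratification-descent} together with the spectral-space coequalizer of \Cref{thmx:c}, after translating the Galois hypothesis into concrete statements about $\mod{A\otimes A}(\C)$ and its Balmer spectrum.

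First I would collect the structural consequences of $A$ being a faithful $G$-Galois extension with $G$ finite: such an $A$ is dualizable, hence compact since $\C$ is rigidly-compactly generated, and it is descendable, so that \Cref{thmx:c} applies and $\supp(A)=\Spc(\C^c)$. The defining Galois identification provides an equivalence $A\otimes A\xrightarrow{\ \sim\ }\prod_{g\in G}A$, and hence an equivalence $\mod{A\otimes A}(\C)\simeq\prod_{g\in G}\mod{A}(\C)$; passing to compact objects and Balmer spectra gives $\Spc(\mod{A\otimes A}(\C)^c)\cong\coprod_{g\in G}\Spc(\mod{A}(\C)^c)$. Since $\Spc(\mod{A}(\C)^c)$ is noetherian by hypothesis, this finite coproduct is noetherian, in particular weakly noetherian; together with the assumed stratification of $\mod{A}(\C)$, all hypotheses of \Cref{thm-stratification-descent} are in force, so it remains only to verify that the base-change diagram of Balmer spectra is a coequalizer of \emph{sets}.

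Next I would identify the two maps in that diagram. Writing $X=\Spc(\mod{A}(\C)^c)$ with its induced $G$-action, the two base-change functors $\mod{A}(\C)\rightrightarrows\mod{A\otimes A}(\C)$ correspond, on the factor indexed by $g\in G$, to the identity and to the autoequivalence induced by $g$; applying $\Spc$, the two maps $\coprod_{g\in G}X\rightrightarrows X$ are the identity and the homeomorphism $g$ on the $g$-th copy. The coequalizer of such a diagram in $\Set$ is the orbit set $X/G$. Because $G$ is finite, the quotient map $q\colon X\to X/G$ is both open and closed, and from this one checks that $X/G$ with the quotient topology is a spectral space exhibited by $q$ as the coequalizer in $\Spec$, whose underlying set is precisely the orbit set. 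Comparing with \Cref{thmx:c}, whose coequalizer is $\supp(A)=\Spc(\C^c)$, we conclude $\Spc(\C^c)\cong X/G=\Spc(\mod{A}(\C)^c)/G$, which is noetherian as a continuous image of a noetherian space.

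Finally, the identification of the $\Spec$-coequalizer of \Cref{thmx:c} with the set-theoretic coequalizer shows that the set-level coequalizer condition of \Cref{thm-stratification-descent} holds, and that theorem then yields that $\C$ is stratified, with $\Spc(\C^c)\cong\Spc(\mod{A}(\C)^c)/G$ as above. The main obstacle I anticipate is the spectral-space bookkeeping in the third step: one must verify carefully that the finite-group orbit space $X/G$ simultaneously computes the coequalizer in $\Spec$ and in $\Set$ — it is exactly the failure of such agreement in general (colimits of spectral spaces differing from colimits of sets) that \Cref{thm-stratification-descent} is designed to detect, so this comparison, and the finiteness of $G$ that powers it, is the crux of the proof.
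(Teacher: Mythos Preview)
Your proposal is correct and follows essentially the same route as the paper: verify the hypotheses of \Cref{thm-stratification-descent} (via its noetherian variant \Cref{cor-descent-strat-noetherian}) using $A\otimes A\simeq\prod_G A$, and then establish condition~(c) by showing that the topological quotient $X/G$ is spectral and hence computes the coequalizer simultaneously in $\Spec$ and in $\Set$. The paper isolates the coequalizer step as \Cref{prop-Balmer-galois} and, for the fact you flag as the crux (that $X/G$ is spectral with spectral quotient map for a finite group action), cites \cite{Fargues}*{Example~1.7.2} and \cite{Scholze2017}*{Lemma~2.10} rather than verifying it by hand; one minor reordering: you should establish that $\Spc(\C^c)$ is (weakly) noetherian \emph{before} invoking \Cref{thm-stratification-descent}, e.g.\ by noting it is a surjective image of the noetherian space $\Spc(\mod{A}(\C)^c)$, as in \Cref{cor-descent-strat-noetherian}.
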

Using this and \cite{BCHNP2023} we show in \Cref{exa:ko-g-stratification} that the category $\mod{KO_G}(\Sp_G)$ is stratified with noetherian spectrum $\Spc(\mod{KO_G}(\Sp_G)^c) \cong \spec(R(G))/C_2$, where $KO_G$ denotes real equivariant $K$-theory, and $G$ is a finite group. 

In the case of a Galois extension of commutative ring spectra, \Cref{thmx:e} admits the following strengthening (\cref{descent-strat-compact-lie}):

\begin{thmx}
	Let $A \to B$ be a faithful $G$-Galois extension of commutative ring spectra where $G$ is a compact Lie group. If $\mod{B}$ is stratified with noetherian Balmer spectrum, then so is $\mod{A}$.
\end{thmx}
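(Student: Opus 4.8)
The plan is to reduce the claim to the already-established finite case \Cref{cor-descent-strat-fin-group} and descendable case \Cref{thm-stratification-descent} by successively decomposing the compact Lie group $G$. Throughout, for a closed subgroup $H\le G$ write $A_H:=B^{hH}$; then $A_H\to B$ is again a faithful $H$-Galois extension of commutative ring spectra, and when $H\trianglelefteq G$ the map $A_G\to A_H$ is a faithful $(G/H)$-Galois extension. Each time we invoke \Cref{thm-stratification-descent} for a descendable Galois extension, the support condition $\Supp(F(x))=\varphi^{-1}(\Supp(x))$ that it requires is verified exactly as in the proof of \Cref{cor-descent-strat-fin-group}.

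First I would reduce to the case where $G$ is connected: if $G_0\trianglelefteq G$ denotes the identity component, then $\pi_0(G)$ is finite and $A_G\to A_{G_0}$ is faithful $\pi_0(G)$-Galois, so by \Cref{cor-descent-strat-fin-group} it suffices to treat $A_{G_0}\to B$. Assuming $G$ connected, let $T\le G$ be a maximal torus, $N=N_G(T)$ its normalizer, and $W=N/T$ the finite Weyl group. The homogeneous space $G/N$ has Euler characteristic $1$, so the Becker--Gottlieb transfer for the fibration $G/N\to BN\to BG$ splits the unit map $A_G\to A_N$ in $\mod{A_G}$; thus $\unit_{A_G}\in\thick{A_N}$ and $A_G\to A_N$ is descendable, whence by \Cref{thm-stratification-descent} it is enough to treat $A_N$. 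Since $N$ is an extension of $W$ by $T$, another application of \Cref{cor-descent-strat-fin-group} replaces $N$ by $T$; and writing $T\cong S^1\times T'$ and running an induction on the rank through the $S^1$-Galois extension $A_T=(A_{T'})^{hS^1}\to A_{T'}$, we are reduced to the case $G=S^1$.

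For a faithful $S^1$-Galois extension $A=B^{hS^1}\to B$ I would show that $A\to B$ is itself descendable and conclude once more by \Cref{thm-stratification-descent}. The key structural point is that $\Sigma^\infty_+S^1$ is a finite spectrum, so the Amitsur complex of $A\to B$ is as small as possible: $B\otimes_A B\simeq F(S^1_+,B)\simeq B\oplus\Sigma^{-1}B$ is a perfect $B$-module and, setting $\bar B=\mathrm{cofib}(\unit_A\to B)\in\mod A$, one finds inductively $\bar B^{\otimes_A n}\otimes_A B\simeq\Sigma^{-n}B$. The hard part, and the main obstacle I foresee, is to convert this uniform finiteness --- together with faithfulness, which says that $B$ generates $\mod A$ as a localizing ideal and forces $B^{tS^1}=0$ --- into the statement that the associated graded of the Amitsur tower is pro-zero, i.e.\ that $\unit_A\in\thickid{B}$ in $\mod A$; this is exactly descendability. (One could instead try to prove in a single step that every faithful Galois extension of commutative ring spectra by a compact Lie group is descendable, bypassing the group-theoretic reduction above; the circle is the essential case and the obstruction is the same.)

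Granting descendability at each stage, \Cref{thm-stratification-descent} and \Cref{cor-descent-strat-fin-group} yield that $\mod A$ is stratified. For the Balmer spectrum, the finite-Galois stages contribute genuine orbit-space quotients by \Cref{cor-descent-strat-fin-group}, while the stages coming from connected groups leave the spectrum unchanged, since a connected topological group cannot act nontrivially on a spectral space (the orbit maps land in a totally disconnected space in the constructible topology); feeding these descriptions through the reduction gives $\Spc(\mod A^c)\cong\Spc(\mod B^c)/G$, with $G$ acting through $\pi_0(G)$. Finally, this space is noetherian: a finite quotient of a noetherian spectral space is again noetherian, and each coequalizer of spectral spaces produced along \Cref{cor:coeq-of-spectral} corresponds dually to a sublattice of the lattice of quasi-compact opens, which retains the ascending chain condition.
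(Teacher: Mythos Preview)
Your initial reduction to the connected case via the identity component agrees with the paper's (\cref{lem-split-galois}). The divergence is in how the connected case is handled. The paper treats a faithful $H$-Galois extension $A\to B$ with $H$ connected directly, without any further group-theoretic reduction: it shows $\Spc(\mod_A^c)\cong\Spc(\mod_B^c)$ by proving that the section of the two parallel arrows $\Spc(\mod_{B\otimes_A B}^c)\rightrightarrows\Spc(\mod_B^c)$ induced by multiplication is not merely split injective but bijective. The point is that under $B\otimes_A B\simeq F(H_+,B)$ the multiplication map identifies with restriction along $\{e\}\hookrightarrow H$, which is descendable because $H$ is a connected finite CW complex. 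Hence the two arrows agree, the spectral coequalizer is trivially the topological one, and condition~(c) of \cref{thm-stratification-descent} follows at once (\cref{cor-descent-strat-connected}).

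Your route has two genuine gaps. First, the ``main obstacle'' you foresee --- descendability for the $S^1$-Galois extension --- is not an obstacle: any faithful $G$-Galois extension of ring spectra with $G$ stably dualizable is descendable (\cref{rem-descent+compact}, via Mathew's Theorem~3.38). Second, and more seriously, your assertion that the set-coequalizer condition is verified ``exactly as in the proof of \cref{cor-descent-strat-fin-group}'' is incorrect outside the finite case. That proof depends on the identification $A\otimes A\simeq\prod_G A$, which rewrites the coequalizer as an orbit quotient; for positive-dimensional $G$ one has $B\otimes_A B\simeq F(G_+,B)$ instead, and for the non-Galois step $A_{G_0}\to A_N$ there is no such description at all. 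Your remark that a connected group cannot act nontrivially on a spectral space is morally correct (the induced action on $\Spc(\mod_B^c)$ factors through $\pi_0$), but this does not identify $\Spc(\mod_{B\otimes_A B}^c)$ or the two parallel arrows, which is what condition~(c) actually demands. Likewise, for $A_{G_0}\to A_N$ you would need to control $\mod_{A_N\otimes_{A_{G_0}}A_N}$ (noetherian spectrum, local-to-global) and then produce the set coequalizer; none of this is addressed. The paper's direct argument for connected $H$ sidesteps all of these difficulties.
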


\subsection{Set theoretic considerations}
We fix three uncountable, strongly inaccessible cardinals $\kappa_0 < \kappa_1 <\kappa_2$ and corresponding universes $\U_{\kappa_0} \in \U_{\kappa_1} \in \U_{\kappa_2}$. A set, simplicial set, category, etc., will be said to be \emph{small} if it is contained in $\U_{\kappa_0}$; will be said to be \emph{large} if it is contained in the universe $\U_{\kappa_1}$; and will be said to be \emph{very large} if it is contained in the universe $\U_{\kappa_2}$. We will say that a set, simplicial set, category, etc., is essentially small if it is equivalent (in the appropriate sense) to a small one. 

\subsection*{Acknowledgements} 
We thank Scott Balchin for helpful conversations related to this paper.
We are also grateful to the organizers of the Abel Symposium on Triangulated Categories in Representation Theory and Beyond for their invitation to make a contribution to the proceedings.

\section{Split equalizers}

We begin with some preliminary lemmas concerning split equalizers.

\begin{definition}
	A \emph{fork diagram} in a category $\cat C$ is a diagram
	\[
		 \begin{tikzcd}
		 X_0 \arrow[r, "f"] & X_1 \arrow[r, shift left, "\alpha"] 
		 \arrow[r, shift right, "\beta"'] & X_2
		 \end{tikzcd}
	\]
	in which $\alpha\circ f=\beta\circ f$.
	A \emph{split fork diagram} is such a diagram equipped with two additional arrows
	\[
		\begin{tikzcd}
		X_0 \arrow[r, "f"] & X_1  \ar[l,bend left=60,"u"] \arrow[r, shift left, "\alpha"] \arrow[r, shift right, "\beta"'] & X_2 \ar[l,bend left=60, "v"]
		\end{tikzcd}
	\]
	in which $u\circ f=\id$, $v\circ \beta=\id$ and $v\circ \alpha=f\circ u$.
\end{definition}

\begin{remark}\label{rem:split-fork-is-equalizer}
	Every split fork diagram 
	\[
	\begin{tikzcd}
		X_0 \ar[r,"f"] \ar[r] &X_1 \arrow[r, shift left, "\alpha"] \arrow[r, shift right, "\beta"'] & X_2
	\end{tikzcd}
	\]
	is an equalizer; see \cite[Section VI.6]{Maclane}. In fact, a split fork diagram is an absolute equalizer; that is, an equalizer which is preserved by all functors. We use \emph{split equalizer} as a synonym for \emph{split fork diagram}.
\end{remark}

\begin{lemma}\label{lem-mega-lemma}
	Consider a diagram of sets
	\[
	\begin{tikzcd}
		X_0 \arrow[r, "f"] & X_1 \arrow[r, shift left, "\alpha"] 
		\arrow[r, shift right, "\beta"'] & X_2 \\
		Y_0 \arrow[u,"i_0"] \arrow[r,"g"] & Y_1 \arrow[u, "i_1"]\arrow[r, shift left, "\gamma"] 
		\arrow[r, shift right, "\delta"'] & Y_2 \arrow[u, "i_2"]
	\end{tikzcd}
	\]
	in which
	\begin{itemize}
		\item[(i)] The top fork is split: there exists $u\colon X_1 \to X_0$ and $v \colon X_2 \to X_1$ such that $u\circ f=\id$, $v\circ \beta=\id$ and $v\circ \alpha=f\circ u$;
		\item[(ii)] $i_0$ and $i_1$ are injective;
		\item[(iii)] $f \circ i_0=i_1\circ g$;
		\item[(iv)] $\alpha\circ i_1=i_2\circ \gamma$ and $\beta\circ i_1=i_2\circ \delta$.
	\end{itemize}
	Then the following are equivalent:
	\begin{itemize}
		\item[(a)] The bottom fork in an equalizer;
		\item[(b)] For all $y_1\in Y_1$ such that $\gamma(y_1)=\delta(y_1)$, there exists $y_0\in Y_0$ such that $u(i_1(y_1))=i_0(y_0)$.
	\end{itemize}
\end{lemma}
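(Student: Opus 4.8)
The plan is to work concretely in $\Set$, where the equalizer of the lower pair is the subset $E:=\{y_1\in Y_1 : \gamma(y_1)=\delta(y_1)\}$ with its inclusion into $Y_1$, and to use the retraction $u$ of the split fork as a test for membership in the image of $g$. The skeleton of the argument is: first record a few formal identities forced by the hypotheses; then observe that (a) is equivalent to the single equality $\im(g)=E$; and finally chase the diagram to see that $\im(g)=E$ is equivalent to (b).

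Concretely, I would begin by collecting the following consequences. Applying $u$ to identity (iii) and using $u\circ f=\id$ yields the bookkeeping identity $u\circ i_1\circ g=i_0$. Since $f$ is a split monomorphism and $i_0$ is injective by (ii), the composite $i_1\circ g=f\circ i_0$ is injective, hence $g$ is injective. Because the top fork is split it is an absolute equalizer (\Cref{rem:split-fork-is-equalizer}); the explicit fact I need is that for every $x_1\in X_1$ with $\alpha(x_1)=\beta(x_1)$ one has
\[
f(u(x_1))=v(\alpha(x_1))=v(\beta(x_1))=x_1 ,
\]
using $f\circ u=v\circ\alpha$ and $v\circ\beta=\id$. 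Since the lower row is a fork, $\im(g)\subseteq E$, and as $g$ is injective this means precisely that statement (a) — that $g$ is the equalizer of $(\gamma,\delta)$ — holds if and only if $\im(g)=E$. So the whole lemma reduces to the equivalence ``$\im(g)=E\iff$ (b)''.

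For that equivalence I would argue as follows. Suppose first $\im(g)=E$, and let $y_1\in Y_1$ with $\gamma(y_1)=\delta(y_1)$, i.e.\ $y_1\in E=\im(g)$; writing $y_1=g(y_0)$ and applying the bookkeeping identity gives $u(i_1(y_1))=u(i_1(g(y_0)))=i_0(y_0)$, which is exactly (b). Conversely, assume (b) and take $y_1\in E$. By (iv) and $\gamma(y_1)=\delta(y_1)$ we get $\alpha(i_1(y_1))=i_2(\gamma(y_1))=i_2(\delta(y_1))=\beta(i_1(y_1))$, so $i_1(y_1)$ lies in the equalizer of $(\alpha,\beta)$, whence $f(u(i_1(y_1)))=i_1(y_1)$ by the split-fork fact above. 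Now choose $y_0\in Y_0$ with $u(i_1(y_1))=i_0(y_0)$ via (b); then $i_1(y_1)=f(i_0(y_0))=i_1(g(y_0))$ by (iii), and injectivity of $i_1$ (hypothesis (ii)) gives $y_1=g(y_0)\in\im(g)$. Hence $\im(g)=E$, so (a) holds.

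The argument is essentially a diagram chase, and the only real content is the use of the split structure to force $f\circ u$ to act as the identity on the equalizer of $(\alpha,\beta)$: this is precisely what turns the retraction $u$ into a faithful test for membership in $\im(g)$, and it is where the hypothesis (i) is used in an essential way. The one point that merits a word of care is the bookkeeping through the injective maps $i_0,i_1$ and the implicit fact that the lower row is a fork diagram — which is part of what is meant by saying the bottom diagram ``is an equalizer'' (and which in any case follows from (iii), (iv) and the top fork relation once $i_2$ is injective); beyond that there is no genuine obstacle.
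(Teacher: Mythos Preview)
Your proof is correct and follows essentially the same diagram chase as the paper's. Your $(a)\Rightarrow(b)$ is in fact slightly cleaner: you first record the identity $u\circ i_1\circ g=i_0$ (obtained by applying $u$ to (iii)) and then read off $u(i_1(y_1))=i_0(y_0)$ directly once $y_1=g(y_0)$, whereas the paper instead derives $f(u(i_1(y_1)))=i_1(y_1)=f(i_0(y_0))$ and then cancels the injective $f$. The $(b)\Rightarrow(a)$ direction is identical in both.

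One small correction to your closing parenthetical: the hypothesis (ii) only gives injectivity of $i_0$ and $i_1$, not $i_2$, so you cannot deduce the fork relation $\gamma\circ g=\delta\circ g$ from the top fork via $i_2$. Your primary justification---that the fork relation is presupposed by the phrasing ``the bottom fork is an equalizer'' in statement (a)---is the correct one, and is also what the paper's proof tacitly relies on.
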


\begin{proof}
	$(a)\Rightarrow (b)$: If $y_1\in Y_1$ is such that $\gamma(y_1)=\delta(y_1)$ then by (a) there exists $y_0 \in Y_0$ such that $g(y_0)=y_1$. We claim that $u(i_1(y_1))=i_0(y_0)$ giving (b). Since $\gamma(y_1)=\delta(y_1)$ then $i_2(\gamma(y_1))=i_2(\delta(y_1)$ and so $\alpha(i_1(y_1))=\beta(i_1(y_1))$ by condition~(iv). Applying~$v$ to this identity and using that the fork is split we find 
	\[
		f(u(i_1(y_1)))=v(\alpha(i_1(y_1)))=v(\beta(i_1(y_1)))=i_1(y_1).
	\]
	Note that the right hand side can be rewritten as $i_1(y_1)=i_1(g(y_0))=f(i_0(y_0))$ by (iii). Note that $f$ is injectve by (i) so we conclude that $u(i_1(y_1))=i_0(y_0)$ giving~(b).
 
	$(b)\Rightarrow (a)$: Let $y_1 \in Y_1$ be such that $\gamma(y_1)=\delta(y_1)$. We need to show that there exists unique $y_0 \in Y_0$ such that $g(y_0)=y_1$.  Uniqueness follows from the fact that $g$ is injective since the composite $i_1g=fi_0$ is so by combining (i) and (ii). Let us construct such $y_0$. From $\gamma(y_1)=\delta(y_1)$ and (iv) we deduce that $\alpha(i_1(y_1))=\beta(i_1(y_1))$. Applying $v$ to this and using the identities of the split fork we find that 
	\begin{equation}\label{eqqq}
		f(u(i_1(y_1)))=i_1(y_1)
	\end{equation}
	as in the previous paragraph. By condition (b) there exists $y_0 \in Y_0$ such that $i_0(y_0)=u(i_1(y_1))$. We claim that such $y_0$ does the job, namely $g(y_0)=y_1$. Indeed
	\[
		 i_1(g(y_0))=f(i_0(y_0))=f(u(i_1(y_1)))\overset{\eqref{eqqq}}{=}i_1(y_1)
	\]
	so we conclude by injectivity of $i_1$.
\end{proof}

\begin{lemma}\label{lem-diagram-chase}
	Consider a diagram of sets
	\[
	\begin{tikzcd}
		X_0 \arrow[r, "f"]\arrow[d,shift left, "r_0"] & X_1 \arrow[d,shift left, "r_1"] \arrow[r, shift left, "\alpha"] \arrow[r, shift right, "\beta"'] & X_2 \arrow[d,shift left,"r_2"]\\ Y_0 \arrow[u,shift left,"i_0"] \arrow[r,"g"] & Y_1 \arrow[u,shift left, "i_1"]\arrow[r, shift left, "\gamma"] \arrow[r, shift right, "\delta"'] & Y_2 \arrow[u,shift left, "i_2"]
	\end{tikzcd}
	\]
	in which 
	\begin{itemize}
		\item[(i)] The top fork is an equalizer and $\gamma \circ g=\delta \circ g$;
		\item[(ii)] $r_0 \circ i_0=\id$, $r_1 \circ i_1=\id$, and $r_2 \circ i_2=\id$;
		\item[(iii)] $i_1$ is bijective (hence with inverse $r_1$);
		\item[(iv)] $f \circ i_0=i_1 \circ g$, $\alpha \circ i_1=i_2 \circ \gamma$ and $\beta \circ i_1=i_2 \circ \delta$. 
	\end{itemize}
	Then the following are equivalent:
	\begin{itemize}
		\item[(a)] $r_0$ is bijective;
		\item[(b)] $g \circ r_0 =r_1 \circ f$;
		\item[(c)] The bottom fork is an equalizer. 
	\end{itemize}
\end{lemma}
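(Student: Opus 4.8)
The plan is to prove the cyclic chain of implications $(a)\Rightarrow(b)\Rightarrow(c)\Rightarrow(a)$, using the hypotheses to reduce everything to a diagram chase. Throughout I would use that $i_1$ is bijective with inverse $r_1$ (hypothesis (iii)), so that any statement about $Y_1$ can be transported to $X_1$ and back. I also note at the outset that, by hypothesis (ii), all of $r_0, r_1, r_2$ are split surjections, so the only content in (a) is the \emph{injectivity} of $r_0$.

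First I would show $(a)\Rightarrow(b)$. Both $g\circ r_0$ and $r_1\circ f$ are maps $X_0\to Y_1$. Postcompose with the bijection $i_1$: using (iv) we have $i_1\circ g\circ r_0 = f\circ i_0\circ r_0$, while $i_1\circ r_1\circ f = f$. So it suffices to see that $f\circ i_0\circ r_0 = f$ as maps $X_0\to X_1$. Since the top fork is an equalizer, $f$ is injective, so this reduces to $i_0\circ r_0=\id_{X_0}$. Now $r_0$ is bijective by (a) and $r_0\circ i_0=\id$ by (ii), which forces $i_0\circ r_0=\id$ as well; this gives (b).

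Next, $(b)\Rightarrow(c)$. I would verify the bottom fork is an equalizer by checking its universal property directly, or more efficiently by applying \Cref{lem-mega-lemma} — but since that lemma needs the top fork to be \emph{split} rather than merely an equalizer, I expect it is cleaner to argue by hand. Given $y_1\in Y_1$ with $\gamma(y_1)=\delta(y_1)$, I want a unique $y_0\in Y_0$ with $g(y_0)=y_1$. For existence: apply $i_2$ and use (iv) to get $\alpha(i_1(y_1))=\beta(i_1(y_1))$; since the top fork is an equalizer there is a unique $x_0\in X_0$ with $f(x_0)=i_1(y_1)$. Set $y_0:=r_0(x_0)$. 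Then $g(y_0)=g(r_0(x_0))=r_1(f(x_0))$ by (b), $=r_1(i_1(y_1))=y_1$ by (iii). For uniqueness: if $g(y_0')=y_1$ then $f(i_0(y_0'))=i_1(g(y_0'))=i_1(y_1)$ by (iv), and since $i_1\circ g = f\circ i_0$ is injective (being $i_1$ bijective composed with $f$ injective), $g$ is injective, so $y_0'=y_0$. Uniqueness of the equalizing map out of any other set follows formally once existence-and-uniqueness of the lift of $y_1$ is established, since $g$ is a monomorphism.

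Finally, $(c)\Rightarrow(a)$. I need $r_0$ injective. Take $x_0, x_0'\in X_0$ with $r_0(x_0)=r_0(x_0')=:y_0$. The idea is to show each $x_i$ is forced to equal $i_0(y_0)$. Observe $\gamma\circ g=\delta\circ g$ holds by (i), so $y_1:=g(y_0)$ satisfies $\gamma(y_1)=\delta(y_1)$; by (c) this $y_0$ is the \emph{unique} preimage. Now consider $i_0(y_0)\in X_0$: I would check $f(i_0(y_0))$ and $f(x_0)$ both lie in the equalizer of $\alpha,\beta$ and have the same image under $r_1$ — indeed $r_1(f(x_0)) = g(r_0(x_0)) = g(y_0) = r_1(i_1(g(y_0))) = r_1(f(i_0(y_0)))$, the first equality needing $(b)$, which we do \emph{not} yet have in this implication. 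So instead I would run the argument the other way: the map $f$ lands in $X_1$, and $r_1\colon X_1 \xrightarrow{\sim} Y_1$ is a bijection, so $f$ is injective iff $r_1\circ f$ is; and $r_1\circ f = $ (the composite $X_0 \to Y_0 \xrightarrow{g} Y_1$)? That again presupposes (b). The honest route: since the top fork is an equalizer with $f$ the equalizer map, $f$ is a monomorphism; I claim $r_0$ is then a \emph{retraction of a monomorphism composed with bijections}, hence the real work is a short diagram chase showing $x_0 = i_0(r_0(x_0))$ for all $x_0$, using that $r_1\circ f\circ i_0 = r_1\circ i_1\circ g = g = g\circ r_0\circ i_0$ and the injectivity coming from (c).

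\emph{Main obstacle.} The genuinely delicate implication is $(c)\Rightarrow(a)$: establishing injectivity of $r_0$ from the equalizer property of the bottom row. The subtlety is that (b) is exactly the compatibility that makes $r_0$ behave well, and it is not a hypothesis here, so one must extract it — or extract just enough of it — from (c) together with (i)–(iv). I expect the clean resolution is: from (c) and (i), the map $g$ is the equalizer of $\gamma,\delta$, hence a monomorphism; combined with $r_0\circ i_0 = \id$ and the bijectivity of $i_1$, a diagram chase shows $i_0\circ r_0 = \id_{X_0}$ (the key point being that $f\circ i_0\circ r_0$ and $f$ agree after applying the bijection-inverse $r_1$ and using that $g$ is mono), which gives that $i_0$ and $r_0$ are mutually inverse, so $r_0$ is bijective. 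Getting that chase exactly right — deciding precisely where the equalizer property of the \emph{bottom} fork is invoked — is where care is needed; everything else is formal.
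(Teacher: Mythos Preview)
Your arguments for $(a)\Rightarrow(b)$ and $(b)\Rightarrow(c)$ are correct and essentially identical to the paper's.

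For $(c)\Rightarrow(a)$ there is a genuine gap. Your proposed ``clean resolution'' is circular: you want to show $f\circ i_0\circ r_0 = f$, and you suggest checking this after applying $r_1$. But $r_1\circ f\circ i_0\circ r_0 = r_1\circ i_1\circ g\circ r_0 = g\circ r_0$, so the desired equality becomes $g\circ r_0 = r_1\circ f$, which is precisely statement~(b). Invoking that $g$ is a monomorphism does not help here, since $r_1\circ f$ is not \emph{a priori} of the form $g\circ(\text{something})$.

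What is missing is the verification that, for any $x_0\in X_0$, the element $r_1(f(x_0))\in Y_1$ equalizes $\gamma$ and $\delta$; only then does~(c) produce a $y_0$ with $g(y_0)=r_1(f(x_0))$, after which $f(i_0(y_0))=i_1(g(y_0))=i_1(r_1(f(x_0)))=f(x_0)$ and injectivity of $f$ give $i_0(y_0)=x_0$, showing $i_0$ surjective. The paper obtains this equalizing property by first rewriting $\alpha = i_2\circ\gamma\circ r_1$ and $\beta = i_2\circ\delta\circ r_1$ (from (iv) and the bijectivity of $i_1$), then applying $r_2$ to the identity $\alpha(f(x_0))=\beta(f(x_0))$ and using $r_2\circ i_2=\id$ from~(ii). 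This is the step your sketch does not supply, and it is where hypothesis~(ii) for $r_2,i_2$ actually enters the proof.
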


\begin{proof}
	$(a) \Rightarrow (b)$: Using (ii) and (iv) we see that $g=r_1 \circ i_1 \circ g= r_1 \circ f\circ i_0$. Now combining (ii)+(a) we conclude that $r_0$ has inverse~$i_0$. So $g\circ r_0 =r_1 \circ f\circ i_0 \circ r_0= r_1 \circ f$ as required.  

	$(b)\Rightarrow (c)$: Given $y_1 \in Y_1$ such that $\gamma(y_1)=\delta(y_1)$ we need to show that there exists a unique $y_0 \in Y_0$ such that $g(y_0)=y_1$. Note that $g$ is injective since the composite $i_1\circ g=f\circ i_0$ is so by assumption. Therefore if it exists $y_0$ is necessarily unique. Note that $i_2(\gamma(y_1))=i_2(\delta(y_1))$ so by (iii) we see that $\alpha(i_1(y_1))=\beta(i_1(y_1))$. Since the top diagram is an equalizer we find unique $x_0\in X_0$ such that $f(x_0)=i_1(y_1).$ We claim that $y_0\coloneqq r_0(x_0)$ does the job. Indeed
	\[
		g(y_0)=g(r_0(x_0))\overset{(b)}{=}r_1f(x_0)=r_1i_1(y_1)\overset{(ii)}{=}y_1.
	\]

	$(c)\Rightarrow (a)$: By condition (ii) it suffices to show that $r_0$ is injective or equivalently that $i_0$ is surjective. So pick $x_0\in X_0$; we will show that $x_0$ is in the image of $i_0$. By assumption $\alpha \circ i_1=i_2 \circ \gamma$ and $\beta \circ i_1=i_2 \circ \delta$ so by precomposing with $r_1$ and using (iii) we find $\alpha=i_2 \circ \gamma \circ r_1$ and $\beta=i_2 \circ \delta \circ r_1$. Clearly $r_2(\alpha(f(x_0)))=r_2(\beta(f(x_0)))$ so
	\begin{align*}
		 \gamma(r_1(f(x_0)))=r_2(i_2(\gamma(r_1(f(x_0)))))&=r_2(\alpha(f(x_0)))\\
		 &=r_2(\beta(f(x_0)))\\
		 &=r_2(i_2(\delta(r_1(f(x_0)))))=\delta(r_1(f(x_0)))
	\end{align*}
	by inserting the identities for $\alpha$ and $\beta$. Then by (c) we find $y_0\in Y_0$ such that $g(y_0)=r_1(f (x_0))$. We claim that $i_0(y_0)=x_0$. Indeed $f(i_0(y_0))=i_1(g(y_0))=i_1(r_1(f(x_0)))=f(x_0)$ and by injectivity of $f$ we conclude that $i_0(y_0)=x_0$.
\end{proof}

\section{Stone duality}

We briefly recall some background material on lattices and Stone duality. Our main reference is \cite{Johnstone} where the reader can find a more elaborate discussion.

\subsection{Lattices}
A partial ordered set $(A,\leq )$ is a \emph{lattice} if any two-element subset $\{a,b\}\subseteq A$ has a join (or least upper bound) $a \vee b$, and a meet (or greatest lower bound) $a\wedge b$. We also require that it contains a least element $0$ and a greatest element~$1$. A morphism of lattices $f\colon A \to B$ is a function satisfying $f(a\wedge a')=f(a)\wedge f(a')$ and $f(a\vee a')=f(a)\vee f(a')$ for all $a,a'\in A$, and also $f(0)=0$ and $f(1)=1$. We denote the category of lattices by $\Lat$.

\begin{remark}
	Many authors do not require a lattice to have $0$ and $1$ and would call the above notion a \emph{bounded} lattice. We follow the terminology of \cite{Johnstone}.
\end{remark}

\begin{remark}
	Any lattice morphism is necessarily order-preserving (since $a \leq a'$ if and only if $a \vee a'=a'$) but not all order-preserving maps are lattice morphisms. For example, let $\CP$ be the power set of the two-element set $\{a,b\}$ and let $\CQ$ be the totally-ordered set $\{0 < 1 < 2\}$. Then the cardinality map $c\colon \CP \to \CQ$ is order-preserving but not a lattice morphism since $c(\{ a\} \vee \{b \})=c(\{ a,b\})=2$ and yet $c(\{ a\}) \vee c(\{ b\})=1 \vee 1 =1$.
\end{remark}

\begin{remark}
	According to the above remark, the forgetful functor $\Lat \to \Pos$ from lattices to posets is faithful but not full. On the other hand, one readily checks that if $f\colon A \to B$ is a bijective lattice morphism, then $f^{-1}\colon B\to A$ is a lattice morphism. Hence, lattice isomorphisms coincide with bijective lattice morphisms. In particular, the forgetful functors $\Lat \to \Pos$ and $\Lat \to \Set$ reflect isomorphisms.
\end{remark}

\subsection{Distributive lattices}
A lattice $(A,\le)$ is said to be \emph{distributive} if \[a \wedge (b \vee c)=(a\wedge b)\vee (a\wedge c)\] for all $a,b,c \in A$. There are a number of equivalent characterizations; see \cite[I.1.5]{Johnstone}, \cite[Section I.6]{Birkhoff67}, and \cite[Section II.5]{BalbesDwinger74}. The distributive lattices form a full subcategory $\DLat$ of the category of lattices $\Lat$.

\subsection{Coherent frames}
	A lattice $A$ is \emph{complete} if \emph{every} subset $S \subseteq A$ has both a meet $\bigwedge S$ and a join $\bigvee S$ in $A$. A \emph{frame} is a complete lattice $F$ in which finite meets distribute over arbitrary joins:
	\[
		a \wedge \bigvee_{s\in S}s = \bigvee_{s\in S} (a \wedge s)
	\]
	for any $a \in F$ and $S \subseteq F$. A \emph{frame map} is a lattice morphism that preserves arbitrary joins. We denote the category of frames and frame maps by $\Frm$. An element $c$ of a frame $F$ is said to be \emph{finite} if whenever $c\leq \bigvee_{s\in S} s$ for some subset $S\subseteq F$, there exists a finite subset $K \subseteq S$ such that $c \leq \bigvee_{s\in K}s$. A frame $F$ is \emph{coherent} if every element can be expressed as a join of finite elements and the finite elements form a (distributive) lattice $F^f$. This amounts to requiring that $1$ is finite and that the meet of two finite elements is finite. (See \cite[Section II.3]{Johnstone}.) A frame map is \emph{coherent} if it takes finite elements to finite elements. We denote the category of coherent frames and coherent frame maps by $\CFr$. By definition we have a functor $(-)^f \colon \CFr \to \DLat$ which sends a coherent frame to its finite elements.

\subsection{Spectral spaces}
	A topological space is \emph{sober} if it is $T_0$ and every nonempty irreducible closed set is the closure of a (unique) point. A \emph{spectral space} is a quasi-compact sober space in which the quasi-compact open subsets are closed under finite intersection and form a basis for the topology. A \emph{spectral map} between spectral spaces is a continuous map for which the inverse image of any quasi-compact open subset is quasi-compact. The category of spectral spaces and spectral maps is denoted by $\Spec$. We have a functor \[ \Omega\colon\Spec\op \to \CFr\] which sends a spectral space $X$ to the coherent frame $\Omega(X)$ of open subsets of $X$. Our terminology is due to \cite{Hochster}. Note that spectral spaces are called ``coherent spaces'' in \cite{Johnstone} due to the following theorem:

\begin{theorem}[Stone duality]\label{thm-stone-duality}
	The functors
	\[
		\Omega\colon\Spec\op \to \CFr \qquad\text{ and }\qquad (-)^f\colon \CFr \to \DLat
	\]
	are equivalences of categories.
\end{theorem}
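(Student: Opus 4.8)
The plan is to establish that each of the two functors is an equivalence by exhibiting an explicit pseudo-inverse and checking that the unit and counit are isomorphisms. For $(-)^f\colon\CFr\to\DLat$ the pseudo-inverse is the ideal functor $\operatorname{Idl}\colon\DLat\to\CFr$, sending a distributive lattice $D$ to the poset of its ideals ordered by inclusion, and a lattice map $D\to D'$ to the frame map carrying an ideal to the ideal generated by its image in $D'$. For $\Omega\colon\Spec\op\to\CFr$ the pseudo-inverse is the points functor $\operatorname{pt}\colon\CFr\to\Spec\op$, sending a coherent frame $F$ to its set of points --- frame maps $F\to\{0 < 1\}$, equivalently completely prime filters of $F$ --- topologized by declaring each $U_a=\{p:p(a)=1\}$, $a\in F$, to be open, and sending a coherent frame map to the evident restriction map on point sets. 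As a reality check, the composite $(-)^f\circ\Omega$ is the classical functor sending a spectral space $X$ to the distributive lattice $\Omega(X)^f$ of its quasi-compact open subsets.

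First I would handle $(-)^f\simeq\DLat$. The verifications that $\operatorname{Idl}(D)$ is a coherent frame with $\operatorname{Idl}(D)^f\cong D$ are direct: arbitrary meets are intersections, arbitrary joins are generated ideals, the frame distributivity law is an immediate consequence of the distributivity of $D$, the principal ideals ${\downarrow}a=\{b\in D:b\leq a\}$ are precisely the finite elements and form a sublattice isomorphic to $D$ via $a\mapsto{\downarrow}a$, and every ideal is the join of the principal ideals it contains. For a coherent frame $F$ one then checks that $\operatorname{Idl}(F^f)\to F$, $I\mapsto\bigvee I$, is an isomorphism of frames, with inverse $x\mapsto\{c\in F^f:c\leq x\}$; both surjectivity and injectivity follow from the fact that every element of $F$ is the join of the finite elements below it, together with the defining property of finite elements. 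Naturality of both comparisons is routine, so $(-)^f$ is an equivalence with pseudo-inverse $\operatorname{Idl}$.

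Next I would handle $\Omega\colon\Spec\op\simeq\CFr$ via $\operatorname{pt}$. One checks that $\operatorname{pt}(F)$ is a spectral space: it is $T_0$ since distinct points are separated by some $U_a$; it is quasi-compact because $1\in F$ is finite; the $U_c$ with $c$ finite are quasi-compact, closed under finite intersection, and form a basis for the topology; and it is sober, as an irreducible closed subset is readily identified with the closure of a point. The comparison $X\to\operatorname{pt}(\Omega(X))$, $x\mapsto(U\mapsto[x\in U])$, is injective by the $T_0$ axiom and surjective by sobriety, and it is a homeomorphism because $U\mapsto\{x\in X:x\in U\}$ matches up the open sets on the two sides. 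The only substantial point is the comparison in the other direction: $a\mapsto U_a$ is a frame map $F\to\Omega(\operatorname{pt}(F))$ which is surjective by construction of the topology and coherent since it carries finite elements to quasi-compact opens, so the whole statement comes down to its injectivity, i.e.\ to the assertion that a coherent frame has enough points. Using the equivalence $\operatorname{Idl}$ of the previous paragraph, this reduces to the statement that any two distinct elements of a distributive lattice are separated by a prime ideal --- the prime ideal theorem for distributive lattices. This is the one genuinely non-formal ingredient and the main obstacle; I would invoke \cite{Johnstone} for it, after which the remaining compatibility with spectral maps and coherent frame maps on morphisms is a direct check, completing the proof.
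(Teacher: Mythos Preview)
Your proposal is correct and is essentially the standard argument found in \cite[II.3]{Johnstone}. The paper's own proof is simply the one-line citation ``See~\cite[Corollary II.3.4]{Johnstone}'', so you have unpacked precisely what is being cited: the ideal completion $\operatorname{Idl}$ as pseudo-inverse to $(-)^f$, the points functor $\operatorname{pt}$ as pseudo-inverse to $\Omega$, and the prime ideal theorem for distributive lattices as the one non-formal ingredient ensuring coherent frames have enough points. There is no genuine divergence here --- your sketch is a faithful expansion of the reference the paper defers to.
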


\begin{proof}
	See~\cite[Corollary II.3.4]{Johnstone}.
\end{proof}

\begin{remark}
	In particular, we have an equivalence 
	\begin{equation}\label{eq:Spec-to-DLat}
		\Spec\op \xrightarrow{\simeq} \DLat
	\end{equation}
	which sends a spectral space $X$ to the distributive lattice of quasi-compact open subsets of $X$. We now describe a quasi-inverse to this equivalence:
\end{remark}

\subsection{The spectrum of a distributive lattice}
	A subset $I$ of a distributive lattice~$A$ is an \emph{ideal} if: 
	\begin{itemize}
		\item $I$ is non-empty;
		\item if $a\in I$ and $b\in A$ are such that $b\leq a$, then $b\in I$;
		\item for all $a,b \in I$, the element $a \vee b \in I$.
	\end{itemize} 
	Moreover $I$ is \emph{prime} if in addition it is a proper subset of $A$ and satisfies:
	\begin{itemize}
		 \item if $a \wedge b \in I$ then either $a\in I$ or $b \in I$.
	\end{itemize}
	The set of prime ideals of $A$ is denoted by $\spec(A)$. We endow this set with a topology by taking as basis the subsets $d(a)\coloneqq \{I \in \spec(A) \mid a \not\in I\}$ for all $a\in A$. We refer to $\spec(A)$ as the \emph{spectrum} of the distributive lattice $A$. Given a lattice map $f\colon A \to B$, we define a map $\spec(f)\colon \spec(B) \to \spec(A)$ by sending $I$ to $f^{-1}(I)$. One can check that this defines a functor
	\[
		\spec\colon \DLat\op \to \Spec
	\]
which is quasi-inverse to the equivalence \eqref{eq:Spec-to-DLat}.
  
\begin{remark}[Hochster duality]\label{rem:hochster-duality}
	Every spectral space $X$ can be equipped with a ``Hochster dual'' spectral topology; see \cite[Proposition 8]{Hochster}. Under the equivalence $\Spec\op \cong \DLat$ this amounts to sending a distributive lattice to its opposite lattice (which categorically amounts to taking the opposite category). Note that the ``Hochster dual topology'' is called the ``inverse topology'' in \cite{Spectralbook}.
\end{remark}

\begin{remark}
	The equivalence $\Spec\op \cong \DLat$ restricts to an equivalence between profinite spaces (a.k.a.~``Stone spaces'') and Boolean lattices (a.k.a.~``Boolean algebras). This is the original manifestation of Stone duality from \cite{Stone37}.
\end{remark}

Finally, we recall the following result:

\begin{lemma}\label{lem-forgetful-creates-limits}
	The forgetful functors $\Lat \to \Set$, $\DLat \to \Set$ and $\Frm \to\Set$ create all limits.
\end{lemma}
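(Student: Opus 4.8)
The plan is to argue directly that for each of the three categories $\cat V \in \{\Lat, \DLat, \Frm\}$, with forgetful functor $U \colon \cat V \to \Set$, every diagram $D \colon \cat J \to \cat V$ has a limit whose underlying set is $\lim(UD)$ and whose limiting cone lies over the one in $\Set$, and that no other lift of the set-level limit to $\cat V$ does the job. (This is the standard behaviour of the forgetful functor from the category of models of a possibly infinitary algebraic theory to $\Set$, and one could alternatively just cite that; I would spell it out.) Concretely, I would realize $\lim(UD)$ as the set $L \subseteq \prod_{j} UD(j)$ of families $(x_j)_j$ compatible with all transition maps $D(\varphi)$, together with the coordinate projections $\pi_j$.

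First I would put coordinatewise operations on $L$. Since each transition map $D(\varphi)$ is a morphism of $\cat V$ — hence preserves $\wedge$, $\vee$, $0$, $1$, and, in the $\Frm$ case, all joins — the coordinatewise operations on $\prod_j UD(j)$ restrict to $L$ (e.g.\ if $x,y \in L$ then $(x_j \wedge y_j)_j$ is again a compatible family, and similarly for $\vee$, the units, and arbitrary joins). The axioms of a lattice, resp.\ distributive lattice, resp.\ frame are equations holding in each $UD(j)$, so they hold coordinatewise in $L$. In the frame case there is one further point: $L$ with its coordinatewise joins has all joins and a greatest element, hence is automatically a complete lattice (arbitrary meets being joins of lower bounds), so $L$ is a frame. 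By construction each $\pi_j$ preserves all the relevant operations, so it is a morphism of $\cat V$, and the cone $(\pi_j)$ is compatible with $D$.

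Next I would verify the universal property and the uniqueness of the lift. Given any cone $(q_j \colon M \to D(j))_j$ in $\cat V$, the unique set map $h \colon UM \to L$ over it is $m \mapsto (q_j(m))_j$; since the $q_j$ preserve the operations and the operations on $L$ are coordinatewise, $h$ preserves them, so $h$ is a morphism of $\cat V$, and it is the only one lifting the set map because $U$ is faithful. For the creation statement: if $L'$ is any object of $\cat V$ with $UL' = L$ and all $\pi_j$ morphisms, then joint injectivity of the $\pi_j$ (they form a limiting cone in $\Set$) forces $x\wedge y$, $x\vee y$, $0$, $1$ and arbitrary joins in $L'$ to be computed coordinatewise, since each is determined by its images under the $\pi_j$; in the frame case the arbitrary meets are then forced as well, being joins of lower bounds. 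Hence $L'$ equals $L$ as an object of $\cat V$.

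I expect the only real subtlety to be in the $\Frm$ case: frame maps preserve finite meets and arbitrary joins but \emph{not} arbitrary meets, so one must not try to compute infinite meets in $L$ coordinatewise, and instead deduce completeness of $L$ formally from the existence of all joins together with a top element. For $\Lat$ and $\DLat$ everything is a routine transfer of equational axioms through coordinatewise operations, and for $\DLat$ one additionally notes only that distributivity, being an equation, passes to $L$, so that the limit computed in $\Lat$ already lies in the full subcategory $\DLat$.
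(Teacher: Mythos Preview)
Your proof is correct and complete; the handling of the $\Frm$ case is careful in exactly the right place (deducing arbitrary meets from arbitrary joins rather than trying to compute them coordinatewise). However, your route is quite different from the paper's.

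The paper does not verify anything directly. It observes that $\Lat$, $\DLat$ and $\Frm$ are \emph{algebraic over $\Set$} in Johnstone's sense, i.e.\ the forgetful functor to $\Set$ is monadic (citing \cite[I.3.7, II.1.2]{Johnstone}), and then invokes the general fact that monadic functors create limits (Mac~Lane, Exercise~2 on p.~142). Your argument instead carries out by hand the construction that the monadicity theorem packages abstractly: coordinatewise operations on the set-theoretic limit, pointwise verification of equational axioms, and uniqueness of the lifted structure via joint injectivity of the projections. What you gain is a self-contained, elementary proof that makes the frame subtlety visible and requires no appeal to monadicity; what the paper's approach buys is brevity and a clean explanation of \emph{why} the result holds uniformly (all three categories are categories of models of an equational theory, possibly infinitary in the $\Frm$ case). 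You already gestured at this in your parenthetical remark about models of infinitary algebraic theories; that remark is essentially the paper's entire proof.
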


\begin{proof}
	Johnstone~\cite[I.3.8]{Johnstone} defines a category to be \emph{algebraic} if the forgetful functor to sets is monadic and \emph{equationally presentable} if its objects can be described by (a proper class of) operations and equations. Note that $\Lat, \DLat$ and $\Frm$ are equationally presentable and so they admits all limits by~\cite{Johnstone}*{Proposition I.3.8}. Also, $\Lat$ and $\DLat$ are algebraic by the discussion in~\cite[I.3.7]{Johnstone} whereas $\Frm$ is algebraic by~\cite[II.1.2]{Johnstone}. It is only left to note that monadic functors create limits; see for instance~\cite{Maclane}*{Exercise~2, p.~142}.
\end{proof}

\begin{remark}
	In contrast, the forgetful functor $\Spec \to \Set$ does not preserve colimits. For example, under Stone duality, the one-point space corresponds to the Boolean algebra $\mathbbm{2} \coloneqq \{0 < 1\}$. The product of countably infinitely many copies of~$\mathbbm{2}$ in $\DLat$ corresponds under Stone duality to the Stone-\v{C}ech compactification $\beta \mathbb{N}$ of the natural numbers. Thus, $\beta \mathbb{N}$ is the coproduct in $\Spec$ of countably infinitely many copies of the one-point space. Since $\beta \mathbb{N}$ is uncountable, we deduce that the forgetful functor $\Spec \to \Set$ does not preserve this infinite coproduct. The extent to which $\Spec \to \Set$ and $\Spec \to \Top$ preserve colimits turns out to be highly relevant for our questions. This is what we turn to next.
\end{remark}

\section{Spectral and topological coequalizers}

Colimits in spectral spaces are in general different from colimits taken in the category of topological spaces as the next example shows. Our aim is to describe conditions under which the coequalizer of a diagram in spectral spaces and spectral maps is homeomorphic to the topological coequalizer. 

\begin{example}\label{ex:spectral-vs-top-colimit}
	This example is taken from~\cite{Spectralbook}*{Example 10.2.9 (i)}. Recall that finite spectral spaces are equivalent to finite posets; see~\cite{Spectralbook}*{\S 1.1.16}. Under this equivalence spectral maps coincide with monotonic maps. Then consider the following diagram of finite posets:
	\[
	\begin{tikzcd}
		{\{0, 1\} }\arrow[r] \arrow[d] & {\{0 > 1\}} \\
		{\{1>0\}} &
	\end{tikzcd}
	\]
	The spectral pushout has only one point, whereas the topological pushout is indiscrete with two points, which is not a $T_0$-space.
\end{example}

\subsection{Spectral reflection}
We now recall some properties of the spectral reflection functor $S\colon \Top \rightarrow \Spec$ which has a unit denoted by  $S_X\colon X\rightarrow S(X)$ for each $X\in \Top$ (following \cite[\S 11]{Spectralbook} and \cite{Schwartz17}). This functor is the left adjoint of the forgetful functor $U\colon \Spec \rightarrow \Top$, and therefore it commutes with colimits (in particular, with coequalizers). 

It is worth recalling that the category of spectral spaces is not a full subcategory of the category of topological spaces. This implies that even if $X$ is a spectral space, the unit $S_X\colon X\rightarrow S(X)$ is not necessarily a homeomorphism. Nevertheless, the behaviour of $S_X$ is well-understood.

\begin{theorem}[Schwartz]\label{thm:spectral-reflection}
	Let $X$ be a topological space and let $S_X \colon X \to S(X)$ be its spectral reflection. Then
	\begin{enumerate}
		\item $S_X$ is a quasi-homeomorphism onto its image.
		\item $S_X$ is injective if and only if $X$ is $T_0$.
		\item $S_X$ is a homeomorphism onto its image if and only if $X$ is $T_0$.
		\item The corestriction $S_X\colon X\rightarrow S_X(X)$ is the $T_0$-reflection of $X$. 
		\item $S_X$ is a homeomorphism if and only if $X$ is a noetherian spectral space.
		\item $S_X$ is surjective if and only if the $T_0$-reflection of $X$ is a noetherian spectral space.
	\end{enumerate}
\end{theorem}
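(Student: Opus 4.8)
This is a theorem of Schwartz; see \cite{Schwartz17} and \cite[Chapter~11]{Spectralbook}. Here is the approach I would take.

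\emph{A concrete model.} The plan is first to realise the spectral reflection explicitly as $S(X) = \spec(\Omega X)$, where $\Omega X$ is the frame of open subsets of $X$ regarded \emph{merely} as a distributive lattice, with unit $S_X\colon X \to S(X)$ given by $x \mapsto \{U \in \Omega X \mid x \notin U\}$. To justify that this is left adjoint to $U\colon \Spec \to \Top$, I would assemble the chain of bijections
\begin{align*}
	\Top(X, UY) &\cong \Frm(\Omega Y, \Omega X) \cong \Frm(\mathrm{Idl}((\Omega Y)^f), \Omega X)\\
	&\cong \DLat((\Omega Y)^f, \Omega X) \cong \Spec(S(X), Y),
\end{align*}
natural in the spectral space $Y$: the first bijection because spectral spaces are sober; the second because the frame $\Omega Y$ is coherent, hence canonically the ideal completion $\mathrm{Idl}((\Omega Y)^f)$ of its lattice of finite (equivalently, quasi-compact open) elements; the third because $\mathrm{Idl}$ is left adjoint to the forgetful functor $\Frm \to \DLat$ and $\Omega X$ is a frame; and the last by \Cref{thm-stone-duality} together with the fact that $a \mapsto d(a)$ identifies any distributive lattice $A$ with the lattice of quasi-compact opens of $\spec(A)$, applied to $A = \Omega X$. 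Chasing $\id_{S(X)}$ through this chain recovers the stated formula for the unit. This bookkeeping is the one genuinely delicate point in the argument; once the model is in hand, everything else is a direct inspection.

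\emph{Parts (1)--(4).} For (1) I would note that a basic open $d(J) \subseteq S(X)$ (for $J$ an ideal of $\Omega X$) meets the image in $d(J) \cap S_X(X) = S_X(\bigcup J)$, so the opens of the subspace $S_X(X)$ are exactly the sets $S_X(U)$ with $U \in \Omega X$; moreover $S_X^{-1}(S_X(U)) = U$, and $U \mapsto S_X(U)$ is injective because a point lying in $U$ but not in $U'$ is topologically indistinguishable from no point of $U'$. Hence $U \leftrightarrow S_X(U)$ is a bijection $\Omega X \cong \Omega(S_X(X))$, i.e.\ $S_X$ is a quasi-homeomorphism onto its image. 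Part (2) is immediate from the formula: $S_X(x) = S_X(x')$ iff $x$ and $x'$ have the same open neighbourhoods. Part (3) is then formal, since a bijective quasi-homeomorphism is a homeomorphism (a continuous bijection inducing a bijection on opens satisfies $f(f^{-1}(V)) = V$, so it is open): one direction is (1) and (2), the other is trivial. For (4), the description in (1) also shows $S_X$ is an open map onto its image and hence a quotient map onto $S_X(X)$; since $S_X(X)$ is $T_0$ (a subspace of the spectral, hence $T_0$, space $S(X)$) and $S_X$ collapses precisely the topological-indistinguishability relation, the corestriction $S_X\colon X \to S_X(X)$ is the $T_0$-reflection of $X$.

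\emph{Parts (5)--(6).} For (5): if $X$ is noetherian spectral then every open subset is quasi-compact, so $(\Omega X)^f = \Omega X$, whence $S(X) = \spec(\Omega X) \cong X$ by \Cref{thm-stone-duality}, and unwinding identifies this homeomorphism with $S_X$. Conversely, if $S_X$ is a homeomorphism then $X \cong \spec(\Omega X)$ is spectral and the induced bijection $\mathrm{Idl}(\Omega X) = \Omega(\spec \Omega X) \xrightarrow{\ \bigcup\ } \Omega X$ forces every ideal of $\Omega X$ to be principal; writing any open of $X$ as a union of quasi-compact opens then shows every open of $X$ is quasi-compact, i.e.\ $X$ is noetherian. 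For (6): let $q\colon X \to X_0$ be the $T_0$-reflection. As $q$ induces an isomorphism of open-set lattices it is a quasi-homeomorphism, so $S(q) = \spec(\Omega q)\colon S(X) \to S(X_0)$ is a homeomorphism; naturality of the unit gives $S_X = S(q)^{-1} \circ S_{X_0} \circ q$ with $q$ surjective and $S(q)$ bijective, so $S_X$ is surjective iff $S_{X_0}$ is. Since $X_0$ is $T_0$, $S_{X_0}$ is an injective quasi-homeomorphism by (1) and (2), hence it is surjective iff it is a homeomorphism, which by (5) holds iff $X_0$ is noetherian spectral.
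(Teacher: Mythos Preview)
Your argument is essentially correct and self-contained, whereas the paper's own proof is merely a citation: it defers entirely to \cite{Schwartz17} and \cite[Theorems 11.1.3 and 11.1.12, \S 11.4.3]{Spectralbook}. So you have supplied the content that the paper omits, and your concrete model $S(X)=\spec(\Omega X)$ with unit $x\mapsto\{U:x\notin U\}$ is exactly the standard one used in those references.

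A couple of minor points. In part~(1) you call $d(J)$ a ``basic open'' when you mean a general open (the basic ones are $d(a)$ for $a\in\Omega X$); this does not affect the argument. Your justification that $U\mapsto S_X(U)$ is injective is phrased obscurely; it follows immediately from the identity $S_X^{-1}(S_X(U))=U$ you already established. In part~(5), the converse can be streamlined: once $S_X$ is a homeomorphism, $X\cong\spec(\Omega X)$ is spectral, and under $S_X$ each open $U\subseteq X$ corresponds to the basic open $d(U)$, which is quasi-compact; hence every open of $X$ is quasi-compact and $X$ is noetherian. This avoids the detour through principal ideals. Otherwise the proof is sound.
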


\begin{proof}
	These results are proved in \cite{Schwartz17}. Alternatively, see \cite{Spectralbook}*{Theorems 11.1.3 and 11.1.12, and \S 11.4.3}. 
\end{proof}

\subsection{$T_0$-reflection}
The fully faithful embedding of $T_0$-spaces in $\Top$ also has a left adjoint, which is quite simple to describe: The $T_0$-reflection (or ``Kolmogorov quotient'') of a topological space $X$ is the quotient $X \to \KQ(X)\coloneqq X/{\equiv}$ by the equivalence relation on $X$ defined by $x\equiv y$ iff $\overbar{\{x\}}=\overbar{\{y\}}$; cf.~\cite[\S 6.1.4]{Spectralbook}. In particular it commutes with colimits and the unit $X\rightarrow \KQ(X)$ is a homeomorphism if $X$ is a $T_0$-space.

\begin{lemma}\label{lem:KQ-spec}
	The spectral reflection of a topological space coincides with the spectral reflection of its $T_0$-reflection:
	\[\begin{tikzcd}
		X \ar[d,"S_X"] \ar[r,"id"] & \KQ(X) \ar[d,"S_{\KQ(X)}"]\\
		S(X) \ar[r,"\cong"] & S(\KQ(X)).
	\end{tikzcd}\]
\end{lemma}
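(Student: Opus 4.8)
The plan is to deduce the lemma formally, from two universal properties together with the single observation that every spectral space is $T_0$. First I would record that, since a spectral space is by definition sober and hence $T_0$, the forgetful functor $U\colon \Spec \to \Top$ factors through the full subcategory of $T_0$-spaces. Consequently, for every spectral space $Z$, precomposition with the canonical quotient map $q\colon X \to \KQ(X)$ yields --- by the universal property of the $T_0$-reflection --- a bijection $q^{\ast}\colon \Hom_{\Top}(\KQ(X),UZ) \xrightarrow{\ \cong\ } \Hom_{\Top}(X,UZ)$, natural in $Z$.

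Next I would splice this together with the adjunction $S \dashv U$ applied on both ends, obtaining a chain of bijections
\[
	\Hom_{\Spec}(S(\KQ(X)),Z) \;\cong\; \Hom_{\Top}(\KQ(X),UZ) \;\xrightarrow{\ q^{\ast}\ }\; \Hom_{\Top}(X,UZ) \;\cong\; \Hom_{\Spec}(S(X),Z),
\]
natural in $Z \in \Spec$. By the Yoneda lemma this composite is represented by a unique isomorphism $S(X) \xrightarrow{\ \cong\ } S(\KQ(X))$; unwinding the two adjunction units one checks that this isomorphism is precisely the map $S(q)$ obtained from functoriality of the spectral reflection, and that the square in the statement is then the naturality square $S(q)\circ S_X = S_{\KQ(X)}\circ q$ for the unit $S_{(-)}$. (The top arrow, written ``$\mathrm{id}$'' in the diagram, is to be read as this canonical map $X \to \KQ(X)$.)

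I do not expect a genuine obstacle here; the only point requiring care is the verification that the isomorphism produced by Yoneda is genuinely $S(q)$ --- and hence that the displayed square commutes --- rather than merely some abstract isomorphism of spectral spaces. This is a routine diagram chase through the units and counits of the two adjunctions. As an alternative that stays closer to the surrounding material, one can instead invoke \Cref{thm:spectral-reflection}: part (4) identifies the corestriction $X \to S_X(X)$ with the $T_0$-reflection of $X$, so there is a canonical homeomorphism $\KQ(X) \cong S_X(X) \subseteq S(X)$; part (1) says this inclusion is a quasi-homeomorphism; and since the spectral reflection of a space depends (through its soberification) only on its frame of open subsets, applying $S$ turns $\KQ(X) \cong S_X(X) \hookrightarrow S(X)$ into the asserted homeomorphism $S(\KQ(X)) \cong S(X)$.
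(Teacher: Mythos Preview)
Your proposal is correct and follows essentially the same route as the paper: both arguments rest on the universal properties of the $T_0$-reflection and the spectral reflection, together with the fact that every spectral space is $T_0$. The paper packages this as a ``routine exercise'' and additionally invokes that the surjection $X\to\KQ(X)$ is an epimorphism in $\Top$ (presumably to conclude that $S(q)$ is a split mono and an epi, hence an iso), whereas you obtain the isomorphism directly via Yoneda from the chain of hom-set bijections; this is a cosmetic difference in packaging rather than a genuinely different argument.
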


\begin{proof}
	This is a routine exercise using the universal properties of the $T_0$-reflection and the spectral reflection, together with the fact that a surjective continuous map, such as $X \to \KQ(X)$, is an epimorphism in the category of topological spaces.
\end{proof}

\begin{remark}\label{rem:coeq-top-spec}
	Suppose we are given a coequalizer diagram
	\begin{equation}\label{eq:coeq-S}
	\begin{tikzcd}
		Z \arrow[r, shift left, "g"] \arrow[r, shift right, "h"'] & Y \arrow[r, "\varphi_S"] & T^{\Spec}
	\end{tikzcd}
	\end{equation}
	in the category of spectral spaces (that is, $Z,Y,g,h$ are spectral spaces and spectral maps). We can also consider the following coequalizer in the category of topological spaces, 
	\begin{equation}\label{eq:coeq-T}
	\begin{tikzcd}
		UZ \arrow[r, shift left, "Ug"] \arrow[r, shift right, "Uh"'] & UY \arrow[r, "\varphi_T"] & T^{\Top}.
	\end{tikzcd}
	\end{equation}
	Both maps $\varphi_S$ and $\varphi_T$ are epimorphisms in $\Spec$ and $\Top$ respectively, and therefore surjective. Applying the forgetful functor to the diagram \eqref{eq:coeq-S}, we obtain a comparison map $p\colon T^{\Top}\rightarrow UT^{\Spec}$ such that $p\circ \varphi_T=U\varphi_S$.  
\end{remark}

\begin{remark}\label{rem:comparison map is closed}
	In the situation of \cref{rem:coeq-top-spec}, first note that $p$ is surjective since $\varphi_S$ is so. Moreover, if $\varphi_S$ is closed then so is the comparison map $p$: let $K\subseteq T^\Top$ be a closed subset; then $p(K)=p(\varphi_T(\varphi_T^{-1}(K)))=\varphi_S(\varphi^{-1}_T(K))$ is also closed.
\end{remark}

In some favourable cases the comparison map is automatically a homeomorphism. 

\begin{proposition}\label{prop-Ttop-spectral}
	In the situation of \cref{rem:coeq-top-spec}, the following are equivalent:
	\begin{itemize}
	    \item[(a)] $T^{\Top}$ is a spectral space and $\varphi_T$ is a spectral map. 
	    \item[(b)] The comparison map $p\colon T^{\Top}\rightarrow UT^{\Spec}$ is a homeomorphism.
	\end{itemize}
\end{proposition}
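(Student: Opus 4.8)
The plan is to run a short diagram chase, using the universal property of $\varphi_S$ as the coequalizer in $\Spec$ together with the fact (recorded in \cref{rem:coeq-top-spec}) that $\varphi_S$ and $\varphi_T$ are epimorphisms, hence surjective, in $\Spec$ and $\Top$ respectively. The one subtlety to keep in view is that $U\colon \Spec \to \Top$ is faithful but not full, so a continuous map between underlying spaces of spectral spaces need not be a morphism of $\Spec$; this is precisely why (a) is a genuine condition rather than an automatic one.

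For the implication (a) $\Rightarrow$ (b): assuming $T^{\Top}$ is a spectral space and $\varphi_T$ is a spectral map, the map $\varphi_T\colon Y \to T^{\Top}$ is a morphism of $\Spec$ that coequalizes the two maps $Z \rightrightarrows Y$, so the universal property of $\varphi_S$ produces a unique spectral map $q\colon T^{\Spec} \to T^{\Top}$ with $q\circ \varphi_S = \varphi_T$. I would then verify that $p$ and the underlying continuous map of $q$ are mutually inverse: combining $p\circ \varphi_T = U\varphi_S$ with $Uq\circ U\varphi_S = \varphi_T$ gives $p\circ Uq\circ U\varphi_S = U\varphi_S$ and $Uq\circ p\circ \varphi_T = \varphi_T$, and cancelling the surjections $U\varphi_S$ and $\varphi_T$ on the right yields $p\circ Uq = \id$ and $Uq\circ p = \id$. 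Hence $p$ is a homeomorphism.

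For (b) $\Rightarrow$ (a): if $p$ is a homeomorphism, then $T^{\Top}$ is homeomorphic to the underlying space of $T^{\Spec}$, and since being a spectral space is a property of the underlying topological space, $T^{\Top}$ is spectral. A homeomorphism between spectral spaces preserves quasi-compact open subsets in both directions, so $p^{-1}$ is a spectral map; the identity $\varphi_T = p^{-1}\circ U\varphi_S$ then exhibits $\varphi_T$ as a composite of spectral maps, hence as a spectral map, which is (a). The whole argument is formal, and the only place demanding care is bookkeeping the distinction between morphisms of $\Spec$ and of $\Top$, so I do not anticipate a serious obstacle.
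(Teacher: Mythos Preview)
Your proof is correct and essentially aligns with the paper's, though it is more self-contained: for (a)$\Rightarrow$(b) the paper cites \cite{Spectralbook}*{Corollary 10.2.6} after rewriting the coequalizer as a pushout, whereas you give the direct universal-property argument that this citation encapsulates; the (b)$\Rightarrow$(a) direction is identical in both.
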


\begin{proof}
	By definition, there is a commutative diagram in the category of topological spaces
	\[
	\begin{tikzcd}
		UZ \sqcup UY \arrow[r,"{(g,1)}"] \arrow[d,"{(h,1)}"'] & UY \arrow[d,"\varphi_T"] \arrow[ddr,bend left, "\varphi_S"] &\\
		UY \arrow[r,"\varphi_T"] \arrow[drr,bend right, "\varphi_S"] & T^{\Top} \arrow[dr,"p"] & \\
		&  & UT^{\Spec}.
	\end{tikzcd}
	\]
	where the top square is a pushout. The fact that (a) implies (b) now follows from~\cite{Spectralbook}*{Corollary 10.2.6}. 
	If (b) holds, then $\varphi_S=p \circ \varphi_T$ together with the fact that $\varphi_S$ is spectral implies that $\varphi_T$ is also spectral.
\end{proof}

\begin{remark}
	Since the spectral reflection commutes with colimits we obtain a new coequalizer diagram
	\[
	\begin{tikzcd}
		S(UZ) \arrow[r, shift left, "S(Ug)"] \arrow[r, shift right, "S(Uh)"'] & S(UY) \arrow[r, "S(\varphi_T)"] & S(T^{\Top})
	\end{tikzcd}
	\]
	in the category of spectral spaces. The spectral reflection induces a map from $T^{\Spec}$ to $S(T^{\Top})$ which is not necessarily a homeomorphism. That is, the spectral coequalizer need not be the spectral reflection of the topological coequalizer. 
\end{remark}

\begin{proposition}\label{prop:T-is-T_0}
	In the situation of \cref{rem:coeq-top-spec}, suppose that $Y$ and $Z$ are noetherian spectral spaces. Then the comparison map $p\colon T^{\Top} \rightarrow UT^{\Spec}$ is the $T_0$-reflection of $T^{\Top}$. Therefore, in this case, the map $p$ is an homeomorphism if and only if $T^{\Top}$ is a $T_0$-space.
\end{proposition}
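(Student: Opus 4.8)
The plan is to recognize $p$ as the unit $S_{T^{\Top}}\colon T^{\Top}\to S(T^{\Top})$ of the spectral reflection, transported along a canonical isomorphism $S(T^{\Top})\cong T^{\Spec}$, and then to invoke \cref{thm:spectral-reflection}(d). So the work splits into producing that isomorphism, checking that $p$ really is the unit (after the correction), and feeding surjectivity into part (d).

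First I would construct an isomorphism $\psi\colon S(T^{\Top})\xrightarrow{\sim}T^{\Spec}$. Since $S$ is a left adjoint it preserves the topological coequalizer \eqref{eq:coeq-T}, so $S(T^{\Top})$ is the coequalizer of $S(UZ)\rightrightarrows S(UY)$ in $\Spec$ with structure map $S(\varphi_T)$. The counits $\epsilon_Z\colon S(UZ)\to Z$ and $\epsilon_Y\colon S(UY)\to Y$ form a morphism of parallel pairs onto $Z\rightrightarrows Y$ (naturality of the counit), hence induce a map $\psi$ on coequalizers with $\psi\circ S(\varphi_T)=\varphi_S\circ\epsilon_Y$. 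The key use of the hypotheses is that $\epsilon_Y$ and $\epsilon_Z$ are \emph{isomorphisms}: since $Y$ and $Z$ are noetherian spectral spaces the units $S_{UY}$, $S_{UZ}$ are homeomorphisms by \cref{thm:spectral-reflection}(e), so by the triangle identity the underlying maps of $\epsilon_Y,\epsilon_Z$ are homeomorphisms, and a spectral map that is a homeomorphism on underlying spaces is an isomorphism in $\Spec$. Therefore $\psi$ is an isomorphism.

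Next I would check that $p=U\psi\circ S_{T^{\Top}}$. Naturality of the unit at $\varphi_T\colon UY\to T^{\Top}$ gives $S_{T^{\Top}}\circ\varphi_T=US(\varphi_T)\circ S_{UY}$; post-composing with $U\psi$ and inserting $\psi\circ S(\varphi_T)=\varphi_S\circ\epsilon_Y$ together with the triangle identity $U\epsilon_Y\circ S_{UY}=\id_{UY}$ yields $U\psi\circ S_{T^{\Top}}\circ\varphi_T=U\varphi_S=p\circ\varphi_T$, and since $\varphi_T$ is an epimorphism in $\Top$ this forces $p=U\psi\circ S_{T^{\Top}}$. Now $p$ is surjective (\cref{rem:comparison map is closed}) and $U\psi$ is a homeomorphism, so $S_{T^{\Top}}$ is surjective; hence its corestriction onto its image, which by \cref{thm:spectral-reflection}(d) is the $T_0$-reflection of $T^{\Top}$, equals $S_{T^{\Top}}$ itself. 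It follows that $S_{T^{\Top}}$, and therefore also $p=U\psi\circ S_{T^{\Top}}$ (obtained from it by composing with the homeomorphism $U\psi$ on the target), is the $T_0$-reflection of $T^{\Top}$. The final equivalence is then immediate: if $p$ is a homeomorphism then $T^{\Top}\cong UT^{\Spec}$ is $T_0$, and conversely if $T^{\Top}$ is $T_0$ then its $T_0$-reflection $T^{\Top}\to\KQ(T^{\Top})$ is a homeomorphism, as recalled when the $T_0$-reflection was introduced.

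The main obstacle is purely the bookkeeping of the adjunction $S\dashv U$: producing $\psi$ from the counits with the right compatibility and verifying $p=U\psi\circ S_{T^{\Top}}$ through the triangle identities (one must be careful, since $\Spec$ is not a full subcategory of $\Top$, that the relevant counit maps are genuinely isomorphisms in $\Spec$ and not merely homeomorphisms of underlying spaces). Conceptually there is nothing deeper — the content of the statement is just that, when $Y$ and $Z$ are noetherian spectral, the comparison map is the spectral reflection unit of $T^{\Top}$ up to a homeomorphic correction of the target, after which \cref{thm:spectral-reflection}(d) applies directly.
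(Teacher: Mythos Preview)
Your proof is correct and follows essentially the same approach as the paper. Both identify $p$ with the spectral reflection unit $S_{T^{\Top}}$ (up to a homeomorphism of targets) by comparing the spectral coequalizer with $S$ applied to the topological coequalizer, and then invoke \cref{thm:spectral-reflection}(d); the only cosmetic difference is that the paper builds the comparison isomorphism $T^{\Spec}\xrightarrow{\cong}S(T^{\Top})$ from the units $S_Y,S_Z$ whereas you build its inverse from the counits, and you are slightly more explicit about the surjectivity of $S_{T^{\Top}}$ needed to pass from the corestriction statement in (d) to $S_{T^{\Top}}$ itself.
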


\begin{proof}
	Since $Y$ and $Z$ are noetherian spectral spaces, the coequalizer $T^{\Spec}$ is also a noetherian spectral space. Moreover, the spectral reflection is an isomorphism in $\Spec$ by \Cref{thm:spectral-reflection}. Then consider the following commutative diagram of topological spaces:
	\iffalse
	\[ \begin{tikzcd}
		&&T^{\Top} \ar[d,"p"] \\
		Z \ar[r,shift left,"g"] \ar[r,shift right,"h"'] \ar[d,"S_Z"',"\simeq"] & Y \ar[ur,bend left=10,"\varphi_T"] \ar[d,"S_Y"',"\simeq"] \ar[r,"\varphi_S"'] & T^{\Spec} \ar[d,dotted,"\exists !\theta","\simeq"'] \\
		S(UZ) \ar[r,shift left,"S(g)"] \ar[r,shift right,"S(h)"'] & S(UY) \ar[r,"S(\varphi_T)"]  & S(T^{\Top}) 
	\end{tikzcd}\]
	\fi	
	\[\begin{tikzcd}
		%&&T^{Top} \ar[d,"p"] \\
		Z \ar[r,shift left,"g"] \ar[r,shift right,"h"'] \ar[d,"S_Z"',"\cong"] & Y  \ar[d,"S_Y"',"\cong"] \ar[r,"\varphi_S"'] & T^{Spec} \ar[d,dotted,"\exists !\theta","\cong"'] \\
		S(UZ) \ar[r,shift left,"S(g)"] \ar[r,shift right,"S(h)"'] & S(UY) \ar[r,"S(\varphi_T)"]  & S(T^{Top}) 
	\end{tikzcd}\]
	We deduce $\theta \colon T^{\Spec} \xrightarrow{\cong} S(T^{\Top})$ is a homeomorphism, by uniqueness of coequalizers. Moreover, since $p\circ \varphi_T=U\varphi_S$, applying the forgetful functor to the first row in the diagram we have the composite $\theta \circ U\varphi_S= (\theta\circ p)\circ \varphi_T = S(\varphi_T)\circ S_Y$. But, the naturality of the spectral reflection $S(\varphi_T)\circ S_Y= S_{T^{\Top}}\circ \varphi_T$ implies $\theta\circ p = S_{T^\Top}$ by uniqueness of the factorization. In other words, the map $p$ can be identified, up to homeomorphism, with the spectral reflection map $T^{\Top} \to S(T^{\Top})$. The result then follows from \Cref{thm:spectral-reflection}(d). 
\end{proof}

\begin{remark}\label{rem-spectral-quot}
	The coequalizer in the category of spectral spaces is the spectral quotient of $Y$ by the equivalence relation generated by 
	\[
	\{(f(z),g(z))|z\in Z\}\subseteq Y\times Y
	\]
	see \cite[Proposition 6.1.6]{Spectralbook}. For a discussion on quotients of spectral spaces see~\cite{Fargues}*{\S 1.7} and also~\cite{Scholze2017}*{Lemma 2.9}. Therefore it will suffice to give conditions under which the spectral quotient agrees with the topological quotient. This is what we discuss next. 
\end{remark}

\begin{notation}\label{not-spectral-quot}
    Let $X$ be a spectral space, $R$ an equivalence relation on $X$ and let $p\colon X \to X/R$ be the quotient map. If $A\subseteq X$, we denote $R^{-1}(A)=p^{-1}(p(A))$, the smallest $R$-invariant subset containing $A$. Note that $R^{-1}(x)=[x]$, the equivalence class of $x\in X$. Let $p_S\colon X\rightarrow X//R$ be the spectral quotient map. By the universal property of the quotient space, there is a continuous comparison map $c\colon X/R\rightarrow X//R$ making the following diagram commutative
	\begin{equation}\label{eq:comparison-quotient}
	\begin{tikzcd}
			X  \ar[r,"p_S"] \ar[dr,"p"']  & X//R\\
			 & X/R. \ar[u,"c"']\\
	\end{tikzcd}
	\end{equation}
\end{notation}

\begin{proposition}\label{prop:spectralquotient}
	In the situation of \Cref{not-spectral-quot}, assume that $p_S\colon X\rightarrow X//R$ is closed and $p_S^{-1}(p_S(x))\subseteq X$ a $T_1$-space. If $p_S^{-1}(p_S(x))\subseteq R^{-1}(\overline{\{x\}})$ then the comparison map $c\colon X/R\rightarrow X//R$ is an homeomorphism.
\end{proposition}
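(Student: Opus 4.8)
The plan is to show that the continuous comparison map $c \colon X/R \to X//R$ is a bijection whose inverse is continuous; since $X//R$ is $T_0$ (being a spectral space) and $c$ is surjective (both quotient maps are surjective), the key point is injectivity of $c$, i.e.\ that the fibers $p_S^{-1}(p_S(x))$ are exactly the $R$-equivalence classes $R^{-1}(x) = [x]$, together with closedness of $c$.

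First I would establish the inclusion $R^{-1}(x) \subseteq p_S^{-1}(p_S(x))$: this is automatic and holds for any spectral quotient, since $p_S$ is constant on $R$-classes by the commutativity of \eqref{eq:comparison-quotient} (indeed $p_S = c \circ p$, and $p$ is constant on $R$-classes). For the reverse inclusion $p_S^{-1}(p_S(x)) \subseteq R^{-1}(x)$, I would use the hypothesis $p_S^{-1}(p_S(x)) \subseteq R^{-1}(\overline{\{x\}})$ together with the assumptions that $p_S$ is closed and that the fiber $p_S^{-1}(p_S(x))$ is a $T_1$-space. The idea: the fiber $Z := p_S^{-1}(p_S(x))$ is a closed, $R$-invariant (hence $R^{-1}(Z) = Z$) subset, so $R^{-1}(\overline{\{x\}}) \subseteq R^{-1}(Z) = Z$ as well, giving $Z = R^{-1}(\overline{\{x\}})$. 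Now I want to see that $\overline{\{x\}} \cap Z = \{x\}$: since $Z$ is $T_1$ and $x \in Z$, the closure of $\{x\}$ computed in the subspace $Z$ is just $\{x\}$; but $\overline{\{x\}} \subseteq Z$ already (as $Z$ is closed in $X$ and contains $x$), so $\overline{\{x\}} = \{x\}$, i.e.\ $x$ is a closed point of $X$ lying in $Z$. Hence $R^{-1}(\overline{\{x\}}) = R^{-1}(\{x\}) = [x]$, and combining with the displayed hypothesis we get $p_S^{-1}(p_S(x)) \subseteq [x]$, which is the desired reverse inclusion. Therefore the fibers of $p_S$ and of $p$ agree, so $c$ is a continuous bijection.

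It then remains to promote the continuous bijection $c$ to a homeomorphism, i.e.\ to show $c$ is closed (equivalently open). Here I would invoke the closedness of $p_S$: given a closed set $K \subseteq X/R$, its preimage $p^{-1}(K) \subseteq X$ is closed and $R$-invariant, and $c(K) = c(p(p^{-1}(K))) = p_S(p^{-1}(K))$ is closed in $X//R$ because $p_S$ is a closed map. Thus $c$ is a closed continuous bijection, hence a homeomorphism.

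The main obstacle I anticipate is the careful bookkeeping in the middle step: showing that the hypothesis $p_S^{-1}(p_S(x)) \subseteq R^{-1}(\overline{\{x\}})$ combined with $T_1$-ness of the fiber forces $x$ to be a closed point of $X$ (equivalently $\overline{\{x\}} = \{x\}$), and hence that $R^{-1}(\overline{\{x\}})$ collapses to the single equivalence class $[x]$. One should be slightly careful that ``the fiber is $T_1$'' refers to the subspace topology, and that a closed point of the closed subspace $Z$ containing $\overline{\{x\}}$ is automatically closed in $X$. Once that is pinned down, the rest is the standard argument that a closed continuous bijection is a homeomorphism.
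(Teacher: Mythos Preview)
Your arguments for surjectivity and for closedness of $c$ are fine and match the paper. The gap is in the injectivity step.

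You assert that the fiber $Z := p_S^{-1}(p_S(x))$ is \emph{closed} in $X$, and from this deduce $\overline{\{x\}}\subseteq Z$, hence $\overline{\{x\}}=\{x\}$, i.e.\ $x$ is a closed point of $X$. But nothing in the hypotheses forces $Z$ to be closed: $p_S$ is only assumed to be a closed \emph{map}, which says nothing about preimages of singletons, and $X//R$ is a spectral space in which points are typically not closed. So the chain $R^{-1}(\overline{\{x\}})\subseteq R^{-1}(Z)=Z$ and the conclusion $\overline{\{x\}}=\{x\}$ are unjustified. In fact the conclusion that every $x\in X$ is a closed point is visibly too strong: it would make $X$ itself $T_1$, which a spectral space rarely is.

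The paper's argument avoids this. Given $y\in Z$, the hypothesis $Z\subseteq R^{-1}(\overline{\{x\}})$ produces some $z\in\overline{\{x\}}$ with $[y]=[z]$. Since $p_S$ is constant on $R$-classes, $p_S(z)=p_S(y)=p_S(x)$, so $z\in Z$; thus $z\in \overline{\{x\}}\cap Z$. Now the $T_1$ assumption on the subspace $Z$ says exactly that $\overline{\{x\}}\cap Z=\{x\}$ (this is the closure of $\{x\}$ computed in $Z$), hence $z=x$ and $[y]=[x]$. The point is that one only needs $\overline{\{x\}}\cap Z=\{x\}$, not $\overline{\{x\}}\subseteq Z$.
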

\begin{proof}
	Note that all maps in the diagram \eqref{eq:comparison-quotient} are surjective since $p_S$ and $p$ are so. Moreover $c$ is closed because $p_S$ is closed by assumption (see \cref{rem:comparison map is closed}). It remains to show injectivity of $c$. For that we will prove that $p_S^{-1}(p_S(x))=[x]$. 

	We have inclusions $[x]\subseteq p_S^{-1}(p_S(x))\subseteq R^{-1}(\overline{\{x\}})$. Let $y\in p_S^{-1}(p_S(x))$. Then there exists $z\in \overline{\{x\}}$ with $[y]=[z]$; in particular $p_S(z)=p_S(y)=p_S(x)$. Since $p_S^{-1}(p_S(x))\subseteq X$ is a $T_1$-space, $\overline{\{x\}}\cap p_S^{-1}(p_S(x))=\{x\}$ implies that $z=x$. Finally, $[y]=[x]$.
\end{proof}

\begin{corollary}\label{cor:comparison_coequalizers}
    In the situation of \cref{rem:coeq-top-spec}, let $R$ be the equivalence relation generated by the coequalizer diagram. Assume that $\varphi_S$ is closed and that $\varphi_S^{-1}(\varphi_S(y))$ is a $T_1$-space for all $y\in Y$. If $\varphi_S^{-1}(\varphi_S(y))\subseteq R^{-1}(\overline{\{y\}})$ for all $y\in Y$ then the comparison map $c\colon T^\Top \rightarrow UT^\Spec$ is an homeomorphism.
\end{corollary}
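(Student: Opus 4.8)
The plan is to derive this directly from \Cref{prop:spectralquotient} after identifying the two coequalizers appearing in \Cref{rem:coeq-top-spec} with quotients. First I would observe that the topological coequalizer $T^\Top$ of $Ug, Uh\colon UZ \rightrightarrows UY$ is, by the usual construction of coequalizers in $\Top$, the quotient space $UY/R$ with $\varphi_T$ the quotient map, where $R$ is precisely the equivalence relation on $UY$ generated by the pairs $(Ug(z),Uh(z))$. On the spectral side, \Cref{rem-spectral-quot} tells us that the spectral coequalizer $T^\Spec$ is the spectral quotient $Y//R$ for the very same relation $R$, with $\varphi_S$ playing the role of the spectral quotient map $p_S$. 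Under these two identifications the comparison map $c\colon T^\Top \to UT^\Spec$ of \Cref{rem:coeq-top-spec} (there denoted $p$) coincides with the comparison map $c\colon Y/R \to Y//R$ of diagram~\eqref{eq:comparison-quotient}, since both are the unique continuous map $e$ with $e\circ\varphi_T=\varphi_S$ furnished by the universal property of the quotient map $\varphi_T$.

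With this dictionary in place, I would simply apply \Cref{prop:spectralquotient} with $X=Y$ and this equivalence relation $R$. Its hypotheses translate verbatim into the standing assumptions of the corollary: $p_S=\varphi_S$ is closed; for each $y\in Y$ the fiber $p_S^{-1}(p_S(y))=\varphi_S^{-1}(\varphi_S(y))$ is a $T_1$-space; and $\varphi_S^{-1}(\varphi_S(y))\subseteq R^{-1}(\overline{\{y\}})$. The proposition then yields $p_S^{-1}(p_S(y))=[y]$ for every $y\in Y$, hence injectivity of $c$; combined with surjectivity and closedness of $c$ (already recorded in \Cref{rem:comparison map is closed}) this shows $c$ is a homeomorphism.

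The hard part is really just the bookkeeping in the first step: one must be confident that a single equivalence relation $R$ --- the one ``generated by the coequalizer diagram'' --- simultaneously computes the topological quotient $UY/R$ and the spectral quotient $Y//R$, and that the fiberwise hypotheses of \Cref{prop:spectralquotient}, stated there at a single point, are being invoked uniformly over all $y\in Y$ (which is exactly why the corollary spells out ``for all $y\in Y$''). Neither point presents any real difficulty, so the corollary amounts to a repackaging of \Cref{prop:spectralquotient} for the coequalizer situation of \Cref{rem:coeq-top-spec}.
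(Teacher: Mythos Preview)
Your proposal is correct and follows exactly the paper's approach: the paper's proof is the single sentence ``Combine \Cref{rem-spectral-quot} with \Cref{prop:spectralquotient},'' and your write-up simply unpacks that combination, making explicit the identification of $T^\Top$ and $T^\Spec$ with $Y/R$ and $Y//R$ and then invoking \Cref{prop:spectralquotient}. The extra sentence re-deriving injectivity and closedness of $c$ is unnecessary (that is already the conclusion of \Cref{prop:spectralquotient}), but harmless.
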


\begin{proof}
Combine \Cref{rem-spectral-quot} with \Cref{prop:spectralquotient}.
\end{proof}

\begin{example}
	Let $X=\Spec(\mathbb{Z})_1\sqcup \Spec(\mathbb{Z})_2$ be the disjoint union of two copies of $\Spec(\mathbb{Z})$ with the equivalence relation generated by $(p)_1\sim (p)_2$ if $p\neq 0$. Note that $[(0)_1]_R\neq [(0)_2]_R$ but $\Cl_R((0)_1)=\Cl_R((0)_2)=X$. The topological quotient is not $T_0$, and the spectral quotient is $\Spec(\mathbb{Z})$. The fibers of $p_0$ are discrete, and the spectral quotient map $p_0$ is closed, but $p_0^{-1}(p_0((0)_1))=\{(0)_1,(0)_2\}$ and $(0)_2\not \in R^{-1}(\overline{\{(0)_1\}})=X\setminus \{(0)_2\}$. Note also that $R^{-1}(\overline{\{(0)_1\}})\subseteq X$ is not closed.
\end{example}

\section{Triangulated categories and higher categories}\label{sec:categories}

We now turn to the world of derived algebra and describe the types of (higher) categories we aim to study. We also take this opportunity to recall various definitions, notation and terminology.

\begin{definition}
	A \emph{tt-category} is a triangulated category equipped with a compatible closed symmetric monoidal structure in the sense of \cite[Appendix A]{HPS}. We denote the internal hom by $\ihom$ and the dual functor by $D\coloneqq \ihom(-,\unit)$.
\end{definition}

\begin{definition}\label{def:thick-subcategories}
	Let $\cat T$ be a tt-category.
	\begin{enumerate}
		\item A \emph{thick subcategory} of $\cat T$ is a (full, replete) triangulated subcategory which is thick (i.e., closed under direct summands).
		\item A \emph{thick ideal} of $\cat T$ is a thick subcategory $\cat I \subseteq \cat T$ such that $\cat T \otimes \cat I \subseteq \cat I$; that is, 
			if $a \in \cat T$ and $b \in \cat I$, then $a\otimes b \in \cat I$.
		\item A \emph{radical thick ideal} of $\cat T$ is a thick ideal $\cat I \subseteq \cat T$ with the property that if $a \in \cat T$ satisfies $a^{\otimes k} \in \cat I$ for some $k \ge 1$, then $a \in \cat I$.
		\item We write $\thick{\cat E}$, $\thickid{\cat E}$, and $\sqrt{\cat E}$ for the thick subcategory, thick ideal, and radical thick ideal generated by a collection of objects $\cat E \subseteq \cat T$.
	\end{enumerate}
\end{definition}

\begin{definition}\label{def:dual-rigid}
	Let $\cat T$ be a tt-category.
	\begin{enumerate}
		\item We say that $x \in \cat T$ is \emph{dualizable} if  for all $y \in \cat T$, the canonical map $Dx \otimes y \to \ihom(x,y)$ is an isomorphism in $\cat T$. 
		\item We say that $\cat T$ is \emph{rigid} if every object of $\cat T$ is dualizable.%, that is, if $\cat T^d=\cat T$.
		\item We denote the full subcategory of dualizable objects by $\cat T^d$. It is a rigid tt-subcategory of $\cat T$.
	\end{enumerate}
\end{definition}

\begin{remark}\label{rem-all-ideal-radical}
	If $\cat T$ is rigid, then any thick ideal is radical; see~\cite[Prop.~2.4]{Balmer2}.
\end{remark}

\begin{definition}
	A \emph{big tt-category} is a tt-category $\cat T$ which admits coproducts. 
\end{definition}

\begin{definition}
	Let $\cat T$ be a big tt-category.
	\begin{enumerate}
		\item A \emph{localizing subcategory} of $\cat T$ is a thick subcategory which is closed under coproducts.
		\item A \emph{localizing ideal} of $\cat T$ is a thick ideal which is closed under coproducts.
		\item We write $\loc{\cat E}$ and $\locid{\cat E}$ for the localizing subcategory and localizing ideal generated by a collection of objects $\cat E \subseteq \cat T$.
	\end{enumerate}
\end{definition}

\begin{definition}\label{def:big-tt-defns}
	Let $\cat T$ be a big tt-category.
	\begin{enumerate}
		\item An object $x \in \cat T$ is \emph{compact} if $\Hom_{\cat T}(x,-)\colon\cat T \to \Ab$ preserves coproducts.
		\item We denote the full subcategory of compact objects by $\cat T^c$. It is a thick triangulated subcategory of~$\cat T$.
		\item We say that $\cat T$ is \emph{compactly generated} if there exists a set $\cat G \subseteq \cat T^c$ such that 
			$\loc{\cat G} = \cat T$.
		\item We say that $\cat T$ is \emph{rigidly-compactly generated} if $\cat T^c=\cat T^d$
			and $\cat T$ is compactly generated.
	\end{enumerate}
\end{definition}

\begin{remark}\label{rem:gen-by-compact}
	For a set of compact objects $\cat G \subseteq \cat T^c$, the following statements are equivalent:
	\begin{enumerate}
		\item $\loc{\cat G} = \cat T$;
		\item For any $t \in \cat T$, if $\ihom_{\cat T}(\cat G,t)=0$ then $t=0$;
		\item For any $t \in \cat T$, if $\Hom_{\cat T}(\Sigma^n g,t)=0$ for all $g \in \cat G$ and $n \in \mathbb{Z}$, then $t=0$.
	\end{enumerate}
\end{remark}

\begin{remark}\label{rem:compact-of-compact-loc}
	A fundamental result of Neeman \cite[Lemma~2.2]{Neeman92b} asserts that if $\cat G \subseteq \cat T^c$ is a set of compact objects then $\loc{\cat G} \cap \cat T^c = \thick{\cat G}$.
\end{remark}

\begin{definition}\label{def:tt-functor}
	A \emph{tt-functor} $F\colon\cat T\to \cat S$ between tt-categories is a (strong) symmetric monoidal exact functor. Such a functor always restricts to a tt-functor $\cat T^d \to \cat S^d$ between the subcategories of dualizable objects. This is because (strong) symmetric monoidal functors preserve dualizable objects. This latter fact is most easily seen using the characterization of dualizable objects in terms of evaluation and coevaluation maps; see, e.g., \cite[Proposition III.1.9]{lms}. For brevity, a coproduct-preserving tt-functor between big tt-categories will be called a \emph{geometric functor}.
\end{definition}

We also need to introduce the $\infty$-categorical analogues of the previous definitions.

\begin{definition}\label{def:big-tt-infty}
	A \emph{big tt-$\infty$-category} is a presentable, symmetric monoidal, stable $\infty$-category $\cat C$ whose tensor $\otimes$ commutes with colimits in each variable. We note that any such $\C$ is automatically closed symmetric monoidal. A \emph{tt-$\infty$-functor} is a symmetric monoidal exact functor. The very large $\infty$-category of big tt-$\infty$-categories and colimit-preserving tt-$\infty$-functors is denoted by $\CAlg(\Pr)$.
\end{definition}

\begin{remark}\label{rem:homotopy}
	The homotopy category $\Ho(\cat C)$ of a big tt-$\infty$-category has the canonical structure of a big tt-category; see \cite[Chapter 1]{HA}. Many of the invariants and properties we are interested in concerning $\cat C$ are really invariants and properties of $\Ho(\cat C)$. For example, we can speak of thick subcategories of $\cat C$ (which are precisely the stable subcategories which are closed under retracts), thick ideals, localizing ideals, and so on. Also, an object $x \in \cat C$ dualizable iff it is dualizable as an object of $\Ho(\cat C)$; see~\cite[Section 4.6.1]{HA}.
\end{remark}

\begin{remark}\label{rem-coproduct}
	A tt-$\infty$-functor $F \colon \cat C \to \cat D$ of big tt-$\infty$-categories preserves colimits if and only if $F$ preserves coproducts; see \cite[Proposition 1.4.4.1(2)]{HA}. This latter condition is also equivalent to the induced tt-functor $\Ho(\cat C) \to \Ho(\cat D)$ of big tt-categories preserving coproducts.
\end{remark}

\begin{lemma}\label{lem-compact}
	Let $\cat C$ be a big tt-$\infty$-category. The following are equivalent for an object $x \in\C$:
	\begin{itemize}
	    \item[(a)] the functor $\Hom_{\cat C}(x,-) \colon \cat C \to \mathcal S$ to the $\infty$-category of spaces preserves filtered colimits; 
	    \item[(b)] the functor $\hom_{\cat C}(x,-)\colon \cat C \to \Sp$ to the $\infty$-category of spectra preserves filtered colimits;
	    \item[(c)] the functor $\hom_{\cat C}(x,-)\colon \C \to \Sp$ preserves coproducts;
	    \item[(d)] the functor $\Hom_{\Ho(\cat C)}^*(x,-)\colon \Ho(\cat C)\to \Ab$ into abelian groups preserves coproducts.
	\end{itemize} 
\end{lemma}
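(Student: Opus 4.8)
The plan is to prove the equivalence of the four conditions by establishing a cycle of implications, exploiting standard adjunctions between the $\infty$-categories of spaces, spectra, and abelian groups. The key inputs are: the suspension spectrum functor $\Sigma^\infty_+ \colon \mathcal{S} \to \Sp$ is left adjoint to $\Omega^\infty$ and detects equivalences on connective objects in a suitable sense; the relation $\hom_{\cat C}(x,-) \simeq \Sigma^\infty_+ \Hom_{\cat C}(x,-)$ is \emph{not} quite right, so more care is needed — in fact the correct statement is that $\Omega^\infty \hom_{\cat C}(x,-) \simeq \Hom_{\cat C}(x,-)$, which follows from the stable enrichment of $\cat C$; and finally that $\pi_{-n} \hom_{\cat C}(x,y) \cong \Hom_{\Ho(\cat C)}(\Sigma^n x, y) = \Hom^*_{\Ho(\cat C)}(x,y)$ in degree $n$, which is the passage from the spectral mapping object to the graded homotopy-category Hom.

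First I would prove $(b) \Leftrightarrow (c)$: since $\cat C$ is stable, $\hom_{\cat C}(x,-)$ is an exact functor to $\Sp$, and an exact functor between stable $\infty$-categories preserves filtered colimits if and only if it preserves all colimits if and only if it preserves coproducts; this is \cite[Proposition 1.4.4.1]{HA} applied to the functor $\hom_{\cat C}(x,-)$, using that $\cat C$ is presentable so that filtered colimits together with finite colimits generate all colimits. Next, $(b) \Rightarrow (a)$: apply $\Omega^\infty \colon \Sp \to \mathcal{S}$, which preserves filtered colimits (it preserves sifted colimits, cf.~\cite{HA}), and use $\Omega^\infty \hom_{\cat C}(x,-) \simeq \Hom_{\cat C}(x,-)$. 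For $(a) \Rightarrow (b)$, I would use that $\hom_{\cat C}(x,-)$ is the ``delooping'' of $\Hom_{\cat C}(x,-)$: concretely, $\hom_{\cat C}(x,y)$ has $n$-th space $\Hom_{\cat C}(\Sigma^{-n} x, y)$, and since $\cat C$ is stable each $\Sigma^{-n}x$ is again an object of $\cat C$ to which the hypothesis applies; a filtered colimit in $\Sp$ is computed spacewise (after fibrant replacement, or simply because $\Omega^\infty$ and its shifts jointly detect equivalences and each preserves filtered colimits), so $\hom_{\cat C}(x,-)$ preserves filtered colimits as well.

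Finally, for $(c) \Leftrightarrow (d)$: the functor $\hom_{\cat C}(x,-) \colon \cat C \to \Sp$ preserves coproducts if and only if the induced functor $\Ho(\cat C) \to \Ho(\Sp)$ does, since coproducts in $\cat C$ and in $\Sp$ are detected on homotopy categories and $\hom_{\cat C}(x,-)$ is exact; then $\Hom^*_{\Ho(\cat C)}(x,-) = \bigoplus_{n \in \mathbb{Z}} \pi_n \hom_{\cat C}(x,-) $ viewed as a functor to graded abelian groups (or, degreewise, $\pi_n$), and since homotopy groups of spectra commute with coproducts (a coproduct of spectra is computed as a filtered colimit of finite coproducts, i.e.~finite products, and $\pi_n$ commutes with filtered colimits and finite products), $\hom_{\cat C}(x,-)$ preserves coproducts if and only if $\Hom^*_{\Ho(\cat C)}(x,-)$ does. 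The one subtlety to handle carefully is the direction $(d) \Rightarrow (c)$: knowing $\pi_n \hom_{\cat C}(x,-)$ commutes with coproducts for all $n$ implies the natural map $\coprod_i \hom_{\cat C}(x,y_i) \to \hom_{\cat C}(x, \coprod_i y_i)$ is a $\pi_*$-isomorphism, hence an equivalence of spectra.

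The main obstacle I anticipate is bookkeeping the precise relationship between the $\mathcal{S}$-valued mapping functor and the $\Sp$-valued one — specifically, making rigorous the claim that a functor to $\Sp$ preserves filtered colimits iff its composites with all the $\Omega^\infty \Sigma^n$ do, and that this composite is the $\mathcal{S}$-valued mapping functor out of a shift of $x$. This is where one must invoke stability of $\cat C$ (to know $\Sigma^n x \in \cat C$) together with the fact that $\{\Omega^\infty \Sigma^n\}_{n \in \mathbb{Z}}$ is a jointly conservative family of filtered-colimit-preserving functors on $\Sp$. Everything else is a formal consequence of exactness plus the presentability of $\cat C$.
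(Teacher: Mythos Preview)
Your proposal is correct and follows essentially the same approach as the paper: the equivalence $(b)\Leftrightarrow(c)$ via exactness and \cite[Proposition~1.4.4.1]{HA}, $(b)\Rightarrow(a)$ via $\Omega^\infty$ preserving filtered colimits and $\Omega^\infty\hom_{\cat C}\simeq\Hom_{\cat C}$, $(a)\Rightarrow(b)$ by checking the canonical map after applying the jointly conservative filtered-colimit-preserving family $\{\Omega^\infty\Sigma^n\}$ (the paper shifts the target $y$ rather than the source $x$, but this is the same argument in a stable category), and $(c)\Leftrightarrow(d)$ via $\pi_n\hom_{\cat C}(x,-)\cong\Hom_{\Ho(\cat C)}(x,\Sigma^{-n}(-))$ together with the fact that $\pi_*$ detects equivalences and commutes with coproducts of spectra.
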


\begin{proof}
	Recall that $\Omega^\infty \hom_{\C}\simeq \Hom_\C$. Thus (b) implies (a) since $\Omega^\infty$ preserves filtered colimits. For the reverse implication, let $\{y_i\}$ be a filtered system of objects in $\C$, and consider the canonical map $\hom_{\C}(x,\colim y_i)\to \colim \hom_{\C}(x,y_i)$. This is an equivalence if and only if $\Omega^\infty\hom_{\C}(x,\Sigma^n\colim y_i)\to \colim \Omega^\infty\hom_{\C}(x,\Sigma^n y_i)$ is an equivalence for all $n\in\Z$. Using $\Omega^\infty \hom_{\C}\simeq \Hom_\C$ again, we conclude that (a) implies (b).  Note that the functor $\hom_{\cat C}(x,-)\colon \cat C \to \Sp$ is exact. Then (b) is equivalent to (c) by~\cite{HA}*{Proposition 1.4.4.1(2)}. Finally (c) is equivalent to (d) since we have a natural equivalence $\pi_n \hom_{\C}(x,-)=\Hom_{\Ho(\C)}(x, \Sigma^{-n}(-))$; see \cite{HA}*{Notation 1.1.2.17}.
\end{proof}

\begin{remark}
	Recall that an object $x \in \cat C$ which satisfies Condition $(a)$ of \cref{lem-compact} is called compact. It then follows from the same lemma that $x$ is compact in $\C$ if and only if $x$ is a compact object of $\Ho(\cat C)$ in the sense of \Cref{def:big-tt-defns}(a). Writing $\cat C^c$ for the subcategory of compact objects, we then have $\Ho(\cat C^c)=\Ho(\cat C)^c$. 
\end{remark}

\begin{remark}
	We say that a big tt-$\infty$-category $\cat C$ is compactly generated or rigidly-compactly generated when $\Ho(\cat C)$ is so.
\end{remark}

It is also convenient to have a ``small'' version of the definition:

\begin{definition}
	Let $\Cat_\infty^\perf$ denote the large $\infty$-category of essentially small, idempotent complete stable $\infty$-categories and exact functors. 
	\begin{itemize}
		\item A \emph{tt-$\infty$-category} is a commutative algebra object in $\Cat_\infty^\perf$. In concrete terms, a tt-$\infty$-category is an essentially small, idempotent complete, stable $\infty$-category equipped with a symmetric monoidal structure whose tensor is exact in each variable. This is coherent with our previous terminology, as a big tt-$\infty$-category in the sense of \cref{def:big-tt-infty} is a tt-$\infty$-category in the present sense.
		\item The large $\infty$-category of tt-$\infty$-categories and tt-$\infty$-functors is denoted by $\CAlg(\Cat_\infty^\perf)$.
		\item A tt-$\infty$-category $\cat C$ is \emph{rigid} if $\Ho (\cat C)$ is rigid (\Cref{def:dual-rigid}(b)).
	\end{itemize} 
\end{definition}

\begin{example}
	If $\C$ is a rigidly-compactly generated tt-$\infty$-category, then $\C^c=\C^d$ is a rigid tt-$\infty$-category.
\end{example}

\section{Projection formulas and base change}

We begin this section by discussing the projection formula for an adjunction of tt-categories. We then introduce the $\infty$-category of modules over a commutative algebra object and discuss various properties of the associated base change functor.

\begin{definition}
	Let $F\colon\cat T\to \cat S$ be a tt-functor which admits a right adjoint $U$. We say that the \emph{projection formula holds} if the canonical natural transformation
	\begin{equation}\label{eq:proj-formula}
		U(s)\otimes t \to U(s\otimes F(t))
	\end{equation}
	defined in \cite[(2.6)]{BalmerDellAmbrogioSanders15} is an isomorphism for all $s \in \cat S$ and $t \in \cat T$. In this case, $UF \simeq U(\unit)\otimes -$ is an isomorphism of monads \cite[Lemma 2.8]{BalmerDellAmbrogioSanders15}.
\end{definition}

\begin{lemma}\label{lem:preserve-gen}
	Let $F\colon\cat T\to \cat S$ be a tt-functor between big tt-categories which admits a right adjoint $U$.
	\begin{enumerate}
		\item If $\cat T$ is generated by dualizable objects (that is, $\cat T=\loc{\cat T^d}$) and $U$ preserves coproducts then the projection formula holds.
		\item If $U$ preserves coproducts then $F$ preserves compact objects.
		\item If $\cat T$ is compactly generated and $U$ is conservative and preserves coproducts then $\cat S=\loc{F(\cat T^c)}$.
	\end{enumerate}
\end{lemma}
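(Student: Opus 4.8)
The plan is to prove the three parts essentially independently, each following the same template: produce a right adjoint, verify the projection formula, and then deduce the generation statement from conservativity.

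\textbf{Part (1).} First I would recall the transformation \eqref{eq:proj-formula}, which for $s\in\cat S$ and $t\in\cat T$ is natural in both variables. Both sides, as functors of $t$ (with $s$ fixed), are exact and — using that $U$ preserves coproducts by hypothesis — preserve coproducts. Hence the collection of $t$ for which \eqref{eq:proj-formula} is an isomorphism is a localizing subcategory of $\cat T$. So it suffices to check that it contains the dualizable objects, because $\cat T=\loc{\cat T^d}$. For $t$ dualizable we argue by hand: using the adjunction $F\dashv U$ together with the fact that $F$ is strong symmetric monoidal (so $F(Dt)\simeq D F(t)$), one identifies $U(s)\otimes t\simeq U(s)\otimes DDt$ with $U(s\otimes F(t))$ via the unit/counit triangle identities; concretely, $\Hom(x, U(s)\otimes t)\simeq \Hom(x\otimes Dt, U(s))\simeq \Hom(F(x\otimes Dt),s)\simeq \Hom(F(x)\otimes D F(t),s)\simeq \Hom(F(x), s\otimes F(t))\simeq \Hom(x, U(s\otimes F(t)))$, Yoneda. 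Alternatively, cite \cite[Proposition 2.15]{BalmerDellAmbrogioSanders15}, which proves exactly this under the same hypotheses. I expect the bookkeeping of the triangle identities here to be the only mildly delicate point, but it is standard.

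\textbf{Part (2).} This is immediate from the adjoint functor characterization of compactness: if $t\in\cat T^c$ and $\{s_i\}$ is a set of objects of $\cat S$, then $\Hom_{\cat S}(F(t),\coprod s_i)\simeq \Hom_{\cat T}(t, U(\coprod s_i))\simeq \Hom_{\cat T}(t,\coprod U(s_i))\simeq \coprod \Hom_{\cat T}(t, U(s_i))\simeq\coprod\Hom_{\cat S}(F(t),s_i)$, where the second isomorphism uses that $U$ preserves coproducts and the third uses compactness of $t$. So $F(t)$ is compact.

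\textbf{Part (3).} Set $\cat L\coloneqq\loc{F(\cat T^c)}$; by part (2) this is a localizing subcategory of $\cat S$ generated by a set of compact objects (a set because $\cat T^c$ is essentially small and $F$ preserves compacts), so $\cat S=\cat L$ can be tested via \cref{rem:gen-by-compact}: it suffices to show that if $s\in\cat S$ satisfies $\Hom_{\cat S}(\Sigma^n F(g), s)=0$ for all $g\in\cat T^c$ and all $n\in\Z$, then $s=0$. By adjunction, $\Hom_{\cat S}(\Sigma^n F(g),s)\simeq\Hom_{\cat T}(\Sigma^n g, U(s))$, so the hypothesis says $\Hom_{\cat T}(\Sigma^n g, U(s))=0$ for all $g\in\cat T^c$, $n\in\Z$. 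Since $\cat T$ is compactly generated, \cref{rem:gen-by-compact} forces $U(s)=0$, and then conservativity of $U$ gives $s=0$. The main obstacle — really the only subtlety — is making sure the generating set in \cref{rem:gen-by-compact} is a \emph{set}: this is where essential smallness of $\cat T^c$ (built into our standing hypotheses on big tt-$\infty$-categories) is used, together with part (2) to land $F(\cat T^c)$ in $\cat S^c$. Everything else is a formal adjunction argument.
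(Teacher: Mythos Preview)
Your proposal is correct and follows essentially the same approach as the paper: for part~(a) you use the same localizing-subcategory argument reducing to dualizable $t$ (the paper cites \cite[Proposition 3.12]{FauskHuMay03} for that base case where you give a direct Yoneda computation), part~(b) is the standard adjunction argument the paper defers to Neeman, and part~(c) is exactly the paper's ``part~(b) plus \Cref{rem:gen-by-compact}'' route. The only cosmetic difference is that you spell out more details and worry about set-theoretic size, which the paper leaves implicit.
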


\begin{proof}
	(a): This is extracted from the work of \cite{BalmerDellAmbrogioSanders16}. The projection formula~\eqref{eq:proj-formula} is always an isomorphism when the object $t$ is dualizable \cite[Proposition 3.12]{FauskHuMay03}. For a fixed $s \in \cat S$, \eqref{eq:proj-formula} is --- under our hypothesis that $U$ preserves coproducts --- a natural transformation of coproduct preserving exact functors $\cat T\to \cat T$. Hence the collection of objects on which it is an isomorphism is a localizing subcategory of $\cat T$. It contains the dualizable objects as noted, hence is everything under the hypothesis that $\cat T$ is generated by dualizable objects.

	(b): See the proof of \cite[Theorem 5.1]{Neeman96}.

	(c): This is standard and follows from part (b) and \Cref{rem:gen-by-compact}.
\end{proof}

\begin{corollary}\label{cor:gen-forced}
	Let $F\colon \cat T \to \cat S$ be a tt-functor between big tt-categories which admits a right adjoint~$U$ which is conservative and preserves coproducts. Then:
	\begin{enumerate}
		\item If $\cat T$ is compactly generated then so is $\cat S$.
		\item If $\cat T$ is rigidly-compactly generated then so is $\cat S$. 
	\end{enumerate}
\end{corollary}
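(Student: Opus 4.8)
The plan is to deduce both parts directly from \Cref{lem:preserve-gen}, using that $U$ is conservative and preserves coproducts throughout.

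\textbf{Part (1).} Since $\cat T$ is compactly generated, fix a set $\cat G \subseteq \cat T^c$ with $\loc{\cat G} = \cat T$. By \Cref{lem:preserve-gen}(b), $F$ preserves compact objects, so $F(\cat G) \subseteq \cat S^c$; and by \Cref{lem:preserve-gen}(c), $\cat S = \loc{F(\cat T^c)}$. It therefore suffices to show $F(\cat T^c) \subseteq \loc{F(\cat G)}$. For this, observe that the full subcategory $\{\, t \in \cat T : F(t) \in \loc{F(\cat G)}\,\}$ is localizing (as $F$ is exact and preserves coproducts, being a left adjoint, and $\loc{F(\cat G)}$ is localizing) and contains $\cat G$, hence equals $\cat T$. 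Thus $F(\cat T^c) \subseteq F(\cat T) \subseteq \loc{F(\cat G)}$, giving $\cat S = \loc{F(\cat T^c)} = \loc{F(\cat G)}$, a localizing subcategory generated by a set of compact objects. Hence $\cat S$ is compactly generated.

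\textbf{Part (2).} Assume in addition $\cat T^c = \cat T^d$; by part (1) we already know $\cat S$ is compactly generated, so it remains to prove $\cat S^c = \cat S^d$. Keep the generating set $\cat G$ from part (1); now $\cat G \subseteq \cat T^d$, and since a tt-functor preserves dualizable objects (\Cref{def:tt-functor}) we get $F(\cat G) \subseteq \cat S^d$. Because $\cat S = \loc{F(\cat G)}$, \Cref{rem:compact-of-compact-loc} yields $\cat S^c = \thick{F(\cat G)}$, and since $\cat S^d$ is a thick subcategory (\Cref{def:dual-rigid}) we conclude $\cat S^c = \thick{F(\cat G)} \subseteq \cat S^d$. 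For the reverse inclusion, note first that $\unitS = F(\unitT)$ is compact: indeed $\unitT$ is dualizable, so $\unitT \in \cat T^d = \cat T^c$, and $F$ preserves compacts by \Cref{lem:preserve-gen}(b). Now let $x \in \cat S^d$. Dualizability gives a natural isomorphism $\Hom_{\cat S}(x,-) \cong \Hom_{\cat S}(\unitS, Dx \otimes -)$, exhibiting $\Hom_{\cat S}(x,-)$ as a composite of the coproduct-preserving functor $Dx \otimes -$ with $\Hom_{\cat S}(\unitS,-)$, the latter preserving coproducts since $\unitS$ is compact. Hence $\Hom_{\cat S}(x,-)$ preserves coproducts (and likewise after suspension), so $x \in \cat S^c$ by \Cref{lem-compact}. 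Therefore $\cat S^d \subseteq \cat S^c$, and $\cat S$ is rigidly-compactly generated.

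The proof is essentially bookkeeping once \Cref{lem:preserve-gen} is in hand; the only point needing a little care is the inclusion $\cat S^d \subseteq \cat S^c$, where the crucial input is the compactness of the unit $\unitS = F(\unitT)$ combined with the standard identity $\Hom(x,-) \cong \Hom(\unitS, Dx \otimes -)$ for dualizable $x$.
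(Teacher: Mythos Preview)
Your proof is correct and follows essentially the same approach as the paper: use \Cref{lem:preserve-gen} for part~(a), then for part~(b) observe that $\cat S$ is generated by compact-dualizable objects and that $\unitS = F(\unitT)$ is compact. The paper simply cites \cite[Theorem~2.1.3(d)]{HPS} for the conclusion $\cat S^c = \cat S^d$, whereas you spell out both inclusions directly; one small quibble is that your final appeal to \Cref{lem-compact} is unnecessary and slightly misplaced (that lemma is stated for $\infty$-categories), since in the triangulated setting $x \in \cat S^c$ follows immediately from \Cref{def:big-tt-defns}(a) once $\Hom_{\cat S}(x,-)$ preserves coproducts.
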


\begin{proof}
	Part (a) follows from \Cref{lem:preserve-gen}. Moreover, it implies that if $\cat T$ is rigidly-compactly generated then $\cat S$ is generated by a set of compact and dualizable objects. Then to prove part (b) we need to show that $\cat S^c=\cat S^d$. Since $\unitS=F(\unitT)$ is compact, this follows from \cite[Theorem 2.1.3(d)]{HPS}.
\end{proof}

\begin{corollary}\label{cor:weakly-closed-is-closed}
	Let $F\colon \cat T \to \cat S$ be a tt-functor between big tt-categories which admits a right adjoint $U$ which is conservative and preserves coproducts. If $\cat T$ is rigidly-compactly generated and $U(\unitS)$ is compact, then $U$ preserves compact objects.
\end{corollary}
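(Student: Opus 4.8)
The plan is to reduce the claim to a single computation with the unit object by exploiting the projection formula, and then to propagate compactness off a generating family by a thick-subcategory argument.

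\textbf{Step 1 (Projection formula).} Since $\cat T$ is rigidly-compactly generated it is generated by its dualizable objects, and $U$ preserves coproducts by hypothesis; hence \Cref{lem:preserve-gen}(a) shows that the projection formula holds, so in particular $UF \simeq U(\unitS)\otimes-$ as monads. Thus $U(F(t))\cong U(\unitS)\otimes t$ naturally in $t\in\cat T$.

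\textbf{Step 2 (A generating family).} Because $U$ preserves coproducts, $F$ preserves compact objects by \Cref{lem:preserve-gen}(b); because $U$ is moreover conservative, $\cat S=\loc{F(\cat T^c)}$ by \Cref{lem:preserve-gen}(c). Applying Neeman's lemma (\Cref{rem:compact-of-compact-loc}) to the set $F(\cat T^c)\subseteq\cat S^c$ (more precisely, to a set of representatives) yields $\cat S^c=\loc{F(\cat T^c)}\cap\cat S^c=\thick{F(\cat T^c)}$. So it is enough to prove $U$ carries each $F(t)$, $t\in\cat T^c$, to a compact object, and then to observe that the objects with this property form a thick subcategory.

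\textbf{Step 3 (Conclude).} The full subcategory $\CU\coloneqq\SET{s\in\cat S}{U(s)\in\cat T^c}$ is thick, since $U$ is exact (being a right adjoint of the exact functor $F$) and preserves retracts while $\cat T^c$ is a thick subcategory of $\cat T$. For $t\in\cat T^c$ we have $U(F(t))\cong U(\unitS)\otimes t$ by Step 1; here $U(\unitS)$ is compact by hypothesis, $t$ is compact, and $\cat T^c=\cat T^d$ is closed under $\otimes$ (it is a tt-subcategory, \Cref{def:dual-rigid}) because $\cat T$ is rigid, so $U(F(t))$ is compact. Hence $F(t)\in\CU$ for all $t\in\cat T^c$, and by Step 2 we get $\cat S^c=\thick{F(\cat T^c)}\subseteq\CU$, which is precisely the assertion that $U$ preserves compact objects.

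\textbf{Expected difficulty.} There is no deep obstacle; this is genuinely a corollary of \Cref{lem:preserve-gen}. The one point worth isolating is the reduction itself: the projection formula collapses $UF$ to $U(\unitS)\otimes-$, so the hypothesis ``$U(\unitS)$ compact'' is exactly what is needed on the thick generators $F(\cat T^c)$. The only spots demanding a little care are the use of rigidity of $\cat T$ (so that $\cat T^c$ is closed under tensoring) and of exactness of $U$ (so that $\CU$ is a thick subcategory).
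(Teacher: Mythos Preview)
Your proof is correct and follows essentially the same route as the paper: both use \Cref{lem:preserve-gen} to get the projection formula and $\cat S^c=\thick{F(\cat T^c)}$, and then run a thick-subcategory argument. The only cosmetic difference is that the paper shows the subcategory $\SET{s\in\cat S^c}{U(s)\in\cat T^c}$ is a thick \emph{ideal} containing $\unitS$ (using the projection formula to check closure under $F(\cat T^c)\otimes-$), whereas you verify directly that it contains each generator $F(t)$; one minor slip of phrasing: it is $\cat T^c=\cat T^d$ that is rigid, not $\cat T$ itself.
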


\begin{proof}
	Consider the thick subcategory $\cat S_0\coloneqq \SET{s \in \cat S^c}{U(s) \in \cat T^c} \subseteq \cat S^c$. By \Cref{lem:preserve-gen}(b),~$F$ preserves compactness. Hence $\unitS=F(\unitT)$ is compact and is thus contained in $\cat S_0$ since $U(\unitS)$ is compact by hypothesis. It remains to prove that $\cat S_0$ is a thick \emph{ideal} of $\cat S^c$. By \Cref{lem:preserve-gen}(c), $\cat S=\loc{F(\cat T^c)}$. Since $F$ preserves compactness, \Cref{rem:compact-of-compact-loc} then implies $\cat S^c = \thick{F(\cat T^c)}$. It follows that $\cat S_0$ is a thick ideal of $\cat S^c$ if and only if $F(\cat T^c)\otimes \cat S_0 \subseteq \cat S_0$. The latter is true by invoking the projection formula (\Cref{lem:preserve-gen}(a)).
\end{proof}

We will mostly use the following special case of the previous results.

\begin{proposition}\label{prop-properties-adjunction}
	Let $F \colon \cat T \to \cat S$ be a geometric functor between rigidly-compactly generated tt-categories. Then:
	\begin{enumerate}
		\item $F$ admits a right adjoint $U$;
		\item $U$ itself admits a right adjoint;
		\item $F$ preserves compact objects;
		\item The projection formula holds.
	\end{enumerate}
	If in addition $U$ is conservative and $U(\unitS)$ is compact, then $U$ preserves compact objects.
\end{proposition}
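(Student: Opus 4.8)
The plan is to deduce the four numbered assertions together with the addendum from the general results established above, essentially by assembling earlier statements; I would carry them out in the order (1), (3), (2), (4), which is not the order in which they are listed. Recall that a geometric functor is by definition exact, strong symmetric monoidal, and coproduct-preserving, and that a rigidly-compactly generated tt-category is in particular compactly generated and satisfies $(-)^c=(-)^d$. For (1), since $F$ is exact and coproduct-preserving and $\cat T$ is compactly generated, Brown representability furnishes a right adjoint $U$ of $F$; being right adjoint to an exact functor of triangulated categories, $U$ is automatically exact. For (3), as a strong symmetric monoidal functor $F$ preserves dualizable objects (see \Cref{def:tt-functor}), so that $F(\cat T^c)=F(\cat T^d)\subseteq \cat S^d=\cat S^c$, using $\cat T^c=\cat T^d$ and $\cat S^d=\cat S^c$.

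For (2), I would first upgrade (3) to the statement that $U$ preserves coproducts. Fix a set $\cat G\subseteq\cat T^c$ of compact generators. For any family $\{s_i\}$ in $\cat S$, the canonical map $\bigoplus_i U(s_i)\to U\big(\bigoplus_i s_i\big)$ becomes an isomorphism after applying $\Hom_{\cat T}(\Sigma^n g,-)$ for every $g\in\cat G$ and $n\in\mathbb{Z}$ --- on the source because $g$ is compact in $\cat T$, and on the target because $Fg$ is compact in $\cat S$ by (3) and adjunction --- hence it is an isomorphism by \Cref{rem:gen-by-compact}. Now $U\colon\cat S\to\cat T$ is exact and coproduct-preserving and $\cat S$ is compactly generated, so Brown representability again produces a right adjoint for $U$, giving (2). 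For (4), we are then exactly in the situation of \Cref{lem:preserve-gen}(a): $\cat T=\loc{\cat T^c}=\loc{\cat T^d}$ is generated by its dualizable objects and $U$ preserves coproducts, so the projection formula holds. Finally, the last assertion is immediate from \Cref{cor:weakly-closed-is-closed} applied to $F$, whose right adjoint $U$ we have just shown preserves coproducts, is conservative by hypothesis, and has $U(\unit_{\cat S})$ compact by hypothesis, with $\cat T$ rigidly-compactly generated.

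There is no genuine difficulty in this argument; it is entirely bookkeeping, and the order of dependence (1) $\Rightarrow$ (3) $\Rightarrow$ (2) $\Rightarrow$ (4) is the only subtle point. The one step that deserves a little care is the verification that $U$ preserves coproducts, as it underlies both (2) and (4): it rests on combining compactness of a generating set of $\cat T$ with the fact, established in (3), that $F$ sends that set into $\cat S^c$.
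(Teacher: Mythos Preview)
Your argument is correct. The paper's own proof simply cites \cite[Corollary~2.14 and Proposition~2.15]{BalmerDellAmbrogioSanders16} for parts (a)--(d) and invokes \Cref{cor:weakly-closed-is-closed} for the addendum, so it defers entirely to the literature. You instead give a self-contained proof using only Brown representability together with the lemmas already established in the paper. The one noteworthy difference is your route to (c): rather than first proving that $U$ preserves coproducts and then invoking \Cref{lem:preserve-gen}(b), you argue directly via dualizable objects (strong monoidal functors preserve dualizability, and $\cat T^c=\cat T^d$, $\cat S^d=\cat S^c$), and only afterwards deduce that $U$ preserves coproducts in order to obtain (b) and (d). This is slightly more elementary and makes transparent why rigid compact generation is the right hypothesis. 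A minor remark: your stated chain of dependence ``(1) $\Rightarrow$ (3)'' is not quite accurate, since your proof of (3) does not use the existence of $U$; only (2) and (4) genuinely rely on (1) and (3).
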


\begin{proof}
	This is \cite[Corollary 2.14 and Proposition 2.15]{BalmerDellAmbrogioSanders16}. The final claim follows from \cref{cor:weakly-closed-is-closed}.
\end{proof}

\begin{recollection}
	Let $\cat C$ be a big tt-$\infty$-category and consider $A \in \CAlg(\cat C)$. As discussed in~\cite[Section 4.5]{HA} there is a big tt-$\infty$-category $\mod{A}(\cat C)$ of $A$-module objects internal to $\C$. Extension-of-scalars (or base change) provides a tt-$\infty$-functor $F_A\colon \cat C \to \mod{A}(\cat C)$ (see \cite[Theorems 4.5.2.1 and 4.5.3.1]{HA}) which admits a right adjoint ``forgetful functor'' $U_A \colon \mod{A}(\cat C)\to \cat C$. To ease the notation we will often drop the subscript $A$ and simply write $F$ and $U$ for these functors. The forgetful functor is conservative (i.e., reflects equivalences) and preserves all colimits \cite[Corollary 4.2.3.5]{HA}.  These two facts have a number of consequences as follows:
\end{recollection}

\begin{lemma}\label{lem-proj-formula-holds}
	Let $\cat C$ be a big tt-$\infty$-category and let $A \in \CAlg(\cat C)$.
	\begin{enumerate}
		\item The base change functor $F_A\colon \cat C \to \mod{A}(\cat C)$ preserves the subcategories of dualizable objects and compact objects.
		\item If $\cat C$ is generated by dualizable objects (that is, $\cat C=\loc{\cat C^d}$) then the projection formula holds for the base change functor $F_A$. 
		\item If $\cat C$ is compactly generated, say by $\cat G \subseteq \cat C^c$, then $\mod{A}(\cat C)$ is compactly generated by $F_A(\cat G)$. Moreover, $\mod{A}(\cat C)^c = \mathrm{thick}(F_A(\cat C^c))$.
		\item If $\C$ is rigidly-compactly generated, then $\mod{A}(\C)$ is also rigidly-compactly generated.
	\end{enumerate}
\end{lemma}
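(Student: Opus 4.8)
The plan is to prove the four parts of \Cref{lem-proj-formula-holds} in order, since each part builds on the general results about adjunctions established earlier in this section. Throughout I write $F = F_A$ and $U = U_A$, and I use the fact from the preceding recollection that $U$ is conservative and preserves all colimits; in particular $U$ preserves coproducts, so all of \Cref{lem:preserve-gen}, \Cref{cor:gen-forced}, \Cref{cor:weakly-closed-is-closed} and \Cref{prop-properties-adjunction} apply to the adjunction $F \dashv U$.

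For part (a): $F$ is a tt-$\infty$-functor, hence (passing to homotopy categories, cf.~\Cref{rem:homotopy}) a strong symmetric monoidal exact functor, so it preserves dualizable objects by the remark in \Cref{def:tt-functor}. That $F$ preserves compact objects is exactly \Cref{lem:preserve-gen}(b), applied to the fact that $U$ preserves coproducts. Part (b) is then immediate from \Cref{lem:preserve-gen}(a): the hypothesis $\cat C = \loc{\cat C^d}$ together with $U$ preserving coproducts gives the projection formula for $F$. For part (c): assuming $\cat C$ is compactly generated by $\cat G \subseteq \cat C^c$, we invoke \Cref{lem:preserve-gen}(c) (using that $U$ is conservative and preserves coproducts) to get $\mod{A}(\cat C) = \loc{F(\cat C^c)}$; and since $F$ preserves compact objects by part (a), we have $F(\cat C^c) \subseteq \mod{A}(\cat C)^c$ so this says $\mod{A}(\cat C)$ is compactly generated. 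To upgrade $\loc{F(\cat C^c)} = \mod{A}(\cat C)$ to $\mod{A}(\cat C)^c = \thick{F(\cat C^c)}$, apply Neeman's result \Cref{rem:compact-of-compact-loc} to the set of compact objects $F(\cat C^c)$ (or a small generating subset $F(\cat G)$). Finally, for part (d): if $\cat C$ is rigidly-compactly generated, then the adjunction $F \dashv U$ has conservative coproduct-preserving right adjoint, so \Cref{cor:gen-forced}(b) directly gives that $\mod{A}(\cat C)$ is rigidly-compactly generated.

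I do not expect any serious obstacle here; the lemma is essentially a packaging of the earlier general statements specialized to base change, and the only mild subtlety is the usual size bookkeeping in part (c) (one should replace the proper class $F(\cat C^c)$ by the set $F(\cat G)$ before quoting Neeman's lemma, which is legitimate since $\loc{F(\cat G)} = \loc{F(\cat C^c)}$ as $\cat C^c = \thick{\cat G}$ and $F$ is exact). One should also note explicitly in part (d) that $\unit_{\mod{A}(\cat C)} = F(\unit_{\cat C}) = A$ is compact by part (a), which is what \Cref{cor:gen-forced}(b) needs (via \cite[Theorem 2.1.3(d)]{HPS}) to conclude $\mod{A}(\cat C)^c = \mod{A}(\cat C)^d$.
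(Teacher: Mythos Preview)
Your proposal is correct and follows essentially the same approach as the paper, which simply writes ``This follows from \cref{lem:preserve-gen} and \cref{cor:gen-forced}.'' You have just spelled out in detail how each part invokes the relevant clause of those earlier results, together with the extra bookkeeping (Neeman's \Cref{rem:compact-of-compact-loc} for the last claim in (c), and the size remark about replacing $F(\cat C^c)$ by $F(\cat G)$), which the paper leaves implicit.
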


\begin{proof}
	This follows from \cref{lem:preserve-gen} and \cref{cor:gen-forced}.
\end{proof}

\begin{remark}\label{rem:weakly-closed-is-closed}
	Let $\cat C$ be a rigidly-compactly generated big tt-$\infty$-category and consider $A \in \CAlg(\cat C)$. It follows from \cref{prop-properties-adjunction} that if $A$ is compact (as an object of~$\cat C$) then $U_A\colon \mod{A}(\cat C) \to \cat C$ preserves compact objects.
\end{remark}

\begin{proposition}\label{prop:basechange}
    Let $F\colon\cat C \to \cat D$ be a colimit-preserving tt-$\infty$-functor between rigidly-compactly generated tt-$\infty$-categories whose right adjoint $U$ is conservative. Then $F$ is extension-of-scalars with respect to $U(\unit_{\cat D})\in \CAlg(\cat C)$: There is an equivalence of tt-$\infty$-categories $\cat D \cong \mod{U(\unit_{\cat D})}(\cat C)$ under which $F\colon \cat C \cat \to \cat D$ is identified with $\cat C \to \mod{U(\unit_{\cat D})}(\cat C)$.
\end{proposition}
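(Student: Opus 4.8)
The plan is to recognize this as an instance of the $\infty$-categorical Barr–Beck–Lurie theorem. First I would observe that by \Cref{prop-properties-adjunction}(1), the functor $F$ admits a right adjoint $U$, and by hypothesis $U$ is conservative. Since $F$ preserves colimits, $U$ preserves those colimits along which $F$-split simplicial objects are sent — in fact $U$ preserves all colimits by \Cref{prop-properties-adjunction}(2) (it too has a right adjoint), so in particular $U$ preserves geometric realizations of $U$-split simplicial objects. Thus the monadicity theorem \cite[Theorem 4.7.3.5]{HA} applies to the adjunction $F \dashv U$, giving an equivalence $\cat D \xrightarrow{\simeq} \mod{T}(\cat C)$ where $T = U F$ is the associated monad on $\cat C$, compatible with the forgetful functors to $\cat C$ (so that the free-module functor is identified with $F$).

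The second step is to upgrade this to a \emph{symmetric monoidal} statement and to identify the monad $T$ with the algebra $A \coloneqq U(\unit_{\cat D})$. Since $F$ is a symmetric monoidal left adjoint, its right adjoint $U$ is lax symmetric monoidal, so $A = U(\unit_{\cat D})$ is naturally a commutative algebra object of $\cat C$; and $U$ refines to a functor $\cat D \to \mod{A}(\cat C)$. Next I would invoke the projection formula: by \Cref{prop-properties-adjunction}(4) the canonical map $U(s) \otimes t \to U(s \otimes F(t))$ is an equivalence, and as recorded there this yields an isomorphism of monads $UF \simeq A \otimes -$, i.e. $T$ is identified with the monad underlying the commutative algebra $A$. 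Since modules over the monad $A \otimes -$ are precisely $A$-modules internal to $\cat C$ \cite[Section 4.5]{HA}, the Barr–Beck equivalence becomes an equivalence $\cat D \simeq \mod{A}(\cat C)$ of $\infty$-categories under $\cat C$, identifying $F$ with extension of scalars.

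The remaining point — and the step requiring the most care — is promoting this to an equivalence of \emph{symmetric monoidal} $\infty$-categories, not merely of underlying $\infty$-categories over $\cat C$. Here I would appeal to \cite[Theorem 4.5.3.1, Proposition 4.5.3.2]{HA}, which provide precisely the symmetric monoidal refinement of the free–forgetful adjunction for modules over a commutative algebra, together with the observation that a symmetric monoidal functor which is an equivalence on underlying $\infty$-categories is a symmetric monoidal equivalence. Concretely: both $F \colon \cat C \to \cat D$ and $F_A \colon \cat C \to \mod{A}(\cat C)$ are symmetric monoidal left adjoints whose right adjoints are conservative and identify the two monads; the induced comparison functor $\cat D \to \mod{A}(\cat C)$ is then a map of modules over $\cat C$ in $\CAlg(\Pr)$ which is an equivalence of underlying $\infty$-categories, hence an equivalence in $\CAlg(\Pr)$. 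I expect the main obstacle to be bookkeeping the coherences: verifying that the monadicity equivalence is compatible with the symmetric monoidal structures and that the identification of $T$ with $A \otimes -$ is an identification of \emph{commutative}-algebra-shaped data, rather than doing any substantive new argument. All the needed inputs are available from \cite{HA} and from \Cref{prop-properties-adjunction}.
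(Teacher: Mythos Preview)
Your proposal is correct and is essentially the same approach as the paper's: the paper simply cites \cite[Proposition 5.29]{MNN} (noting via \Cref{prop-properties-adjunction} that its hypotheses are met), and that result is precisely the Barr--Beck--Lurie monadicity argument combined with the projection formula identification $UF \simeq U(\unit_{\cat D})\otimes -$ that you have spelled out. In other words, you have unpacked the black-box citation rather than taken a different route.
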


\begin{proof}
	Apply \cite[Proposition 5.29]{MNN}. Note that the required conditions are satisfied by \cref{prop-properties-adjunction}.
\end{proof}

We briefly recall the relative version of the base change functor. 

\begin{recollection}
    Let $\cat C$ be a big tt-$\infty$-category and let $f\colon A \to B$ be a morphism in $\CAlg(\C)$. By~\cite[Corollary 4.2.3.2]{HA}, there is an induced restriction functor $U\colon \mod{B}(\cat C)\to \mod{A}(\cat C)$ which is conservative and preserves all small (co)limits by~\cite[Corollary 4.2.3.7(2)]{HA}. This restriction functor admits a  symmetric monoidal left adjoint given by $M\mapsto B\otimes_A M$; see~\cite[Proposition 4.6.2.17 and Remark 4.5.3.2]{HA}.
\end{recollection}

\begin{corollary}\label{cor-double-module-category}
	Let $\cat C$ be a  rigidly-compactly generated tt-$\infty$-category and let $f\colon A \to B$ be a morphism in $\CAlg(\cat C)$. There is an equivalence of tt-$\infty$-categories
	\[	
		\mod{B}(\cat C) \simeq \mod{U(B)}(\mod{A}(\cat C))
	\]
	under which $F_{U(B)}\colon \mod{A}(\cat C)\to \mod{U(B)}(\mod{A}(\cat C))$ identifies with the extension-of-scalars functor $\mod{A}(\cat C)\to \mod{B}(\cat C)$.
\end{corollary}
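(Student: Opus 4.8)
The plan is to deduce this directly from \Cref{prop:basechange}, applied to the relative extension-of-scalars/restriction adjunction $B\otimes_A(-)\colon \mod{A}(\cat C)\adjto \mod{B}(\cat C)\noloc U$.

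First I would verify that this adjunction meets the hypotheses of \Cref{prop:basechange}. By the recollection preceding the statement, $U\colon \mod{B}(\cat C)\to \mod{A}(\cat C)$ is conservative and preserves small colimits, and its left adjoint $M\mapsto B\otimes_A M$ is symmetric monoidal; as a colimit-preserving functor of stable $\infty$-categories it is in particular exact, so it is a colimit-preserving tt-$\infty$-functor whose right adjoint is conservative. Moreover both $\mod{A}(\cat C)$ and $\mod{B}(\cat C)$ are rigidly-compactly generated tt-$\infty$-categories: this is \Cref{lem-proj-formula-holds}(4) applied to $A\in\CAlg(\cat C)$ and to $B\in\CAlg(\cat C)$ respectively. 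Hence \Cref{prop:basechange}, applied with ``$\cat C$'' $=\mod{A}(\cat C)$, ``$\cat D$'' $=\mod{B}(\cat C)$ and ``$F$'' the relative extension of scalars, produces an equivalence of tt-$\infty$-categories
\[
	\mod{B}(\cat C)\;\simeq\;\mod{U(\unit_{\mod{B}(\cat C)})}\big(\mod{A}(\cat C)\big)
\]
under which the extension-of-scalars functor $\mod{A}(\cat C)\to\mod{B}(\cat C)$ is identified with base change along $U(\unit_{\mod{B}(\cat C)})$.

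It then remains to identify the algebra object $U(\unit_{\mod{B}(\cat C)})\in\CAlg(\mod{A}(\cat C))$. The unit of $\mod{B}(\cat C)$ is $B$ itself (as a module over itself), and since $U$ is lax symmetric monoidal (being right adjoint to a symmetric monoidal functor) it carries this unit to $B$ equipped with its $A$-module structure along $f$, which is exactly the commutative algebra object of $\mod{A}(\cat C)$ that the notation $U(B)$ refers to. Substituting this into the equivalence above yields the stated equivalence $\mod{B}(\cat C)\simeq\mod{U(B)}(\mod{A}(\cat C))$ together with the asserted identification of $F_{U(B)}$ with relative extension of scalars. The only point requiring a little care is precisely this last identification of the algebra together with the naturality of the comparison, but both are already packaged into the conclusion of \Cref{prop:basechange} and into the symmetric monoidality of $U$ and of $B\otimes_A(-)$ recorded in the recollections; an alternative, more self-contained route would invoke the transitivity of iterated module $\infty$-categories directly from \cite[\S4.8.5]{HA}, but routing through \Cref{prop:basechange} keeps the argument inside the framework already set up here.
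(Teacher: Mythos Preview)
Your proof is correct and follows exactly the paper's approach: apply \Cref{prop:basechange} to the relative base change functor $\mod{A}(\cat C)\to \mod{B}(\cat C)$. The paper's proof is a one-line invocation of that proposition, whereas you have helpfully spelled out the verification of its hypotheses and the identification $U(\unit_{\mod{B}(\cat C)})=U(B)$.
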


\begin{proof}
	Apply \cref{prop:basechange} to the base change functor $\mod{A}(\cat C)\to \mod{B}(\cat C)$.
\end{proof}

\section{Descendable algebras}

We start by recalling some definitions and results from~\cite[Section 3]{Mathew2016}, \cite{Mathew2015} and~\cite{MNN}, which are partly inspired by \cite{Balmer2016}. From now on, let $\C$ be a (big) tt-$\infty$-category. 
 
\begin{definition}\label{def:descendable}
	A commutative algebra $A \in \CAlg(\C)$ is \emph{descendable} if the thick ideal generated by $A$ is all of $\C$: $\thickid{A}=\C$. More generally,  we say that a map of commutative algebras $f\colon A \to B$ is descendable if $B$ is descendable as a commutative algebra in $\mod{A}(\C)$.
\end{definition}

\begin{lemma}\label{lem-generates-same-ideal}
	If $A\in \CAlg(\cat C)$ is descendable then
	\[
		\thickid{x} = \thickid{x\otimes A}
	\]
	for all $x \in \cat C$.
\end{lemma}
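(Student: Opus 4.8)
The plan is to prove both inclusions of thick ideals. One inclusion is immediate: since $x \otimes A \in \thickid{x}$ (because $\thickid{x}$ is an ideal and hence absorbs tensoring with $A$), we get $\thickid{x \otimes A} \subseteq \thickid{x}$. So the content is the reverse inclusion $\thickid{x} \subseteq \thickid{x \otimes A}$, which amounts to showing $x \in \thickid{x \otimes A}$.

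For the reverse inclusion I would exploit descendability in the form of the filtration/nilpotence it provides. Recall that $A$ being descendable means $\thickid{A} = \cat C$, so in particular the unit $\unit \in \thickid{A}$. Tensoring the whole situation with $x$: the functor $x \otimes - \colon \cat C \to \cat C$ is exact and preserves the property of ``being a thick ideal that contains a given object'' in the sense that it sends $\thickid{A}$ into $\thickid{x \otimes A}$ — indeed, $x \otimes A \in \thickid{x \otimes A}$, the collection $\SET{y \in \cat C}{x \otimes y \in \thickid{x \otimes A}}$ is a thick subcategory, and it is an ideal because $\thickid{x\otimes A}$ is. Since this thick ideal contains $A$, it contains $\thickid{A} = \cat C$, hence contains $\unit$; that is, $x = x \otimes \unit \in \thickid{x \otimes A}$. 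Combining the two inclusions gives the claimed equality.

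The key step, and the only place where any real argument is needed, is the observation that $\SET{y \in \cat C}{x \otimes y \in \thickid{x\otimes A}}$ is a thick ideal of $\cat C$: it is clearly a thick subcategory since $x \otimes -$ is exact and $\thickid{x\otimes A}$ is thick, and it is an ideal because for $a \in \cat C$ and $y$ in the collection, $x \otimes (a \otimes y) \cong a \otimes (x \otimes y) \in \thickid{x \otimes A}$ as the latter is an ideal. I don't anticipate a genuine obstacle here; the main thing to be careful about is not to confuse thick ideals generated in $\cat C$ with thick ideals generated after tensoring, and to use the hypothesis $\thickid{A} = \cat C$ at exactly the right moment. (One could alternatively phrase the argument via the explicit finite filtration of $\unit$ with subquotients retracts of finite sums of shifts of $A$-modules that descendability supplies, and then apply $x \otimes -$, but the ideal-membership argument above is cleaner.)
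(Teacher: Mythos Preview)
Your argument is correct and is precisely the ``standard thick subcategory argument'' the paper invokes: you spell out explicitly that $\SET{y \in \cat C}{x \otimes y \in \thickid{x\otimes A}}$ is a thick ideal containing $A$, hence all of $\cat C$, whence $x \in \thickid{x\otimes A}$. The paper's proof is the same in outline but compressed to a single sentence.
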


\begin{proof}
	The hypothesis $\unit \in \thickid{A}$ implies $x \in \thickid{x\otimes A}$  by a standard thick subcategory argument. Hence, $\thickid{x}\subseteq \thickid{x\otimes A}$. The reverse inclusion is immediate.
\end{proof}

\begin{corollary}\label{cor:descendable-is-conservative}
\sloppy	If $A \in \CAlg(\C)$ is descendable then the base change functor $F_A\colon\cat C \to \mod{A}(\cat C)$ is conservative.
\end{corollary}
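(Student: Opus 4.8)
The plan is to reduce the assertion to the detection of zero objects and then apply \Cref{lem-generates-same-ideal}. Since $F_A$ is a tt-$\infty$-functor it is in particular exact, and an exact functor between stable $\infty$-categories is conservative as soon as it reflects zero objects: if $f$ is a morphism of $\cat C$ with $F_A(f)$ an equivalence, then its cofiber satisfies $F_A(\mathrm{cofib}(f)) \simeq \mathrm{cofib}(F_A(f)) \simeq 0$, so reflecting zero objects forces $\mathrm{cofib}(f) \simeq 0$, i.e.\ $f$ is an equivalence. Hence it suffices to show that $F_A(x) \simeq 0$ implies $x \simeq 0$ for $x \in \cat C$.

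So suppose $F_A(x) \simeq 0$ in $\mod{A}(\cat C)$. Applying the forgetful functor $U_A$ and using that the underlying object of $F_A(x)$ is $A \otimes x$, we obtain $A \otimes x \simeq 0$ in $\cat C$. Now \Cref{lem-generates-same-ideal} gives
\[
	\thickid{x} = \thickid{x \otimes A} = \thickid{0} = 0,
\]
and since $x \in \thickid{x}$ this forces $x \simeq 0$, as desired.

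The proof is essentially immediate once \Cref{lem-generates-same-ideal} is in hand, so there is no genuine obstacle; the only point worth spelling out is the reduction of ``conservative'' to ``reflects zero objects'', which is the standard consequence of exactness recalled above. (Alternatively, one could avoid passing through cofibers and argue directly on homotopy categories, where the same computation applies verbatim.)
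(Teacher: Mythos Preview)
Your proof is correct and is precisely the intended argument: the paper states this as an immediate corollary of \Cref{lem-generates-same-ideal} without spelling out the details, and your reduction of conservativity to reflecting zero objects followed by an application of that lemma is exactly how the corollary follows.
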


\begin{remark}\label{rem:algebra-splits}
	For an arbitrary $A \in \CAlg(\cat C)$, consider the base change adjunction
	\[
		F_A\colon \cat C \adjto \mod{A}(\cat C):U_A.
	\]
	Complete the unit $\eta\colon\unit \to A$ to an exact triangle
	\begin{equation}\label{eq:triangle-on-unit}
		W \xrightarrow{\xi} \unit \xrightarrow{\eta} A \to \Sigma W.
	\end{equation}
	Then $F_A(\xi)=0$ in $\mod{A}(\cat C)$ so that $F_A(A) \simeq \Sigma F_A(W) \oplus \unit_A$ in $\mod{A}(\C)$ and $A\otimes A \simeq (\Sigma A\otimes W) \oplus A$ in $\cat C$.
\end{remark}

\begin{lemma}\label{lem:proj-formula-trick}
	Let $\C$ be a rigidly-compactly generated tt-$\infty$-category. For an arbitrary $A \in \CAlg(\C)$, we have
	\[
		\thickid{U_A(x)} = \thickid{U_A(x)\otimes A}
	\]
	for any $x \in \mod{A}(\cat C)$.
\end{lemma}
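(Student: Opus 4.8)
The plan is to reduce the claimed identity to an application of the projection formula for the base change functor $F_A$, exploiting rigidity. First I would observe that for $x \in \mod{A}(\cat C)$ one always has the trivial inclusion $\thickid{U_A(x) \otimes A} \subseteq \thickid{U_A(x)}$, since $U_A(x) \otimes A \in \thickid{U_A(x)}$. So the content is the reverse inclusion: $U_A(x) \in \thickid{U_A(x) \otimes A}$ inside $\cat C$.

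The key point is that $U_A(x)$ is a retract of $U_A(x) \otimes A$ in $\cat C$. To see this, recall that $x$, being an $A$-module, is a retract of $F_A U_A(x) = A \otimes U_A(x)$ in $\mod{A}(\cat C)$ (the module action provides a retraction of the unit map $x \to F_A U_A(x)$). Applying the forgetful functor $U_A$, which is exact and additive, $U_A(x)$ becomes a retract of $U_A F_A U_A(x)$ in $\cat C$. Now the projection formula — which holds for $F_A$ by \Cref{lem-proj-formula-holds}(b), using that $\cat C$ is rigidly-compactly generated and hence generated by dualizable objects — gives $U_A F_A(-) \simeq U_A(\unit_A) \otimes (-) = U_A(A) \otimes (-)$. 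Wait: more precisely, applied with the module $U_A(x) \in \cat C$, the projection formula isomorphism $U_A F_A(U_A(x)) \simeq U_A(A) \otimes U_A(x)$, and since $U_A(A)$ is simply $A$ viewed in $\cat C$ (the forgetful functor sends the unit $A$-module to the underlying object $A$), we get $U_A F_A U_A(x) \simeq A \otimes U_A(x) = U_A(x) \otimes A$. Hence $U_A(x)$ is a retract of $U_A(x) \otimes A$ in $\cat C$, so $U_A(x) \in \thick{U_A(x) \otimes A} \subseteq \thickid{U_A(x) \otimes A}$, which gives the reverse inclusion and completes the proof.

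The main thing to be careful about is the identification $U_A(\unit_{\mod{A}(\cat C)}) = A$ and the precise form of the projection formula: one must apply \eqref{eq:proj-formula} with $t = U_A(x) \in \cat C$ and $s = \unit_{\mod A(\cat C)}$, and check that the resulting natural transformation $U_A(\unit) \otimes U_A(x) \to U_A(\unit \otimes F_A U_A(x)) = U_A F_A U_A(x)$ is exactly the comparison map whose splitting we want. Since $\cat C$ rigidly-compactly generated implies $\cat C = \loc{\cat C^d}$, \Cref{lem-proj-formula-holds}(b) applies and this map is an isomorphism. I do not expect any genuine obstacle here — the whole argument is a short diagram-and-retract manipulation; the only subtlety is bookkeeping the forgetful functor's action on units and module structures, which is routine given the recollections already set up.
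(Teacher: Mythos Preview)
Your proof is correct and follows essentially the same approach as the paper: show that $U_A(x)$ is a retract of $U_A(x)\otimes A$ by producing a retraction at the level of $A$-modules and then applying the projection formula. The only cosmetic difference is that the paper phrases the retraction as ``$x$ is a summand of $x\otimes_A F_A(A)$'' (using \Cref{rem:algebra-splits}) whereas you phrase it as ``$x$ is a retract of $F_A U_A(x)$'' via the module action; these are the same splitting, and both applications of the projection formula identify the relevant object with $U_A(x)\otimes A$.
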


\begin{proof}
	This follows from \Cref{rem:algebra-splits}. If $x \in \mod{A}(\C)$ is arbitrary, then $x$ is a summand of $x\otimes F_A(A)$. Hence $U_A(x)$ is a summand of $U_A(x \otimes F_A(A)) \simeq U_A(x) \otimes A$, where the last equivalence is the projection formula (see~\cref{lem-proj-formula-holds}). Hence $\thickid{U_A(x)}=\thickid{U_A(x)\otimes A}$.
\end{proof}

\begin{proposition}\label{prop:nil-faithful}
	For $A \in \CAlg(\cat C)$, the following are equivalent:
	\begin{enumerate}
		\item $A$ is descendable;
		\item The base change functor $F_A \colon \cat C \to \mod{A}(\cat C)$ is nil-faithful: if $f$ is a morphism in $\cat C$ such that $F_A(f) = 0$ (i.e., $f \otimes A = 0$), then $f$ is tensor-nilpotent.
	\end{enumerate}
\end{proposition}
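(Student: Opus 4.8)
The plan is to route both implications through a single claim about the fiber sequence $W \xrightarrow{\xi} \unit \xrightarrow{\eta} A$ of \cref{rem:algebra-splits}, namely: \emph{$A$ is descendable if and only if the tensor power $\xi^{\otimes N} \colon W^{\otimes N} \to \unit$ vanishes for some $N \geq 1$.} Granting this, the proposition follows easily. For $(2) \Rightarrow (1)$: we have $F_A(\xi) = 0$ by \cref{rem:algebra-splits}, so nil-faithfulness makes $\xi$ tensor-nilpotent, i.e.\ $\xi^{\otimes N} = 0$ for some $N$, hence $A$ is descendable. For $(1) \Rightarrow (2)$: assuming $A$ descendable, fix $N$ with $\xi^{\otimes N} = 0$ and let $f \colon x \to y$ satisfy $f \otimes A = 0$. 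Then $(\id_y \otimes \eta) \circ f = (f \otimes \id_A) \circ (\id_x \otimes \eta) = 0$, so $f$ factors as $f = (\id_y \otimes \xi) \circ g$ through the fiber $y \otimes W$ of $\id_y \otimes \eta$; taking $N$-fold tensor powers and transporting along the symmetry isomorphism $(y \otimes W)^{\otimes N} \cong y^{\otimes N} \otimes W^{\otimes N}$ identifies $(\id_y \otimes \xi)^{\otimes N}$ with $\id_{y^{\otimes N}} \otimes \xi^{\otimes N} = 0$, whence $f^{\otimes N} = 0$ and $f$ is tensor-nilpotent.

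It remains to prove the claim. For the backward direction, I would write $\xi^{\otimes N}$ as the composite $W^{\otimes N} \to W^{\otimes N - 1} \to \cdots \to W \to \unit$ whose $k$-th arrow is $\xi \otimes \id_{W^{\otimes k-1}}$, so that the cofiber of the $k$-th arrow is $A \otimes W^{\otimes k-1}$. An iterated application of the octahedral axiom then exhibits the cofiber of $\xi^{\otimes N}$ as a finite iterated extension of shifts of the objects $A \otimes W^{\otimes k-1}$, all of which lie in $\thickid{A}$; hence $\mathrm{cofib}(\xi^{\otimes N}) \in \thickid{A}$. Since $\xi^{\otimes N} = 0$ we have $\mathrm{cofib}(\xi^{\otimes N}) \simeq \unit \oplus \Sigma W^{\otimes N}$, so $\unit$ is a retract of an object of $\thickid{A}$ and therefore $\thickid{A} = \cat C$.

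For the forward direction, I would consider the full subcategory $\CI \subseteq \cat C$ of objects $c$ for which $\xi^{\otimes n} \otimes \id_c \colon W^{\otimes n} \otimes c \to c$ vanishes for some $n \geq 1$. It is immediate that $\CI$ is closed under retracts, shifts, and $\otimes$ with arbitrary objects, and one checks it is closed under cofibers: in a triangle $c \to c' \to c''$ with $c, c'' \in \CI$, naturality of $\xi^{\otimes m} \otimes \id_{(-)}$ shows $\xi^{\otimes m} \otimes \id_{c'}$ factors through $c$ as soon as $\xi^{\otimes m} \otimes \id_{c''} = 0$, after which writing $\xi^{\otimes (n+m)} \otimes \id_{c'} = (\xi^{\otimes n} \otimes \id_{c'}) \circ (\id_{W^{\otimes n}} \otimes (\xi^{\otimes m} \otimes \id_{c'}))$ and invoking $\xi^{\otimes n} \otimes \id_c = 0$ together with naturality gives $\xi^{\otimes (n+m)} \otimes \id_{c'} = 0$. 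Thus $\CI$ is a thick ideal, and it contains $A$ because $\xi \otimes \id_A$ is carried by the forgetful functor $U_A$ from $F_A(\xi) = 0$ (indeed $U_A F_A(\xi) = \id_A \otimes \xi$, up to the symmetry). Hence $\cat C = \thickid{A} \subseteq \CI$, so $\unit \in \CI$ and $\xi^{\otimes N} = 0$ for some $N$.

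The main obstacle I anticipate is the verification that $\CI$ is closed under cofibers: it is a genuine but routine diagram chase blending the triangulated and symmetric monoidal structures, and if one prefers it can be bypassed by citing the equivalence between descendability and nilpotence of $\xi$ from \cite{Mathew2016} (cf.\ also \cite{Balmer2016}). The only other points needing care are keeping track of the symmetry isomorphisms in the last step of $(1) \Rightarrow (2)$ and the bookkeeping in the octahedral tower, neither of which is conceptually hard.
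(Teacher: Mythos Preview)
Your proposal is correct and follows essentially the same route as the paper: both reduce to the equivalence between descendability of $A$ and tensor-nilpotence of $\xi$, then use the factorization of $f$ through $\xi\otimes y$ for $(1)\Rightarrow(2)$. The only difference is that the paper cites \cite[Proposition~2.10]{Balmer2018} for the identity $\thickid{A}=\{z:\xi^{\otimes n}\otimes z=0\text{ for some }n\}$, whereas you establish the needed implications directly via your thick-ideal and octahedral arguments.
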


\begin{proof}
	This is essentially contained in the work of Balmer \cites{Balmer2016,Balmer2018}; see also \cite[Proposition~3.27]{Mathew2016}. As observed in \cite[Proposition 2.10]{Balmer2018}, 
	\[
		\thickid{A} = \SET{z \in \cat C}{\xi^{\otimes n} \otimes z = 0 \text{ for some } n \ge 1}
	\]
	where $\xi:W\to \unit$ is the morphism in \eqref{eq:triangle-on-unit}. Thus, $A$ is descendable if and only if the morphism $\xi$ is tensor-nilpotent. In particular, note that $\xi\otimes A=0$, that is, $\xi$ is tensor-nilpotent on its cone. Hence (b) implies (a). Conversely, suppose $f \colon x\to y$ is a morphism in $\cat C$ such that $A \otimes f=0$. From 
	\[\begin{tikzcd}
		W \otimes x \ar[d,"W\otimes f"']  \ar[r] & x \ar[dl,dotted]\ar[d,"f"] \ar[r] &A \otimes x \ar[d,"A\otimes f=0"] \ar[r] & \Sigma W \otimes x \ar[d]\\
		W\otimes y \ar[r,"\xi\otimes y"] & y \ar[r] & A\otimes y \ar[r] & \Sigma W \otimes y
	\end{tikzcd}\]
	we see that $f$ factors through $\xi \otimes y$. If $A$ is descendable, then $\xi$ is tensor-nilpotent and this readily implies that $f$ is tensor-nilpotent too.
\end{proof}

\begin{proposition}\label{prop:descendable-surjective}
	Let $\C$ be a rigidly-compactly generated tt-$\infty$-category. For any $A \in \CAlg(\cat C)$, let
	\[
		\varphi\colon\Spc(\mod{A}(\cat C)^c) \to \Spc(\cat C^c)
	\]
	denote the map on Balmer spectra induced by extension-of-scalars.
	\begin{enumerate}
		\item If $A$ is descendable, then $\varphi$ is surjective.
		\item If $A$ is compact and $\varphi$ is surjective, then $A$ is descendable.
	\end{enumerate}
\end{proposition}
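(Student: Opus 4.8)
The plan is to deduce surjectivity of $\varphi$ from the support-theoretic description of the image and standard tt-geometric facts. For part (1), the image of $\varphi$ is the support $\supp(A) = \SET{\cat P \in \Spc(\cat C^c)}{A \notin \cat P}$; this is a closed subset because $A$ is not assumed compact, but its complement $\SET{\cat P}{A \in \cat P}$ is the support-closed complement of the Thomason subset associated to the thick ideal structure — more precisely, by the theory of the Balmer spectrum, $\cat P \in \im(\varphi)$ if and only if $F_A$ does not annihilate the residue field at $\cat P$, equivalently $A \otimes g \neq 0$ for suitable $g \in \cat P^{\perp}$. The cleanest route: if $A$ is descendable then $\thickid{A} = \cat C$, so $\unit \in \thickid{A}$, and hence for every $\cat P \in \Spc(\cat C^c)$ we cannot have $A$ lying "in the prime" in the sense that the localization $\cat C \to \cat C_{\cat P}$ kills $A$; for if it did, then since $\thickid{A}$ is generated by $A$ and localization is a tt-functor, $\unit$ would be killed in $\cat C_{\cat P}$, which is absurd. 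Therefore $F_A$ composed with localization at $\cat P$ is still conservative (using \Cref{cor:descendable-is-conservative} and that descendability passes to the localization, since $\thickid{A}$ localizes to the whole localized category), which forces a prime of $\mod{A}(\cat C)^c$ lying over $\cat P$. I would carry this out by invoking the surjectivity criterion via conservativity: a geometric functor $F \colon \cat C \to \cat D$ whose restriction to compacts is such that the induced map on spectra has image $= \{\cat P : F(\text{Koszul/residue object at } \cat P) \neq 0\}$, and descendability of $A$ guarantees this object is nonzero at every $\cat P$.

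For part (2), suppose $A$ is compact and $\varphi$ is surjective. Now $U_A$ preserves compact objects by \Cref{rem:weakly-closed-is-closed}, so $U_A(\unit_A) = A \in \cat C^c$ and we may work entirely with $\cat C^c$ and $\mod{A}(\cat C)^c$ and their Balmer spectra. Surjectivity of $\varphi$ means $\supp(A) = \Spc(\cat C^c)$, i.e., the thick ideal $\sqrt{\thickid{A}^c}$ (the radical thick ideal generated by $A$ in $\cat C^c$) has full support, hence by the classification of radical thick ideals via the Balmer spectrum it equals all of $\cat C^c$: $\sqrt{\thickid{A}} = \cat C^c$. Then $\unit \in \sqrt{\thickid{A}}$, meaning $\unit^{\otimes k} = \unit \in \thickid{A}$ already — wait, radicality only gives $\unit \in \thickid{A}$ after noting $\unit^{\otimes k} = \unit$, so in fact $\unit \in \thickid{A}$ directly in $\cat C^c$, hence in $\cat C$, which is exactly descendability. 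The one subtlety to check here is that $\supp(A) = \Spc(\cat C^c)$ really does force $A$ to generate the whole category as a radical thick ideal; this is the content of the classification theorem of Balmer (support detects radical thick ideals), applied to the radical thick ideal $\sqrt{A}$ inside $\cat C^c$, using crucially that $A$ is compact so that $\sqrt{A} \subseteq \cat C^c$ is a genuine radical thick ideal of the essentially small rigid category $\cat C^c$.

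The main obstacle I anticipate is part (1): when $A$ is not compact, $\thickid{A} = \cat C$ is a statement about the big category, and I need to transfer it to a statement about primes of the compact objects, where $A$ itself does not live. The argument must go through localizations $\cat C \to \cat C_{\cat P}$ or equivalently through the conservativity of $F_A$ after localization, and I would need to verify carefully that descendability is preserved under Bousfield localization (this should follow because $\thickid{-}$ is preserved by any geometric functor, so $\thickid{A}$ localizes to $\thickid{L_{\cat P} A}$, which then contains $L_{\cat P}\unit$, the unit of the localized category). Given that, surjectivity of $\varphi$ onto $\Spc(\cat C^c)$ is equivalent to: for every $\cat P$, the fiber $\Spc(\mod{A}(\cat C)^c) \times_{\Spc(\cat C^c)} \{\cat P\}$ is nonempty, and this fiber is nonempty precisely because $\mod{A}(\cat C_{\cat P})$ is a nonzero tt-category (its unit is nonzero since $A$ is descendable, hence not $\otimes$-nilpotent, in the localization), so its Balmer spectrum is nonempty and maps into the fiber. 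Part (2) is comparatively routine once \Cref{rem:weakly-closed-is-closed} is invoked and the classification of radical thick ideals is brought to bear.
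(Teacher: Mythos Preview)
Your argument for part (b) is correct and essentially identical to the paper's: since $A$ is compact, $U_A$ preserves compacts (\Cref{rem:weakly-closed-is-closed}), so by Balmer's result the image of $\varphi$ equals $\supp(A)$; surjectivity then gives $\supp(A)=\Spc(\cat C^c)$, and the classification of radical thick ideals yields $\unit\in\thickid{A}$. One small point: you should cite explicitly that $\im\varphi=\supp(A)$ is \cite{Balmer2018}*{Theorem~1.7}, since this is the step that genuinely uses compactness of $A$ (without it, $\supp(A)$ is not even defined in $\Spc(\cat C^c)$).

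For part (a), your proposal is meandering and never quite lands on a complete argument. Your opening sentence ``the image of $\varphi$ is the support $\supp(A)$'' is precisely what is \emph{not} available when $A$ is not compact, as you immediately realize. You then gesture at the correct approach --- ``invoking the surjectivity criterion via conservativity'' --- but abandon it for a localization-at-primes argument. The paper's proof is exactly the one-liner you gestured at: descendability of $A$ implies $F_A$ is conservative (\Cref{cor:descendable-is-conservative}, or \cite{Mathew2016}*{Proposition~3.19}), and conservativity of a geometric functor between rigidly-compactly generated tt-categories implies surjectivity on Balmer spectra (\cite{BCHNP2023}*{Corollary~2.26}). No localization at primes, no Koszul objects, no fibers.

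Your alternative route via localizing at each $\cat P$ is not wrong in spirit --- descendability does pass along geometric functors, so $L_{\cat P}A$ remains descendable in $\cat C_{\cat P}$, and hence $\mod{L_{\cat P}A}(\cat C_{\cat P})\neq 0$ --- but you have not justified the final step: why a nonempty $\Spc(\mod{L_{\cat P}A}(\cat C_{\cat P})^c)$ produces a prime of $\mod{A}(\cat C)^c$ lying over $\cat P$ specifically (rather than over some generalization of $\cat P$). This identification of the fiber requires additional work which you have not supplied, and which the direct conservativity argument avoids entirely.
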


\begin{proof}
	For (a): If $A$ is descendable then $F_A\colon \cat C \to \mod{A}(\cat C)$ is conservative by \cite[Proposition 3.19]{Mathew2016}. Hence $\varphi$ is surjective by \cite[Corollary 2.26]{BCHNP2023}; a generalization to arbitrary families of geometric functors will be given in \cite{BCHS-big-surjectivity}.

	For (b): If $A$ is compact then the restriction functor $U_A$ preserves compact objects (\Cref{rem:weakly-closed-is-closed}). Hence the image of $\varphi$ is $\supp(A)$ by \cite[Theorem 1.7]{Balmer2018}. Since $\varphi$ is surjective, $\supp(A)=\Spc(\cat C^c)$. By the classification of (radical) thick ideals of~$\cat C^c$, this implies that $\unit$ is contained in the (radical) thick ideal of $\cat C^c$ generated by $A$. In particular, $\unit \in \thickid{A}$.
\end{proof}

\begin{remark}\label{rem:smashing-warning}
	It is natural to wonder whether part (b) of \Cref{prop:descendable-surjective} holds without the assumption that $A$ is compact; in other words, whether the surjectivity of $\varphi$ characterizes descendable algebras. This is not true in general, as the following two counterexamples demonstrate.
\end{remark}
	
\begin{example}
	Let $R$ be a commutative ring and let $I \subseteq R$ be a nil ideal that is not nilpotent; for example, let $R = \mathbb{C}[x_1,x_2,x_3,\ldots]/(x_1,x_2^2,x_3^3,\ldots)$ and $I = (\overbar{x_1},\overbar{x_2},\ldots)$. Now consider the extension-of-scalars functor $\Der(R) \to \Der(R/I)$ induced by the quotient map $R \to R/I$. On Balmer spectra, this induces a homeomorphism $\spec(R/I) \cong \spec(R)$ but $R/I \in \Der(R)$ is not descendable \cite[Example 11.21]{BhattScholze2017Projectivity}.
\end{example}

\begin{example}
	For another counterexample, let $R$ be the valuation domain which is called $A$ in \cite{Keller94b}. Let $k\coloneqq R/\mathfrak{m}$ be its residue field and let $Q$ denote its field of fractions. As shown by Bazzoni--{\Stovicek} (see \cite[Example 5.24]{BazzoniStovicek17}), $R\to Q\times k$ induces a conservative (non-finite) smashing localization $\Der(R) \to \Der(Q\times k)$ which is surjective on Balmer spectra. However the idempotent algebra $Q\times k$ is not descendable. (Note that an idempotent algebra $f$ is descendable if and only if $f = \unit$.) This example also shows that the conservativity of $A\otimes -$ does not imply that $A$ is descendable, i.e., the converse of \Cref{cor:descendable-is-conservative} does not hold.
\end{example}

\begin{construction}
	Given $A \in \CAlg(\C)$, we can form a cosimplicial object in $\C$
	\[
		\mathrm{CB}^\bullet(A)= \left\lbrace A \rightrightarrows A \otimes A 
		\substack{\rightarrow\\[-1em] \rightarrow \\[-1em] \rightarrow} \ldots\right\rbrace
		\in \Fun(\Delta, \C)
	\]
	called the \emph{cobar construction} on $A$. Then the sequence of partial totalizations
	\[
		\Tot_n( \mathrm{CB}^\bullet(A))= \lim_{[i]\in\Delta, i\leq n}A^{i+1}
	\]
	naturally arranges into a tower, whose inverse limit is given by the totalization $\Tot( \mathrm{CB}^\bullet(A))$. The cobar construction extends to an augmented cosimplicial object
	\[
		\mathrm{CB}^\bullet_{\mathrm{aug}}(A) =
		\left\lbrace\mathbbm{1} \to A \rightrightarrows A \otimes A 
		\substack{\rightarrow\\[-1em] \rightarrow \\[-1em] \rightarrow} \ldots\right\rbrace
		\in \Fun(\Delta_+, \C)
	\]
	where $\Delta_+$ is the augmented simplex category of finite ordered sets.  
\end{construction}

\begin{proposition}\label{prop-descent-tot}
	Suppose that $A \in \CAlg(\C)$ is descendable. Then for all $x\in\C$ and any exact functor $F \colon \C \to \D$ where $\D$ is a stable $\infty$-category, the augmented cosimplicial object 
	\[
		F(x\otimes\mathrm{CB^\bullet_{\mathrm{aug}}}(A))\colon \Delta^+ \to \D
	\]
	is a limit diagram. That is, the natural morphism
	\[
		\eta_x\colon F(x) \to\Tot(F(x \otimes A) \rightrightarrows F(x \otimes A \otimes A)  
		\substack{\rightarrow\\[-1em] \rightarrow \\[-1em] \rightarrow} \cdots )
	\]
	is an equivalence in $\D$. 
\end{proposition}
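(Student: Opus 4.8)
The plan is to reduce to functors into spectra and then run a thick-ideal argument powered by the hypothesis $\thickid{A}=\C$.

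\emph{Reduction.} First I would show it suffices to prove the special case: for every exact functor $H\colon\C\to\Sp$, the augmented cosimplicial spectrum $H(\mathrm{CB}^\bullet_{\mathrm{aug}}(A))$ is a limit diagram. Granting this, let $x\in\C$ and $F\colon\C\to\D$ be as in the statement. For each $d\in\D$ the functor $H_d:=\hom_{\D}(d,-)\circ F\circ(x\otimes-)\colon\C\to\Sp$ is exact --- the tensor on $\C$ is exact in each variable, $F$ is exact, and $\hom_{\D}(d,-)$ is exact because $\D$ is stable --- and $H_d(\mathrm{CB}^\bullet_{\mathrm{aug}}(A))\simeq\hom_{\D}(d,F(x\otimes\mathrm{CB}^\bullet_{\mathrm{aug}}(A)))$. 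Since an augmented cosimplicial object of $\D$ is a limit diagram precisely when it becomes one after applying $\hom_{\D}(d,-)$ for all $d\in\D$, applying the special case to every $H_d$ shows that $F(x\otimes\mathrm{CB}^\bullet_{\mathrm{aug}}(A))$ is a limit diagram, i.e.\ that $\eta_x$ is an equivalence. (A bonus of landing in $\Sp$ is that it is complete, so totalizations always exist there.)

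\emph{The thick ideal of good objects.} Let $\mathcal G\subseteq\C$ consist of those $Y$ for which $H(Y\otimes\mathrm{CB}^\bullet_{\mathrm{aug}}(A))$ is a limit diagram in $\Sp$ for \emph{every} exact $H\colon\C\to\Sp$. I claim first that $A\in\mathcal G$: the augmented cosimplicial object $A\otimes\mathrm{CB}^\bullet_{\mathrm{aug}}(A)$ is \emph{split}, with the multiplication $\mu\colon A\otimes A\to A$ furnishing the extra codegeneracies (this is the comonadic/\v{C}ech resolution of $A$, which splits after extending scalars along $A$; \cref{rem:algebra-splits} records its degree-zero shadow). Any functor preserves the splitting datum, so $H(A\otimes\mathrm{CB}^\bullet_{\mathrm{aug}}(A))$ is a split augmented cosimplicial spectrum, hence an (absolute) limit diagram.

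\emph{$\mathcal G$ is a thick ideal, and $\unit\in\mathcal G$.} Closure of $\mathcal G$ under shifts and retracts is clear. Given a fiber sequence $Y'\to Y\to Y''$ in $\C$ with $Y',Y''\in\mathcal G$ and an exact $H$, applying $H(-\otimes A^{\otimes n+1})$ levelwise produces a fiber sequence of augmented cosimplicial spectra; as $\Tot\colon\Fun(\Delta,\Sp)\to\Sp$ is exact (and defined everywhere, $\Sp$ being complete), this yields a map of fiber sequences from $H(Y')\to H(Y)\to H(Y'')$ to the corresponding totalizations whose outer vertical maps are equivalences, whence so is the middle one; thus $Y\in\mathcal G$. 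For the ideal property: if $Y\in\mathcal G$, $Z\in\C$, and $H\colon\C\to\Sp$ is exact, then $H\circ(Z\otimes-)$ is again exact, and applying the definition of $\mathcal G$ to it shows $H(Z\otimes Y\otimes\mathrm{CB}^\bullet_{\mathrm{aug}}(A))$ is a limit diagram; hence $Z\otimes Y\in\mathcal G$. So $\mathcal G$ is a thick ideal of $\C$ containing $A$, hence $\mathcal G\supseteq\thickid{A}$. Since $A$ is descendable, $\thickid{A}=\C$, so $\unit\in\mathcal G$ --- which by the reduction is exactly the proposition.

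\emph{Expected main obstacle.} With the reduction in place the argument is essentially formal; the only inputs requiring care are the splitting of $A\otimes\mathrm{CB}^\bullet_{\mathrm{aug}}(A)$ (assembling the full coherent system of extra codegeneracies, not merely the degree-zero retraction of \cref{rem:algebra-splits}) and the standard fact that split augmented cosimplicial objects are absolute limit diagrams. Both are well known and underlie Mathew's treatment of descent; see \cite[Section 3]{Mathew2016}, from which this circle of ideas stems. One could alternatively route the proof through \cref{prop:nil-faithful} to obtain $\otimes$-nilpotence of $\xi$ and check directly that iterated composites in the fiber tower of the cobar construction vanish, but the thick-ideal packaging above seems cleanest.
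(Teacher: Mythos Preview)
Your proof is correct and follows essentially the same strategy as the paper: show that the augmented cobar construction becomes split after tensoring with $A$ (so any functor carries it to an absolute limit diagram), and then run a thick-ideal argument from $A$ using descendability. The paper works directly with a fixed $F$ and observes that the class of $x$ for which $\eta_x$ is an equivalence is thick and contains $A\otimes\C$; your preliminary reduction to $\Sp$-valued functors is not in the paper but is a clean way to make the thickness step rigorous when $\D$ is not known to be complete, and the quantification over all exact $H$ is what lets you verify the \emph{ideal} property rather than just thickness.
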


\begin{proof}
	This is proved in~\cite[Proposition 2.12]{Mathew2018} but we recall the argument for future reference.  We start by noting that if $x=A\otimes y$, then the augmented cosimplicial object $x\otimes \mathrm{CB}^\bullet_\mathrm{aug}(A)$ admits an ``extra degeneracy'' or splitting~\cite[Definition 4.7.2.2]{HA} which is induced by the multiplication map of $A$. It then follows from~\cite[Lemma 6.1.3.16]{HTT} that the functor $F(x\otimes\mathrm{CB^\bullet_{\mathrm{aug}}}(A))$ is a limit diagram. In other words $\eta_x$ is an equivalence if $x=A \otimes y$. Since the class of $x$ for which $\eta_x$ is an equivalence is thick, it follows that $\eta_x$ is an equivalence for all $x\in\thickid{A}$. Since $A$ is descendable we have $\thickid{A}=\C$.
\end{proof}

Recall the following useful characterization of descendable commutative algebras. 

\begin{proposition}[{\cite{Mathew2016}*{Proposition 3.20}}]\label{prop-characterization-descent}
	Consider $A\in\CAlg(\C)$. Then $A$ is descendable if and only if the Tot-tower $\{\Tot_n(\mathrm{CB}^\bullet(A)) \}_{n \geq 0}$ is pro-constant and convergent to $\unit$ (i.e., $\mathrm{CB}^\bullet_{\mathrm{aug}}(A)$ is a limit diagram).
\end{proposition}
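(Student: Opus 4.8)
The plan is to prove the two implications separately, writing $W\xrightarrow{\xi}\unit\xrightarrow{\eta}A\to\Sigma W$ for the triangle \eqref{eq:triangle-on-unit} throughout.

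For the implication that a pro-constant Tot-tower convergent to $\unit$ forces descendability, the argument is soft. Each partial totalisation $\Tot_n(\mathrm{CB}^\bullet(A)) = \lim_{[i]\in\Delta,\, i\le n}A^{\otimes(i+1)}$ is a \emph{finite} limit of tensor powers of $A$. Since $A\in\thickid{A}$ and $\thickid{A}$ is a thick ideal, each $A^{\otimes(i+1)} = A\otimes A^{\otimes i}$ lies in $\thickid{A}$; as a thick subcategory of a stable $\infty$-category is closed under finite limits, $\Tot_n(\mathrm{CB}^\bullet(A))\in\thickid{A}$ for every $n$. Now a tower that is pro-constant and convergent to $\unit$ is pro-isomorphic to the constant tower on $\unit$, and such a pro-isomorphism is witnessed in particular by maps $\unit\to\Tot_m(\mathrm{CB}^\bullet(A))$ and $\Tot_m(\mathrm{CB}^\bullet(A))\to\unit$ (for $m$ large) whose composite $\unit\to\unit$ is the identity. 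Hence $\unit$ is a retract of $\Tot_m(\mathrm{CB}^\bullet(A))$, and since thick ideals are closed under retracts, $\unit\in\thickid{A}$; that is, $A$ is descendable.

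For the converse, recall from the proof of \Cref{prop:nil-faithful} that $A$ is descendable if and only if $\xi$ is tensor-nilpotent, say $\xi^{\otimes N}=0$. The crucial input I would establish is an identification of the fibre tower of the augmentation:
\[
	\mathrm{fib}\bigl(\unit\to\Tot_n(\mathrm{CB}^\bullet(A))\bigr)\;\simeq\;W^{\otimes(n+1)},
\]
under which the transition map $W^{\otimes(n+1)}\to W^{\otimes n}$ is, up to a permutation of the tensor factors, $\xi\otimes\id_{W^{\otimes n}}$. This is proved by induction on $n$: the base case $n=0$ is $\mathrm{fib}(\unit\xrightarrow{\eta}A)=W$, and the inductive step uses the standard fibre sequences $\mathrm{fib}\bigl(\Tot_n(\mathrm{CB}^\bullet(A))\to\Tot_{n-1}(\mathrm{CB}^\bullet(A))\bigr)\simeq\Sigma^{-n}N^n(\mathrm{CB}^\bullet(A))$ together with the identification $N^n(\mathrm{CB}^\bullet(A))\simeq A\otimes(\Sigma W)^{\otimes n}$ of the $n$-th normalised cochain object of the cobar construction, which follows from \eqref{eq:triangle-on-unit} and the splitting recorded in \Cref{rem:algebra-splits}. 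Granting this identification, tensor-nilpotence of $\xi$ makes every $N$-fold composite in the fibre tower vanish, so that tower is pro-zero; consequently the augmentation exhibits $\{\Tot_n(\mathrm{CB}^\bullet(A))\}_n$ as pro-isomorphic to the constant tower on $\unit$, and in particular it is pro-constant. Finally $\lim_n\Tot_n(\mathrm{CB}^\bullet(A))\simeq\unit$, either because a pro-zero tower has vanishing limit or by applying \Cref{prop-descent-tot} with $x=\unit$ and $F=\id_{\C}$; thus $\mathrm{CB}^\bullet_{\mathrm{aug}}(A)$ is a limit diagram, completing the proof.

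The main obstacle will be the fibre-tower identification in the second implication: unwinding the skeletal/normalised filtration of the cobar construction to see that the successive fibres of the augmentation are tensor powers of $W$ and that the transition maps are governed by $\xi$ --- in particular, getting the shifts right in $N^n(\mathrm{CB}^\bullet(A))\simeq A\otimes(\Sigma W)^{\otimes n}$. Everything else is formal: the first implication uses only closure properties of thick ideals, while the second needs only that a tower with vanishing $N$-fold composites is pro-zero, together with \Cref{prop-descent-tot}.
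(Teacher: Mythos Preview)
The paper does not actually give a proof of this proposition; it is stated as a recollection from \cite{Mathew2016}*{Proposition 3.20}, so there is no ``paper's own proof'' to compare against. Your argument is correct and is essentially the standard one (as in Mathew's paper and in \cite{MNN}): the backward direction uses that each $\Tot_n$ lies in $\thickid{A}$ and that pro-constancy with limit $\unit$ forces $\unit$ to be a retract of some $\Tot_m$; the forward direction identifies the fibre tower of the augmentation with $\{W^{\otimes(n+1)}\}$ under transition maps governed by $\xi$, so that tensor-nilpotence of $\xi$ makes this tower pro-zero.

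One small clarification for the forward direction: rather than going through the normalised cochain objects $N^n$, the cleanest route to the identification $\mathrm{fib}(\unit\to\Tot_n)\simeq W^{\otimes(n+1)}$ is the Adams-tower description. Set $X_0=\unit$ and $X_{k+1}=W\otimes X_k$ with structure map $\xi\otimes\id$; then $\mathrm{cofib}(X_{n+1}\to X_0)\simeq\Tot_n(\mathrm{CB}^\bullet(A))$ by an inductive comparison of the two filtrations (this is, for example, \cite{MNN}*{Proposition 2.14}). This avoids having to track the shifts in $N^n$ directly and makes it transparent that the $N$-fold composite in the fibre tower is $\xi^{\otimes N}\otimes\id_{W^{\otimes n}}$, which vanishes once $\xi^{\otimes N}=0$.
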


This enables us to characterize dualizable descendable algebras internally in $\C^{d}$:

\begin{lemma}\label{lem-descent-dualizable-equivalent}
	The following are equivalent for $A\in\CAlg(\C)$:
	\begin{enumerate}
		\item $A$ is dualizable and descendable in $\C$;
		\item $A$ is dualizable and descendable in $\C^d$;
		\item $A$ is dualizable and $\supp(A) = \Spc(\cat C^d)$.
	\end{enumerate}
\end{lemma}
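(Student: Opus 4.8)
The plan is to prove the equivalences in a cycle, roughly $(1)\Leftrightarrow(2)$ and then $(2)\Leftrightarrow(3)$, using the characterization of descendability via the cobar tower (\Cref{prop-characterization-descent}) together with the fact that the inclusion $\C^d \hookrightarrow \C$ is a fully faithful tt-functor closed under the relevant finite limits. The key observation is that for $A$ dualizable, the partial totalizations $\Tot_n(\mathrm{CB}^\bullet(A))$ are built out of the objects $A^{\otimes i+1}$ by finite limits (equivalently finite colimits, since we are in a stable setting), and finite (co)limits of dualizable objects are dualizable; hence the entire Tot-tower lives in $\C^d$ and its limit, when it exists and equals $\unit$, is computed the same way whether we work in $\C$ or in $\C^d$.

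For $(1)\Leftrightarrow(2)$: since $\C^d \subseteq \C$ is a full tt-subcategory and $\unit \in \C^d$, the thick ideal $\thickid{A}$ computed inside $\C^d$ is contained in the one computed inside $\C$; but descendability is equivalent (\Cref{prop:nil-faithful}, or directly \Cref{def:descendable} combined with the cobar description) to the single morphism $\xi\colon W \to \unit$ from \eqref{eq:triangle-on-unit} being tensor-nilpotent, and when $A$ is dualizable the cone $W$ is dualizable, so this nilpotence statement is intrinsic to $\C^d$. Concretely, $A$ is descendable in $\C$ iff $\xi^{\otimes n}=0$ for some $n$, iff the same holds in $\C^d$ (the vanishing of a map between dualizable objects is detected in the full subcategory $\C^d$), iff $A$ is descendable in $\C^d$. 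This also uses that $\mathrm{CB}^\bullet(A)$ computed in $\C^d$ agrees with the one computed in $\C$, so the Tot-tower criterion of \Cref{prop-characterization-descent} transports directly.

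For $(2)\Leftrightarrow(3)$: here $\C^d$ is a rigid tt-$\infty$-category (an essentially small, idempotent-complete stable $\infty$-category with the relevant monoidal structure), so $\Spc(\C^d)$ is its Balmer spectrum and every thick ideal of $\C^d$ is radical (\Cref{rem-all-ideal-radical}). Under the classification of radical thick ideals by Thomason subsets of $\Spc(\C^d)$, the thick ideal generated by $A$ corresponds to $\supp(A)$, so $\thickid{A} = \C^d$ iff $\supp(A) = \Spc(\C^d)$. Since $A$ is assumed dualizable, $\thickid{A}$ makes sense inside $\C^d$ and this is exactly the statement that $A$ is descendable in $\C^d$, giving the equivalence. (One should note $\Spc(\cat C^d) = \Spc(\cat C^c)$ when $\C$ is rigidly-compactly generated, so in that situation all three conditions match the hypotheses appearing elsewhere in the paper, e.g.\ in \Cref{prop:descendable-surjective}.)

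The main obstacle I expect is the careful bookkeeping in $(1)\Leftrightarrow(2)$: one must check that passing between $\C$ and $\C^d$ does not change what "descendable" means, which requires knowing that (a) the cone $W$ on a morphism between dualizable objects is dualizable, (b) tensor powers and the vanishing $\xi^{\otimes n}=0$ are detected in the full subcategory $\C^d$, and (c) the partial totalizations $\Tot_n(\mathrm{CB}^\bullet(A))$ computed in $\C^d$ agree with those in $\C$ — which follows since finite limits in a stable $\infty$-category are finite colimits and $\C^d \hookrightarrow \C$ is closed under both. None of these is deep, but assembling them cleanly, and being careful that "descendable in $\C^d$" a priori only asks $\thickid{A} = \C^d$ with the thick ideal taken inside $\C^d$, is where the argument has to be written with some care.
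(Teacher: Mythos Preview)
Your proposal is correct and essentially follows the paper's approach. For $(2)\Leftrightarrow(3)$ your argument is identical to the paper's: use that $\C^d$ is rigid so all thick ideals are radical, then invoke the classification of radical thick ideals to translate $\thickid{A}=\C^d$ into $\supp(A)=\Spc(\C^d)$.

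For $(1)\Leftrightarrow(2)$ there is a minor difference worth noting. The paper argues exclusively via the Tot-tower criterion (\Cref{prop-characterization-descent}): since $A$ is dualizable, each $A^{\otimes i+1}$ and each partial totalization $\Tot_n(\mathrm{CB}^\bullet(A))$ lies in $\C^d$, so the pro-constancy and convergence to $\unit$ witnessed in $\C$ already takes place inside $\C^d$. You mention this too, but you lead with the tensor-nilpotence characterization (\Cref{prop:nil-faithful}): $A$ descendable iff $\xi^{\otimes n}=0$ for some $n$, and since $W$ is dualizable when $A$ is, this vanishing is detected in the full subcategory $\C^d$. Both routes are valid; your nilpotence argument is arguably more elementary (it avoids the pro-category language entirely), while the paper's Tot-tower argument is what makes direct contact with the characterization in \Cref{prop-characterization-descent}.
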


\begin{proof}
	Recall that $\C^d$ is a symmetric monoidal subcategory of $\C$ so (b) implies~(a). Conversely, suppose that $A$ is dualizable and descendable in $\C$. Then by \Cref{prop-characterization-descent} the Tot-tower $\{\Tot_n(\mathrm{CB}^\bullet(A)) \}_{n \geq 1}\in \mathrm{Tow}(\C)$ is pro-constant and convergent to $\unit$. Since $A$ is dualizable, $\mathrm{CB}^\bullet(A)$ defines a cosimplicial object in $\C^{d}$ and so the resulting Tot-tower belongs to $\mathrm{Tow}(\C^d)\subseteq \mathrm{Tow}(\C)$.  This remains pro-constant and convergent to $\unit\in\C^d$ so $A$ is descendable in $\C^d$ by \Cref{prop-characterization-descent}. For the equivalence between (b) and (c), note that for any dualizable object $A \in \cat C^d$, $\thickid{A}=\cat C^d$ is equivalent to $\unit \in \thickid{A}$. Moreover, since $\cat C^d$ is rigid, the thick ideal generated by $A$ is the same as the radical thick ideal generated by~$A$. So by the classification of radical thick ideals, $\unit \in \thickid{A}$ is equivalent to $\supp(\unit)\subseteq \supp(A)$.
\end{proof}

Next we show how we can use descendability to detect compact objects. 

\begin{proposition}
	Let $\cat C$ be a big tt-$\infty$-category which is generated by dualizable objects. If $A \in \CAlg(\cat C)$ is descendable then $F_A\colon \cat C \to \mod{A}(\cat C)$ reflects compactness: If $x \in \cat C$ is such that $F_A(x)$ is compact in $\mod{A}(\cat C)$ then $x$ is compact in $\cat C$.
\end{proposition}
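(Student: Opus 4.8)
The aim is to show that $\hom_{\cat C}(x,-)\colon \cat C \to \Sp$ preserves coproducts, which by \Cref{lem-compact} is equivalent to $x$ being compact. The plan is to run a thick-ideal argument in the tensor variable: consider the full subcategory
\[
	\cat D \coloneqq \SET{z \in \cat C}{\hom_{\cat C}(x, z \otimes -)\colon \cat C \to \Sp \text{ preserves coproducts}}.
\]
First I would check that $\cat D$ is a thick ideal of $\cat C$. Thickness is formal: for fixed $x$ the functor $z \mapsto \hom_{\cat C}(x, z\otimes -)$ is exact (pointwise in the remaining variable) and coproducts of spectra are exact, so the defining condition is closed under (co)fibres, shifts and retracts. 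For the ideal property, note that for any $w \in \cat C$ there is a natural identification $\hom_{\cat C}(x, (z\otimes w)\otimes -) \simeq \hom_{\cat C}(x, z \otimes -)\circ (w\otimes -)$; since the tensor product of $\cat C$ commutes with colimits in each variable, $w\otimes -$ preserves coproducts, and hence so does the composite whenever $z \in \cat D$.

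The substance of the proof is the claim $A \in \cat D$, and this is exactly where the hypothesis that $F_A(x)$ is compact is used. Restriction of scalars along $\unit \to A$ gives a natural equivalence $U_A F_A(-) \simeq A \otimes -$, and combining this with the mapping-spectrum form of the adjunction $F_A \dashv U_A$ (recalled in the discussion preceding \Cref{lem-proj-formula-holds}) yields a natural equivalence
\[
	\hom_{\cat C}(x, A \otimes -) \;\simeq\; \hom_{\mod{A}(\cat C)}(F_A(x), F_A(-)) \;=\; \hom_{\mod{A}(\cat C)}(F_A(x), -)\circ F_A .
\]
Here $F_A$ preserves coproducts because it is a left adjoint, and $\hom_{\mod{A}(\cat C)}(F_A(x), -)$ preserves coproducts because $F_A(x)$ is compact in the big tt-$\infty$-category $\mod{A}(\cat C)$ (\Cref{lem-compact} applied to $\mod{A}(\cat C)$). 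Hence $\hom_{\cat C}(x, A \otimes -)$ preserves coproducts, i.e.\ $A \in \cat D$.

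To conclude, since $\cat D$ is a thick ideal containing $A$ we get $\thickid{A} \subseteq \cat D$; as $A$ is descendable, $\thickid{A} = \cat C$, so $\cat D = \cat C$. In particular $\unit \in \cat D$, which says that $\hom_{\cat C}(x, \unit \otimes -) \simeq \hom_{\cat C}(x, -)$ preserves coproducts, and therefore $x$ is compact by \Cref{lem-compact}. Rather than a serious obstacle, the crux of the argument is setting up the right thick ideal $\cat D$; after that, the only mildly delicate point is the naturality of the displayed identification at the level of mapping \emph{spectra} (not just mapping spaces), which follows formally from the stable adjunction $F_A \dashv U_A$. One could equivalently phrase the whole argument through the cobar tower of \Cref{prop-descent-tot}: each term $y \mapsto y\otimes A^{\otimes n+1}$ is of the form $A\otimes(-)$ and hence lies in $\cat D$, and descendability ($\unit \in \thickid{A}$) lets one build $\unit$ from these using only finite colimits and retracts — which is precisely the content of $\cat D = \cat C$.
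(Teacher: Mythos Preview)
Your proof is correct and follows essentially the same strategy as the paper: a thick-subcategory argument in the tensor variable, using the adjunction $F_A \dashv U_A$ to show that $A$ (or more generally $A\otimes \cat C$) lies in the good class, and then invoking descendability. The paper fixes a family $\{z_i\}$ and works with the thick subcategory $\CU$ of $y$'s for which $\bigoplus_i \Hom_{\cat C}(x, y\otimes z_i) \to \Hom_{\cat C}(x, y\otimes \coprod_i z_i)$ is an isomorphism; you instead work with mapping spectra and package this uniformly as the condition that $\hom_{\cat C}(x, z\otimes -)$ preserves coproducts.

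One point worth noting: your argument is slightly cleaner in that you verify directly that $\cat D$ is a thick \emph{ideal} (using only that $\otimes$ commutes with coproducts) and then check $A \in \cat D$ via $U_A F_A \simeq A\otimes -$. The paper instead shows $U_A(s) \in \CU$ for every $s \in \mod{A}(\cat C)$, invoking the projection formula (\Cref{lem-proj-formula-holds}), which is where the hypothesis that $\cat C$ is generated by dualizable objects enters. Your route bypasses the projection formula entirely, so your argument in fact does not use that hypothesis. This is a genuine (if minor) improvement, and you may wish to flag it.
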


\begin{proof}
	This is essentially~\cite[Proposition 3.28]{Mathew2016}. We repeat the argument here for completeness. For any set of objects $\{z_i\}$, we need to show that the map 
	\[
		\bigoplus_i \Hom_{\C}(x, z_i) \to \Hom_{\C}(z, \coprod_i z_i)
	\] 
	is an isomorphism. Consider the collection $\CU$ of objects $y$ such that 
	\[
		\bigoplus_i \Hom_{\C}(x, y\otimes z_i) \to \Hom_{\C}(x, y\otimes \coprod_i z_i)
	\]
	is an isomorphism. We would like to show that it contains $\unit$. The collection $\CU$ forms a thick subcategory. Using the $F_A \dashv U_A$ adjunction and the projection formula (\cref{lem:preserve-gen}), one checks that it contains $U_A(s)$ for every $s \in \mod{A}(\cat C)$. Hence it contains $A \otimes \cat C$. That is, it contains $\thick{A\otimes \cat C}=\thickid{A}$, and hence contains~$\unit$.
\end{proof}

We finish this section by recalling the following result on descent theory.

\begin{proposition}[\cite{Mathew2016}*{Proposition 3.22}]\label{prop-category-as-tot}
	Let $\C\in\CAlg(\Pr)$ and consider $A\in\CAlg(\C)$ descendable. Then the base change adjunction $F_A \colon\C\rightleftarrows\mod{A}(\C)\noloc U_A$ is comonadic. In particular, the canonical map 
	\[
		\C \to \Tot(\mod{A}(\C) \rightrightarrows \mod{A\otimes A}(\C)) \substack{\rightarrow\\[-1em] \rightarrow \\[-1em] \rightarrow} \cdots )
	\]
	is an equivalence.
\end{proposition}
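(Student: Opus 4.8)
\emph{Proof sketch.} The plan is to deduce the comonadicity of the adjunction $F_A \dashv U_A$ from the $\infty$-categorical (dual) Barr--Beck--Lurie theorem \cite[Theorem 4.7.3.5]{HA}, and then to read off the ``in particular'' clause by formal comonadic descent. The theorem asks for three things: that $F_A$ be conservative, that $\C$ admit totalizations of $F_A$-split cosimplicial objects, and that $F_A$ preserve those totalizations. The first holds because a descendable algebra has conservative base change (\cref{cor:descendable-is-conservative}); the second is automatic, as $\C$ is presentable and hence complete.

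The substance is the third condition. Here I would first reduce using the right adjoint: $U_A$ is conservative and, being a right adjoint, preserves all limits, while $U_A F_A(-) \simeq A \otimes (-)$. So it is enough to show that $A \otimes (-)$ sends the totalization of an $F_A$-split cosimplicial object $X^\bullet$ to the totalization of $A \otimes X^\bullet$. Since $X^\bullet$ is $F_A$-split, $F_A(X^\bullet)$ is split, hence so is $A \otimes X^\bullet \simeq U_A F_A(X^\bullet)$; thus the latter is an absolute limit diagram, preserved by every functor (cf.\ the proof of \cref{prop-descent-tot} and \cite[Lemma 6.1.3.16]{HTT}). As $F_A$ and $A \otimes (-)$ are exact they commute with each finite partial totalization $\Tot_n$, so the only remaining point is that $A \otimes (-)$ commutes with the countable inverse limit $\Tot X^\bullet = \lim_n \Tot_n X^\bullet$.

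This last point is precisely where descendability, rather than mere exactness, enters, and I expect it to be the main obstacle. I would feed it in through \cref{prop-descent-tot}: applied levelwise to $X^\bullet$ with the exact functor $\id_\C$, it identifies $X^\bullet$ with the cosimplicial object $[m] \mapsto \Tot_{[k]}(X^m \otimes A^{\otimes (k+1)})$, so that after interchanging the two totalizations $\Tot X^\bullet$ becomes an iterated totalization whose inner diagrams $\Tot_{[m]}(X^\bullet \otimes A^{\otimes (k+1)})$ are totalizations of split cosimplicial objects of $\C$ --- split because $A \otimes X^\bullet = U_A F_A(X^\bullet)$ is split and tensoring with $A^{\otimes k}$ preserves this, hence absolute and preserved by $F_A$ --- while the outer totalization over $[k]$ is governed by the pro-constancy of the cobar tower of $A$ supplied by \cref{prop-characterization-descent}. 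The delicate part, and the reason this bookkeeping is non-trivial, is that since $A$ is not assumed dualizable the functor $A^{\otimes (k+1)} \otimes (-)$ need not preserve infinite limits, so one cannot simply pull the cobar factor out of the inner totalization and must instead thread the split and pro-constant inputs together carefully. Carried out, this gives $F_A(\Tot X^\bullet) \simeq \Tot F_A(X^\bullet)$ and finishes the Barr--Beck--Lurie verification; this is exactly the argument of \cite[Proposition 3.22]{Mathew2016}, which I would follow.

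For the final assertion, once $F_A$ is comonadic the $\infty$-category $\C$ is identified with the $\infty$-category of comodules over the comonad $F_A U_A$ on $\mod{A}(\C)$, which by \cite{HA} is the totalization of the cobar cosimplicial $\infty$-category of that comonad. Using \cref{cor-double-module-category} iteratively to identify the $n$-th term with $\mod{A^{\otimes (n+1)}}(\C)$, this cobar diagram is precisely $[n] \mapsto \mod{A^{\otimes (n+1)}}(\C)$ with the base-change coface and codegeneracy maps, so the comparison functor becomes the asserted equivalence $\C \simeq \Tot\big(\mod{A}(\C) \rightrightarrows \mod{A \otimes A}(\C)\cdots\big)$.
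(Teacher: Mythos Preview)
The paper does not supply its own proof of this proposition; it is recorded with a citation to \cite{Mathew2016}*{Proposition 3.22} and no argument is given. Your sketch is a faithful reconstruction of Mathew's proof: the reduction to the comonadic Barr--Beck--Lurie criterion, the use of \cref{cor:descendable-is-conservative} for conservativity, the reduction of the limit-preservation condition to showing that $A\otimes(-)$ commutes with the relevant totalization (via conservativity of $U_A$ and $U_AF_A\simeq A\otimes(-)$), and the identification of descendability---through the pro-constancy of the cobar tower---as the mechanism that forces this commutation, are all exactly the ingredients Mathew uses. Your reading of the ``in particular'' clause via comonadic descent and the identification of the cobar resolution with $[n]\mapsto \mod{A^{\otimes(n+1)}}(\cat C)$ is likewise correct.
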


\section{Descent for localizing ideals}

The goal of this section is to establish a descent strategy for classifying localizing ideals; see \Cref{prop-equalizer} below. This will be used in \Cref{sec:stratification} to deduce a descent result for stratification.

\begin{definition}
	For a big tt-category $\cat T$, we denote by $\Locid(\cat T)$ the (large) poset of localizing ideals of~$\cat T$ ordered by inclusion.
\end{definition}

\begin{remark}
	Given two big tt-categories $\cat T$ and $\cat S$, and \emph{any} functor $F \colon \cat T \to \cat S$, we define a map 
	\[
		F_* \colon \Locid(\cat T) \to \Locid(\cat S)
	\]
	by sending a localizing ideal $\cat L \subseteq \cat T$ to the localizing ideal generated by $F(\cat L)\subseteq \cat S$, that is $F_*(\cat L)\coloneqq \locid{F(\cat L)}$. If $\cat L=\locid{t_i \mid i \in I}$ is generated by a set of objects, it is tempting to claim that $F_*(\cat L)=\locid{F(\cat L)}$ is equal to $\locid{Ft_i \mid i\in I}$. Note that we always have the containment $F_*(\cat L) \supseteq \locid{Ft_i \mid i\in I}$ but in general this might not be an equality. The next result shows that we do have such an equality under some mild conditions on $F$:
\end{remark}
 
\begin{lemma}\label{lem-inverse-image-ideal}
	Let $F\colon \cat T \to \cat S$ be a functor between big tt-categories. Suppose the preimage $F^{-1}$ preserves localizing ideals. Then
	\[
		\locid{F(\cat E)} = \locid{F(\Locid(\cat E))}
	\]
	for any collection of objects $\cat E \subseteq \cat T$.
\end{lemma}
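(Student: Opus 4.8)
The plan is to prove the two inclusions separately. The inclusion $\locid{F(\cat E)} \subseteq \locid{F(\Locid(\cat E))}$ is immediate from $\cat E \subseteq \Locid(\cat E)$ and monotonicity of $\locid{-}$, so the content is the reverse inclusion $\locid{F(\Locid(\cat E))} \subseteq \locid{F(\cat E)}$. First I would observe that it suffices to show $F(\locid{\cat E}) \subseteq \locid{F(\cat E)}$, since then applying $\locid{-}$ to both sides gives the claim (the right-hand side is already a localizing ideal). Equivalently, setting $\cat L := \locid{F(\cat E)} \subseteq \cat S$, it is enough to show that every object of $\Locid(\cat E)$ lands in $F^{-1}(\cat L)$ under $F$.

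The key step is then the following: consider the preimage $F^{-1}(\cat L) \subseteq \cat T$. By hypothesis, $F^{-1}$ carries localizing ideals to localizing ideals, so $F^{-1}(\cat L)$ is a localizing ideal of $\cat T$. Moreover $\cat E \subseteq F^{-1}(\cat L)$, because for $e \in \cat E$ we have $F(e) \in F(\cat E) \subseteq \locid{F(\cat E)} = \cat L$. Since $\Locid(\cat E) = \locid{\cat E}$ is by definition the smallest localizing ideal of $\cat T$ containing $\cat E$, we conclude $\Locid(\cat E) \subseteq F^{-1}(\cat L)$. Unwinding, this says $F(\Locid(\cat E)) \subseteq \cat L = \locid{F(\cat E)}$, and applying $\locid{-}$ yields $\locid{F(\Locid(\cat E))} \subseteq \locid{F(\cat E)}$, completing the proof.

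There is no real obstacle here — the argument is a soft one-line application of the universal property of the generated localizing ideal combined with the hypothesis on $F^{-1}$. The only point worth being careful about is the notational clash in the statement: $\Locid(\cat E)$ should be read as $\locid{\cat E}$, the localizing ideal generated by the collection $\cat E$ (not the poset of all localizing ideals of $\cat E$), and I would make sure the write-up reflects that reading. If the paper instead intends some functorial-image notation, the same two-inclusion skeleton applies verbatim with the appropriate definition substituted.
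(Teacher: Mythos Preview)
Your proof is correct and follows essentially the same approach as the paper: the paper also notes the $\subseteq$ inclusion is immediate, then for the reverse inclusion observes that $F^{-1}(\locid{F(\cat E)})$ is a localizing ideal by hypothesis, contains $\cat E$, and hence contains $\locid{\cat E}$. Your observation about the notation is apt; indeed $\Locid(\cat E)$ in the statement should be read as $\locid{\cat E}$.
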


\begin{proof}
	The $\subseteq$ inclusion is immediate. For the reverse inclusion, observe that $F^{-1}(\locid{F(\cat E)}$ is a localizing ideal by hypothesis. Since it contains $\cat E$, we conclude that it contains $\locid{\cat E}$. Hence $F(\locid{\cat E})$ is contained in $\locid{F(\cat E)}$ and we are done.
\end{proof}

\begin{remark}\label{rem-composition}
	It follows from \Cref{lem-inverse-image-ideal} that if $F\colon\cat T \to \cat S$ and $G\colon\cat S \to \cat R$ are functors and $G^{-1}$ preserves localizing ideals, then $(G\circ F)_* = G_* \circ F_*$.
\end{remark}

\begin{remark}
	The conclusion of \Cref{lem-inverse-image-ideal} says that $F_*(\locid{\cat E})=\locid{F(\cat E)}$ for any collection of objects $\cat E \subseteq \cat T$. In this case, we say that $F_*$ can be ``defined on generators''.
\end{remark}

\begin{example}\label{exa:tt-preserves}
	Let $F\colon \cat T \to \cat S$ be a geometric tt-functor between big tt-categories. Then $F^{-1}$ preserves localizing ideals and so $F_*$ can be ``defined on generators''.
\end{example}

\begin{lemma}\label{lem:conserve-preserves}
	Let $F\colon \cat T\to \cat S$ be a geometric tt-functor between rigidly compactly generated tt-categories. If its right adjoint $U\colon\cat S\to \cat T$ is conservative then $U^{-1}$ preserves localizing ideals. Hence we have
	\[
		U_*(\locid{\cat E}) = \locid{U(\cat E)}
	\]
	for any collection of objects $\cat E\subseteq \cat S$.
\end{lemma}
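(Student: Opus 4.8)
The plan is to show directly that the preimage under $U$ of a localizing ideal is again a localizing ideal; the displayed formula then follows immediately from \Cref{lem-inverse-image-ideal} applied to the functor $U$. So fix a localizing ideal $\cat M\subseteq\cat T$ and set $U^{-1}(\cat M)\coloneqq\SET{s\in\cat S}{U(s)\in\cat M}$. First I would dispose of the structural part: by \Cref{prop-properties-adjunction} the right adjoint $U$ itself admits a right adjoint, hence preserves coproducts, and being a right adjoint between triangulated categories it is exact. Consequently $U^{-1}(\cat M)$ is closed under shifts, cofibre sequences, retracts and coproducts, i.e.\ it is a localizing subcategory of $\cat S$.

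The only nonformal point is the ideal property, which cannot be read off from monoidality since $U$ is merely lax monoidal; this is where the projection formula and conservativity enter. Fix $s\in U^{-1}(\cat M)$ and consider $\cat U_s\coloneqq\SET{t\in\cat S}{t\otimes s\in U^{-1}(\cat M)}$. Since $-\otimes s$ and $U$ are exact and coproduct-preserving and $\cat M$ is localizing, $\cat U_s$ is a localizing subcategory of $\cat S$. For $t=F(t_0)$ with $t_0\in\cat T$, the projection formula (\Cref{lem:preserve-gen}(a), applicable because $\cat T=\loc{\cat T^d}$ and $U$ preserves coproducts) gives a natural isomorphism $U(s\otimes F(t_0))\cong U(s)\otimes t_0$, which lies in $\cat M$ because $\cat M$ is an ideal and $U(s)\in\cat M$. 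Hence $\cat U_s$ contains the image of $F$, in particular $F(\cat T^c)\subseteq\cat U_s$.

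To finish I would invoke \Cref{lem:preserve-gen}(c): as $\cat T$ is compactly generated and $U$ is conservative and coproduct-preserving, $\cat S=\loc{F(\cat T^c)}$. Since $\cat U_s$ is a localizing subcategory containing $F(\cat T^c)$, it must equal $\cat S$, i.e.\ $t\otimes s\in U^{-1}(\cat M)$ for all $t\in\cat S$. As $s\in U^{-1}(\cat M)$ was arbitrary, $U^{-1}(\cat M)$ is a localizing ideal, so $U^{-1}$ preserves localizing ideals. The equality $U_*(\locid{\cat E})=\locid{U(\cat E)}$ is then exactly \Cref{lem-inverse-image-ideal} for the functor $U$. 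There is no genuine obstacle here; the point to get right is simply that the ideal condition must be verified on the generating family $F(\cat T^c)$ through the projection formula, which is precisely why conservativity of $U$ is assumed.
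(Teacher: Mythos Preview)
Your proof is correct and follows essentially the same approach as the paper: first observe that $U^{-1}(\cat M)$ is a localizing subcategory because $U$ is exact and coproduct-preserving, then verify the ideal property by reducing to the generators $F(\cat T^c)$ via \Cref{lem:preserve-gen}(c) and the projection formula. The paper's proof is simply a more condensed version of your argument, omitting the explicit introduction of the auxiliary subcategory $\cat U_s$.
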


\begin{proof}
	Recall from \cref{prop-properties-adjunction} that the right adjoint $U$ preserves coproducts. Hence if $\cat L \subseteq \cat T$ is a localizing ideal then $U^{-1}(\cat L)$ is a localizing subcategory of~$\cat S$. The conservativity of $U$ implies (\cref{lem:preserve-gen}(c)) that $\cat S=\loc{F(\cat T^c)}$, hence $\cat S \otimes U^{-1}(\cat L) \subseteq U^{-1}(\cat L)$ if and only if $F(\cat T^c)\otimes U^{-1}(\cat L) \subseteq \cat L$. But this follows from the projection formula (\cref{prop-properties-adjunction}).
\end{proof}

\begin{example}
	Let $A \in \CAlg(\cat C)$ be a commutative algebra in a rigidly-compactly generated tt-$\infty$-category~$\cat C$. \Cref{lem:conserve-preserves} applies to the baschange functor $F_A\colon \cat C \to \mod{A}(\cat C)$ since the right adjoint $U_A$ is conservative.
\end{example}

\begin{lemma}\label{lem:localizing-from-S}
	Let $F\colon \cat T \to \cat S$ be a geometric functor between rigidly-compactly generated tt-categories. Suppose its right adjoint $U$ is conservative. Then:
	\begin{enumerate}
		\item For any $s \in \cat S$, we have $U(s) \in \locid{U(\unitS)}$.
		\item For any $s\in \cat S$, we have $U(s)\in \locid{U(\unitS)\otimes U(s)}$.
		\item For any localizing ideal $\cat L \subseteq \cat T$, we have an equality
			\[
				\locid{U(\unitS)\otimes \cat L} = \locid{U(\cat S)\cap \cat L}.
			\]
	\end{enumerate}
\end{lemma}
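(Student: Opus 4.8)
The plan is to derive all three statements from the projection formula. By \Cref{prop-properties-adjunction} the right adjoint $U$ preserves coproducts and the projection formula holds, so \Cref{lem:preserve-gen}(a) supplies a natural equivalence $U(s)\otimes t\xrightarrow{\ \sim\ }U(s\otimes F(t))$ for every $s\in\cat S$ and $t\in\cat T$; here we use that $\cat T$, being rigidly-compactly generated, is generated by dualizable objects. We also record that $\cat S=\loc{F(\cat T^c)}$ by \Cref{lem:preserve-gen}(c), since $U$ is conservative.

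I would prove (b) first, as (a) then follows formally. The key point is that $\unitS$ is a retract of $FU(\unitS)$: applying $F$ to the unit $\eta_{\unitT}\colon\unitT\to UF(\unitT)=U(\unitS)$ and using the strong monoidality $F(\unitT)=\unitS$, the triangle identity $\epsilon_{\unitS}\circ F(\eta_{\unitT})=\id_{\unitS}$ exhibits the desired retraction. (Alternatively, one may invoke \Cref{prop:basechange} to identify $F$ with base change along $A\coloneqq U(\unitS)$ and read the splitting of $FU(\unitS)\simeq F_A(A)$ off \Cref{rem:algebra-splits}, so that (b) becomes a restatement of \Cref{lem:proj-formula-trick}.) Tensoring the retraction with $s$ shows $s$ is a retract of $s\otimes FU(\unitS)$, hence $U(s)$ is a retract of $U(s\otimes FU(\unitS))\simeq U(s)\otimes U(\unitS)$ by the projection formula. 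Since localizing ideals are closed under retracts, $U(s)\in\locid{U(\unitS)\otimes U(s)}$, which is (b). Statement (a) is then immediate, since $U(\unitS)\otimes U(s)\in\locid{U(\unitS)}$ (the latter being a localizing ideal containing $U(\unitS)$); it can also be proved directly by checking that $\{s\in\cat S : U(s)\in\locid{U(\unitS)}\}$ is a localizing subcategory containing $F(\cat T^c)$, using $U(F(t))\simeq U(\unitS)\otimes t$.

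For (c) I would argue the two inclusions of generating classes. For $\supseteq$: an object of $U(\cat S)\cap\cat L$ has the form $U(s)$ with $U(s)\in\cat L$; by (b) it lies in $\locid{U(\unitS)\otimes U(s)}$, and $U(\unitS)\otimes U(s)$ is among the generators of $\locid{U(\unitS)\otimes\cat L}$ precisely because $U(s)\in\cat L$, so $U(\cat S)\cap\cat L\subseteq\locid{U(\unitS)\otimes\cat L}$. For $\subseteq$: for $\ell\in\cat L$ the projection formula gives $U(\unitS)\otimes\ell\simeq U(F(\ell))\in U(\cat S)$, while $U(\unitS)\otimes\ell\in\cat L$ because $\cat L$ is an ideal, so $U(\unitS)\otimes\ell\in U(\cat S)\cap\cat L$. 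Passing to generated localizing ideals gives both inclusions, hence the asserted equality. The only step requiring any care is the retract statement in (b); everything else is a formal manipulation with the projection formula and the definition of the localizing ideal generated by a class of objects.
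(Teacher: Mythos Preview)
Your proof is correct and follows essentially the same approach as the paper. The only organizational difference is that the paper proves (a) first (via \Cref{lem:conserve-preserves}) and proves (b) by applying the triangle identity directly to the counit $\epsilon_s\colon FU(s)\to s$ (so that $U(\epsilon_s)$ is split epi and $U(s)$ is a summand of $UFU(s)\simeq U(\unitS)\otimes U(s)$), whereas you obtain the same retraction by first splitting $\unitS$ off $FU(\unitS)$ and then tensoring with $s$; part (c) is argued identically in both.
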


\begin{proof}
	For part (a) just observe that $s \in \locid{\unitS}$ hence $U(s) \in U(\locid{\unitS}) \subseteq \locid{U(\unitS)}$ by \Cref{lem:conserve-preserves}. Now consider part (b). Note that $U(\unitS)\otimes U(s) \simeq UFU(s)$ by the projection formula. We complete the counit of the adjunction $(F,U)$ to a triangle
	\[
		FU(s)\xrightarrow{\epsilon } s \xrightarrow{\chi} z\to \Sigma FU(s).
	\]
	Note that $U(\epsilon)$ is a split epimorphism (by the unit-counit equations), hence the  map $U(\chi) \colon U(s)\to U(z)$ is zero. In particular, the triangle splits after applying $U$, so that $U(s)$ is a direct summand of $UFU(s) \simeq U(\unitS)\otimes U(s) $. This proves part (b). Consider part (c). If $U(s) \in \cat L$ then $U(\unitS)\otimes U(s) \in U(\unitS)\otimes \cat L$. Hence, we have an inclusion
	\[
		\locid{U(\cat S)\cap \cat L} \subseteq \locid{U(\unitS) \otimes \cat L}.
	\]
	On the other hand, observe $U(\unit_{\cat S}) \otimes \cat L =UF(\cat L) \subseteq U(\cat S)\cap \cat L$.
\end{proof}

\begin{definition}
	We say that a localizing ideal $\cat L \subseteq \cat T$ is \emph{generated by objects from~$\cat S$} if $\cat L = \locid{U(\cat S) \cap \cat L}$. Bear in mind part (c) of \Cref{lem:localizing-from-S}. 
\end{definition}

\begin{lemma}\label{lem:all-localizing-from-S}
	Assuming the right adjoint $U$ is conservative, the following are equivalent:
	\begin{enumerate}
		\item Every localizing ideal of~$\cat T$ is generated by objects from $\cat S$.
		\item $\cat T=\locid{U(\cat S)}$.
		\item $\unitT \in \locid{U(\unitS)}$.
	\end{enumerate}
\end{lemma}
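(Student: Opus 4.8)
The plan is to prove the cycle of implications $(a) \Rightarrow (b) \Rightarrow (c) \Rightarrow (a)$, using the results already assembled in this section.

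First, $(a) \Rightarrow (b)$: apply the hypothesis to the localizing ideal $\cat T$ itself, which is certainly a localizing ideal. Then $(a)$ says $\cat T = \locid{U(\cat S) \cap \cat T} = \locid{U(\cat S)}$, which is exactly $(b)$.

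Next, $(b) \Rightarrow (c)$: this is immediate from \Cref{lem:localizing-from-S}(a). Indeed, $U(\unitS) \in \locid{U(\unitS)}$ trivially, and by part (a) of that lemma every $U(s)$ lies in $\locid{U(\unitS)}$; hence $\locid{U(\cat S)} \subseteq \locid{U(\unitS)}$, so $(b)$ forces $\unitT \in \cat T = \locid{U(\cat S)} = \locid{U(\unitS)}$. (One can also argue directly: $\unitT = U(\unitS)$ when $\unitT \in U(\cat S)$... but cleanest is via the lemma.)

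Finally, $(c) \Rightarrow (a)$: let $\cat L \subseteq \cat T$ be an arbitrary localizing ideal. By \Cref{lem:localizing-from-S}(c) we have $\locid{U(\unitS) \otimes \cat L} = \locid{U(\cat S) \cap \cat L}$, so it suffices to show $\cat L = \locid{U(\unitS) \otimes \cat L}$. The containment $\supseteq$ holds because $\cat L$ is a localizing ideal containing $U(\unitS) \otimes \cat L$. For $\subseteq$, take $x \in \cat L$; since $\unitT \in \locid{U(\unitS)}$ by $(c)$, tensoring with $x$ and using that $\locid{U(\unitS)} \otimes x \subseteq \locid{U(\unitS) \otimes x}$ (a standard thick-subcategory argument, exactly as in the proof of \Cref{lem-generates-same-ideal}) gives $x \in \locid{U(\unitS) \otimes x} \subseteq \locid{U(\unitS) \otimes \cat L}$. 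This closes the loop. I do not anticipate a genuine obstacle here — every step reduces to a lemma already proved in this section or to the routine ``generators'' manipulation of localizing ideals; the only point requiring a word of care is the inclusion $\locid{\cat E} \otimes x \subseteq \locid{\cat E \otimes x}$, which follows by noting that $\SET{t \in \cat T}{t \otimes x \in \locid{\cat E \otimes x}}$ is a localizing ideal containing $\cat E$.
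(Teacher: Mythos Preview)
Your proof is correct and follows essentially the same cycle of implications as the paper. The only cosmetic difference is in $(b)\Rightarrow(c)$: you invoke \Cref{lem:localizing-from-S}(a) to get $\locid{U(\cat S)}\subseteq\locid{U(\unitS)}$, while the paper applies part (c) with $\cat L=\cat T$; both are immediate. Your treatment of $(c)\Rightarrow(a)$ is in fact slightly more explicit than the paper's, which asserts $t\in\locid{U(\unitS)\otimes t}$ without spelling out the thick-subcategory argument you provide. (One small remark: the parenthetical aside ``$\unitT = U(\unitS)$ when $\unitT\in U(\cat S)$'' is not correct in general and should be deleted, but you already discard it in favour of the lemma.)
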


\begin{proof}
	$(a) \Rightarrow (b)$ is evident.
	$(b) \Rightarrow (c)$ follows from  part (c) of \Cref{lem:localizing-from-S} applied to $\cat L\coloneqq \cat T$.
	$(c) \Rightarrow (a)$ also follows from \Cref{lem:localizing-from-S}: For any $t \in \cat T$, $t \in \locid{U(\unitS)\otimes t}$, hence $\cat L \subseteq \locid{U(\unitS)\otimes \cat L}$, which is then an equality.
\end{proof}

\begin{notation}
	We write $\Locid(F\colon\cat T \uparrow \cat S)$ for the (large) poset of localizing ideals of~$\cat T$ generated from $\cat S$. Note that it contains a largest element $\locid{U(\cat S)}$.
\end{notation}

The following is a generalization of a result of Mathew~\cite[Proposition 6.3]{Mathew2015}.

\begin{proposition}\label{prop:descent-inj}
	Let $F\colon\cat T\to \cat S$ be a coproduct-preserving tt-functor between rigidly-compactly generated tt-categories whose right adjoint $U$ is conservative. The map
	\[
		F_*\colon \Locid(F\colon \cat T\uparrow \cat S) \to \Locid(\cat S)
	\]
	is split injective in the category of posets, where the splitting is induced by the right adjoint $U$. In particular, if $\unitT \in \Locid(U(\unitS))$ then we have a split injection
	\[
		F_*\colon \Locid(\cat T) \to \Locid(\cat S)
	\]
	in the category of posets.
\end{proposition}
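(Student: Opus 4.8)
The plan is to exhibit an explicit retraction to $F_*$ and verify it is monotone. The natural candidate, as the statement suggests, is the map $U_* \colon \Locid(\cat S) \to \Locid(\cat T)$ sending a localizing ideal $\cat M \subseteq \cat S$ to $\locid{U(\cat M)}$ — or, more properly, to $U_*(\cat M)$ in the sense defined earlier, which by \Cref{lem:conserve-preserves} (applicable since $U$ is conservative) can be computed on generators as $\locid{U(\cat M)}$. First I would note that $U_*$ is order-preserving: this is immediate from the fact that $\locid{-}$ is monotone in its generating set. Since $F_*$ is also order-preserving (for any functor, the assignment $\cat L \mapsto \locid{F(\cat L)}$ is monotone), it then suffices to show $U_* \circ F_* = \id$ on $\Locid(F\colon \cat T \uparrow \cat S)$.

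The key computation is therefore: for a localizing ideal $\cat L \subseteq \cat T$ generated from $\cat S$, show $\locid{U(\locid{F(\cat L)})} = \cat L$. Using \Cref{lem:conserve-preserves} again, $U_*(\locid{F(\cat L)}) = \locid{U(F(\cat L))}$ — so I want $\locid{UF(\cat L)} = \cat L$. The containment $\subseteq$ is the substantive direction and this is precisely where \Cref{lem:localizing-from-S}(c) does the work: $\locid{UF(\cat L)} = \locid{U(\unitS) \otimes \cat L}$ (by the projection formula, $UF \simeq U(\unitS) \otimes -$, so $UF(\cat L)$ and $U(\unitS)\otimes\cat L$ generate the same localizing ideal), which equals $\locid{U(\cat S)\cap \cat L}$ by part (c) of that lemma, which equals $\cat L$ by the defining property of being generated from $\cat S$. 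For the reverse containment $\cat L \subseteq \locid{UF(\cat L)}$: by \Cref{lem:localizing-from-S}(b), every $t \in \cat L$ lies in $\locid{U(\unitS)\otimes t} \subseteq \locid{U(\unitS)\otimes \cat L} = \locid{UF(\cat L)}$. (In fact this reverse inclusion together with part (c) is already the content of "$\cat L = \locid{U(\unitS)\otimes\cat L}$" noted in the proof of \Cref{lem:all-localizing-from-S}, so one could also just cite that.)

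For the "in particular" clause: when $\unitT \in \locid{U(\unitS)}$, \Cref{lem:all-localizing-from-S} gives that every localizing ideal of $\cat T$ is generated from $\cat S$, i.e. $\Locid(F\colon \cat T \uparrow \cat S) = \Locid(\cat T)$, so the already-established split injection on the restricted poset is a split injection on all of $\Locid(\cat T)$.

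I do not expect a serious obstacle here: the proposition is essentially a formal consequence of \Cref{lem:conserve-preserves}, \Cref{lem:localizing-from-S}, and \Cref{lem:all-localizing-from-S}, all established just above. The only point requiring a little care is the bookkeeping between "$U_*$ applied to an ideal" and "$\locid{U(-)}$" — these agree precisely because $U$ is conservative, via \Cref{lem:conserve-preserves}, and one must invoke that lemma rather than take the identification for granted. A secondary point worth stating explicitly is that $U_*$ restricted to the image of $F_*$ lands back in $\Locid(F\colon\cat T\uparrow\cat S)$, which is automatic once $U_*F_* = \id$ is known.
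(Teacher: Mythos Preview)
Your proposal is correct and follows essentially the same route as the paper: define the retraction via $U_*$, use \Cref{lem:conserve-preserves} and \Cref{exa:tt-preserves} to compute on generators, reduce $U_*F_*(\cat L)=\cat L$ to $\locid{U(\unitS)\otimes\cat L}=\cat L$ via the projection formula, and invoke \Cref{lem:localizing-from-S}(c) and \Cref{lem:all-localizing-from-S}. The only minor wrinkle is your appeal to \Cref{lem:localizing-from-S}(b) for arbitrary $t\in\cat L$ (that part is stated only for objects of the form $U(s)$), but as you note in your parenthetical the equality $\cat L=\locid{U(\unitS)\otimes\cat L}$ already follows directly from part (c) and the defining property, so no separate ``reverse containment'' argument is needed.
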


\begin{proof}
	Let $\cat L= \locid{U(\unitS)\otimes \cat L}$ be a localizing ideal generated from objects of $\cat S$; see \cref{lem:localizing-from-S}(c). Note that $F_*$ and $U_*$ can be defined on generators by~\cref{exa:tt-preserves} and~\cref{lem:conserve-preserves}. By \cref{rem-composition} it suffices to verify that $(UF)_*(\cat L)=\cat L$. Since $\cat L$ is generated from object of $\cat S$ this translates to verifying that $\locid{(UF)(U(\unitS)\otimes \cat L)}=\locid{U(\unitS)\otimes \cat L}$. By the projection formula, which holds by \cref{prop-properties-adjunction}, we have $(UF)(U(\unitS)\otimes \cat L)=U(\unitS)\otimes U(\unitS) \otimes \cat L$. The required claim follows from the fact that $ \locid{U(\unitS)\otimes U(\unitS)}=\locid{U(\unitS)}$. One containment is clear and the other follows from \cref{lem:localizing-from-S}(c). For the final claim apply \cref{lem:all-localizing-from-S}.
\end{proof}

\begin{example}\label{exa:descent-inj}
	Let $\cat C$ be a rigidly-compactly generated tt-$\infty$-category and let $A\in \CAlg(\cat C)$ be descendable. Then the map
		\[
			F_*\colon \Locid(\cat C) \to \Locid(\mod{A}(\cat C))
		\]
	is split injective in the category of posets.
\end{example}

\begin{remark}
	The studious reader will notice that some of these results only depend on $A \in \CAlg(\cat C)$ having the property that $\unit \in \locid{A}$. We might call such an algebra ``weakly descendable''. Note, if $A$ is compact then it is weakly descendable if and only if it is descendable.
\end{remark}

\begin{remark}
	Our goal is to enhance the split injection of \Cref{exa:descent-inj} to a split equalizer. To this end, we recall some results about base change functors:
\end{remark}

\begin{proposition}
	Let $A\in \CAlg(\C)$ and suppose that the unit map ${f\colon \mathbbm{1} \to A}$ admits a descendable retraction, i.e., a map $r\colon A \to\mathbbm{1}$ in $\CAlg(\C)$ which is descendable and such that $r\circ f=\mathrm{id}$. Then the base change $F\colon\cat C \to \mod{A}(\cat C)$ induces an isomorphism $F_*\colon \Locid(\C)\xrightarrow{\sim}\Locid(\mod{A}(\C))$. 
\end{proposition}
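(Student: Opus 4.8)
The plan is to combine the split injection of \Cref{exa:descent-inj} with a symmetric argument in the other direction to produce mutually inverse maps. First, since $A$ is descendable (being the target of a descendable map from $\unit$, or more directly: the retraction $r$ descendable means $\thickid{A}=\C$ as $\unit$ is a retract of $A$ in $\CAlg(\C)$, hence $\unit\in\thickid{A}$), we know by \Cref{exa:descent-inj} that $F_*\colon\Locid(\C)\to\Locid(\mod{A}(\C))$ is split injective, with splitting $U_*$. So it remains to show $F_*$ is surjective, equivalently that $U_*\circ F_* = \id$ forces $F_*\circ U_* = \id$ as well, i.e.\ that $U_*$ is injective; or directly that every localizing ideal of $\mod{A}(\C)$ is of the form $F_*(\cat L)$.

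The key observation is that the retraction $r\colon A\to\unit$ gives a geometric functor $R\colon\mod{A}(\C)\to\C$ (extension of scalars along $r$, i.e.\ $M\mapsto \unit\otimes_A M$) whose composite with $F$ is the identity: $R\circ F \simeq \id_{\C}$ since $r\circ f = \id$ (this is just $\unit\otimes_A(A\otimes x)\simeq x$). Now $R$ itself is a base change functor along the \emph{descendable} algebra $r$ in $\mod{A}(\C)$ — that is precisely the hypothesis that $r$ is descendable. Wait: I should double-check the direction. The map $r\colon A\to\unit$ in $\CAlg(\C)$ makes $\unit$ an $A$-algebra, and "$r$ is descendable" per \Cref{def:descendable} means $\unit$ (viewed via $r$) is descendable in $\mod{A}(\C)$; the associated base change is indeed $R\colon\mod{A}(\C)\to\mod{\unit}(\mod{A}(\C))\simeq\C$. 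So by \Cref{exa:descent-inj} applied to $R$, the map $R_*\colon\Locid(\mod{A}(\C))\to\Locid(\C)$ is split injective, with splitting induced by the right adjoint of $R$.

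Now I would finish by a formal argument with these four maps. We have $F_*$ split injective (splitting $U_*$, so $U_*F_*=\id$) and $R_*$ split injective. Also $R\circ F\simeq\id$ gives, using that both $F_*$ and $R_*$ can be defined on generators (\Cref{exa:tt-preserves}, \Cref{lem:conserve-preserves}, noting $R$'s right adjoint is conservative by \Cref{cor:descendable-is-conservative} since $r$ is descendable) and \Cref{rem-composition}, that $R_*\circ F_* = (R\circ F)_* = \id_{\Locid(\C)}$. Then $R_*$ is a retraction onto $\Locid(\C)$; since $R_*$ is also a split \emph{injection}, it is an isomorphism, hence so is its two-sided inverse $F_*$. (Concretely: $R_*$ injective and $R_*F_* = \id$ give $F_*R_* = \id$ too, so $F_*$ and $R_*$ are mutually inverse poset isomorphisms.)

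The main obstacle is bookkeeping the compatibility of the various $(-)_*$ constructions — specifically verifying that $R_*\circ F_* = (R\circ F)_*$ requires knowing $R^{-1}$ preserves localizing ideals, which follows from \Cref{lem:conserve-preserves} once we confirm $R$ is a geometric functor between rigidly-compactly generated tt-categories with conservative right adjoint (the conservativity being exactly descendability of $r$, via \Cref{cor:descendable-is-conservative}). Everything else is formal manipulation of split injections and retractions in the poset category. One should also take a moment at the outset to record why $A$ is descendable in $\C$: $\unit$ is a retract of $A$ in $\CAlg(\C)$, so a fortiori in $\C$, whence $\unit\in\thick{A}\subseteq\thickid{A}$, giving $\thickid{A}=\C$; this is what licenses the application of \Cref{exa:descent-inj} to $F$ in the first place.
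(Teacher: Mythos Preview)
Your argument is correct and follows essentially the same route as the paper: show $R_*\circ F_*=\id$ from $r\circ f=\id$ (so $R_*$ is surjective), then use descendability of $r$ and \Cref{exa:descent-inj} to get $R_*$ injective, concluding that $R_*$ is bijective with inverse $F_*$. The paper's proof is simply more economical: it never establishes that $A$ is descendable or that $F_*$ is split injective, since these facts are not needed once you have $R_*$ bijective. A minor remark: for $(R\circ F)_*=R_*\circ F_*$ via \Cref{rem-composition} you only need $R^{-1}$ to preserve localizing ideals, and since $R$ is a geometric tt-functor this is immediate from \Cref{exa:tt-preserves}; your detour through \Cref{lem:conserve-preserves} and conservativity of $R$'s right adjoint (which holds for any restriction-of-scalars functor, not by \Cref{cor:descendable-is-conservative}) is unnecessary.
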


\begin{proof}
	We note that by \Cref{cor-double-module-category}, we have $\C=\mod{\mathbbm{1}}(\C)\simeq \mod{\mathbbm{1}}(\mod{A}(\C))$. Then base change along $r$ and $f$ induce maps
	\[ 
		F_*\colon \Locid(\C) \to \Locid(\mod{A}(\C))  \quad \mathrm{and} \quad  R_*\colon \Locid(\mod{A}(\C)) \to \Locid(\C)
	\] 
	which satisfies $R_* \circ F_*=\mathrm{Id}$ (since $r$ is a retraction for $f$ together with \Cref{rem-composition}). In particular, we see that $R_*$ is surjective. Since $r$ is descendable, \Cref{exa:descent-inj} shows that the map
	\[
		R_*\colon \Locid(\mod{A}(\C)) \to \Locid(\C)
	\]
	is also injective. So $R_*$ is bijective, with inverse $F_*$.
\end{proof}

\begin{example}
	Let $R$ be a commutative ring spectrum and $X$ a finite connected CW complex. Then the map $C^*(X;R)\to R$ given by evaluation at a basepoint is descendable by~\cite{Mathew2016}*{Proposition 3.36}.  This gives a retraction for the canonical algebra map $R \to C^*(X;R)$.  Therefore $\Loc(\mod{R})\simeq \Loc(\mod{C^*(X;R)})$.
\end{example}

\begin{example}
	Let $R$ be a connective commutative ring spectrum which is \mbox{$n$-truncated} so that $\pi_i(R)=0$ for all $i>n$. Suppose that we have a ring map $\pi_0(R)\to R$ which is the identity on $\pi_0$ (for example this always holds if $R$ is a graded commutative ring). Then the canonical truncation map $\tau_{\leq 0}\colon R\to \pi_0(R)$ is descendable by~\cite{Mathew2016}*{Proposition 3.34} and this is a retraction for $\pi_0(R)\to R$. It follows that $\Loc(\mod{\pi_0(R)})\simeq \Loc(\mod{R})$. If $\pi_0(R)$ is noetherian, then we can classify the localizing subcategories of $\mod{\pi_0(R)}$ and hence that of $\mod{R}$ even though $\pi_*(R)$ might not be noetherian. 
\end{example}

\begin{notation}\label{nota-base-change}
	Consider $A \in \CAlg(\C)$ and let $f \colon \unit \to A$ denote the unit map. Then we have ring maps $g\coloneqq f \otimes 1_A \colon \unit \otimes A \to A \otimes A$ and $h\coloneqq 1_A \otimes f \colon A \otimes \unit \to A \otimes A$. Extending scalars along these rings maps provides functors
	\[
	\begin{tikzcd}\
		\C \arrow[r, "F"] & \mod{A}(\C) \arrow[r, shift right, "G"'] \arrow[r, shift left,"H"] 
		& \mod{A\otimes A}(\C).
	\end{tikzcd}
	\]
	Restricting scalars along $f\colon \unit \to A$ and $g\colon A \to A \otimes A$ define functors $U$ and $V$ as depicted below
	\[
	\begin{tikzcd}\
		\C \arrow[r, "F"] & \mod{A}(\C) \arrow[l, bend left=45,"U"]\arrow[r, "G"] 
		& \mod{A\otimes A}(\C) \arrow[l, bend left, "V"].
	\end{tikzcd}
	\]
\end{notation}

\begin{remark}\label{rem:VHFU}
	Note that for all $A$-modules $M\in \mod{A}(\cat C)$, we have a natural equivalence $(V\circ H)(M)\simeq (F\circ U)(M)$. This is because $(V\circ H)(M)=(A \otimes A)\otimes_A M$ with $A$-module structure coming from the right factor of $A\otimes A$. The left factor of $A\otimes A$ is used for calculating the tensor with $M$.
\end{remark}

\begin{remark}\label{rem:double-is-descendable}
	The algebra morphisms $g\colon A \to A\otimes A$ and $h\colon A \to A \otimes A$ are both descendable. Indeed if $\mu \colon A \otimes A \to A$ is the multiplication map, then $g\mu=1_A=f\mu$ so $A$ is a retract of $A \otimes A$ in $\mod{A}(\C)$.
\end{remark}

\begin{proposition}\label{prop-equalizer}
	Let $\cat C$ be a rigidly-compactly generated tt-$\infty$-category and let $A \in \CAlg(\cat C)$. Then the diagram induced by base change
	\[
	\begin{tikzcd}
		\Locid(F\colon \C\uparrow \mod{A}(\cat C)) \arrow[r, "F_*"] & \Locid(\mod{A}(\C)) 
		 \arrow[r, shift right, "G_*"'] \arrow[r, shift left,"H_*"] 
		 & \Locid(\mod{A\otimes A}(\C)).
	\end{tikzcd}
	\]
	is a split equalizer of posets.
\end{proposition}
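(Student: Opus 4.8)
The plan is to exhibit the diagram as a \emph{split fork}, so that it is automatically an (absolute) equalizer by \Cref{rem:split-fork-is-equalizer}. The two extra arrows will be induced by the restriction functors $U$ and $V$ of \Cref{nota-base-change}: writing $\alpha = H_*$ and $\beta = G_*$ for the two parallel maps, I would set
\[
	u\coloneqq U_*\colon\Locid(\mod{A}(\C))\to\Locid(F\colon\C\uparrow\mod{A}(\C)),
	\qquad
	v\coloneqq V_*\colon\Locid(\mod{A\otimes A}(\C))\to\Locid(\mod{A}(\C)),
\]
and verify the four identities $\beta\circ F_*=\alpha\circ F_*$, $u\circ F_*=\id$, $v\circ\beta=\id$ and $v\circ\alpha=F_*\circ u$ (that all these maps are order-preserving is automatic, since ``generated by'' is monotone). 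A preliminary observation is that $u=U_*$ genuinely takes values in the sub-poset $\Locid(F\colon\C\uparrow\mod{A}(\C))$: for any localizing ideal $\cat M\subseteq\mod{A}(\C)$ the ideal $\locid{U(\cat M)}$ is by construction generated by objects from $\mod{A}(\C)$.

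The engine that makes the four identities formal is the composition law $(G\circ F)_*=G_*\circ F_*$ of \Cref{rem-composition}, which applies whenever the preimage of the outer functor preserves localizing ideals. Each of $F_*,G_*,H_*,U_*,V_*$ can therefore be inserted freely: $F,G,H$ are geometric tt-functors, so their preimages preserve localizing ideals by \Cref{exa:tt-preserves}, while $U$ (resp.\ $V$) is the conservative right adjoint of the geometric functor $F$ (resp.\ $G$), so \Cref{lem:conserve-preserves} applies to it. Granting this, the identities reduce to statements about the functors upstairs. The fork condition $G_*\circ F_*=H_*\circ F_*$ holds because $G\circ F$ and $H\circ F$ are both extension of scalars along the algebra map $\unit\to A\otimes A$ obtained by composing $f\colon\unit\to A$ with $g$, respectively with $h$, and $g\circ f=h\circ f$. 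The identity $u\circ F_*=\id$ is precisely the splitting produced in \Cref{prop:descent-inj}. For $v\circ\beta=\id$, i.e., $V_*\circ G_*=\id$, one uses that $g\colon A\to A\otimes A$ is descendable (\Cref{rem:double-is-descendable}), which forces $\unit_{\mod{A}(\C)}\in\locid{V(\unit_{\mod{A\otimes A}(\C)})}$; \Cref{prop:descent-inj} applied to $G$ with right adjoint $V$ then yields the splitting $V_*$ on \emph{all} of $\Locid(\mod{A}(\C))$. Finally $v\circ\alpha=V_*\circ H_*=(V\circ H)_*=(F\circ U)_*=F_*\circ U_*=F_*\circ u$, the middle equality being the natural equivalence $V\circ H\simeq F\circ U$ recorded in \Cref{rem:VHFU}.

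I expect the delicate points to be two. First, the identity $v\circ\alpha=F_*\circ u$: it depends on the functor-level equivalence $V\circ H\simeq F\circ U$ from \Cref{rem:VHFU} \emph{and} on the functoriality of $(-)_*$ on the two relevant composites, which is exactly why the ``defined on generators'' bookkeeping (\Cref{exa:tt-preserves} and \Cref{lem:conserve-preserves}) must be assembled first. Second, the identity $v\circ\beta=\id$ needs the strengthened conclusion of \Cref{prop:descent-inj}, namely a splitting on the full poset of localizing ideals rather than merely on those generated from the target, and this is available only because the structure map $A\to A\otimes A$ is descendable, not merely because the fork arises from a base-change adjunction. Everything else is routine diagram bookkeeping, and the conclusion then follows from \Cref{rem:split-fork-is-equalizer}.
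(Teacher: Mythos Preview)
Your proof is correct and follows essentially the same route as the paper: exhibit the diagram as a split fork by using $U_*$ and $V_*$ (induced by restriction of scalars) as the splitting maps, invoke \Cref{prop:descent-inj} for the identities $U_*\circ F_*=\id$ and $V_*\circ G_*=\id$ (the latter via the descendability of $g\colon A\to A\otimes A$ from \Cref{rem:double-is-descendable}), and deduce $V_*\circ H_*=F_*\circ U_*$ from the equivalence $V\circ H\simeq F\circ U$ of \Cref{rem:VHFU} together with the composition rule of \Cref{rem-composition}. You are in fact slightly more explicit than the paper in verifying the fork condition $G_*\circ F_*=H_*\circ F_*$ and in checking that each instance of \Cref{rem-composition} is licensed by \Cref{exa:tt-preserves} or \Cref{lem:conserve-preserves}.
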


\begin{proof}
	We will show that the fork is split in the category of posets. In other words we show that there exist $U_*$ and $V_*$ as in the diagram below
	\[
	\begin{tikzcd}
		\Locid(\C\uparrow \mod{A}(\cat C)) \arrow[r, "F_*"] & \Locid(\mod{A}(\C)) 
		\arrow[l, dotted, bend left,"U_*"]
		\arrow[r, shift right, "G_*"'] \arrow[r, shift left,"H_*"] 
		& \Locid(\mod{A\otimes A}(\C)) \arrow[l, dotted, bend left, "V_*"].
	\end{tikzcd}
	\]
	such that $U_* \circ F_*=1$ and $V_* \circ G_*=1$ and $V_* \circ H_*=F_* \circ U_*$. The equations immediately imply that $F_*$ is an equalizer of $H_*$ and $G_*$ (recall \Cref{rem:split-fork-is-equalizer}).
 
	Recall that $g=F(f)\colon A \otimes \unit \to A \otimes A$ is descendable in $\mod{A}(\C)$ (\cref{rem:double-is-descendable}). Note also that $\mod{A\otimes A}(\mod{A}(\C))\simeq \mod{A\otimes A}(\C)$ by~\cref{cor-double-module-category}. Therefore by \Cref{prop:descent-inj} the functors $F_*$ and $G_*$ admit sections $U_*$ and $V_*$ which are induced by restricting of scalars along $f$ and $g$ respectively. Thus $U_* \circ F_*=1$ and $V_* \circ G_*=1$. 
 
	It is only left to show that $V_* \circ H_*=F_* \circ U_*$. Let $\cat L\subseteq \mod{A}(\C)$ be a localizing ideal. Then we find that $(F_* \circ U_* )(\cat L)=\locid{F( U(\cat L))}$ and $(V_* \circ H_*)(\cat L)=\locid{V(H(\cat L))}$ by \Cref{rem-composition}. To conclude we only need to note that there is a natural equivalence $(V\circ H)(M)\simeq (F\circ U)(M)$ for all $A$-module $M$ as discussed in \cref{rem:VHFU}.
\end{proof}

\begin{example}
	If $A$ is descendable then we obtain a split equalizer of posets:
	\[\begin{tikzcd}
		\Locid(\C) \arrow[r, "F_*"] & \Locid(\mod{A}(\C)) 
		 \arrow[r, shift right, "G_*"'] \arrow[r, shift left,"H_*"] 
		 & \Locid(\mod{A\otimes A}(\C)).
	\end{tikzcd}\]
	It is worth making the following relative version explicit.
\end{example}

\begin{corollary}\label{cor-equalizer}
	Let $A \to B$ be a morphism in $\CAlg(\C)$. If $B$ is descendable over~$A$, then the diagram induced by base change
	\[
		\Locid(\mod{A}(\C)) \to \Locid(\mod{B}(\C)) \rightrightarrows
		\Locid(\mod{B\otimes_A B}(\C))
	\]
	is a split equalizer of sets (hence an equalizer of posets). 
\end{corollary}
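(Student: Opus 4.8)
The plan is to deduce this directly from the absolute statement \Cref{prop-equalizer} after changing the base category from $\C$ to $\mod{A}(\C)$. First I would record the two standard facts that make the reduction possible: by \Cref{lem-proj-formula-holds}(4) the category $\mod{A}(\C)$ is again a rigidly-compactly generated tt-$\infty$-category, and by \Cref{def:descendable} the hypothesis that $B$ is ``descendable over $A$'' means precisely that $B$, regarded through the map $A\to B$ as a commutative algebra object of $\CAlg(\mod{A}(\C))$, is a descendable algebra there.

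Next I would apply the descendable case of \Cref{prop-equalizer} to the rigidly-compactly generated tt-$\infty$-category $\mod{A}(\C)$ and the descendable commutative algebra $B\in\CAlg(\mod{A}(\C))$. This produces a split equalizer of posets
\[
\Locid(\mod{A}(\C)) \to \Locid(\mod{B}(\mod{A}(\C))) \rightrightarrows \Locid(\mod{B\otimes_A B}(\mod{A}(\C))),
\]
in which the outgoing maps are extension of scalars along the unit $A\to B$ and along the two coface maps $B\rightrightarrows B\otimes_A B$, the two splittings being induced by the corresponding restriction functors exactly as in \Cref{nota-base-change} and the proof of \Cref{prop-equalizer}. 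Here one uses that the tensor square of $B$ formed internally to $\mod{A}(\C)$ is exactly $B\otimes_A B$. I would then transport this fork along the equivalences of \Cref{cor-double-module-category}, namely $\mod{B}(\mod{A}(\C))\simeq\mod{B}(\C)$ and $\mod{B\otimes_A B}(\mod{A}(\C))\simeq\mod{B\otimes_A B}(\C)$, under which all the base-change and restriction functors are identified with the evident ones over $\C$. The result is precisely the displayed diagram, now exhibited as a split fork in $\Poset$; by \Cref{rem:split-fork-is-equalizer} such a fork is an (absolute) equalizer, so it is an equalizer of posets, and since the forgetful functor $\Poset\to\Set$ preserves split forks, the diagram is a split equalizer of sets.

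There is essentially no new mathematical content here; the only step requiring care is the bookkeeping in the last paragraph — checking that the two coface maps $B\rightrightarrows B\otimes_A B$ produced by \Cref{prop-equalizer} over $\mod{A}(\C)$ are carried, under the identifications of \Cref{cor-double-module-category}, to the maps $G_*$ and $H_*$ of the statement, and that those identifications are natural enough to transport the split-fork structure (the maps $U_*$, $V_*$) along with them. I expect this compatibility check to be the main, though routine, obstacle.
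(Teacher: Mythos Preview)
Your proposal is correct and follows essentially the same route as the paper: change base to $\mod{A}(\C)$, apply the descendable case of \Cref{prop-equalizer} to $B\in\CAlg(\mod{A}(\C))$, and then use \Cref{cor-double-module-category} to rewrite the module categories. The paper's proof is terser and phrases the descendability step as invoking \Cref{lem:all-localizing-from-S} to identify $\Locid(\mod{A}(\C)\uparrow\mod{B}(\mod{A}(\C)))$ with $\Locid(\mod{A}(\C))$, whereas you package this as ``the descendable case of \Cref{prop-equalizer}'' (i.e., the Example immediately following it); these are the same thing.
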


\begin{proof}
	Using~\cref{cor-double-module-category} we can rewrite the fork as
	\[
		\Locid(\mod{A}(\C)) \to \Locid(\mod{B}(\mod{A}(\C))) \rightrightarrows 
		\Locid(\mod{B\otimes_A B}(\mod{A}(\C))).
	\]
	Now this is a split equalizer of sets by~\cref{prop-equalizer}. Note that descendability of $A \to B$ ensures that all localizing ideals of $\mod{A}(\C)$ are generated from $\mod{B}(\mod{A}(\C))$; see~\cref{lem:all-localizing-from-S}.
\end{proof}

\begin{example}
	Consider a commutative ring spectrum $A$ with $\pi_i(A)=0$ for all $i>0$ and $i \ll 0$, and with $\pi_0(A)=k$ a field. Then the map $A \to k$ is descendable by~\cite{Mathew2018}*{Example 3.5} and so we get an injective map $\Loc(\mod{A}) \to \Loc(\mod{k})= \{0,\mod{k} \}$.  This is clearly surjective and hence bijective. 
\end{example}

\section{Descent for smashing ideals}

In this section we show how one can use a descendable commutative algebra to classify smashing ideals. First we recall a few important definitions and results. 

\begin{definition}
	Let $\cat T$ be a big tt-category. A localization functor $L \colon \cat T \to \cat T$ is said to be \emph{smashing} if $L$ preserves coproducts. By~\cite{HPS}*{Definition 3.3.2} we have $L\simeq L\unit \otimes -$ as functors $\cat T \to \cat T$. A localizing ideal $\cat I$ of $\cat T$ is \emph{smashing} if there exists a smashing localization $L_{\cat I}\colon \cat T \to \cat T$ such that $\ker(L)=\cat I$. We denote by $\Smashid(\cat T)$ the (large) poset of smashing ideals of $\cat T$ ordered by inclusion. For a big tt-$\infty$-category $\C$, we set $\Smashid(\C):=\Smashid(\Ho(\C))$. 
\end{definition}

We recall the following result. 

\begin{theorem}
	Let $\cat T$ be a rigidly-compactly generated tt-category. Then the collection of smashing ideals $\Smashid(\cat T)$ is a frame with meet and join given by $\cat I \wedge \cat I'=\cat I \cap \cat I'$ and $\cat I \vee \cat I'=\loc{\cat I \cup \cat I'}$ respectively.
\end{theorem}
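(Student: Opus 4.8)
The plan is to recall the standard structure theory of smashing localizations and show directly that $\Smashid(\cat T)$ is closed under the two proposed operations, and then verify the frame axioms. I would begin by recording the basic dictionary: a smashing ideal $\cat I$ corresponds to an idempotent triangle $e_{\cat I} \to \unit \to f_{\cat I}$ with $e_{\cat I}$ an idempotent algebra (more precisely, $f_{\cat I} = L_{\cat I}\unit$ is a smashing localization of the unit), $\cat I = \ker(f_{\cat I}\otimes -) = \loc{e_{\cat I}} = \locid{e_{\cat I}}$, and conversely such idempotent triangles biject with smashing ideals. The key technical input is that, since $\cat T$ is rigidly-compactly generated, every localizing ideal is a localizing \emph{subcategory} of the form $\ker$ of a Bousfield localization (using that $\unit$ is compact and the localizing ideal is generated by a set), so one can freely pass back and forth between smashing ideals and their associated idempotent objects.

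Next I would check closure under meet. Given smashing ideals $\cat I, \cat I'$ with associated idempotents $e, e'$, the object $e \otimes e'$ is again idempotent, and I claim $\loc{e\otimes e'} = \loc{e}\cap\loc{e'} = \cat I \cap \cat I'$; the localization functor $L_{\cat I \cap \cat I'}$ is $L_{\cat I}L_{\cat I'} = (f\otimes f'\otimes -)$, which preserves coproducts since both factors do, so $\cat I \cap \cat I'$ is smashing. For the join, I would dualize: the object $f \otimes f'$ is the idempotent for the localization whose kernel is $\loc{\cat I \cup \cat I'}$; equivalently, the smashing localization with $L\unit = f\otimes f'$ has acyclics exactly $\loc{e} + \loc{e'}$ since $x\otimes f\otimes f' = 0$ iff $x$ is built from things killed by $f$ together with things killed by $f'$ — here one uses the fiber sequence relating $e\otimes e'$, $e$, $e'$ and the fact that $\loc{e}$, $\loc{e'}$ are localizing ideals. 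Thus $\cat I \vee \cat I' := \loc{\cat I \cup \cat I'}$ is smashing with the stated formula.

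Having established that $\Smashid(\cat T)$ is a sublattice of $\Locid(\cat T)$ with these operations (and that $0$ and $\cat T$ are smashing, via the idempotents $\unit$ and $0$), I would turn to completeness and the frame law. For an arbitrary family $\{\cat I_\alpha\}$ of smashing ideals, the join $\bigvee_\alpha \cat I_\alpha := \loc{\bigcup_\alpha \cat I_\alpha}$ is again smashing: it is a localizing ideal generated by a set (since each $\cat I_\alpha = \locid{e_\alpha}$), hence of the form $\ker$ of a Bousfield localization $L$, and one checks $L$ preserves coproducts because its acyclics form a localizing \emph{ideal} and the associated $f = L\unit$ detects them, i.e.\ $L \simeq f\otimes -$. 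The meet of an arbitrary family is $\bigwedge_\alpha \cat I_\alpha = \bigcap_\alpha \cat I_\alpha$ (an intersection of smashing ideals is a localizing ideal; one must argue it is again smashing — this is where I would cite the fact, available in the rigidly-compactly generated setting, that smashing ideals are closed under arbitrary intersection, or equivalently that the poset of smashing localizations is closed under arbitrary meets). Finally, the infinite distributive law $\cat I \wedge \bigvee_\alpha \cat I_\alpha = \bigvee_\alpha (\cat I \wedge \cat I_\alpha)$ follows from the tensor description: $\cat I \cap \loc{\bigcup \cat I_\alpha}$ corresponds to the idempotent $e \otimes (\bigvee e_\alpha\text{-idempotent})$, and since $\otimes$ preserves the relevant colimits and $\loc{-}$ is generated by generators, this matches $\loc{\bigcup(\cat I\cap\cat I_\alpha)}$.

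The main obstacle I anticipate is the closure of $\Smashid(\cat T)$ under \emph{arbitrary} meets (intersections): unlike finite meets, where the product of idempotents handles it cleanly, an infinite intersection of kernels of smashing localizations is not obviously the kernel of a smashing localization, and showing the resulting localization preserves coproducts requires genuine input — either a compact-generation argument identifying $\bigcap \cat I_\alpha$ with a localizing ideal generated by a set together with the observation that the complementary local objects are closed under coproducts, or an appeal to the lattice-theoretic structure of smashing subcategories established in the literature (e.g.\ work of Balmer--Favi or Krause). I would structure the write-up so that finite meets, all joins, and the distributive law are handled by the elementary idempotent-tensor calculus, isolating the arbitrary-meet claim as the one point needing the heavier generation argument.
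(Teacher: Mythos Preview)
The paper's proof is a two-line citation: Krause's theorem that $\Smashid(\cat T)$ is a \emph{set}, followed by Balmer--Krause--Stevenson's theorem that it is a frame. Your proposal attempts a direct argument via the idempotent calculus, which is a reasonable strategy and mostly works, but you have misidentified where the real difficulty lies and one of your key steps is stated circularly.

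First, the misidentification. You flag closure under \emph{arbitrary meets} as the main obstacle, but a frame need not come equipped with any formula for infinite meets: once arbitrary joins exist, arbitrary meets exist automatically (the meet is the join of the lower bounds), and the theorem only asserts $\cat I\wedge\cat I'=\cat I\cap\cat I'$ for \emph{binary} meets. So your closing paragraph is aimed at a non-issue.

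Second, the genuine gap. Your argument that an arbitrary join $\bigvee_\alpha \cat I_\alpha=\loc{\bigcup_\alpha\cat I_\alpha}$ is smashing reads: ``its acyclics form a localizing ideal and the associated $f=L\unit$ detects them, i.e.\ $L\simeq f\otimes-$.'' This is circular: $L\simeq f\otimes-$ is exactly what ``smashing'' means, and the fact that the acyclics are an ideal does not by itself imply it. A correct elementary argument is available --- the $L$-local objects are precisely those $x$ with $f_\alpha\otimes x\simeq x$ for every $\alpha$, and this class is visibly closed under coproducts since each $f_\alpha\otimes-$ preserves them --- but you have not given it. More seriously, for the Bousfield localization $L$ to exist at all you need $\loc{\bigcup_\alpha\cat I_\alpha}$ to be set-generated, which requires either that the indexing family is a set or, ultimately, that $\Smashid(\cat T)$ itself is a set. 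This set-theoretic input is precisely the content of the Krause citation in the paper, and your proposal never addresses it.

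Your sketch of infinite distributivity, by contrast, is essentially fine once spelled out: tensoring $\locid{e_\alpha:\alpha}$ with $e$ lands in $\locid{e\otimes e_\alpha:\alpha}$ because $e\otimes-$ is exact and coproduct-preserving, which gives the nontrivial inclusion.
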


\begin{proof}
	Krause~\cite{Krause1998}*{Theorem 4.9} showed that the collection of smashing subcategories form a set if $\cat T^c$ satisfies a weak form of Brown's representability theorem which the author calls condition (B). This is always satisfied by~\cite{Krause}*{Theorem 5.1.1}. Balmer--Krause--Stevenson~\cite{BKS2020}*{Theorem 1.1} then showed that $\Smashid(\cat T)$ forms a frame with join and meet as given above; see also~\cite{BKS2020}*{Remark 5.12}. 
\end{proof}

\begin{definition}
	Let $\cat T$ be a big tt-category. We say that a map $\lambda\colon \unit \to E$ is an \emph{idempotent object} in $\cat T$ if the induced map $\lambda \otimes E \colon \unit \otimes E \to E \otimes E$ is an isomorphism. We denote by $\mathrm{Idem}(\cat T)$ the (large) poset of isomorphism classes of idempotent objects in $\cat T$ with partial order given by: $[\lambda\colon \unit \to E] \leq [\lambda'\colon \unit \to E']$ if and only if there is a morphism $\lambda \to \lambda'$ in ${\cat T}_{\unit /}$. For a big tt-$\infty$-category $\C$, we set $\mathrm{Idem}(\C):=\mathrm{Idem}(\Ho(\C))$.
\end{definition}
 
\begin{theorem}\label{thm-idem-smash}
	Let $\cat T$ be a big tt-category. Then the posets $\Smashid(\cat T)$ and $\mathrm{Idem}(\cat T)$ are isomorphic via the maps: $\cat I \mapsto L_{\cat I}\unit$ and $A \mapsto \ker(A \otimes - \colon \cat T \to \cat T)$.
\end{theorem}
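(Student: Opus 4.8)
The plan is to exhibit the two maps explicitly and verify they are mutually inverse morphisms of posets. First I would recall the standard dictionary between smashing localizations and idempotent objects: given a smashing ideal $\cat I$, there is a smashing localization $L_{\cat I}$ with kernel $\cat I$, and by \cite{HPS}*{Definition 3.3.2} we have $L_{\cat I} \simeq L_{\cat I}\unit \otimes -$. Setting $E \coloneqq L_{\cat I}\unit$, the localization unit $\unit \to L_{\cat I}\unit$ gives a map $\lambda \colon \unit \to E$, and applying $L_{\cat I} = E \otimes -$ to $\lambda$ shows $E \otimes \lambda \colon E \to E \otimes E$ is an equivalence (since $L_{\cat I}$ inverts the localization unit); hence $E$ is an idempotent object. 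This defines $\cat I \mapsto L_{\cat I}\unit$. Conversely, given an idempotent object $\lambda \colon \unit \to E$, the functor $E \otimes -$ is a smashing localization functor (idempotency of $\lambda$ is exactly what makes $E\otimes-$ idempotent as an endofunctor, and it preserves coproducts since $\otimes$ does), so $\ker(E \otimes -)$ is a smashing ideal; this defines $A \mapsto \ker(A \otimes -)$.

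Next I would check the two composites are the identity. Starting from $\cat I$, we have $\ker(L_{\cat I}\unit \otimes -) = \ker(L_{\cat I}) = \cat I$ by definition of the smashing localization attached to $\cat I$. Starting from an idempotent object $\lambda \colon \unit \to E$, the associated smashing ideal is $\cat I_E \coloneqq \ker(E \otimes -)$, and I must identify $L_{\cat I_E}\unit$ with $E$. The point is that $E \otimes -$ is itself the Bousfield localization with kernel $\cat I_E$: its essential image consists of the $E$-local objects, and the localization unit of $\unit$ is precisely $\lambda \colon \unit \to E$; this uses that $E$ is $E$-local (again a consequence of idempotency, via the two-out-of-three on $E \otimes \lambda$ and $\lambda \otimes E$ being equivalences) and that the fiber of $\lambda$ lies in $\cat I_E$. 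Therefore $L_{\cat I_E}\unit = E$ in $\mathrm{Idem}(\cat T)$, where one should be slightly careful that the identification is as objects under $\unit$, i.e.\ respecting $\lambda$; this is automatic since the localization unit is uniquely characterized.

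Finally I would verify both maps are order-preserving. If $\cat I \subseteq \cat I'$ are smashing ideals, then every $\cat I'$-local object is $\cat I$-local, so there is a (unique) map of localizations $L_{\cat I} \to L_{\cat I'}$ under $\unit$, giving $L_{\cat I}\unit \leq L_{\cat I'}\unit$ in $\mathrm{Idem}(\cat T)$. In the other direction, if $[\lambda \colon \unit \to E] \leq [\lambda' \colon \unit \to E']$ via a map $\theta \colon E \to E'$ under $\unit$, then tensoring any $x \in \ker(E\otimes-)$ with $\theta$ shows $E' \otimes x$ is a retract of... more directly, the map $\theta$ induces a natural transformation $E \otimes - \to E' \otimes -$ of localizations, which forces $\ker(E \otimes -) \subseteq \ker(E' \otimes -)$.

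I expect the main obstacle to be the bookkeeping in the second composite: carefully checking that for an abstract idempotent object $E$, the endofunctor $E \otimes -$ really is a Bousfield localization (not merely idempotent) and that its localization unit on $\unit$ recovers $\lambda$ up to the equivalence relation defining $\mathrm{Idem}(\cat T)$. This is where the precise formulation of \cite{HPS}*{Chapter 3} on smashing localizations and idempotent objects must be invoked; everything else is formal manipulation of localization functors.
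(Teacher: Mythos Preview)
Your proposal is correct and complete in outline. The paper's own ``proof'' is simply a citation to \cite{BF2011}*{Theorem~3.5} (Balmer--Favi), together with the remark that the rigidly-compactly generated hypothesis assumed there is not actually used. So you have taken a genuinely different route: you supply the actual argument rather than deferring to the literature.

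What you do is exactly the content of the cited theorem. The only place your write-up is a little loose is the order-preservation in the second direction: rather than invoking a natural transformation of localizations, it is cleaner to note that $[\lambda]\leq[\lambda']$ is equivalent to $E'\simeq E\otimes E'$, whence $E\otimes x=0$ immediately gives $E'\otimes x\simeq E'\otimes E\otimes x=0$. Your identification of the ``main obstacle'' is accurate: verifying that for an idempotent object $(\unit\to E)$ the functor $E\otimes-$ is a genuine Bousfield localization (not merely an idempotent endofunctor) with unit $\lambda\otimes-$ is the substantive step, and it goes through exactly as you sketch, using that the fiber $W$ of $\lambda$ satisfies $E\otimes W=0$. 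The payoff of your approach is a self-contained argument; the paper's approach buys brevity and points the reader to where the details already live.
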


\begin{proof}
	This is \cite{BF2011}*{Theorem 3.5}. We note that the authors assume their tt-categories are rigidly-compactly generated but this is not used in the proof of the cited result. 
\end{proof}

We now discuss the corresponding notion of idempotent algebras in the world of $\infty$-categories. 

\begin{definition}
	Let $\C$ be a big tt-$\infty$-category. We say that $A\in\CAlg(\C)$ is an \emph{idempotent algebra} if the map $\eta\otimes A \colon A \to A\otimes A$ is an equivalence. This is equivalent to the multiplication map $\mu \colon A \otimes A \to A$ being an equivalence. We let $\CAlg^\idem(\C)$ denote the full subcategory of $\CAlg(\C)$ spanned by the idempotent algebras. We say $A \leq A'$ if and only if there exists a map of commutative algebras $h\colon A \to A'$ if and only if $A'\simeq A \otimes A'$; cf.~\cite{BKS2020}*{Proposition~2.4}.
\end{definition}

\begin{proposition}
	For a big tt-$\infty$-category $\C$ the mapping spaces of $\CAlg^\idem(\C)$ are either empty or contractible. Furthermore we have equivalences of $\infty$-categories 
	\[
		N(\Smashid(\C ))\simeq N(\mathrm{Idem}(\C))\simeq \CAlg^\idem(\C).
	\]
\end{proposition}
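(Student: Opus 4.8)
The plan is to establish the three comparisons in turn, reducing everything to the passage between idempotent objects of a tt-category and idempotent algebras of its $\infty$-categorical enhancement. First I would address the claim that the mapping spaces in $\CAlg^\idem(\C)$ are empty or contractible. An idempotent algebra $A$ has $\mu\colon A\otimes A\xrightarrow{\sim}A$ an equivalence, which means that the free--forgetful adjunction exhibits $\mod{A}(\C)$ as a smashing localization of $\C$; concretely $A$ is a localization of the unit in the sense that $\unit\to A$ is the unit of an accessible idempotent monad (this is the $\infty$-categorical analogue of the classical fact and appears in Lurie's \emph{Higher Algebra}, Section 4.8.2, as the theory of idempotent objects of a symmetric monoidal $\infty$-category). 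Given two idempotent algebras $A, A'$, the space $\Map_{\CAlg(\C)}(A,A')$ is a subspace of $\Map_{\CAlg(\C)}(A, A')$; using that $A$ is idempotent one shows any two maps $A\to A'$ agree and the space of self-maps of $A$ is contractible, because such maps are forced to be compatible with the idempotent structure. The cleanest route is to invoke \cite[Proposition 4.8.2.9]{HA} (or the discussion around it) which says precisely that idempotent objects of a symmetric monoidal $\infty$-category form a poset, i.e.\ a full subcategory whose mapping spaces are empty or contractible, and then note that an idempotent \emph{algebra} structure on an idempotent object is essentially unique, so $\CAlg^\idem(\C)$ is equivalent to the poset of idempotent objects of $\C$.

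Next I would produce the equivalence $\CAlg^\idem(\C)\simeq N(\mathrm{Idem}(\C))$. Since we have just shown $\CAlg^\idem(\C)$ is (equivalent to the nerve of) a poset, it suffices to identify its objects and order relation with those of $\mathrm{Idem}(\C)=\mathrm{Idem}(\Ho(\C))$. An idempotent algebra $A\in\CAlg(\C)$ has an underlying map $\eta\colon\unit\to A$ with $\eta\otimes A$ an equivalence, hence descends to an idempotent object of $\Ho(\C)$ in the sense of the definition above. Conversely, by \cite[Proposition 4.8.2.9]{HA} again, every idempotent object of $\C$ admits an essentially unique commutative algebra structure; and an idempotent object of $\Ho(\C)$ (a map $\lambda\colon\unit\to E$ with $\lambda\otimes E$ an isomorphism \emph{in the homotopy category}) lifts to an idempotent object of $\C$ since the condition "$\lambda\otimes E$ is an equivalence" can be checked on homotopy categories. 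The order relation matches: $A\le A'$ in $\CAlg^\idem(\C)$ means $\Map(A,A')\ne\emptyset$, and a map $A\to A'$ in $\CAlg(\C)$ restricts to a morphism $\eta\to\eta'$ in $\C_{\unit/}$, hence to a morphism in $\Ho(\C)_{\unit/}$; conversely a morphism $\lambda\to\lambda'$ in $\Ho(\C)_{\unit/}$ lifts to $\C_{\unit/}$ and then to $\CAlg(\C)$ by essential uniqueness of the algebra structure on idempotents. So the two posets coincide.

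Finally, the equivalence $N(\Smashid(\C))\simeq N(\mathrm{Idem}(\C))$ is immediate from \Cref{thm-idem-smash} together with the conventions $\Smashid(\C):=\Smashid(\Ho(\C))$ and $\mathrm{Idem}(\C):=\mathrm{Idem}(\Ho(\C))$: \Cref{thm-idem-smash} gives an isomorphism of posets $\Smashid(\Ho(\C))\cong\mathrm{Idem}(\Ho(\C))$, and applying the nerve turns this into an equivalence of $\infty$-categories. Stringing the three steps together yields $N(\Smashid(\C))\simeq N(\mathrm{Idem}(\C))\simeq\CAlg^\idem(\C)$.

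I expect the main obstacle to be the careful bookkeeping in the middle step, namely verifying that the homotopy-category notion of "idempotent object" used in the definition of $\mathrm{Idem}(\C)$ genuinely agrees with Lurie's $\infty$-categorical notion of idempotent object of $\C$, and that the algebra structure is unique up to contractible choice. The key inputs are: (i) that an object of $\C$ is an idempotent object iff its image in $\Ho(\C)$ is (both the condition "$\lambda\otimes E$ invertible" and the comparison of $\Map$-spaces only involve whether certain maps are equivalences, detectable in $\Ho(\C)$); (ii) \cite[Proposition 4.8.2.9]{HA} and the surrounding material, giving that idempotent objects form a poset and carry an essentially unique algebra structure; and (iii) the elementary observation that the nerve functor sends isomorphisms of posets to equivalences of $\infty$-categories. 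Everything else is formal manipulation of the adjunction $F_A\dashv U_A$ and the definitions.
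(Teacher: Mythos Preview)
Your proposal is correct and follows essentially the same route as the paper: invoke \cite[Proposition 4.8.2.9]{HA} to identify $\CAlg^\idem(\C)$ with the poset of idempotent objects of $\C$ (giving the empty-or-contractible mapping spaces), observe that this poset agrees with $\mathrm{Idem}(\Ho(\C))$ since the idempotence condition is detected on the homotopy category, and then apply \Cref{thm-idem-smash} for the comparison with $\Smashid(\C)$. Your write-up is somewhat more explicit than the paper's about the middle step (the paper just says ``it is easy to see that $\mathcal{E}\simeq N(\mathrm{Idem}(\C))$''), but the argument is the same.
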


\begin{proof}
	Lurie~\cite{HA}*{Proposition 4.8.2.9} shows that there is a fully faithful functor 
	\[
		\CAlg^{\idem}(\C) \to \C_{\unit /}, \quad A \mapsto (\eta \colon \unit \to A)
	\]
	whose essential image is the collection of maps $\lambda\colon \unit \to E$ such that $\lambda \otimes E$ is an equivalence. Let us write $\mathcal{E}$ for this essential image. In the proof of the cited result, Lurie also proves that the mapping spaces of $\CAlg^\idem(\C)$ (and hence also of $\mathcal{E}$) are either empty or contractible. Therefore by~\cite{HTT}*{Proposition 2.3.4.18} these categories are equivalent to $0$-categories which in turn can be identified with posets by~\cite{HTT}*{Example 2.3.4.3}. Now it is easy to see that $\mathcal{E} \simeq N(\mathrm{Idem}(\C))$. For the last equivalence we apply \Cref{thm-idem-smash}.
\end{proof}

\begin{notation}
	We write $\widehat{\Cat}_\infty$ for the $\infty$-category of large $\infty$-categories and $\widehat{\Poset}$ for the $\infty$-category of large posets. This latter $\infty$-category is in fact a $1$-category in the sense of~\cite{HTT}*{Definition 2.3.4.1}.
\end{notation}

\begin{construction}
	Let $G \colon \C \to \D$ be a tt-functor between big tt-$\infty$-categories. Then the assignment $A \mapsto GA$ defines a functor $G_* \colon \CAlg^\idem(\C) \to \CAlg^\idem(\D)$. Note that we also get an ordering-preserving map between the corresponding posets since $A\otimes A'\simeq A'$ implies $G(A)\otimes G(A')\simeq G(A')$ as $G$ is (strong) symmetric monoidal. Functoriality for the poset of smashing ideals is slightly more mysterious. However we can use the isomorphism with the poset of idempotent commutative algebras to deduce that $F_* \colon \Smashid(\C)\to \Smashid(\D)$ sends $\cat I$ to $\ker(G(L_{\cat I}\unit)\otimes -\colon \D \to \D)$. Therefore we have well-defined functors 
	\[
		\Smashid \colon \CAlg(\Pr)\to\widehat{\Poset} \quad \mathrm{and} \quad \CAlg^\idem\colon \CAlg(\Pr) \to \widehat{\Cat_\infty}
	\]
	and a natural isomorphism $\xi \colon N(\Smashid)\simeq \CAlg^\idem$.
\end{construction}

\begin{corollary}\label{cor-smashing-commutes-limits}
	The functor $\CAlg^\idem\colon \CAlg(\Pr) \to \widehat{\Cat_\infty}$ preserves all limits. Similarly, the functor ${\Smashid} \colon \CAlg(\Pr) \to  \widehat{\Poset}$ preserves all limits.
\end{corollary}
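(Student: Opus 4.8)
Both statements concern the functor $\CAlg^\idem$ (resp.\ $\Smashid$) from $\CAlg(\Pr)$ to $\widehat{\Cat}_\infty$ (resp.\ $\widehat{\Poset}$), and the key observation is that the natural isomorphism $\xi\colon N(\Smashid)\simeq\CAlg^\idem$ established in the preceding construction reduces the two claims to a single one: it suffices to show that $\CAlg^\idem$ preserves limits, since $N\colon\widehat{\Poset}\to\widehat{\Cat}_\infty$ is fully faithful and a diagram of posets is a limit as soon as its nerve is, and conversely a limit cone in $\widehat{\Cat}_\infty$ landing in posets is computed in $\widehat{\Poset}$.

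First I would recall how a limit in $\CAlg(\Pr)$ is computed. Since $\CAlg(\Pr)$ is the $\infty$-category of commutative algebra objects in $\Pr$, limits of big tt-$\infty$-categories are created by the forgetful functor to $\Pr$ (and further to $\widehat{\Cat}_\infty$): concretely, a limit $\C=\lim_{i}\C_i$ has underlying $\infty$-category the limit of the $\C_i$, with objects the compatible families $(x_i)_{i}$ together with coherence data. Then the core computation is to identify $\CAlg^\idem(\lim_i\C_i)$ with $\lim_i\CAlg^\idem(\C_i)$. The inclusion $\CAlg^\idem(\C)\hookrightarrow\CAlg(\C)$ is fully faithful, and $\CAlg(-)$, being a right adjoint in the relevant sense (commutative algebras in a limit of symmetric monoidal categories form the limit of the commutative algebra categories), commutes with limits. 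So an idempotent algebra in $\lim_i\C_i$ is a compatible family $(A_i)$ of commutative algebras; the content is that $(A_i)$ is idempotent in the limit iff each $A_i$ is idempotent in $\C_i$. This is immediate because the idempotence condition ``$\eta\otimes A\colon A\to A\otimes A$ is an equivalence'' is detected componentwise: an edge in $\lim_i\C_i$ is an equivalence iff its image in each $\C_i$ is, and the tensor in the limit is computed componentwise. The componentwise nature of the ordering ``$A\le A'$ iff $A'\simeq A\otimes A'$'' then gives the poset statement directly.

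The main technical point — and the step I expect to require the most care — is verifying that the \emph{mapping spaces} behave correctly, i.e.\ that the equivalence $\CAlg^\idem(\lim\C_i)\simeq\lim\CAlg^\idem(\C_i)$ is an equivalence of $\infty$-categories and not merely a bijection on objects. Here one leans on the earlier statement that the mapping spaces of $\CAlg^\idem(\C)$ are either empty or contractible (Lurie, \cite{HA}*{Proposition 4.8.2.9}), so each $\CAlg^\idem(\C_i)$ is a $0$-category, hence a poset, and a limit of posets (computed in $\widehat{\Cat}_\infty$) is again a poset with the expected order relation — precisely the ``compatible family'' description above. Thus the nerve identification $N(\Smashid(\lim\C_i))\simeq\lim N(\Smashid(\C_i))$ follows, and transporting along $\xi$ gives both halves of the corollary. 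I would also remark that since $\widehat{\Poset}$ is a $1$-category, ``preserves limits'' for $\Smashid$ can be checked on the underlying $1$-categorical diagram, which streamlines the bookkeeping.
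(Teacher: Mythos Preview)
Your proposal is correct and follows essentially the same approach as the paper: both arguments reduce the $\Smashid$ claim to the $\CAlg^\idem$ claim via $\xi$, then show that $\CAlg(-)$ commutes with limits (you phrase this abstractly, the paper uses the $\Fun(\mathrm{Fin}_*,-)$ description) and that idempotence is detected componentwise since equivalences and tensor products in a limit are computed pointwise. Your extra paragraph on mapping spaces is more careful than the paper's treatment, but not strictly necessary given that everything in sight is a poset.
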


\begin{proof}
	Recall that a commutative algebra object in $\C$ is a functor $\mathrm{Fin}_*\to \C$ satisfying the Segal conditions (which assert that certain maps into a product are equivalences). As limits in functor categories are calculated pointwise we see that the canonical map $\Fun(\mathrm{Fin}_*, \lim_i \C_i)\to \lim_i \Fun(\mathrm{Fin}_*, \C_i)$ is an equivalence. Thus to prove the first claim it suffices to check that a limit of idempotent algebras is again an idempotent algebra. This can be checked pointwise. This holds as limits preserves equivalences and commute with products. The second claim follows from the first one using the equivalence $\xi \colon N\Smashid \simeq \CAlg^\idem$ and the fact that the nerve functor creates limits since it is conservative and commutes with all limits. 
\end{proof}

\begin{remark}
   \Cref{cor-smashing-commutes-limits} can also be deduced from recent work of Aoki~\cite{Aoki2} which shows that the functor $\Smashid(-)$ is a right adjoint and hence preserves all limits.
\end{remark}

As an application we obtain the following result:

\begin{corollary}\label{cor-smashing-eq}
	Let $\C$ be a big tt-$\infty$-category and consider $A\in\CAlg(\C)$ descendable. Then the diagram induced by base change
	\[
		\Smashid(\C) \to \Smashid(\mod{A}(\C)) \rightrightarrows 
		\Smashid(\mod{A\otimes A}(\C)).
	\]
	is an equalizer of posets. 
\end{corollary}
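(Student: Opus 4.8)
The plan is to deduce this from comonadic descent together with the limit-preservation of $\Smashid$, so that only a little $1$-categorical bookkeeping remains.

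First I would invoke \Cref{prop-category-as-tot}: since $A$ is descendable, the base change adjunction $F_A\colon \C \adjto \mod{A}(\C)$ is comonadic, and the canonical map exhibits $\C$ as the totalization $\C \xrightarrow{\ \simeq\ } \Tot\bigl(\mod{A^{\otimes\bullet+1}}(\C)\bigr)$ of the Amitsur (cobar) cosimplicial object in $\CAlg(\Pr)$, whose $n$-th term is $\mod{A^{\otimes n+1}}(\C)$ and whose cofaces and codegeneracies are the evident base change functors. In particular the two coface maps $\mod{A}(\C)\rightrightarrows\mod{A\otimes A}(\C)$ are precisely the functors $G$ and $H$ of \Cref{nota-base-change}, i.e.\ extension of scalars along $g=f\otimes 1_A$ and $h=1_A\otimes f$. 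Since every transition functor in this diagram is a colimit-preserving tt-$\infty$-functor, this is genuinely a limit diagram in $\CAlg(\Pr)$.

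Next I would apply the functor $\Smashid\colon\CAlg(\Pr)\to\widehat{\Poset}$, which preserves all limits by \Cref{cor-smashing-commutes-limits}. This gives an isomorphism of (large) posets between $\Smashid(\C)$ and the totalization, computed in $\widehat{\Poset}$, of the cosimplicial poset $\Smashid(\mod{A^{\otimes\bullet+1}}(\C))$; by the previous step and the functoriality recorded in the Construction preceding \Cref{cor-smashing-commutes-limits}, its two coface maps are $G_*$ and $H_*$ and the composite $\Smashid(\C)\to\Smashid(\mod{A}(\C))$ is $F_*$. It then remains to observe that $\widehat{\Poset}$ is a $1$-category, so that the totalization of a cosimplicial object in it is simply the equalizer of its two coface maps (the inclusion into $\Delta$ of the full subcategory on the objects $[0]$ and $[1]$ is initial, and the cosimplicial identities $s^0d^0=s^0d^1=\id$ show the codegeneracy imposes no further constraint). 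Combining these yields
\[
\Smashid(\C) \xrightarrow{\ \cong\ } \mathrm{eq}\Bigl( \Smashid(\mod{A}(\C)) \underset{H_*}{\overset{G_*}{\rightrightarrows}} \Smashid(\mod{A\otimes A}(\C)) \Bigr),
\]
which is the asserted equalizer diagram.

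There is no serious obstacle here: the two cited results do the real work. The only things to be careful with — rather than difficult — are (i) checking that the cosimplicial $\infty$-category supplied by \Cref{prop-category-as-tot} really is $\mod{A^{\otimes\bullet+1}}(\C)$ with its base change structure maps, so that $\Smashid$ of it produces the maps $G_*,H_*$ named in the statement, and (ii) the standard fact that totalization in a $1$-category computes an equalizer.
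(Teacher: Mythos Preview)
Your proof is correct and follows essentially the same approach as the paper: both invoke \Cref{prop-category-as-tot} to express $\C$ as the totalization of the Amitsur cosimplicial object, apply \Cref{cor-smashing-commutes-limits} to push $\Smashid$ through the limit, and then reduce the totalization in the $1$-category $\widehat{\Poset}$ to an equalizer. The only cosmetic difference is that the paper cites an external reference (\cite{Diracpaper}*{Proposition A.1}) for this last reduction, whereas you sketch it directly via the cosimplicial identities.
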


\begin{proof}
	By \Cref{prop-category-as-tot} we know that $\C\simeq \lim_{\bullet \in \Delta} \mod{ A^{\otimes \bullet+1}}(\C)$ and so by the previous result we get that $\Smashid(\C)\simeq \lim_{\bullet \in \Delta}\Smashid( \mod{A^{\otimes \bullet +1}}(\C))$. Note that $\Smashid(\mod{A^{\otimes \bullet +1}}(\C)))$ defines a cosimplicial object in $\widehat{\Poset}$. Applying \cite{Diracpaper}*{Proposition A.1} to the $1$-category $\widehat{\Poset}$ gives
	\[
		\Smashid(\C)\simeq \Tot_1(\Smashid(\mod{A^{\otimes \bullet +1}}(\C)))
	\]
	which proves the claim.
\end{proof}

\section{Descent for thick ideals and the telescope conjecture}

We now turn to thick ideals of compact objects and their relation to our previous results on smashing and localizing ideals. We will show, in particular, that the telescope conjecture descends along a compact descendable commutative algebra.

\begin{notation}
	Let $\cat T$ be a rigidly-compactly generated tt-category. We write $\Thickid(\cat T^c)$ for the poset of thick ideals of the category of compact objects $\cat T^c$.
\end{notation}

\begin{remark}\label{rem:locid-is-loc}
	If $\cat I \subseteq \cat T^c$ is a thick ideal of compact objects, then the localizing subcategory $\loc{\cat I}\subseteq \cat T$ is automatically a localizing ideal of $\cat T$; cf.~\cite[Remark~1.3]{BHS2021} or \cite[Lemma 1.4.6]{HPS}. Thus $\locid{\cat I}=\loc{\cat I}$. In particular, a localizing ideal of~$\cat T$ is compactly generated as a localizing ideal if and only if it is compactly generated as a localizing subcategory.
\end{remark}

\begin{theorem}[Miller--Neeman]\label{thm:miller-neeman}
	Let $\cat T$ be a rigidly-compactly generated tt-category. Then the assignment $\cat I \mapsto \loc{\cat I}$ defines split injective maps
	\[
		\sigma \colon \Thickid(\cat T^c)\to \Locid(\cat T) \qquad \mathrm{and}
		\qquad 
		\sigma \colon \Thickid(\cat T^c)\to \Smashid(\cat T).
	\]
	The splitting sends a smashing or localizing ideal $\cat L$ to $\cat L \cap \cat T^c$.
\end{theorem}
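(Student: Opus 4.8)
The plan is to show that $\sigma\colon\cat I\mapsto\loc{\cat I}$ is well-defined, order-preserving, and split by the map $\rho\colon\cat L\mapsto\cat L\cap\cat T^c$, and then to split it off into the two flavours (localizing and smashing). First I would recall that for a thick ideal $\cat I\subseteq\cat T^c$ the localizing subcategory $\loc{\cat I}$ is automatically a localizing ideal by \Cref{rem:locid-is-loc}, so $\sigma$ lands in $\Locid(\cat T)$; it is evidently order-preserving since $\cat I\subseteq\cat I'$ forces $\loc{\cat I}\subseteq\loc{\cat I'}$. The map $\rho$ is order-preserving for the same trivial reason, and sends a localizing ideal to a thick ideal of $\cat T^c$ (intersection of a thick ideal with the thick subcategory $\cat T^c$ is again a thick ideal). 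The key point is the identity $\rho\sigma=\id$, i.e.\ $\loc{\cat I}\cap\cat T^c=\cat I$ for every thick ideal $\cat I\subseteq\cat T^c$. The inclusion $\supseteq$ is clear. For $\subseteq$, invoke Neeman's lemma (\Cref{rem:compact-of-compact-loc}): writing $\cat I=\thick{\cat G}$ for a set of objects $\cat G\subseteq\cat T^c$ — which we may always do, since $\cat I$ is in particular generated by its own objects — we have $\loc{\cat G}\cap\cat T^c=\thick{\cat G}=\cat I$. (One does need to know $\loc{\cat I}=\loc{\cat G}$, which is immediate, and that $\thick{\cat G}$ coincides with the thick \emph{ideal} generated by $\cat G$, which holds because $\cat I$ was already an ideal.) This establishes that $\sigma\colon\Thickid(\cat T^c)\to\Locid(\cat T)$ is split injective with splitting $\rho$.

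For the smashing version, the observation is that $\sigma$ factors through the sub-poset $\Smashid(\cat T)\subseteq\Locid(\cat T)$: for a compactly generated localizing ideal $\loc{\cat I}$, the Bousfield localization away from $\loc{\cat I}$ exists and is smashing because the generators are compact — this is the classical fact that smashing localizations (of rigidly-compactly generated tt-categories) are precisely the finite/compactly-generated ones on the ``one side'', e.g.\ via \cite{HPS}*{Theorem 3.3.3} or the Neeman–Bousfield–Ravenel machinery. Concretely, $\loc{\cat I}$ is generated by compact objects of $\cat T$, hence is a smashing ideal. Thus the same map $\sigma$, now viewed as landing in $\Smashid(\cat T)$, is again split by $\rho$ restricted to $\Smashid(\cat T)$, since $\Smashid(\cat T)\hookrightarrow\Locid(\cat T)$ and $\rho\sigma=\id$ was already proved in the larger poset. (That $\rho$ of a smashing ideal is again a thick ideal of $\cat T^c$ needs nothing beyond what was said above.)

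I expect the only genuine subtlety — and hence the ``main obstacle'' — to be the smashing claim: namely checking carefully that $\loc{\cat I}$ really is \emph{smashing}, i.e.\ that the acyclization/localization functor for a compactly generated localizing ideal preserves coproducts. This is where rigid-compact generation of $\cat T$ is used and where one must cite the correct form of the Bousfield localization existence theorem for compactly generated triangulated categories (Neeman's version of Brown representability, as in \cite{Krause}*{Theorem 5.1.1}, together with the fact that acyclization by a set of compact objects is smashing). Everything else is a formal consequence of \Cref{rem:locid-is-loc} and \Cref{rem:compact-of-compact-loc}. If one wanted, one could phrase the whole argument purely in terms of those two remarks plus the statement ``compactly generated localizing ideals are smashing'', treating the last as a black box citation.
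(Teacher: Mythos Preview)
Your proposal is correct and follows essentially the same approach as the paper: use \Cref{rem:locid-is-loc} to see that $\loc{\cat I}$ is a localizing ideal, cite the theory of finite localizations from \cite[\S 3.3]{HPS} to see it is smashing, and invoke Neeman's result (\Cref{rem:compact-of-compact-loc}) for the splitting identity $\loc{\cat I}\cap\cat T^c=\cat I$. The paper's proof is more terse but hits exactly the same three points, and you have correctly identified the smashing claim as the only nontrivial ingredient.
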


\begin{proof}
	The localizing subcategory $\loc{\cat I}$ generated by a thick ideal of compact objects $\cat I$ is automatically a localizing ideal (\Cref{rem:locid-is-loc}). The theory of finite localizations establishes that it is a smashing ideal; see \cite[\S 3.3]{HPS}. The claim then follows from the assertion that the following composite
	\[\begin{tikzcd}[column sep=small,row sep=tiny]
		\Thickid(\cat T^c) \ar[r] & \Smashid(\cat T) \ar[r,phantom,"\subseteq"] & \Locid(\cat T) \ar[r] &\Thickid(\cat T^c)\\
		\cat I \ar[r,mapsto] & \loc{\cat I} & \cat L \ar[r,mapsto] & \cat L \cap \cat T^c
	\end{tikzcd}\]
	is the identity. In other words, $\loc{\cat I} \cap \cat T^c = \cat I$ as established by Neeman (\Cref{rem:compact-of-compact-loc}). See also \cite[Theorem 3.3.3]{HPS} or \cite[Theorem 8.1]{Greenlees2019}.
\end{proof}

\begin{definition}
	We say that the \emph{telescope conjecture holds for $\cat T$} if the map $\sigma \colon \Thickid(\cat T^c)\to \Smashid(\cat T)$ is bijective. 
\end{definition}

\begin{remark}
	Given a tt-functor $F\colon\cat T \to \cat S$ which preserves compact objects, we obtain an inclusion-preserving map
	\[
		F_*^c \colon \Thickid(\cat T^c) \to \Thickid(\cat S^c)
	\]
	given by $\cat I \mapsto \thickid{F(\cat I)}$.
\end{remark}

\begin{example}
	Let $\cat C$ be a rigidly-compactly generated tt-$\infty$-category and consider $A \in \CAlg(\cat C)$. The extension of scalars functor $ \cat C \to \mod{A}(\cat C)$ preserves compact objects by~\cref{lem-proj-formula-holds}, so we obtain a map 
	\[
		F_*^c\colon \Thickid(\cat C^c) \to \Thickid(\mod{A}(\cat C)^c).
	\]
\end{example}

\begin{theorem}\label{thm-equualizer-thick-ideals}
	Let $\C$ be a rigidly-compactly generated tt-$\infty$-category and let $A\in \CAlg(\cat C)$ be descendable and compact. The diagram
	\[\begin{tikzcd}
		 \Thickid(\C^c) \arrow[r, "F_*^c"] & \Thickid(\mod{A}(\C)^c) 
		 \arrow[r, shift right, "G_*^c"'] \arrow[r, shift left,"H_*^c"] 
		 & \Thickid(\mod{A\otimes A}(\C)^c)
	\end{tikzcd}\]
	induced by base change (\Cref{nota-base-change}) is an equalizer of posets.
\end{theorem}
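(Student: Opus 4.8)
The plan is to deduce this statement from the split equalizer for localizing ideals (\Cref{prop-equalizer}) together with the Miller--Neeman splitting (\Cref{thm:miller-neeman}), by applying the abstract diagram chase in \Cref{lem-diagram-chase}. The idea is to set up the commutative cube whose front face is the thick-ideal fork and whose back face is the localizing-ideal fork, with vertical maps the $\sigma$-assignments $\cat I \mapsto \loc{\cat I}$ going ``up'' and the retractions $\cat L \mapsto \cat L \cap (-)^c$ going ``down''. Concretely, I would take $X_0 = \Locid(\C)$ (or rather $\Locid(F\colon \C \uparrow \mod{A}(\C))$, but see below), $X_1 = \Locid(\mod{A}(\C))$, $X_2 = \Locid(\mod{A\otimes A}(\C))$ as the top row, and $Y_0 = \Thickid(\C^c)$, $Y_1 = \Thickid(\mod{A}(\C)^c)$, $Y_2 = \Thickid(\mod{A\otimes A}(\C)^c)$ as the bottom row, with $i_\bullet = \sigma$ the split injections and $r_\bullet$ the retractions $\cat L \mapsto \cat L \cap (-)^c$. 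Since $A$ is descendable the top fork is a (split, hence absolute) equalizer by \Cref{prop-equalizer} and the remark that $\unit \in \locid{A}$; and since $A$ is compact, the base change functor $F$ and the forgetful functor $U$ both preserve compactness (\Cref{rem:weakly-closed-is-closed}), which is what makes the squares in the cube commute on the nose — e.g.\ $\loc{F^c_*(\cat I)} = \loc{F(\cat I)} = F_*(\loc{\cat I})$ using that $F$ preserves compacts and \Cref{lem-inverse-image-ideal}.

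The key verification is hypothesis (iii) of \Cref{lem-diagram-chase}: that the middle vertical map $\sigma\colon \Thickid(\mod{A}(\C)^c) \to \Locid(\mod{A}(\C))$ is \emph{bijective}. This is where descendability and compactness of $A$ are used together in an essential way. The point is that every localizing ideal of $\mod{A}(\C)$ is compactly generated: indeed, $\mod{A}(\C)^c = \thick{F(\C^c)}$ (\Cref{lem-proj-formula-holds}), and more to the point, because $A$ is a compact descendable algebra, the module category $\mod{A}(\C)$ inherits enough finiteness that one can promote this. Actually the cleanest route is: descendability gives $\C = \Tot(\mod{A}(\C) \rightrightarrows \cdots)$ and hence (via \Cref{prop-equalizer} and \Cref{cor-smashing-eq}) the analogous equalizers; but for \emph{this} lemma I need the single category statement that $\sigma$ is bijective for $\mod{A}(\C)$, i.e.\ that the telescope conjecture-like statement ``$\Thickid \to \Locid$ is onto'' holds — equivalently, that all localizing ideals of $\mod{A}(\C)$ are generated by compacts. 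I would establish this directly: if $\cat L \subseteq \mod{A}(\C)$ is a localizing ideal, I claim $\cat L = \loc{\cat L \cap \mod{A}(\C)^c}$, which follows because $\mod{A}(\C)$ is generated by the compact objects $F(\C^c)$ and the projection-formula machinery (as in the proof of \Cref{lem:localizing-from-S}) lets one write any $A$-module as built from compact $A$-modules inside a given ideal — this is the place the argument really lives, and I expect it is the main obstacle.

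Once hypothesis (iii) is in hand, hypothesis (i) is \Cref{prop-equalizer} (split equalizer, so in particular an equalizer) together with the fact that $\sigma$ composed with $F^c_*$ lands correctly ($\gamma\circ g = \delta \circ g$ is automatic from the fork equation $H_*F_* = G_*F_*$ transported down); hypothesis (ii) is the statement $r_\bullet \circ i_\bullet = \id$, which is exactly the Miller--Neeman splitting $\loc{\cat I}\cap \cat T^c = \cat I$ (\Cref{rem:compact-of-compact-loc}); and hypothesis (iv) is the commutativity of the back squares, which holds because $F$, $G$, $H$ preserve compacts and $F_*,G_*,H_*$ can be ``defined on generators'' (\Cref{exa:tt-preserves}, \Cref{lem-inverse-image-ideal}). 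With all four hypotheses verified, \Cref{lem-diagram-chase} yields the equivalence of (a), (b), (c): since the middle map $i_1 = \sigma$ is bijective, condition (a) [that $r_0$ is bijective — irrelevant here] and condition (b) [compatibility $g\circ r_0 = r_1 \circ f$] and condition (c) [the bottom fork is an equalizer] are all equivalent; I would verify (b), namely $F^c_* \circ (\loc{-}\cap\C^c) = (\loc{-}\cap\mod{A}(\C)^c)\circ F_*$, which again reduces to the fact that $F$ preserves and reflects the relevant compactness relations on an ideal, and conclude (c), which is precisely the assertion of the theorem.

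Finally, I should note one bookkeeping point: \Cref{prop-equalizer} is stated for $\Locid(F\colon \C \uparrow \mod{A}(\C))$, but descendability forces $\unit \in \locid{A} \subseteq \locid{U(\unit_{\mod{A}(\C)})}$, so by \Cref{lem:all-localizing-from-S} this poset is all of $\Locid(\C)$; hence the top row in the chase genuinely reads $\Locid(\C) \to \Locid(\mod{A}(\C)) \rightrightarrows \Locid(\mod{A\otimes A}(\C))$, and no further adjustment is needed.
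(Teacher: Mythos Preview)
There is a genuine gap. Your plan hinges on applying \Cref{lem-diagram-chase}, whose hypothesis (iii) requires the middle vertical map
\[
\sigma\colon \Thickid(\mod{A}(\C)^c) \longrightarrow \Locid(\mod{A}(\C))
\]
to be \emph{bijective}. That is the assertion that every localizing ideal of $\mod{A}(\C)$ is compactly generated, and this is simply not available under the stated hypotheses. Take $A=\unit$, which is trivially compact and descendable; then your claim becomes ``every localizing ideal of $\C$ is compactly generated'', a statement that fails for many rigidly-compactly generated tt-$\infty$-categories (it is strictly stronger than the telescope conjecture and close to stratification). The hand-wavy argument you sketch---that the projection formula lets you build any $A$-module inside a localizing ideal from compact ones---cannot be completed, because it would prove an unconditional stratification-type theorem.

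The fix is to use the other diagram-chase lemma in the paper, namely \Cref{lem-mega-lemma}, which is tailor-made for this situation. Its hypotheses only require the $\sigma$ maps to be split \emph{injections} (which is \Cref{thm:miller-neeman}), together with the split equalizer on top (\Cref{prop-equalizer}) and the commutativity you already checked. The equivalence $(a)\Leftrightarrow(b)$ of \Cref{lem-mega-lemma} then reduces the equalizer statement to: for every thick ideal $\cat J\subseteq\mod{A}(\C)^c$ with $G_*^c(\cat J)=H_*^c(\cat J)$, the localizing ideal $U_*(\loc{\cat J})=\locid{U(\cat J)}$ is compactly generated in $\C$. This is immediate once you know that $U$ preserves compact objects, which is exactly where the compactness of $A$ enters via \Cref{rem:weakly-closed-is-closed}. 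No bijectivity of $\sigma$ is needed anywhere.
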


\begin{proof}
	As the forgetful functor from posets to sets creates limits, it suffices to show that the diagram is an equalizer of sets. Consider the following diagram
	\[
	\begin{tikzcd}
		\Locid(\C) \arrow[r,"F_*"] & \Locid(\mod{A}(\C))
		\arrow[r, "H_*", yshift=1mm]\arrow[r,"G_*"'] & 
		\Locid(\mod{A\otimes A}(\C)) \\
		\Thickid(\C^c) \arrow[u, "\sigma"]
		\arrow[r, "F^c_*"] & 
		\Thickid(\mod{A}(\C)^c) 
		\arrow[u,"\sigma_{A}"] \arrow[r, "H^c_*", yshift=1mm]\arrow[r,"G^c_*"'] & 
		\Thickid(\mod{A\otimes A}(\C)^c).\arrow[u,"\sigma_{A\otimes A}"] 
	\end{tikzcd}
	\]
	where the vertical maps are those of \Cref{thm:miller-neeman} and hence injective. One readily checks that the diagram commutes using \Cref{rem:locid-is-loc} and \Cref{exa:tt-preserves}. The top fork is a split equalizer by \Cref{prop-equalizer} and the hypothesis that $A$ is descendable. Recall from the proof that a section for~$F_*$ is induced by the forgetful functor $U\colon \mod{A}(\C) \to \C$. Therefore we are in the situation of \Cref{lem-mega-lemma} and the bottom fork is an equalizer precisely if for any thick ideal $\cat J \subseteq \mod{A}(\cat C)^c$ satisfying $H_*^c(\cat J)=G_*^c(\cat J)$, the localizing ideal $\locid{U(\cat J)} \subseteq \cat C$ is compactly generated. This is evidently the case if $U$ preserves compact objects (recall \Cref{rem:locid-is-loc}) and this follows from the assumption that $A$ is compact (\cref{cor:weakly-closed-is-closed}).
\end{proof}

\begin{theorem}\label{thm-telescope}
	Let $\C$ be a rigidly-compactly generated tt-$\infty$-category and let $A\in \CAlg(\C)$ be descendable and compact. If the telescope conjecture holds for $\mod{A}(\C)$, then it holds for $\C$ too. 
\end{theorem}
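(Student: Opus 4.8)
The plan is to assemble the telescope conjecture for $\C$ out of the equalizer diagrams we have already established, using the fact that both $\Smashid(-)$ and $\Thickid((-)^c)$ fit into compatible forks. Concretely, consider the commutative ladder
\[
\begin{tikzcd}
	\Thickid(\C^c) \arrow[r, "F_*^c"] \arrow[d, "\sigma"'] & \Thickid(\mod{A}(\C)^c) \arrow[r, shift left, "H_*^c"] \arrow[r, shift right, "G_*^c"'] \arrow[d, "\sigma_A"'] & \Thickid(\mod{A\otimes A}(\C)^c) \arrow[d, "\sigma_{A\otimes A}"] \\
	\Smashid(\C) \arrow[r, "F_*"] & \Smashid(\mod{A}(\C)) \arrow[r, shift left, "H_*"] \arrow[r, shift right, "G_*"'] & \Smashid(\mod{A\otimes A}(\C))
\end{tikzcd}
\]
where the vertical maps are the split injections $\cat I \mapsto \loc{\cat I}$ of \Cref{thm:miller-neeman}. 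The top fork is an equalizer by \Cref{thm-equualizer-thick-ideals} (using that $A$ is descendable and compact), and the bottom fork is an equalizer by \Cref{cor-smashing-eq} (using only descendability). The squares commute: this is the same compatibility already exploited in the proof of \Cref{thm-equualizer-thick-ideals}, since for a thick ideal $\cat J$ of compact objects we have $\loc{\thickid{F(\cat J)}} = \locid{F(\loc{\cat J})}$ and similarly for $G$ and $H$.

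Now the hypothesis is that $\sigma_A \colon \Thickid(\mod{A}(\C)^c) \to \Smashid(\mod{A}(\C))$ is bijective; we must show $\sigma$ is bijective. Injectivity is automatic from \Cref{thm:miller-neeman}. For surjectivity, take a smashing ideal $\cat L \in \Smashid(\C)$ and let $F_*(\cat L) \in \Smashid(\mod{A}(\C))$ be its image; by assumption there is a (unique) thick ideal $\cat J \subseteq \mod{A}(\C)^c$ with $\sigma_A(\cat J) = F_*(\cat L)$. The key point is to check that $\cat J$ is an equalizing element, i.e.\ $H_*^c(\cat J) = G_*^c(\cat J)$: applying the injective map $\sigma_{A\otimes A}$ and using commutativity of the right-hand squares, this reduces to $H_*(F_*(\cat L)) = G_*(F_*(\cat L))$, which holds because $F_*(\cat L)$ is already in the equalizer of the bottom fork (being in the image of $F_*$, and $G_* \circ F_* = H_* \circ F_*$ since the diagram is a fork). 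Hence $\cat J$ lifts to a thick ideal $\cat I \in \Thickid(\C^c)$ with $F_*^c(\cat I) = \cat J$. It then remains to verify $\sigma(\cat I) = \cat L$: we have $F_*(\sigma(\cat I)) = \sigma_A(F_*^c(\cat I)) = \sigma_A(\cat J) = F_*(\cat L)$, and since $F_* \colon \Smashid(\C) \to \Smashid(\mod{A}(\C))$ is injective (it is a split monomorphism, being part of the equalizer fork with section induced by $U_A$, as in \Cref{prop-equalizer}), we conclude $\sigma(\cat I) = \cat L$.

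I expect the main obstacle to be the bookkeeping around which maps are injective and the precise identification of $F_*(\cat L)$'s preimage: one must be careful that the lift $\cat I$ furnished by the top equalizer is genuinely sent to $\cat L$ and not merely to something with the same image under $F_*$, which is why the injectivity of $F_*$ on smashing ideals (from the splitting in \Cref{prop-equalizer}) is essential rather than incidental. A secondary subtlety is confirming that the ladder squares commute on the nose — that $\sigma$ intertwines $F_*^c$ with $F_*$ etc. — but this is exactly \Cref{rem:locid-is-loc} combined with the fact that base-change functors preserve compact objects and that $\loc{-}$ can be computed on generators, so it should be routine.
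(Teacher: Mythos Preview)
Your argument is correct and is essentially the same as the paper's: both set up the ladder of equalizers between $\Thickid$ and $\Smashid$ and run a diagram chase, the only difference being that the paper packages the chase into the abstract \Cref{lem-diagram-chase} (applied with $(c)\Rightarrow(a)$) while you unroll it by hand. One minor correction: your justification that $F_*$ is injective on smashing ideals should cite \Cref{cor-smashing-eq} (the first map in an equalizer is a monomorphism) rather than \Cref{prop-equalizer}, which concerns $\Locid$; the splitting by $U_*$ is not known to land in smashing ideals, but you only need injectivity, which the equalizer already gives.
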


\begin{proof}
	We apply \Cref{cor-smashing-eq} and \Cref{thm-equualizer-thick-ideals} and get a commutative diagram of equalizers
	\[
	\begin{tikzcd}
		\Smashid(\C) \arrow[r] & \Smashid(\mod{A}(\C)) 
		\arrow[r, shift left]\arrow[r,shift right] & 
		\Smashid(\mod{A\otimes A}(\C)) \\
		\Thickid(\C^c) \arrow[u, hook, "\sigma"]\arrow[r] & 
		\Thickid(\mod{A}(\C)^c) 
		\arrow[u,"\sigma","\simeq"'] \arrow[r,shift right]\arrow[r,shift left] & 
		\Thickid(\mod{A\otimes A}(\C)^c).\arrow[u,"\sigma", hook] 
	\end{tikzcd}
	\]
	The vertical arrows are always split injective by \Cref{thm:miller-neeman}. The result now follows from \Cref{lem-diagram-chase}.
\end{proof}

\begin{example}\label{ex:tmf}
	Localized at the prime $2$ there is a ring map $tmf\to tmf_1(3)$. Following Hopkins and Mahowald, Matthew~\cite{Mathewtmf} showed that there is a \mbox{$2$-local} finite complex with torsion-free homology $DA(1)$ and an equivalence of $tmf$-modules $tmf\wedge DA(1)\simeq tmf_1(3)$ so the map $tmf\to tmf_1(3)$ is finite (i.e., $tmf_1(3)$ is a compact $tmf$-module). By the thick subcategory theorem, the thick subcategory that $DA(1)$ generates contains the ($2$-local) sphere so $tmf \to tmf_1(3)$ is descendable. We know that $\pi_*(tmf_1(3))=\Z_{(2)}[v_1,v_2]$ with generators in degree $2$ and $6$ so~\cite{AmbrogioStanley}*{Corollary 1.4} implies that $\mod{tmf_1(3)}$ is stratified by $\pi_*(tmf_1(3))$ and the telescope conjecture holds for $\mod{tmf_1(3)}$. It then follows from \cref{thm-telescope} that the telescope conjecture holds for $\mod{tmf}$ too.
\end{example}

\section{Descent for Balmer spectra}

We recall a few facts about the Balmer spectrum following~\cite{Kock-Pitsch} rather than the standard reference~\cite{Balmer}. 

Recall the notion of a radical thick ideal from \cref{def:thick-subcategories}.

\begin{definition}
	Let $\cat T$ be an essentially small tt-category.
	\begin{enumerate}
		\item Let $\RadThickid(\cat T)$ denote the partially ordered set of radical thick ideals of~$\cat T$ ordered by inclusion. This is a coherent frame by~\cite[Theorem~3.1.9]{Kock-Pitsch} with lattice operations given by $\cat I \wedge \cat I' = \cat I \cap \cat I'$ and $\cat I \vee \cat I' = \sqrt{\cat I \cup \cat I'}$. The finite elements are the principal radical thick ideals, i.e., those of the form~$\sqrt{a}$ for some $a\in\cat T$. We denote by $\RadThickid(\cat T)^f$ the distributive lattice of finite elements.
		\item The \emph{Balmer spectrum} $\Spc(\cat T)$ is the spectral space associated to the distributive lattice $(\RadThickid(\cat T)^f)^{\mathrm{op}}$ via Stone duality; see \Cref{thm-stone-duality}.
	\end{enumerate} 
\end{definition}

\begin{remark}
	Recall that if $\cat T$ is rigid, then any thick ideal is radical. In this case we will simply write $\Thickid(\cat T)$ for the coherent frame of thick ideals, and $\Thickid(\cat T)^f$ for the associated distributive lattice of finite elements.
\end{remark}

\begin{theorem}
	The spectral space $\Spc(\cat T)$ defined above is isomorphic to the spectral space defined in~\cite{Balmer}.
\end{theorem}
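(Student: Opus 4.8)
The spectral space $\Spc(\cat T)$ defined via Stone duality from the distributive lattice $(\RadThickid(\cat T)^f)^{\opname}$ is isomorphic to Balmer's spectral space from \cite{Balmer}.

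The plan is to exhibit a natural homeomorphism by matching up the universal properties, or more concretely, by directly comparing points and topologies. Recall that Balmer's spectrum $\Spc^{\mathrm{Bal}}(\cat T)$ has as points the \emph{prime} thick ideals $\cat P \subsetneq \cat T$ (those for which $a \otimes b \in \cat P$ implies $a \in \cat P$ or $b \in \cat P$), with closed sets generated by the supports $\supp(a) = \{\cat P : a \notin \cat P\}$; equivalently the opens have basis $U(a) = \{\cat P : a \in \cat P\}$. On the other side, by \Cref{thm-stone-duality} and the description of $\spec(-)$ of a distributive lattice, the points of our $\Spc(\cat T)$ are the prime ideals of the lattice $L := (\RadThickid(\cat T)^f)^{\opname}$, i.e.\ (dualizing) the prime \emph{filters} of $\RadThickid(\cat T)^f$, with basic opens $d(\sqrt{a})$.

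\textbf{Step 1: Identify the points.} First I would show that prime thick ideals of $\cat T$ are the same as prime thick \emph{radical} ideals — this is standard (\cite{Balmer}): every prime thick ideal is automatically radical, since $a^{\otimes n} \in \cat P$ forces $a \in \cat P$ by primality and induction. Then I would set up a bijection between prime radical thick ideals $\cat P$ of $\cat T$ and prime ideals of $L$. Given $\cat P$, the set $\{\sqrt a : a \notin \cat P\}$ is a prime filter of $\RadThickid(\cat T)^f$: it is upward closed and closed under finite meets ($\sqrt a \wedge \sqrt b = \sqrt{a \otimes b}$, using that a radical thick ideal of a tt-category satisfies $\sqrt a \cap \sqrt b = \sqrt{a\otimes b}$), and primality of $\cat P$ gives that if $\sqrt a \vee \sqrt b = \sqrt{a \oplus b}$ lies in it then one of $\sqrt a, \sqrt b$ does. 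Conversely, a prime filter $\mathcal F$ gives back $\cat P_{\mathcal F} := \{a \in \cat T : \sqrt a \notin \mathcal F\}$, which one checks is a prime radical thick ideal (radicality is built in; the ideal axioms follow from the lattice operations on $\RadThickid(\cat T)^f$; primality from the prime filter condition). These two assignments are mutually inverse, using that every radical thick ideal is the union of the principal ones it contains (so $\cat P$ is recovered from which $\sqrt a$ it contains).

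\textbf{Step 2: Match the topologies.} Under the bijection of Step 1, I would check that Balmer's basic open $U(a) = \{\cat P : a \in \cat P\}$ corresponds precisely to $d(\sqrt a) = \{\mathcal F : \sqrt a \notin \mathcal F\}$ — this is immediate from the definition $\cat P_{\mathcal F} = \{a : \sqrt a \notin \mathcal F\}$. Since both collections $\{U(a)\}_{a \in \cat T}$ and $\{d(\sqrt a)\}_{a \in \cat T}$ are bases for the respective topologies (the first by \cite{Balmer}, the second by the definition of $\spec(L)$ together with the fact that the finite elements $\sqrt a$ generate $L$), the bijection is a homeomorphism. One should also remark that Balmer shows his $\Spc^{\mathrm{Bal}}(\cat T)$ is a spectral space and that its lattice of quasi-compact opens is exactly $\RadThickid(\cat T)^f$ (the principal radical thick ideals), which is the abstract input to Stone duality here; this gives a cleaner, purely lattice-theoretic route: both spaces are $\spec$ of the same distributive lattice, hence canonically homeomorphic by \Cref{thm-stone-duality}.

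\textbf{Main obstacle.} The only real content is bookkeeping about radical thick ideals: verifying $\sqrt a \wedge \sqrt b = \sqrt{a \otimes b}$ and $\sqrt a \vee \sqrt b = \sqrt{a\oplus b}$ in $\RadThickid(\cat T)^f$, that these are exactly the finite (= principal) elements, and that prime thick ideals coincide with prime radical thick ideals and correspond to prime filters. All of this is essentially in \cite{Balmer} and \cite{Kock-Pitsch}; the cleanest write-up simply cites \cite[Theorem 3.1.9]{Kock-Pitsch} for the identification of the Zariski frame and then invokes \cite[Theorem 14]{Balmer} (or the corresponding statement) that Balmer's spectrum is the Stone dual of the lattice of principal radical thick ideals, so that the asserted isomorphism is a formal consequence of the uniqueness in Stone duality. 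I would therefore structure the proof as: recall Balmer's description of $\Spc^{\mathrm{Bal}}(\cat T)$ and its quasi-compact opens, observe this matches $(\RadThickid(\cat T)^f)^{\opname}$ via $U(a) \leftrightarrow \sqrt a$, and conclude by \Cref{thm-stone-duality}.
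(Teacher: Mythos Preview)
Your proposal is correct, but it takes a more hands-on route than the paper. The paper's proof is two lines: by \cite[Corollary~3.4.2]{Kock-Pitsch}, Balmer's spectrum is the Hochster dual of $\spec(\RadThickid(\cat T)^f)$; since Hochster duality on the lattice side is passage to the opposite lattice (\Cref{rem:hochster-duality}), this is exactly $\spec((\RadThickid(\cat T)^f)^{\opname})$, which is the paper's definition of $\Spc(\cat T)$. You instead build the homeomorphism explicitly by bijecting prime thick ideals with prime filters of $\RadThickid(\cat T)^f$ and matching the basic opens $U(a)$ with $d(\sqrt a)$. Your approach has the virtue of being self-contained and making the comparison concrete, at the cost of re-deriving facts (such as $\sqrt a \wedge \sqrt b = \sqrt{a\otimes b}$) that are packaged into the cited results. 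The ``cleaner route'' you sketch at the end---identifying the lattice of quasi-compact opens and invoking Stone duality---is essentially the paper's argument, though the paper phrases the passage through the opposite lattice as Hochster duality rather than as a direct identification of quasi-compact opens.
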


\begin{proof}
	By~\cite[Corollary~3.4.2]{Kock-Pitsch} the Balmer spectrum defined in~\cite{Balmer} is the Hochster dual of $\spec (\RadThickid(\cat T)^f)$. But the Hochster dual in the category of distributive lattices is the opposite lattice (\Cref{rem:hochster-duality}).
\end{proof}

\begin{remark}\label{rem:radthickop}
	Note that the distributive lattice $(\RadThickid(\cat T)^f)^{\mathrm{op}}$ is isomorphic to the lattice of quasi-compact open subsets of the Balmer spectrum $\Spc(\cat T)$ via $\sqrt{a} \mapsto \supp(a)^c$.
\end{remark}

Let us give a more concrete description of the meet operation in the distributive lattice $\Thickid(\cat T)^f$:

\begin{lemma}\label{lem:new-meet}
	For any $a,b \in \cat T$, we have $\sqrt{a} \wedge \sqrt{b} = \sqrt{a \otimes b}$.
\end{lemma}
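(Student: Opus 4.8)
The plan is to prove the two inclusions separately. For the inclusion $\sqrt{a \otimes b} \subseteq \sqrt{a} \wedge \sqrt{b} = \sqrt{a} \cap \sqrt{b}$, it suffices to show $a \otimes b \in \sqrt{a}$ and $a \otimes b \in \sqrt{b}$; but since $\sqrt{a}$ and $\sqrt{b}$ are thick \emph{ideals}, both memberships are immediate from $a \in \sqrt{a}$, $b \in \sqrt{b}$ (indeed $a\otimes b$ lies even in $\thickid{a}$ and $\thickid{b}$). Since the right-hand side is a radical thick ideal containing $a\otimes b$, it contains $\sqrt{a\otimes b}$.

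For the reverse inclusion $\sqrt{a} \cap \sqrt{b} \subseteq \sqrt{a \otimes b}$, the key point is the ``Rickard idempotent''-style observation that for any $x \in \cat T$, the object $x$ lies in the radical thick ideal generated by $x \otimes x$; more generally $\sqrt{x} = \sqrt{x \otimes x}$ since $x^{\otimes 2} \in \thickid{x}$ shows $\sqrt{x^{\otimes 2}} \subseteq \sqrt{x}$, while conversely $x^{\otimes 2} \in \sqrt{x^{\otimes 2}}$ forces $x \in \sqrt{x^{\otimes 2}}$ by the radical (tensor-nilpotent) closure condition with $k = 2$. I would first establish, by a standard thick subcategory argument, the slightly stronger statement that $\sqrt{a} \cap \sqrt{b} \subseteq \sqrt{a \otimes b}$ by showing that if $c \in \sqrt{a}$ and $c \in \sqrt{b}$ then $c^{\otimes 2} \in \thickid{a \otimes b}$, whence $c \in \sqrt{a \otimes b}$. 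Concretely: fix $c \in \sqrt{b}$ and consider $\{y \in \cat T : y \otimes c \in \sqrt{a \otimes b}\}$; this is a radical thick ideal (radicality is where one uses that $\sqrt{a\otimes b}$ is radical and tensoring commutes with $\otimes$-powers), and it contains $b$ because $b \otimes c \in \thickid{b \otimes a}$ (as $c \in \sqrt{b} $, we get $b\otimes c\in \sqrt{b\otimes a}$ by applying the previous paragraph's trick with the ideal $\sqrt{a\otimes b}$, using that $\{z : b\otimes z\in\sqrt{a\otimes b}\}$ is radical thick and contains $b$ since $b\otimes b\in\thickid{b}$ gives... ) — this is getting circular, so the cleaner route is:

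\emph{Cleaner argument.} Recall $\supp(a \otimes b) = \supp(a) \cap \supp(b)$ by \cite[Proposition 2.7]{Balmer} (a basic property of the Balmer support). Under the lattice isomorphism of \Cref{rem:radthickop}, $\sqrt{a} \mapsto \supp(a)^c$, and this isomorphism is order-reversing, so it sends meets to joins of complements: $\sqrt{a} \wedge \sqrt{b}$ corresponds to $\supp(a)^c \cap \supp(b)^c = (\supp(a) \cup \supp(b))^c$. Wait — I need the meet in $\Thickid(\cat T)^f$ itself, which is $\sqrt a\cap\sqrt b$, and the isomorphism $\Thickid(\cat T)^f \cong \Omega(\Spc \cat T)^{qc}$ via $\sqrt a\mapsto\supp(a)^c$ carries $\cap$ to $\cap$ of open sets (it is a lattice iso), so $\sqrt a\cap\sqrt b \mapsto \supp(a)^c\cap\supp(b)^c = (\supp(a)\cap\supp(b))^c = \supp(a\otimes b)^c$, which is the image of $\sqrt{a\otimes b}$. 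Since the map is a bijection, $\sqrt a\cap\sqrt b = \sqrt{a\otimes b}$.

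\textbf{Main obstacle.} The only real input is the support formula $\supp(a\otimes b)=\supp(a)\cap\supp(b)$ together with the fact (\Cref{rem:radthickop}) that $\sqrt{(-)} \mapsto \supp(-)^c$ is a lattice isomorphism onto the quasi-compact opens; both are standard, so the proof is essentially a two-line dictionary translation. If one prefers to avoid invoking the support formula, the fallback is the direct thick-subcategory argument: for the nontrivial inclusion, fix $b$ and show $\{x : \sqrt{x}\cap\sqrt{b}\subseteq\sqrt{x\otimes b}\}$ contains all objects — but even this ultimately reduces to the identity $\sqrt{x}=\sqrt{x\otimes x}$ and the ideal property, so the support-theoretic proof is the most economical and is what I would write.
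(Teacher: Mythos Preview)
Your ``cleaner argument'' is exactly the paper's approach: translate via the bijection $\sqrt{a}\mapsto\supp(a)^c$ and use $\supp(a\otimes b)=\supp(a)\cap\supp(b)$. However, your execution contains two sign errors that happen to cancel. First, the map $\sqrt{a}\mapsto\supp(a)^c$ is \emph{order-reversing} (as you correctly noted before you second-guessed yourself): $\sqrt{a}\subseteq\sqrt{b}$ iff $\supp(a)\subseteq\supp(b)$ iff $\supp(a)^c\supseteq\supp(b)^c$. So it is a lattice isomorphism from $(\RadThickid(\cat T)^f)\op$ to the quasi-compact opens, and it carries the meet $\sqrt{a}\wedge\sqrt{b}$ to the \emph{union} $\supp(a)^c\cup\supp(b)^c$, not the intersection. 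Second, the line $\supp(a)^c\cap\supp(b)^c=(\supp(a)\cap\supp(b))^c$ is false by De Morgan. The correct chain, as in the paper, is
\[
\sqrt{a}\wedge\sqrt{b}\;\longmapsto\;\supp(a)^c\cup\supp(b)^c=(\supp(a)\cap\supp(b))^c=\supp(a\otimes b)^c,
\]
which is the image of $\sqrt{a\otimes b}$.

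Your direct argument for the easy inclusion $\sqrt{a\otimes b}\subseteq\sqrt{a}\cap\sqrt{b}$ is fine. The abandoned direct argument for the reverse inclusion can be made to work: show that $\cat I\coloneqq\{x:x\otimes b\in\sqrt{a\otimes b}\}$ is a \emph{radical} thick ideal (if $x^{\otimes k}\otimes b\in\sqrt{a\otimes b}$ then $(x\otimes b)^{\otimes k}=x^{\otimes k}\otimes b^{\otimes k}\in\thickid{x^{\otimes k}\otimes b}\subseteq\sqrt{a\otimes b}$), hence $\sqrt{a}\subseteq\cat I$; iterate in the second variable to get $c\otimes c\in\sqrt{a\otimes b}$ for any $c\in\sqrt{a}\cap\sqrt{b}$, whence $c\in\sqrt{a\otimes b}$. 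But the support-theoretic route is shorter and is what the paper does.
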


\begin{proof}
	Under Stone duality, the Balmer spectrum of $\cat T$ corresponds to the distributive lattice $(\RadThickid(\cat T)^f)\op$. In particular, by \cref{rem:radthickop}, we have an isomorphism of distributive lattices between $(\RadThickid(\cat T)^f)\op$ and the distributive lattice of quasi-compact open sets, $\Omega(\Spc(\cat T))^f$, under which $\sqrt{a}$ corresponds to $\supp(a)^c$. Thus $\sqrt{a}\wedge \sqrt{b}$ in $\RadThickid(\cat T)^f$ corresponds to 
	\[
		\supp(a)^c \cup \supp(b)^c = (\supp(a) \cap \supp(b))^c = (\supp(a \otimes b))^c.
	\]
	Hence $\sqrt{a} \wedge \sqrt{b} = \sqrt{a\otimes b}$.
\end{proof}

\begin{definition}
	If $Y \subseteq \Spc(\cat T)$ is a Thomason subset, then it corresponds to the radical thick ideal of $\cat T$ given by $\cat T_Y \coloneqq \SET{a \in \cat T}{\supp(a) \subseteq Y}$. We denote by $\RadThickid(\cat T; Y)$ the poset of radical thick ideals of $\cat T$ contained in $\cat T_Y$, ordered by inclusion. Note that this is a subframe of $\RadThickid(\cat T)$.  If $\cat T$ is rigid, we simplify the notation to $\Thickid(\cat T; Y)$ as all thick ideals are radical.
\end{definition}

\begin{lemma}
	If $Y \subseteq \Spc(\cat T)$ is a closed Thomason subset, then $\RadThickid(\cat T; Y)$ is a coherent frame. 
\end{lemma}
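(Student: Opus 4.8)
The plan is to exploit that $\RadThickid(\cat T;Y)$ is, by construction, a subframe of the coherent frame $\RadThickid(\cat T)$, so that it is already a frame and only its coherence needs checking. The first step is to identify its finite elements. Since joins in the subframe $\RadThickid(\cat T;Y)$ are computed exactly as in $\RadThickid(\cat T)$, any element that is finite in $\RadThickid(\cat T)$ and happens to lie in $\RadThickid(\cat T;Y)$ remains finite there; thus every principal radical thick ideal $\sqrt a$ with $\supp(a)\subseteq Y$ is a finite element of $\RadThickid(\cat T;Y)$. Moreover every $\cat I\in\RadThickid(\cat T;Y)$ satisfies $\cat I=\bigvee_{a\in\cat I}\sqrt a$ with each $\sqrt a$ finite and contained in $\cat T_Y$, so every element is a join of finite elements. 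The finite elements are automatically closed under finite joins and contain the bottom element, and they are closed under finite meets because $\sqrt a\wedge\sqrt b=\sqrt{a\otimes b}$ by \cref{lem:new-meet} while $\supp(a\otimes b)=\supp(a)\cap\supp(b)\subseteq Y$.

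The only remaining point, and the one where closedness of $Y$ is actually used, is that the top element $\cat T_Y$ of $\RadThickid(\cat T;Y)$ is finite, equivalently that $\cat T_Y$ is itself a principal radical thick ideal. By the classification of radical thick ideals together with \cref{rem:radthickop}, this amounts to showing $Y=\supp(a)$ for some $a\in\cat T$, i.e.\ that $\Spc(\cat T)\setminus Y$ is a quasi-compact open subset. Because $Y$ is Thomason, $\Spc(\cat T)\setminus Y$ is an intersection of quasi-compact opens; because $Y$ is closed, it is moreover open. So it suffices to observe that an \emph{open} subset of a spectral space which is an intersection of quasi-compact opens is itself quasi-compact. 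I would prove this by passing to the constructible (patch) topology, in which $\Spc(\cat T)$ is compact Hausdorff and the quasi-compact opens are clopen: such a subset $U$ is then an intersection of patch-clopens, hence patch-closed, hence patch-compact; and since $U$ is open in the original topology it is covered by quasi-compact opens, which are patch-open, so patch-compactness yields a finite subcover, exhibiting $U$ as a finite union of quasi-compact opens and hence quasi-compact.

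Putting this together, the finite elements of $\RadThickid(\cat T;Y)$ form a bounded sublattice — closed under finite joins (automatic), under finite meets (by \cref{lem:new-meet}), containing the bottom element and, by the previous paragraph, the top element $\cat T_Y$ — which is distributive since it is a sublattice of the distributive lattice underlying $\RadThickid(\cat T)$; and every element of $\RadThickid(\cat T;Y)$ is a join of such elements. Hence $\RadThickid(\cat T;Y)$ is a coherent frame. The hard part is the spectral-space fact invoked in the second paragraph (closed Thomason subsets have quasi-compact open complement); the rest is a direct transfer of structure along the subframe inclusion $\RadThickid(\cat T;Y)\hookrightarrow\RadThickid(\cat T)$ together with \cref{lem:new-meet}.
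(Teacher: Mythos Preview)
Your proof is correct and follows essentially the same approach as the paper: check that the top element $\cat T_Y$ is finite and that finite elements are closed under meets, the latter via \cref{lem:new-meet}. The paper's proof is much terser, simply citing \cite[Proposition~2.14]{Balmer} for the existence of $z\in\cat T$ with $\supp(z)=Y$, whereas you supply a direct proof of this spectral-space fact via the constructible topology; your argument there is sound and recovers exactly what Balmer's proposition provides.
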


\begin{proof}
	We need to establish that the greatest element $1=\cat T_Y$ is finite and that the meet of two finite elements is again finite. By~\cite[Proposition 2.14]{Balmer}, there exists $z\in\cat T$ such that $\supp(z)=Y$ so $\cat T_Y=\sqrt{z}$ is finite. That the meet of two finite elements is again finite follows immediately from \cref{lem:new-meet}.
\end{proof}

\begin{remark}
	From another perspective, a Thomason closed subset $Y \subseteq \Spc(\cat T)$ is a spectral subspace (see \cite[Theorem~2.1.3]{Spectralbook}) and $\ZarTYf$ is the corresponding distributive lattice. The inclusion $Y \subseteq \Spc(\cat T)$ amounts to the map $\ZarTf \to \ZarTYf$ given by intersecting with $\cat T_Y$, in other words $\sqrt{a} \mapsto \sqrt{a} \cap \cat T_Y = \sqrt{a\otimes z}$ where $\supp(z)=Y$.
\end{remark}

\begin{definition}
	A \emph{support theory} on $\cat T$ is a pair $(L,d)$ where $L$ is a distributive lattice and $d\colon \mathrm{obj}(\cat T) \to L$ is a map satisfying:
	\begin{itemize}
		\item[(1)] $d(0)=0$ and $d(1)=1$;
		\item[(2)] $d(\Sigma a)= d(a)$ for all $a\in \cat T$;
		\item[(3)] $d(a \oplus b)=d(a)\vee d(b)$ for all $a,b\in \cat T$;
		\item[(4)] $d(a \otimes b)=d(a) \wedge d(b)$ for all $a,b \in \cat T$;
		\item[(5)] If $a \to b \to c \to \Sigma a$ is an exact triangle in $\cat T$, then $d(b)\leq d(a)\vee d(c)$.
	\end{itemize}
	A morphism of supports from $(L,d)$ to $(L', d')$ is a lattice homomorphism which is compatible with $d$ and~$d'$. 
\end{definition}

\begin{proposition}\label{prop:initial-support}
	Let $\cat T$ be an essentially small tt-category. Define a function as follows
	\[
		d \colon \mathrm{obj}(\cat T) \to \RadThickid(\cat T)^f, \qquad a \mapsto \sqrt{a}.
	\]
	Then the pair $(\RadThickid(\cat T)^f, d)$ is the initial support theory on $\cat T$.
\end{proposition}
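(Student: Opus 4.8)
The plan is to verify directly that $d(a)=\sqrt{a}$ satisfies the five axioms of a support theory, and then to check the universal property. The five axioms are essentially immediate from the basic theory of radical thick ideals. Axiom (1): $\sqrt{0}$ is the smallest radical thick ideal, namely $0$, and $\sqrt{1}=\cat T$ since any thick ideal containing the unit is everything. Axiom (2): $\thickid{\Sigma a}=\thickid{a}$ because thick subcategories are closed under (de)suspension, so the same holds after radicalizing. Axiom (3): $\sqrt{a\oplus b}=\sqrt{\{a,b\}}=\sqrt{a}\vee\sqrt{b}$ since $a\oplus b$ and the pair $\{a,b\}$ generate the same thick ideal (each of $a,b$ is a summand of $a\oplus b$, and conversely). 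Axiom (4) is exactly \Cref{lem:new-meet}: $\sqrt{a}\wedge\sqrt{b}=\sqrt{a\otimes b}$. Axiom (5): given a triangle $a\to b\to c\to\Sigma a$, the object $b$ lies in the thick ideal generated by $a$ and $c$, hence $\sqrt{b}\subseteq\sqrt{a}\vee\sqrt{c}$, i.e.\ $d(b)\le d(a)\vee d(c)$. So $(\RadThickid(\cat T)^f,d)$ is indeed a support theory.

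For initiality, let $(L',d')$ be any support theory on $\cat T$; I must produce a unique morphism of supports $\phi\colon \RadThickid(\cat T)^f\to L'$ with $\phi\circ d=d'$. The condition $\phi(d(a))=\phi(\sqrt{a})=d'(a)$ forces the value of $\phi$ on every \emph{principal} radical thick ideal, and since every finite element of the coherent frame $\RadThickid(\cat T)$ is of the form $\sqrt{a}$ for some $a\in\cat T$, this determines $\phi$ on all of $\RadThickid(\cat T)^f$ — giving uniqueness. The content is well-definedness: if $\sqrt{a}=\sqrt{b}$ then one must show $d'(a)=d'(b)$. The key point here is that axioms (1), (3), (5) (together with (2)) imply that $a\in\thickid{b}$ forces $d'(a)\le d'(b)$, by the standard thick-subcategory argument: the collection $\SET{x\in\cat T}{d'(x)\le d'(b)}$ is closed under suspension, summands, and cofibers, and under tensoring with arbitrary objects of $\cat T$ by axiom (4) (since $d'(x\otimes y)=d'(x)\wedge d'(y)\le d'(x)$), hence contains $\thickid{b}$; and axiom (4) also gives $d'(x^{\otimes k})=d'(x)$, so the same collection is radical and contains $\sqrt{b}$. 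Symmetrically $\sqrt{a}=\sqrt{b}$ gives $d'(a)=d'(b)$. Then $\phi(\sqrt{a}):=d'(a)$ is well-defined, and one checks it is a lattice homomorphism: $\phi$ preserves $0,1$ by axiom (1), preserves $\vee$ by axiom (3) (using $\sqrt{a}\vee\sqrt{b}=\sqrt{a\oplus b}$), and preserves $\wedge$ by axiom (4) (using \Cref{lem:new-meet}).

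The main obstacle — though it is more bookkeeping than genuine difficulty — is the well-definedness step, i.e.\ extracting from the axioms that $d'$ factors through the passage to radical thick ideals. Everything reduces to the observation that $\SET{x\in\cat T}{d'(x)\le d'(b)}$ is a radical thick ideal containing $b$; axioms (2), (3), (5) handle thickness, axiom (4) handles the ideal property and radicality, and axiom (1) is needed only for the endpoints. Once this is in hand the rest is formal, since $\RadThickid(\cat T)^f$ is by construction generated under the lattice operations by the principal ideals $\sqrt{a}$ and $\phi$ has been pinned down on exactly those generators compatibly with the relations.
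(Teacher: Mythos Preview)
Your argument is correct. The paper itself does not give a proof but simply cites \cite[Theorem 3.2.3]{Kock-Pitsch}, so you have supplied what the paper omits. Your verification of axioms (1)--(5) is clean, and the well-definedness step for $\phi$ is exactly the heart of the matter: the set $\SET{x\in\cat T}{d'(x)\le d'(b)}$ is a radical thick ideal containing $b$, hence contains $\sqrt{b}$, and this is precisely what forces $d'$ to factor through $a\mapsto\sqrt{a}$.

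One small remark on logical dependencies: for axiom (4) you invoke \Cref{lem:new-meet}, whose proof in this paper goes through Stone duality and the identification of $\Spc(\cat T)$ with the spectrum of $(\RadThickid(\cat T)^f)\op$. That is not circular here, since the coherent-frame structure on $\RadThickid(\cat T)$ is established independently in \cite[Theorem 3.1.9]{Kock-Pitsch}. Still, the identity $\sqrt{a}\cap\sqrt{b}=\sqrt{a\otimes b}$ admits a direct elementary proof (this is \cite[Lemma 3.2.2]{Kock-Pitsch}, and is what the paper itself alludes to in \Cref{rem-functoriality}): one inclusion is immediate, and for the other one observes that $\thickid{a}\otimes\thickid{b}\subseteq\thickid{a\otimes b}$ by a double thick-subcategory argument, so $x\in\sqrt{a}\cap\sqrt{b}$ gives $x^{n+m}\in\thickid{a\otimes b}$. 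Using that direct argument would make your proof entirely self-contained and avoid any reliance on the Stone-duality machinery.
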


\begin{proof}
	See~\cite[Theorem 3.2.3]{Kock-Pitsch}.
\end{proof}

\begin{remark}\label{rem-functoriality}
	Let $F\colon \cat T \to \cat T'$ be a tt-functor between essentially small tt-categories. Then one readily checks that the assignment $a  \mapsto \sqrt{Fa}$ is a support theory for $\cat T$. (The key observation is that the intersection of principal radical thick ideals is given by the tensor-product; see the proof of \cite[Lemma~3.2.2]{Kock-Pitsch}.) Therefore by \Cref{prop:initial-support} we get a homomorphism of distributive lattices 
	\[
		F_*\colon \RadThickid(\cat T)^f \to \RadThickid(\cat T')^f, \quad 
		\sqrt{a} \mapsto \sqrt{Fa}.
	\]
	Under Stone duality, this corresponds to the spectral map $\Spc(F)\colon \Spc(\cat T')\to\Spc(\cat T)$ given by $\cat P \mapsto F^{-1}(\cat P')$.
\end{remark}

\begin{example}\label{ex-ext-scalar-coherent}
	Let $\C$ be a rigidly-compactly generated tt-$\infty$-category and consider $A\in \CAlg(\C)$. Recall that the extension-of-scalars functor $F=A\otimes- \colon \C \to \mod{A}(\C)$ preserves compact objects by \Cref{lem-proj-formula-holds}. Therefore we obtain a map of distributive lattices
	\begin{align*}
		 F^c_* \colon \Thickid(\C^c)^f &\to \Thickid(\mod{A}(\C)^c)^f\\
		 \sqrt{a} &\mapsto \sqrt{Fa}
	\end{align*}
\end{example}

\begin{remark}\label{rem-induced-map-finite-elem}
	If $G\colon \cat T \to \cat T'$ is \emph{any} functor, then the assignment
	\[
		G_* \colon \RadThickid(\cat T) \to \RadThickid(\cat T'), \qquad \cat I\mapsto \sqrt{G(\cat I)}
	\]
	is a well-defined function, but in general it won't be a frame morphism, nor will it preserve finite elements. However, if we assume that $G^{-1}$ preserves radical thick ideals, then we obtain a well-defined map of sets
	\[
		G_* \colon \RadThickid(\cat T)^f \to \RadThickid(\cat T')^f, \qquad \sqrt{a} \mapsto \sqrt{Ga}
	\]
	between distributive lattices; cf.~\Cref{lem-inverse-image-ideal}.
\end{remark}

\begin{example}\label{ex-forgetful-coherent}
	Let $F\colon \cat T\to \cat S$ be a geometric tt-functor between rigidly-compactly generated tt-categories. Suppose that the right adjoint $U$ is conservative and $U(\unitS)$ is compact. Then $U$ preserves all compact objects (by \cref{cor:weakly-closed-is-closed}) and we claim that $U^{-1}$ preserves (radical) thick ideals of compact objects. Hence the right adjoint provides a well-defined map of sets between the distributive lattices
	\[
		U^c_* \colon \Thickid(\cat S^c)^f \to \Thickid(\cat T^c)^f.
	\]
	To prove the claim let $\cat J \subseteq \cat T^c$ be a thick ideal. Then $U^{-1}(\cat J)$ is a thick subcategory, since $U$ is an exact functor. The conservativity of the right adjoint $U$ implies that $\cat S^c = \thick{F(\cat T^c)}$ by \cref{lem:preserve-gen}(c). By the projection formula, $F(\cat T^c) \otimes U^{-1}(\cat J)\subseteq U^{-1}(\cat J)$. Hence $\thick{F(\cat T^c)} \otimes U^{-1}(\cat J)\subseteq U^{-1}(\cat J)$.
\end{example}

\begin{lemma}\label{lem:split-Balmer}
	Let $F\colon\cat T\to \cat S$ be a geometric tt-functor between rigidly-compactly generated tt-categories. Suppose the right adjoint $U$ is conservative and $U(\unitS)$ is compact. Then the image $\im \varphi=\supp(U(\unit))$ of the induced map 
	\[
		\varphi\colon\Spc(\cat S^c)\to\Spc(\cat T^c)
	\]
	is Thomason closed and the map of distributive lattices
	\[
		F^c_* \colon \Thickid(\cat T^c;\im \varphi)^f \to \Thickid(\cat S^c)^f
	\]
	is split injective in the category of sets, with section induced by $U$.
\end{lemma}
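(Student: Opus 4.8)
The plan is to identify $\im\varphi$ explicitly and then produce a set-theoretic retraction of $F^c_*$ induced by the forgetful functor $U$, so that $F^c_*$ is split injective in the category of sets.

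First I would record the relevant compactness. Since $\cat T$ is rigidly-compactly generated, $U$ is conservative, and $U(\unit_{\cat S})$ is compact, \Cref{cor:weakly-closed-is-closed} shows that $U$ preserves all compact objects; in particular $U$ restricts to a tt-functor $\cat S^c\to\cat T^c$. Then, exactly as in the proof of \Cref{prop:descendable-surjective}(b), Balmer's computation of the image of the map induced on spectra (\cite[Theorem~1.7]{Balmer2018}) gives $\im\varphi=\supp(U(\unit_{\cat S}))$. Since $U(\unit_{\cat S})$ is a compact object of the rigid category $\cat T^c$, its support is closed with quasi-compact open complement, i.e.\ Thomason closed, which is the first claim. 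Set $A\coloneqq U(\unit_{\cat S})$ and $Y\coloneqq\supp(A)=\im\varphi$; recall that the finite elements of $\Thickid(\cat T^c;Y)$ are precisely the $\sqrt a$ with $a\in\cat T^c$ and $\supp(a)\subseteq Y$.

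Next I would take as candidate section the corestriction to $\Thickid(\cat T^c;Y)^f$ of the map $U^c_*\colon\Thickid(\cat S^c)^f\to\Thickid(\cat T^c)^f$, $\sqrt s\mapsto\sqrt{U(s)}$, of \Cref{ex-forgetful-coherent}; this map is well-defined on $\Thickid(\cat S^c)^f$ because $U^{-1}$ preserves thick ideals of compacts (\Cref{ex-forgetful-coherent}, \Cref{rem-induced-map-finite-elem}). To see that it lands in $\Thickid(\cat T^c;Y)^f$, I would note that $U(s)$ is a retract of $A\otimes U(s)=U(\unit_{\cat S})\otimes U(s)$ — this is exactly the argument in the proof of \Cref{lem:localizing-from-S}(b), where the counit of $(F,U)$ splits after applying $U$ — so $\sqrt{U(s)}\subseteq\sqrt{A\otimes U(s)}=\sqrt A\wedge\sqrt{U(s)}\subseteq\sqrt A$ by \Cref{lem:new-meet}, i.e.\ $\supp(U(s))\subseteq Y$.

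It then remains to verify $U^c_*\circ F^c_*=\id$ on $\Thickid(\cat T^c;Y)^f$. For $\sqrt a$ with $\supp(a)\subseteq Y$, the projection formula (\Cref{prop-properties-adjunction}, \Cref{lem-proj-formula-holds}) yields $U(F(a))\simeq U(\unit_{\cat S})\otimes a=A\otimes a$, so by \Cref{lem:new-meet} and the fact that $\supp(a)\subseteq\supp(A)$ forces $\sqrt a\subseteq\sqrt A$ (classification of radical thick ideals, \Cref{rem:radthickop}) we get $U^c_*(F^c_*(\sqrt a))=\sqrt{A\otimes a}=\sqrt A\wedge\sqrt a=\sqrt a$. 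Hence $F^c_*$ is split injective with section $U^c_*$. I do not anticipate a deep obstacle: the points that need care are invoking Balmer's image formula correctly (which relies on the preliminary fact that $U$ preserves compacts) and verifying the containment $\supp(U(s))\subseteq\supp(A)$, best done via the retract argument rather than via a pushforward-of-supports formula; the rest is bookkeeping in the Zariski lattice via \Cref{lem:new-meet}.
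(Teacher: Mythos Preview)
Your proposal is correct and follows essentially the same approach as the paper: identify $\im\varphi=\supp(U(\unit_{\cat S}))$ via Balmer's image formula, use the projection formula to compute $U^c_*\circ F^c_*(\sqrt a)=\sqrt{U(\unit_{\cat S})\otimes a}$, and conclude this equals $\sqrt a$ when $\supp(a)\subseteq\supp(U(\unit_{\cat S}))$. You are in fact slightly more careful than the paper in one respect: you explicitly verify (via the retract argument of \Cref{lem:localizing-from-S}(b)) that $U^c_*$ itself lands in $\Thickid(\cat T^c;Y)^f$, whereas the paper only records this for the composite $U^c_*\circ F^c_*$.
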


\begin{proof}
	First note that the image of the induced map on Balmer spectra coincides with the Thomason closed set $\supp(U(\unit))$ by \cite[Theorem 1.7]{Balmer2018}. By \cref{ex-ext-scalar-coherent} and \cref{ex-forgetful-coherent} we have maps of sets
	\[
		\Thickid(\cat T^c)^f \xrightarrow{F^c_*} \Thickid(\cat S^c)^f \xrightarrow{U^c_*} \Thickid(\cat T^c)^f.
	\]
	The composite maps the radical thick ideal $\sqrt{a} \in \Thickid(\cat T^c)^f$ to the radical thick ideal generated by $U(F(a))={U(\unitS)\otimes a}$. Thus the image of this composite lies in $\Thickid(\cat T^c,\im \varphi)^f$ and the composite
	\[
		\Thickid(\cat T^c;\im \varphi)^f \hookrightarrow \Thickid(\cat T^c)^f \xrightarrow{U^c_*\circ F^c_*} \Thickid(\cat T^c,\im\varphi)^f.
	\]
	is the identity since $\sqrt{a} = \sqrt{U(\unitS) \otimes a}$ when $\supp(a) = \supp(U(\unitS) \otimes a) = \supp(U(\unitS))\cap \supp(a)$, that is, when $\supp(a) \subseteq \im \varphi$.
\end{proof}

\begin{example}\label{ex:split}
	Let $\cat C$ be a rigidly-compactly generated tt-$\infty$-category and let $A \in \CAlg(\cat C)$ be a compact commutative algebra. Then the map of distributive lattices
	\[
		F^c_*\colon \Thickid(\cat C^c;\supp(A))^f \to \Thickid(\mod{A}(\cat C)^c)^f
	\]
	is split injective in the category of sets. 
\end{example}

We are finally ready to prove our descent result for Balmer spectra.

\begin{theorem}\label{cor:coeq-of-spectral}
	Let $\cat C$ be a rigidly-compactly generated tt-$\infty$-category. Let $A\in \CAlg(\cat C)$ be a compact commutative algebra with support $\supp(A) \subseteq \Spc(\cat C^c)$. Then the diagram induced by base change (\Cref{nota-base-change})
	\[\begin{tikzcd}[column sep=large]
		 \Spc(\mod{A\otimes A}(\C)^c )\arrow[r, shift left, "\Spc(H^c_*)"] 
		  \arrow[r, shift right, "\Spc(G^c_*)"']& \Spc(\mod{A}(\C)^c) 
		 \arrow[r, "\Spc(F^c_*)"] 
		 & \supp(A)
	\end{tikzcd}\]
	is a coequalizer of spectral spaces.
\end{theorem}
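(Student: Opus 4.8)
The plan is to pass to distributive lattices via Stone duality and then re-run the proof of \Cref{thm-equualizer-thick-ideals}, using the split equalizer of \Cref{prop-equalizer} wherever that proof used descendability. By \Cref{thm-stone-duality} the category $\Spec$ is anti-equivalent to $\DLat$, an anti-equivalence under which $\Spc(\cat T)$ corresponds to $(\RadThickid(\cat T)^f)\op$ for a rigid tt-category $\cat T$, and under which the Thomason-closed subspace $\supp(A)\subseteq\Spc(\cat C^c)$ corresponds to $(\Thickid(\cat C^c;\supp(A))^f)\op$ (see \Cref{rem:radthickop} and the surrounding discussion). Hence the displayed fork of spectral spaces is a coequalizer in $\Spec$ if and only if
\[
\Thickid(\cat C^c;\supp(A))^f \xrightarrow{\;F^c_*\;} \Thickid(\mod{A}(\cat C)^c)^f \rightrightarrows \Thickid(\mod{A\otimes A}(\cat C)^c)^f
\]
is an equalizer in $\DLat$ (the maps being those of \Cref{ex-ext-scalar-coherent}, \Cref{ex-forgetful-coherent} and \Cref{nota-base-change}), and since $\DLat\to\Set$ creates limits (\Cref{lem-forgetful-creates-limits}) it suffices to check that this is an equalizer of sets.

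To prove this I would apply \Cref{lem-mega-lemma} to the diagram whose top row is the split equalizer
\[
\Locid(F\colon \cat C \uparrow \mod{A}(\cat C)) \xrightarrow{\;F_*\;} \Locid(\mod{A}(\cat C)) \rightrightarrows \Locid(\mod{A\otimes A}(\cat C))
\]
of \Cref{prop-equalizer}, together with its sections $U_*$ and $V_*$; whose bottom row is the fork of distributive lattices above; and whose three vertical maps are the Miller--Neeman injections $\cat I\mapsto\loc{\cat I}$ of \Cref{thm:miller-neeman}. The one point requiring an argument is that the left-hand vertical map takes values in $\Locid(F\colon\cat C\uparrow\mod A(\cat C))$: if $\cat J\in\Thickid(\cat C^c;\supp(A))$ and $a\in\cat J$, then $\supp(a\otimes A)=\supp(a)\cap\supp(A)=\supp(a)$, so $a\in\thickid{a\otimes A}$ in the rigid category $\cat C^c$; therefore $\cat J\subseteq\locid{A\otimes\loc{\cat J}}=\locid{U_A(\mod A(\cat C))\cap\loc{\cat J}}$ by \Cref{lem:localizing-from-S}(c), i.e.\ $\loc{\cat J}$ is generated from $\mod A(\cat C)$. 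Commutativity of the two squares and injectivity of the vertical maps are checked exactly as in \Cref{thm-equualizer-thick-ideals}, using \Cref{rem:locid-is-loc} and \Cref{exa:tt-preserves}.

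By \Cref{lem-mega-lemma} the bottom fork is then an equalizer provided that for every finite thick ideal $\cat K=\sqrt{\kappa}\subseteq\mod A(\cat C)^c$ there is a finite element $\cat J\in\Thickid(\cat C^c;\supp(A))^f$ with $U_*(\loc{\cat K})=\loc{\cat J}$. Now $U_*(\loc{\cat K})=U_*(\locid{\kappa})=\locid{U_A(\kappa)}$ by \Cref{lem:conserve-preserves}; since $A$ is compact, $U_A$ preserves compact objects (\Cref{rem:weakly-closed-is-closed}), so $U_A(\kappa)\in\cat C^c$ and $\locid{U_A(\kappa)}=\loc{\thickid{U_A(\kappa)}}$. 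Moreover the proof of \Cref{lem:proj-formula-trick} exhibits $U_A(\kappa)$ as a direct summand of $U_A(\kappa)\otimes A$, whence $\supp(U_A(\kappa))\subseteq\supp(U_A(\kappa)\otimes A)=\supp(U_A(\kappa))\cap\supp(A)\subseteq\supp(A)$; thus $\sqrt{U_A(\kappa)}$ is a principal radical thick ideal generated by an object of $\cat C^c$ supported in $\supp(A)$, i.e.\ a finite element of $\Thickid(\cat C^c;\supp(A))$. Taking $\cat J:=\sqrt{U_A(\kappa)}=\thickid{U_A(\kappa)}$ verifies the criterion of \Cref{lem-mega-lemma} — notably without even using the equalizing hypothesis on $\cat K$ — and the theorem follows.

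The crux, just as in \Cref{thm-equualizer-thick-ideals}, is the bridge from localizing ideals to thick ideals of compacts: one needs $U_A$ to carry finite thick ideals of $\mod A(\cat C)^c$ back to finite thick ideals of $\cat C^c$ lying over $\supp(A)$, and this is exactly what the compactness of $A$ (ensuring that $U_A$ preserves compactness) together with the projection-formula identity of \Cref{lem:proj-formula-trick} supplies. Descendability plays no role here because \Cref{prop-equalizer} already furnishes an absolute equalizer after restricting to localizing ideals generated from $\mod A(\cat C)$, and the lattice $\Thickid(\cat C^c;\supp(A))^f$ is precisely the target into which $\loc{(-)}$ then lands.
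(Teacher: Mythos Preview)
Your argument is correct, but it takes a different route from the paper. The paper does not go via localizing ideals at all: it shows directly that the fork of distributive lattices
\[
\Thickid(\cat C^c;\supp(A))^f \xrightarrow{\;F^c_*\;} \Thickid(\mod{A}(\cat C)^c)^f \rightrightarrows \Thickid(\mod{A\otimes A}(\cat C)^c)^f
\]
is itself a \emph{split} fork in $\Set$, by constructing sections $U^c_*\colon\sqrt{a}\mapsto\sqrt{U_A(a)}$ and $V^c_*\colon\sqrt{b}\mapsto\sqrt{V(b)}$ (as in \Cref{ex:split}) and then verifying $U^c_*F^c_*=\id$, $V^c_*G^c_*=\id$, and $V^c_*H^c_*=F^c_*U^c_*$ via the natural equivalence $VH\simeq FU$ of \Cref{rem:VHFU}. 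In other words, the paper reruns the proof of \Cref{prop-equalizer} directly at the level of finite thick ideals. Your approach instead lifts to the split equalizer of \Cref{prop-equalizer} on localizing ideals and applies \Cref{lem-mega-lemma}, in the style of \Cref{thm-equualizer-thick-ideals}. Both arguments use the same essential ingredients (compactness of $A$ so that $U_A$ preserves compacts, and the projection-formula identity), but the paper's route is shorter and yields the stronger conclusion that the equalizer is split; your route has the virtue of making the analogy with \Cref{thm-equualizer-thick-ideals} explicit and of isolating the point where $\supp(A)$ enters (namely your check that $i_0$ lands in $\Locid(F\colon\cat C\uparrow\mod_A(\cat C))$).
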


\begin{proof}
	As before note that any ideal in $\C^c$ is radical by \Cref{rem-all-ideal-radical} and that the solid diagram of distributive lattices
	\[\resizebox{\columnwidth}{!}{$\displaystyle
	\begin{tikzcd}[ampersand replacement=\&]
		\Thickid(\C^c;\supp(A))^f \arrow[r, "F^c_*"] \& \Thickid(\mod{A}(\C)^c)^f 
		\arrow[r, shift right, "G^c_*"'] \arrow[l, bend left, dotted,"U^c_*"]\arrow[r, shift left,"H^c_*"] 
		\& \Thickid(\mod{A\otimes A}(\C)^c)^f\arrow[l, bend left, dotted, "V^c_*"]
	\end{tikzcd}
	$}
	\]
	is well-defined by \Cref{ex-ext-scalar-coherent}. We will show that this fork is split in the category of sets and so in particular is an equalizer. It then follows from \Cref{lem-forgetful-creates-limits} that the above is a limit diagram in $\DLat$. Stone duality will then provide the colimit diagram in the statement of the Corollary.

	As for localizing ideals (see \Cref{prop-equalizer}) we need to construct maps $U^c_*$ and $V^c_*$ as depicted in previous display such that $U^c_* \circ F^c_*=1$, $V^c_* \circ G^c_*=1$ and $V^c_* \circ H^c_*=F^c_* \circ U^c_*$. Recall that $g=F(f)\colon A \otimes \unit \to A \otimes A$ is descendable in $\mod{A}(\C)$ (\cref{rem:double-is-descendable}). Note also that $\mod{A\otimes A}(\mod{A}(\C))\simeq \mod{A\otimes A}(\C)$ by~\cref{cor-double-module-category}. Therefore by \Cref{ex:split} the functors $F^c_*$ and $G^c_*$ admit sections $U^c_*$ and $V^c_*$ which are induced by restricting of scalars along $f$ and $g$ respectively. Thus $U^c_* \circ F^c_*=1$ and $V^c_* \circ G^c_*=1$. 

	It is only left to show that $V^c_* \circ H^c_*=F^c_* \circ U^c_*$. This reduces to checking that $\thickid{VH(M)}=\thickid{FU(M)}$ for all compact $A$-modules $M$. In fact there is an equivalence $(V\circ H)(M)\simeq (F\circ U)(M)$ for all $A$-module $M$ as discussed in \Cref{nota-base-change}. The result then follows.
\end{proof}

Under additional assumptions on the commutative algebra, one can show that the spectral coequalizer is in fact a coequalizer in the category of topological spaces:

\begin{theorem}[Balmer]\label{cor-sep-top-coequ}
	Let $\C$ be a rigidly-compactly generated tt-$\infty$-category. Let $A\in\CAlg(\C)$ be descendable, compact and separable of finite tt-degree. Then the diagram induced by base change
	\[
	\begin{tikzcd}[column sep=large]
		\Spc(\mod{A\otimes A}(\C)^c) \arrow[r, shift left, "\Spc(H^c_*)"] 
		\arrow[r, shift right, "\Spc(G^c_*)"']& \Spc(\mod{A}(\C)^c) 
		\arrow[r, "\Spc(F^c_*)"] 
		& \Spc(\C^c)
	\end{tikzcd}
	\]
	is a coequalizer of topological spaces.
\end{theorem}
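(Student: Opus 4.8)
The plan is to deduce this from \Cref{cor:coeq-of-spectral} together with Balmer's comparison between the spectral and topological coequalizers in the separable setting, using the machinery of \Cref{prop-Ttop-spectral} and \Cref{cor:comparison_coequalizers}. First I would record that since $A$ is descendable we have $\supp(A) = \Spc(\cat C^c)$ (by \Cref{prop:descendable-surjective}(a) together with \cite[Theorem 1.7]{Balmer2018}, as noted after \Cref{thmx:c}), so that \Cref{cor:coeq-of-spectral} already gives that
\[
\begin{tikzcd}[column sep=large]
	\Spc(\mod{A\otimes A}(\C)^c) \arrow[r, shift left] \arrow[r, shift right] & \Spc(\mod{A}(\C)^c) \arrow[r] & \Spc(\C^c)
\end{tikzcd}
\]
is a coequalizer \emph{in the category of spectral spaces}. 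It therefore remains to upgrade this to a coequalizer in $\Top$; equivalently, by \Cref{prop-Ttop-spectral}, to show that the topological coequalizer $T^{\Top}$ is a spectral space and the canonical map $\varphi_T$ is spectral, or equivalently that the comparison map $p\colon T^{\Top} \to U T^{\Spec} = \Spc(\C^c)$ is a homeomorphism.

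Next I would invoke Balmer's separable descent theorem: when $A$ is separable of finite tt-degree, \cite{Balmer2016} shows directly that the Balmer spectrum $\Spc(\C^c)$ is the \emph{topological} coequalizer of $\Spc(\mod{A}(\C)^c)$ along the two base-change maps---this is precisely the diagram \eqref{eq:balmer-coequalizer} in the introduction, specialized to the descendable case where $\supp(A)=\Spc(\C^c)$. (Here one should check that Balmer's hypotheses translate to ours: $\mod{A}(\C)^c \simeq \mod{A}(\C^c)$ since $A$ is compact, and $A$ separable of finite degree in $\C^c$ is exactly the setting of \cite{Balmer2016}; the two maps in Balmer's coequalizer are induced by the two ring maps $A \to A\otimes A$, matching $\Spc(G^c_*)$ and $\Spc(H^c_*)$ in \Cref{nota-base-change}.) This identifies $T^{\Top}$ with $\Spc(\C^c)$ and shows $\varphi_T$ is the canonical (spectral) map, so condition (a) of \Cref{prop-Ttop-spectral} holds and hence $p$ is a homeomorphism.

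Finally I would assemble the pieces: the spectral coequalizer from \Cref{cor:coeq-of-spectral} and the topological coequalizer from Balmer's theorem have underlying sets which agree (both are quotients of $\Spc(\mod{A}(\C)^c)$ by the same generated equivalence relation, cf.\ \Cref{rem-spectral-quot}), and \Cref{prop-Ttop-spectral} shows the topologies agree too; thus the diagram in the statement, being the spectral coequalizer, simultaneously serves as the topological coequalizer. I expect the main obstacle to be purely bookkeeping rather than conceptual: one must carefully match the indexing and the two structure maps in Balmer's formulation of \eqref{eq:balmer-coequalizer} with the base-change maps $G^c_*, H^c_*$ of \Cref{nota-base-change}, and confirm that ``separable of finite tt-degree'' in our ambient category is the precise hypothesis under which \cite{Balmer2016} produces a coequalizer of \emph{topological} (not merely spectral) spaces---this is the point flagged in \Cref{comparison-with-Balmer}. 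Once that dictionary is in place, the result is immediate from \Cref{cor:coeq-of-spectral} and \Cref{prop-Ttop-spectral}.
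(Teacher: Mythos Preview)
Your proposal is correct and, at its core, takes the same approach as the paper: once you note that descendability gives $\supp(A)=\Spc(\cat C^c)$, the result is a direct citation of Balmer's theorem \cite{Balmer2016}*{Theorem 3.14}. The paper's proof is literally those two sentences.

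The detour you make through \Cref{cor:coeq-of-spectral} and \Cref{prop-Ttop-spectral} is superfluous: the moment you invoke Balmer's result you already have that the diagram is a coequalizer in $\Top$, so there is nothing left to match with the spectral coequalizer. What you seem to be gesturing toward is the \emph{alternative} route sketched in \Cref{rem:balmer-comparison}, where one starts from the spectral coequalizer of \Cref{cor:coeq-of-spectral} and then uses \Cref{cor:comparison_coequalizers} together with Balmer's \cite{Balmer2016}*{Lemmas 3.8 and 3.10(a)} (rather than his full Theorem 3.14) to verify the fiber conditions that force the topological and spectral coequalizers to agree. That is a genuine alternative argument, but your write-up mixes the two: if you want to execute the alternative, you should avoid citing Balmer's coequalizer theorem wholesale and instead isolate only the two fiber lemmas needed to feed into \Cref{cor:comparison_coequalizers}.
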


\begin{proof}
	Note that $\supp(A)=\Spc(\C^c)$ as $A$ is descendable. Then the result follows from ~\cite{Balmer2016}*{Theorem 3.14}.
\end{proof}

\begin{remark}\label{comparison-with-Balmer}
	Strictly speaking, in \cite{Balmer2016} Balmer works entirely at the level of tensor triangulated homotopy categories and considers categories of modules in those homotopy categories. (The main result of \cite{Balmer2011} shows that the category of modules over a separable ring object in a tensor triangulated category admits a unique structure of triangulated category which is compatible with the original one.) If $A \in \CAlg(\cat C)$ is a highly structured separable commutative algebra, we can invoke the results of \cite{DellAmbrogioSanders18} (see \cite[Corollary 1.11]{DellAmbrogioSanders18} and \cite[Proposition 3.8]{Sanders21pp}) to conclude that $\Ho(\mod{A}(\cat C)) \cong \mod{\Ho A}(\Ho\cat C)$ where $\Ho A \in \CAlg(\Ho \cat C)$ is the induced commutative separable algebra in the homotopy category. The coequalizer in \Cref{cor-sep-top-coequ} can then be identified with the coequalizer in \cite{Balmer2016}*{Theorem~3.6}. On the other hand, Balmer's theorem holds without assuming the tensor triangulated categories have an underlying model.
\end{remark}

\begin{remark}\label{rem:balmer-comparison}
    One can contemplate an alternative approach to \cref{cor-sep-top-coequ} by starting from \cref{cor:coeq-of-spectral} and then exhibiting conditions on $A$ which guarantee that the spectral and topological coequalizers agree: If $R$ is the equivalence relation on $\Spc(\mod{A}(\cat C)^c)$ given by the coequalizer, then the hypotheses of \cref{cor:comparison_coequalizers} are satisfied. Balmer \cite{Balmer2016}*{Lemma 3.8} shows that $\varphi^{-1}(\varphi(\cat P))\subseteq R^{-1}(\overline{\{\cat P\}})$ and the fact that $\varphi^{-1}(\varphi(\cat P))\subseteq \Spc(\mod{A}(\cat C)^c)$ is a $T_1$-space is proved in \cite{Balmer2016}*{Lemma~3.10(a)}. Thus the topological and spectral coequalizers are homeomorphic.
\end{remark}

\section{Descent for stratification}\label{sec:stratification}

In this section we will work with a rigidly-compactly generated tt-category $\cat T$ whose Balmer spectrum of compact objects $\Spc(\cat T^c)$ is weakly noetherian (see \cite[Definition 2.3]{BHS2021}). We will follow \cites{BHS2021,BCHS-costratification} for notation and terminology. Under these assumptions there is a well-defined notion of support for objects of $\cat T$,
	\[
		\Supp(t) \coloneqq \SET{\cat P \in \Spc(\cat T^c)}{g_{\cat P} \otimes t \neq 0} \subseteq \Spc(\cat T^c),
	\]
which lies in the Balmer spectrum of compact objects and which is defined using certain $\otimes$-idempotent objects $g_{\cat P} \in \cat T$; see \cite[Section 2]{BHS2021}. This notion of support extends to a map
	\begin{equation}\label{eq-supp}
		\Supp \colon \Locid(\cat T) \to \CP(\Spc(\cat T^c)), \qquad \cat L \mapsto \Supp(\cat L)=\bigcup_{X\in \cat L}\Supp(X)
	\end{equation}  
where $\CP$ denotes the (contravariant) power set functor. This map is split surjective with section given by 
	\[
		\cat L_{(-)} \colon \CP(\Spc(\cat T^c)) \to \Locid(\cat T), \qquad Y \mapsto \cat L_Y=\{t\in \cat T \mid \Supp(t) \subseteq Y\};
	\]
see~\cite[Lemma 3.4]{BHS2021}. We recall the following result. 
 
\begin{theorem}[\cite{BHS2021}*{Theorem A}]
	Let $\cat T$ be a rigidly-compactly generated tt-category with $\Spc(\cat T^c)$ weakly noetherian. Then the following are equivalent:
	\begin{itemize}
		\item The map in \eqref{eq-supp} is bijective.
		\item The local-to-global-principle holds for $\cat T$ and for each $\cat P \in \Spc(\cat T^c)$, the localizing ideal $\locid{g_{\cat P}}$ is a minimal localizing ideal of $\cat T$.
	\end{itemize}
	If these conditions hold we say that $\cat T$ is stratified.
\end{theorem}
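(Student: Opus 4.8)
The plan is to prove this directly (it is \cite{BHS2021}*{Theorem~A}). We shall use three foundational facts from the framework of \cite{BHS2021}, all valid under the weakly noetherian hypothesis: first, $\Supp(g_{\cat P}) = \{\cat P\}$ for every $\cat P \in \Spc(\cat T^c)$; second, \emph{detection of support}, namely $\Supp(t) = \emptyset$ implies $t = 0$; and third, the tensor formula $\Supp(g_{\cat P} \otimes t) = \{\cat P\} \cap \Supp(t)$ --- here the inclusion ``$\subseteq$'' is the automatic half of the tensor formula, while ``$\supseteq$'' is immediate from idempotency of $g_{\cat P}$, since $\cat P \in \Supp(t)$ means exactly $g_{\cat P} \otimes t \neq 0$ and then $g_{\cat P} \otimes (g_{\cat P} \otimes t) = g_{\cat P} \otimes t \neq 0$. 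Since the map \eqref{eq-supp} is already split surjective with section $\cat L_{(-)}$, it is bijective if and only if it is injective, if and only if $\cat L = \cat L_{\Supp(\cat L)}$ for every $\cat L \in \Locid(\cat T)$.

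To prove that the second bullet implies bijectivity of \eqref{eq-supp}, fix $\cat L \in \Locid(\cat T)$, put $Y = \Supp(\cat L)$, and let $t \in \cat L_Y$, so that $\Supp(t) \subseteq Y$; we must show $t \in \cat L$. By the local-to-global principle in its object form, $\locid{t} = \locid{g_{\cat P} \otimes t \mid \cat P \in \Spc(\cat T^c)}$, so it suffices to show $g_{\cat P} \otimes t \in \cat L$ for each $\cat P$. If $g_{\cat P} \otimes t = 0$ this is clear; otherwise $\cat P \in \Supp(t) \subseteq Y = \Supp(\cat L)$, so there exists $s \in \cat L$ with $g_{\cat P} \otimes s \neq 0$. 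Then $0 \neq g_{\cat P} \otimes s \in \locid{g_{\cat P}}$, and minimality of $\locid{g_{\cat P}}$ forces $\locid{g_{\cat P} \otimes s} = \locid{g_{\cat P}}$; as $\cat L$ is a localizing ideal containing $g_{\cat P} \otimes s$, we obtain $g_{\cat P} \in \locid{g_{\cat P} \otimes s} \subseteq \cat L$, hence $g_{\cat P} \otimes t \in \cat L$. Thus $\cat L_Y \subseteq \cat L$, and since the reverse inclusion is automatic we get $\cat L = \cat L_{\Supp(\cat L)}$.

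Conversely, assume \eqref{eq-supp} is bijective, hence $\Supp$ is injective on $\Locid(\cat T)$. For minimality, fix $\cat P$ and let $0 \neq \cat L \subseteq \locid{g_{\cat P}}$; choosing a nonzero object of $\cat L$ and invoking detection of support gives $\Supp(\cat L) \neq \emptyset$, so $\emptyset \neq \Supp(\cat L) \subseteq \Supp(g_{\cat P}) = \{\cat P\}$, whence $\Supp(\cat L) = \{\cat P\} = \Supp(\locid{g_{\cat P}})$ and injectivity yields $\cat L = \locid{g_{\cat P}}$; therefore $\locid{g_{\cat P}}$ is a minimal localizing ideal. For the local-to-global principle, fix $t \in \cat T$ and set $\cat L = \locid{g_{\cat P} \otimes t \mid \cat P \in \Spc(\cat T^c)} \subseteq \locid{t}$; by the tensor formula $\Supp(\cat L) = \bigcup_{\cat P}(\{\cat P\} \cap \Supp(t)) = \Supp(t) = \Supp(\locid{t})$, so injectivity of $\Supp$ gives $\cat L = \locid{t}$, and the statement for a general localizing ideal follows by writing it as the union of the principal ones it contains.

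The logical equivalence above is a short diagram chase; the real content --- and the main obstacle --- lies in the three foundational facts invoked at the outset, namely the construction and basic properties of the $\otimes$-idempotents $g_{\cat P}$ (in particular $\Supp(g_{\cat P}) = \{\cat P\}$) and detection of support, under only the weakly noetherian hypothesis. Establishing these is precisely the technical heart of \cite{BHS2021}, and for the purposes of this paper we simply cite \cite{BHS2021}*{Theorem~A}.
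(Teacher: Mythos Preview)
The paper does not give its own proof of this statement: it is stated with the attribution \cite{BHS2021}*{Theorem~A} and then used as a black box. Your proposal supplies the standard argument from \cite{BHS2021} --- using detection of support, $\Supp(g_{\cat P})=\{\cat P\}$, and the tensor formula together with minimality and the local-to-global principle --- and it is correct; this is exactly the proof one finds in the cited reference, so there is nothing to compare against in the present paper.
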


\begin{remark}
	Our goal is to study the extent to which stratification descends along a base change functor. Such ``descent results'' for stratification have already been studied in the literature, e.g., in \cite[\S\S 15--17]{BCHS-costratification} and we begin by recalling some results from that work. Let $F\colon \cat T\to \cat S$ denote a geometric functor between rigidly-compactly generated tt-categories and write
	\[
		\varphi\colon\Spc(\cat S^c) \to \Spc(\cat T^c)
	\]
	for the induced map on Balmer spectra. Recall that $F$ has a right adjoint $U$ which in turn has a right adjoint $V$.
\end{remark}

\begin{proposition}[\cite{BCHS-costratification}]\label{prop:f*conservative}
	Let $F\colon\cat T\to \cat S$ be a geometric functor between rigidly-compactly generated tt-categories with weakly noetherian spectra. Consider the following conditions:
	\begin{enumerate}
		\item $\unit_{\cat T} \in \locid{U(\unit_{\cat S})}$;
		\item $V$ is conservative;
		\item $F$ is conservative;
		\item $\varphi$ is surjective.
	\end{enumerate}
	Then $(a) \Rightarrow (b) \Rightarrow (c) \Rightarrow (d)$. If $\cat T$ is stratified or if $U(\unit_{\cat S})$ is compact then all four conditions are equivalent.
\end{proposition}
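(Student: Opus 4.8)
The plan is to establish the chain of implications $(a) \Rightarrow (b) \Rightarrow (c) \Rightarrow (d)$ first, and then treat the two extra hypotheses ($\cat T$ stratified, or $U(\unit_{\cat S})$ compact) that close the loop.

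For $(a) \Rightarrow (b)$: suppose $\unit_{\cat T} \in \locid{U(\unit_{\cat S})}$ and let $s \in \cat S$ with $V(s) = 0$; I want $s = 0$. The key is that $\Hom_{\cat T}(U(\unit_{\cat S}), V(s)) \cong \Hom_{\cat S}(\unit_{\cat S}, s)$ by adjunction ($F \dashv U \dashv V$), so $V(s)=0$ forces this group (and its shifts, and after tensoring $U(\unit_{\cat S})$ with arbitrary objects and using the projection formula) to vanish. More precisely, since $\locid{U(\unit_{\cat S})} \ni \unit_{\cat T}$, the object $V(s)$ lies in every localizing ideal containing $U(\unit_{\cat S})$; but I should instead argue that the collection of $t \in \cat T$ with $\Hom_{\cat T}^*(t \otimes x, V(s)) = 0$ for all $x$ is a localizing ideal containing $U(\unit_{\cat S})$ (using that $V$ preserves coproducts, being a right adjoint whose left adjoint $U$ preserves compacts—or directly that $V$ has a further structure), hence contains $\unit_{\cat T}$, giving $V(s) = 0$ and then $s=0$ by conservativity arguments via $U$; more cleanly, $\Hom_{\cat S}^*(F(t), s) \cong \Hom_{\cat T}^*(t, V(s))$ vanishes for all $t$, and since $\cat S = \loc{F(\cat T^c)}$ (Lemma~\ref{lem:preserve-gen}(c), once we know $U$ conservative—wait, that's circular). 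The honest route: show the localizing ideal $\locid{U(\unit_{\cat S})}$ being all of $\cat T$ implies $F$ is conservative directly, and separately that $V$ is conservative; I expect this to follow from \Cref{lem:localizing-from-S} and the adjunction identities, and this is the step I'd want to write out carefully.

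For $(b) \Rightarrow (c)$: if $F(t) = 0$ then $U F(t) = 0$, but $UF \simeq U(\unit_{\cat S}) \otimes -$ by the projection formula (\Cref{prop-properties-adjunction}), so $U(\unit_{\cat S}) \otimes t = 0$; applying $V$ and using $VU(\unit_{\cat S}) \otimes - $ considerations, or more simply noting $V$ conservative reduces a vanishing of $F(t)$ to vanishing after $V F$, which I can relate to $t$ itself. Actually the cleanest argument: $F$ conservative is equivalent to $\locid{U(\cat S)} = \cat T$ failing to be proper — no; I'll argue $F(t)=0 \Rightarrow \Hom(F(t'), F(t))=0 \Rightarrow \Hom(t', U F(t)) = \Hom(t', U(\unit_{\cat S})\otimes t)$, and if $V$ is conservative I claim this forces $t \otimes U(\unit_{\cat S})=0$ then... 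I think the intended argument is shorter: $(b) \Rightarrow (c)$ because $VF$ is conservative would suffice, but $VF$ need not be $V$-related simply; instead use that $F$ conservative $\iff$ $U$ reflects zero objects on the image, which $V$ conservative gives. For $(c) \Rightarrow (d)$: this is immediate from \Cref{prop:descendable-surjective}(a)'s proof or \cite[Corollary 2.26]{BCHNP2023} — $F$ conservative implies $\varphi$ surjective on Balmer spectra.

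For the converse under extra hypotheses: if $U(\unit_{\cat S})$ is compact, then $(d) \Rightarrow (a)$ is essentially \Cref{prop:descendable-surjective}(b) combined with \Cref{thm:miller-neeman}-type reasoning: $\varphi$ surjective and $U$ preserving compacts (\Cref{cor:weakly-closed-is-closed}) gives $\supp(U(\unit_{\cat S})) = \Spc(\cat T^c)$ via \cite[Theorem 1.7]{Balmer2018}, hence $\unit_{\cat T} \in \thickid{U(\unit_{\cat S})} \subseteq \locid{U(\unit_{\cat S})}$. If instead $\cat T$ is stratified: then $(d) \Rightarrow (a)$ should follow because surjectivity of $\varphi$ together with stratification forces $\Supp(U(\unit_{\cat S})) = \Spc(\cat T^c)$ — here I'd use that the support of $U(\unit_{\cat S})$ equals the image of $\varphi$ (via a support-detection argument, $g_{\cat P} \otimes U(\unit_{\cat S}) \neq 0 \iff \cat P \in \im\varphi$, using the projection formula and conservativity of the relevant geometric functor on fibers) — and then stratification (bijectivity of the support map) forces $\locid{U(\unit_{\cat S})} = \cat L_{\Spc(\cat T^c)} = \cat T$, so in particular $\unit_{\cat T} \in \locid{U(\unit_{\cat S})}$.

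The main obstacle I anticipate is the stratified case of $(d) \Rightarrow (a)$: identifying $\Supp(U(\unit_{\cat S}))$ with $\im \varphi$ for a general geometric functor (not assuming $U$ preserves compacts) requires the "big support" machinery and a careful argument that the fiberwise functor $\cat T/\cat P \to \cat S \otimes_{\cat T} (\cat T/\cat P)$ detects nonvanishing of $g_{\cat P} \otimes U(\unit_{\cat S})$; this is where \cite{BCHS-costratification} does real work, and I would cite the relevant proposition there (the statement says "Proposition [\cite{BCHS-costratification}]") rather than reprove it. The implications among $(a)$–$(d)$ themselves I expect to be routine adjunction-and-localizing-ideal chases building on \Cref{lem:localizing-from-S}, \Cref{lem:all-localizing-from-S}, and \Cref{prop-properties-adjunction}.
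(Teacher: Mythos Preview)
The paper's proof is entirely by citation to \cite{BCHS-costratification}: each implication $(a)\Rightarrow(b)$, $(b)\Rightarrow(c)$, $(c)\Rightarrow(d)$, and the converse $(d)\Rightarrow(a)$ under either extra hypothesis, is handled by pointing to a specific result in that reference (Remark 13.24, Proposition 13.21, Corollary 13.19, Corollary 14.24, Proposition 13.33 respectively). This is consistent with the proposition being explicitly attributed to \cite{BCHS-costratification} in its heading.

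Your proposal instead attempts to sketch the actual arguments, which is more ambitious but runs into some real problems. In your $(a)\Rightarrow(b)$ sketch you write ``let $s\in\cat S$ with $V(s)=0$'', but $V\colon\cat T\to\cat S$ (it is the right adjoint of $U\colon\cat S\to\cat T$), so $V$ is applied to objects of $\cat T$, not $\cat S$; this type error propagates through the rest of that paragraph. A correct version would take $t\in\cat T$ with $V(t)=0$, deduce $\Hom_{\cat T}(U(s),t)=0$ for all $s$, and hence (via the projection formula) that $\ihom(U(\unit_{\cat S}),t)=0$; then the localizing ideal $\{x:\ihom(x,t)=0\}$ contains $U(\unit_{\cat S})$ and so, by (a), contains $\unit_{\cat T}$, giving $t=0$. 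Your $(b)\Rightarrow(c)$ sketch never settles on an argument and as written does not establish the implication.

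Your treatment of $(c)\Rightarrow(d)$ and of $(d)\Rightarrow(a)$ in the compact case is essentially correct (the latter is indeed \Cref{prop:descendable-surjective}(b) plus the thick-ideal classification). For the stratified case you rightly identify that one must show $\Supp(U(\unit_{\cat S}))=\im\varphi$ and that this is where the genuine content lies; your decision to cite \cite{BCHS-costratification} there is exactly what the paper does---for every step, not just that one.
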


\begin{proof}
	Implication $(a)\Rightarrow (b)$ is \cite[Remark 13.24]{BCHS-costratification}, $(b)\Rightarrow (c)$ is \cite[Proposition 13.21]{BCHS-costratification}, and $(c)\Rightarrow (d)$ is \cite[Corollary 13.19]{BCHS-costratification}. Finally, ${(d)\Rightarrow (a)}$ if $\cat T$ is stratified or if $U(\unitS)$ is compact by \cite[Corollary 14.24]{BCHS-costratification} and \cite[Proposition 13.33]{BCHS-costratification}, respectively.
\end{proof}

\begin{remark}
	If $F\colon \cat T\to \cat S$ is not conservative then intuitively there is no reason to expect us to be able to descend stratification from $\cat S$ to $\cat T$. Thus, for there to be any hope for us to conclude that $\cat T$ is stratified, intuitively we expect that the equivalent conditions of \Cref{prop:f*conservative} must hold. From this point of view, ``descendability'' (or at least ``weak descendability'') is a natural hypothesis.
\end{remark}

\begin{proposition}[\cite{BCHS-costratification}]\label{prop:LGP-descend}
	Let $F\colon \cat T\to \cat S$ be a geometric functor between rigidly-compactly generated tt-categories with weakly noetherian spectra. Suppose $\unit_{\cat T} \in \locid{U(\unit_{\cat S})}$. If the local-to-global principle holds for $\cat S$, then it holds for $\cat T$, too.
\end{proposition}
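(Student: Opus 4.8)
The plan is to recall the definition of the local-to-global principle and reduce it to a statement that can be checked after applying the conservative functor provided by the hypothesis. Recall that the local-to-global principle for a rigidly-compactly generated tt-category $\cat R$ with weakly noetherian spectrum asserts that $\locid{t} = \locid{g_{\cat P} \otimes t \mid \cat P \in \Spc(\cat R^c)}$ for every $t \in \cat R$; equivalently, every object lies in the localizing ideal generated by its ``local pieces''. Since $\unit_{\cat T} \in \locid{U(\unit_{\cat S})}$, \Cref{prop:f*conservative} tells us that $F$ is conservative and, more usefully, that $\cat T = \locid{U(\unit_{\cat S})}$, so that (by the same style of argument as in \Cref{lem:localizing-from-S}) every object $t \in \cat T$ satisfies $t \in \locid{U(\unit_{\cat S}) \otimes t} = \locid{U F(t)}$ via the projection formula. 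This is the key leverage: objects of $\cat T$ are ``visible'' through $F$ and $U$.

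First I would fix $t \in \cat T$ and aim to show $t \in \locid{g_{\cat P}^{\cat T} \otimes t \mid \cat P \in \Spc(\cat T^c)}$, calling the right-hand side $\cat M_t$. By the reduction above it suffices to show $UF(t) \in \cat M_t$, and since $\cat M_t$ is a localizing ideal it suffices to show $U(s) \in \cat M_t$ for $s$ ranging over a generating set of objects of the form $s = F(t) \otimes (\text{something})$ — or, more directly, it suffices to show $U(F(t) \otimes s') \in \cat M_t$ for all $s' \in \cat S$. Now I would apply the local-to-global principle in $\cat S$ to the object $F(t) \otimes s'$: it lies in $\locid{g_{\cat Q}^{\cat S} \otimes F(t) \otimes s' \mid \cat Q \in \Spc(\cat S^c)}$. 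Applying $U$ (which preserves colimits and hence localizing ideals are sent into localizing ideals generated by the images), we get that $U(F(t) \otimes s')$ lies in the localizing ideal generated by the objects $U(g_{\cat Q}^{\cat S} \otimes F(t) \otimes s')$. By the projection formula this is $U(g_{\cat Q}^{\cat S} \otimes s') \otimes t$, an object of $\locid{t_{\cat Q} \otimes t}$ where $t_{\cat Q} := U(g_{\cat Q}^{\cat S})$. So it remains to control these objects $t_{\cat Q} \otimes t$.

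The crucial compatibility I need is that each $U(g_{\cat Q}^{\cat S})$ lies in the localizing ideal generated by the $g_{\cat P}^{\cat T}$ with $\cat P \in \Spc(\cat T^c)$; indeed, the natural guess is $U(g_{\cat Q}^{\cat S}) \in \locid{g_{\varphi(\cat Q)}^{\cat T}}$, which would follow from a support computation showing $\Supp_{\cat T}(U(g_{\cat Q}^{\cat S})) \subseteq \{\varphi(\cat Q)\}$ (or is contained in its specialization closure) together with the minimality/generation properties of the idempotents — this kind of statement is exactly the content of the support-comparison results in \cite{BCHS-costratification}, e.g.\ the behaviour of $\Supp$ under $U$ and the identity $U(g_{\cat Q}) \in \locid{g_{\varphi(\cat Q)} \otimes U(\unit_{\cat S})}$. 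Granting that, $t_{\cat Q} \otimes t \in \locid{g_{\varphi(\cat Q)}^{\cat T} \otimes t} \subseteq \cat M_t$, which closes the argument. I expect the main obstacle to be precisely this last point: pinning down that the forgetful functor $U$ carries the local idempotents $g_{\cat Q}^{\cat S}$ into the localizing ideals generated by the $g_{\cat P}^{\cat T}$, which requires the weakly-noetherian machinery of \cite{BHS2021} (the interaction of $\Supp$, the local-to-global principle already assumed for $\cat S$, and the $\otimes$-idempotents) rather than a formal adjunction argument. Everything else — the projection-formula manipulations and the fact that localizing ideals are preserved in the needed directions — is routine given \Cref{prop-properties-adjunction} and \Cref{lem:conserve-preserves}.
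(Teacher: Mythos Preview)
The paper itself does not give a proof but simply cites \cite[Proposition~15.1]{BCHS-costratification}. Your outline is essentially the argument that appears there and is correct in structure: reduce to $UF(t)\in\cat M_t$ via the hypothesis $\unit_{\cat T}\in\locid{U(\unit_{\cat S})}$, apply the local-to-global principle in $\cat S$ to $F(t)$, then push back down using $U$ and the projection formula.

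One caution on your ``crucial compatibility'' step. You propose to obtain $U(g_{\cat Q}^{\cat S})\in\locid{g_{\varphi(\cat Q)}^{\cat T}}$ from a support computation $\Supp_{\cat T}(U(g_{\cat Q}^{\cat S}))\subseteq\{\varphi(\cat Q)\}$. Be careful: deducing ideal membership from a support containment is exactly the kind of step that typically \emph{requires} the local-to-global principle (or the detection property) in $\cat T$, which is what you are trying to prove. The non-circular route is more direct: since $F$ is geometric one has $F(e_Y)\simeq e_{\varphi^{-1}(Y)}$ and $F(f_Z)\simeq f_{\varphi^{-1}(Z)}$, hence $F(g_{\cat P}^{\cat T})\otimes g_{\cat Q}^{\cat S}\simeq g_{\cat Q}^{\cat S}$ whenever $\varphi(\cat Q)=\cat P$. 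Applying $U$ and the projection formula then gives $g_{\varphi(\cat Q)}^{\cat T}\otimes U(g_{\cat Q}^{\cat S})\simeq U(g_{\cat Q}^{\cat S})$, so $U(g_{\cat Q}^{\cat S})\in\locid{g_{\varphi(\cat Q)}^{\cat T}}$ without any appeal to support theory in $\cat T$. (Equivalently---and this is how the paper phrases the same ingredient in the proof of \Cref{lem:commutativity}---one has $\locid{F(g_{\cat P}^{\cat T})}=\locid{g_{\cat Q}^{\cat S}\mid\cat Q\in\varphi^{-1}(\cat P)}$ once LGP holds for $\cat S$.) With this adjustment your argument goes through.
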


\begin{proof}
	This is \cite[Proposition 15.1]{BCHS-costratification}; cf.~\cite[Remark 15.4]{BCHS-costratification}.
\end{proof}

\begin{proposition}
	Let $F\colon \cat T\to \cat S$ be a geometric functor between rigidly-compactly generated tt-categories with weakly noetherian spectra. Assume $\cat S$ is stratified. The following are equivalent:
	\begin{enumerate}
		\item $\cat T$ is stratified and $\varphi$ is surjective.
		\item $\unit_{\cat T}\in  \locid{U(\unit_{\cat S})}$ and the identity
				\[
					\Supp(F(t)) = \varphi^{-1}(\Supp(t))
				\]
			holds for all $t \in \cat T$.
	\end{enumerate}
\end{proposition}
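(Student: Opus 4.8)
My plan is to prove the two implications separately; both rest on \Cref{prop:f*conservative} (to trade the hypothesis ``$\varphi$ surjective'' for ``$\unit_{\cat T}\in\locid{U(\unit_{\cat S})}$'' and hence conservativity of $F$), on \Cref{prop:LGP-descend} (to descend the local-to-global principle), on the projection formula $U(F(c)\otimes s)\simeq c\otimes U(s)$ (\Cref{prop-properties-adjunction}), and on the identity $\cat S=\locid{F(\cat T^c)}$, which holds once $F$ is conservative (\Cref{lem:preserve-gen}). Throughout I will use that, $F$ being geometric, $F_*$ is defined on generators (\Cref{exa:tt-preserves}) and hence commutes with arbitrary joins of localizing ideals, as do $\Supp$ and $\varphi^{-1}$.

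\emph{Proof of $(a)\Rightarrow(b)$.} Since $\cat T$ is stratified, the four conditions of \Cref{prop:f*conservative} are equivalent, so surjectivity of $\varphi$ gives at once $\unit_{\cat T}\in\locid{U(\unit_{\cat S})}$ --- the first half of $(b)$ --- and conservativity of $F$, whence $\cat S=\locid{F(\cat T^c)}$. For the support identity I will reduce to generators. First, one always has an inclusion $\Supp(F(x))\subseteq\varphi^{-1}(\Supp(x))$, which follows from the base-change compatibility $F\circ\Gamma_Y\simeq\Gamma_{\varphi^{-1}(Y)}\circ F$ of the local cohomology functors recorded in \cite{BCHS-costratification} applied to the presentation of $g_{\cat P}$; in particular $\Supp(F(g_{\cat P}))\subseteq\varphi^{-1}(\{\cat P\})$ for all $\cat P\in\Spc(\cat T^c)$. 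On the other hand, since $\cat T$ is stratified we have $\cat T=\bigvee_{\cat P}\locid{g_{\cat P}}$, hence $\cat S=\locid{F(\cat T^c)}=F_*(\cat T)=\bigvee_{\cat P}\locid{F(g_{\cat P})}$, and taking supports gives $\bigcup_{\cat P}\Supp(F(g_{\cat P}))=\Spc(\cat S^c)$. As the fibres $\varphi^{-1}(\{\cat P\})$ are pairwise disjoint and cover $\Spc(\cat S^c)$, a pigeonhole argument forces $\Supp(F(g_{\cat P}))=\varphi^{-1}(\{\cat P\})$ for every $\cat P$. Finally, every localizing ideal $\cat L$ of $\cat T$ equals $\bigvee_{\cat P\in\Supp(\cat L)}\locid{g_{\cat P}}$, and $F_*$, $\Supp$, $\varphi^{-1}$ all preserve joins, so $\Supp(F_*(\cat L))=\varphi^{-1}(\Supp(\cat L))$; taking $\cat L=\locid{t}$ and using $\Supp(\locid{F(t)})=\Supp(F(t))$ (valid since $\cat S$ is stratified) gives $(b)$.

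\emph{Proof of $(b)\Rightarrow(a)$.} From $\unit_{\cat T}\in\locid{U(\unit_{\cat S})}$, \Cref{prop:f*conservative} gives that $\varphi$ is surjective and $F$ is conservative, and \Cref{prop:LGP-descend} gives the local-to-global principle for $\cat T$ (applied with $\cat S$ stratified, hence satisfying it). It remains to establish minimality of $\locid{g_{\cat P}}$ for each $\cat P\in\Spc(\cat T^c)$. Fix $0\neq t\in\locid{g_{\cat P}}$. Since $g_{\cat Q}\otimes g_{\cat P}=0$ for $\cat Q\neq\cat P$ (as $\Supp(g_{\cat P})=\{\cat P\}$, see \cite{BHS2021}), the same holds for every object of $\locid{g_{\cat P}}$, so $\Supp(t)\subseteq\{\cat P\}$; the local-to-global principle forces $\Supp(t)\neq\emptyset$, hence $\Supp(t)=\{\cat P\}=\Supp(g_{\cat P})$. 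Applying the hypothesis of $(b)$ to $t$ and to $g_{\cat P}$ gives $\Supp(F(t))=\varphi^{-1}(\{\cat P\})=\Supp(F(g_{\cat P}))$, so $\locid{F(t)}=\locid{F(g_{\cat P})}$ as $\cat S$ is stratified; in particular $F(g_{\cat P})\in\locid{F(t)}$. Now the descent step: the full subcategory $\cat N\coloneqq\SET{s\in\cat S}{U(s)\in\locid{t}}$ is localizing (as $U$ preserves coproducts) and is a $\otimes$-ideal (since $U(F(c)\otimes s)\simeq c\otimes U(s)$ by the projection formula and $\cat S=\locid{F(\cat T^c)}$), and it contains $F(t)$ because $U(F(t))\simeq U(\unit_{\cat S})\otimes t\in\locid{t}$. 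Hence $F(g_{\cat P})\in\cat N$, i.e.\ $U(\unit_{\cat S})\otimes g_{\cat P}\simeq U(F(g_{\cat P}))\in\locid{t}$. Tensoring the inclusion $\unit_{\cat T}\in\locid{U(\unit_{\cat S})}$ with $g_{\cat P}$ gives $g_{\cat P}\in\locid{U(\unit_{\cat S})\otimes g_{\cat P}}\subseteq\locid{t}$, so $\locid{g_{\cat P}}\subseteq\locid{t}$; together with $t\in\locid{g_{\cat P}}$ this gives $\locid{t}=\locid{g_{\cat P}}$. Thus $\cat T$ is stratified.

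\emph{Main obstacle.} I expect the delicate point to be the descent step in $(b)\Rightarrow(a)$ --- passing from $F(g_{\cat P})\in\locid{F(t)}$ in $\cat S$ back to $g_{\cat P}\in\locid{t}$ in $\cat T$. This is precisely where the hypothesis $\unit_{\cat T}\in\locid{U(\unit_{\cat S})}$ enters essentially, and where one must be careful that $U$ is not assumed conservative; the auxiliary ideal $\cat N$ works because verifying its ideal property needs only the projection formula together with $\cat S=\locid{F(\cat T^c)}$, not conservativity of $U$. A secondary point is the general inclusion $\Supp(F(-))\subseteq\varphi^{-1}(\Supp(-))$ used in $(a)\Rightarrow(b)$, which I would cite from \cite{BCHS-costratification} rather than reprove.
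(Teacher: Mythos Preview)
Your overall strategy matches the paper's: use \Cref{prop:f*conservative} and \Cref{prop:LGP-descend} for the structural reductions, then for $(b)\Rightarrow(a)$ verify minimality at each $\cat P$ by pushing $t$ forward to $\cat S$, using stratification of $\cat S$, and pulling back via $U$. The paper's $(a)\Rightarrow(b)$ is a one-line citation of \cite[Cor.~14.19]{BCHS-costratification}; its $(b)\Rightarrow(a)$ is your argument, with the pull-back step packaged as \cite[(13.4)]{BCHS-costratification} and the equality $\Supp(F(g_{\cat P}))=\varphi^{-1}(\{\cat P\})$ taken from \cite[Remark~13.7]{BCHS-costratification}.

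There is, however, a genuine gap that appears in both directions of your argument: you assert $\cat S=\locid{F(\cat T^c)}$ as a consequence of conservativity of $F$. The result you are invoking (\Cref{lem:preserve-gen}(c)) requires conservativity of the \emph{right adjoint} $U$, which is not assumed here and is not implied by the hypotheses (the proposition is stated for an arbitrary geometric functor, not for base change). Your ideal $\cat N=\SET{s\in\cat S}{U(s)\in\locid{t}}$ is therefore not shown to be a $\otimes$-ideal, and your pigeonhole argument in $(a)\Rightarrow(b)$ loses the equality $\bigcup_{\cat P}\Supp(F(g_{\cat P}))=\Spc(\cat S^c)$.

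Both uses are easily repaired. For $(b)\Rightarrow(a)$ you do not need $\cat N$ to be an ideal, only that $\cat N\supseteq\locid{F(t)}$. Observe that for \emph{every} $s\in\cat S$ the projection formula gives $U(F(t)\otimes s)\simeq t\otimes U(s)\in\locid{t}$, so $F(t)\otimes\cat S\subseteq\cat N$; since $\loc{F(t)\otimes\cat S}$ is a localizing ideal (the generators are closed under tensoring with $\cat S$, and a standard two-variable localizing argument upgrades this), it equals $\locid{F(t)}$ and is contained in $\cat N$. This is precisely the content of \cite[(13.4)]{BCHS-costratification}. For $(a)\Rightarrow(b)$, drop the pigeonhole and use instead that $\Supp(F(g_{\cat P}))=\varphi^{-1}(\{\cat P\})$ holds unconditionally for geometric functors with weakly noetherian spectra (\cite[Remark~13.7]{BCHS-costratification}); your final join argument then goes through, or simply cite \cite[Cor.~14.19]{BCHS-costratification} as the paper does.
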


\begin{proof}
	$(a)\Rightarrow (b)$: This follows from \cref{prop:f*conservative} and \cite[Cor.~14.19]{BCHS-costratification}.
    
	$(b)\Rightarrow (a)$: The hypothesis $\unitT\in\locid{U(\unitS)}$ implies that $\varphi$ is surjective by \cref{prop:f*conservative} and that the local-to-global principle holds for $\cat T$ by \cref{prop:LGP-descend}. Let $\cat P \in \Spc(\cat T^c)$ and suppose $0 \neq t \in \locid{g_{\cat P}}$. Then $\Supp(t)=\{\cat P\}$ and hence $\Supp(F(t))=\varphi^{-1}(\Supp(t))=\varphi^{-1}(\{\cat P\})=\Supp(F(g_{\cat P}))$ by hypothesis and \cite[Remark 13.7]{BCHS-costratification}. Since $\cat S$ is stratified, this implies $F(g_{\cat P}) \in \locid{F(t)}$ which implies $UF (g_{\cat P}) \in \locid{t}$ by \cite[(13.4)]{BCHS-costratification}. This establishes $g_{\cat P} \in \locid{t}$ since $\unit_{\cat T}\in  \locid{U(\unit_{\cat S})}$ implies $g_{\cat P} \in \locid{UF(g_{\cat P})}$.
\end{proof}

\begin{corollary}\label{cor:general-strat-descent}
	Let $F\colon \cat T\to \cat S$ be a geometric functor between rigidly-compactly generated tt-categories with weakly noetherian spectra. Suppose that $\cat S$ is stratified and $\unit_{\cat T}\in  \locid{U(\unit_{\cat S})}$. The following are equivalent:
	\begin{enumerate}
		\item $\cat T$ is stratified.
		\item The identity $\Supp(F(t)) = \varphi^{-1}(\Supp(t))$ holds for all $t \in \cat T$.
	\end{enumerate}
\end{corollary}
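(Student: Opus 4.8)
The plan is to read this off directly from the preceding proposition, using \Cref{prop:f*conservative} to clean up its surjectivity clause. Recall that the preceding proposition asserts that, whenever $\cat S$ is stratified, the following are equivalent: (a) $\cat T$ is stratified and $\varphi$ is surjective; (b) $\unit_{\cat T}\in\locid{U(\unit_{\cat S})}$ and $\Supp(F(t))=\varphi^{-1}(\Supp(t))$ for all $t\in\cat T$. Since $\cat S$ is assumed stratified here, this equivalence is available to us, and the task is simply to see how conditions (1) and (2) of the corollary relate to (a) and (b) under the extra standing hypothesis $\unit_{\cat T}\in\locid{U(\unit_{\cat S})}$.

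First I would dispose of the surjectivity clause appearing in (a). The standing hypothesis $\unit_{\cat T}\in\locid{U(\unit_{\cat S})}$ is precisely condition (a) of \Cref{prop:f*conservative}, and the implication $(a)\Rightarrow(d)$ there shows that $\varphi$ is surjective. Hence, under the hypotheses of the corollary, condition (1) (``$\cat T$ is stratified'') is equivalent to condition (a) of the preceding proposition (``$\cat T$ is stratified and $\varphi$ surjective''), the surjectivity being automatic rather than an additional assumption.

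Next I would match the remaining two conditions: condition (b) of the preceding proposition is the conjunction of $\unit_{\cat T}\in\locid{U(\unit_{\cat S})}$ — which is our standing hypothesis — with the support identity, so under that standing hypothesis it is equivalent to condition (2) of the corollary. Chaining the three equivalences yields: (1) $\Leftrightarrow$ (a) $\Leftrightarrow$ (b) $\Leftrightarrow$ (2), which is the assertion. I do not expect any genuine obstacle; the only point deserving a moment's attention is checking that the running hypotheses here (a geometric functor between rigidly-compactly generated tt-categories with weakly noetherian spectra, together with $\cat S$ stratified) are exactly those needed to invoke both \Cref{prop:f*conservative} and the preceding proposition, which indeed they are.
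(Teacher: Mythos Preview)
Your proposal is correct and is precisely the intended argument: the paper states this corollary without proof, leaving it as an immediate consequence of the preceding proposition together with \Cref{prop:f*conservative}, exactly as you spell out.
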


\begin{remark}
	Our goal is to provide a third equivalent condition for stratification to descend in the case when $F$ is base change along a descendable algebra; see \Cref{thm-stratification-descent} below. We need some preparatory lemmas.
\end{remark}

\begin{lemma}\label{lem:commutativity}
	Let $F\colon \cat T\to \cat S$ be a geometric functor between rigidly-compactly generated tt-categories with weakly noetherian spectra. Suppose that the local-to-global principle holds for $\cat S$. Then the following diagram is commutative:
	\[
	\begin{tikzcd}
		\Locid(\cat T) \arrow[r,"F_*"] & \Locid(\cat S) \\
		 \CP(\Spc(\cat T^c)) \arrow[u, "\cat L_{(-)}"] \arrow[r, "\CP(\varphi)"]& 
		 \CP(\Spc(\cat S^c)). \arrow[u, "\cat L_{(-)}"]
	\end{tikzcd}
	\]
\end{lemma}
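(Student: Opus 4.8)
The plan is to show directly that the two composites in the square agree, i.e.\ that $F_*(\cat L_Y)=\cat L_{\varphi^{-1}(Y)}$ for every subset $Y\subseteq\Spc(\cat T^c)$, by proving the two inclusions separately. Throughout I will use that $F_*(\cat L)=\locid{F(\cat L)}$ by definition, that $\cat L_Y=\SET{t\in\cat T}{\Supp(t)\subseteq Y}$ is a localizing ideal for every subset $Y$, and the basic formalism of the $\otimes$-idempotents $g_{\cat P}$ and of the (co)localization idempotents $\Gamma_Z\unit$, $L_Z\unit$ attached to Thomason subsets $Z$, in the notation of \cite{BHS2021}.

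For the inclusion $F_*(\cat L_Y)\subseteq\cat L_{\varphi^{-1}(Y)}$, since the right-hand side is a localizing ideal it suffices to check $F(\cat L_Y)\subseteq\cat L_{\varphi^{-1}(Y)}$, and for this I would invoke the general containment $\Supp(F(t))\subseteq\varphi^{-1}(\Supp(t))$, valid for any geometric functor between rigidly-compactly generated tt-categories with weakly noetherian spectra. (This can be deduced from the compatibility of a geometric functor with the idempotents attached to Thomason subsets: $F$ carries $\Gamma_Z\unit_{\cat T}$ to $\Gamma_{\varphi^{-1}(Z)}\unit_{\cat S}$ and $L_Z\unit_{\cat T}$ to $L_{\varphi^{-1}(Z)}\unit_{\cat S}$, so writing $\{\cat P\}=Z_1\setminus Z_2$ for a weakly visible presentation one obtains $F(g_{\cat P})\simeq\Gamma_{\varphi^{-1}(Z_1)}\unit_{\cat S}\otimes L_{\varphi^{-1}(Z_2)}\unit_{\cat S}$, whence $g_{\cat Q}\otimes F(g_{\cat P})\simeq g_{\cat Q}$ for every $\cat Q$ with $\varphi(\cat Q)=\cat P$; then for $\cat P\notin\Supp(t)$ we get $g_{\cat Q}\otimes F(t)\simeq g_{\cat Q}\otimes F(g_{\cat P}\otimes t)=0$. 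This is recorded in \cite{BCHS-costratification}.) Once this is in hand, $t\in\cat L_Y$ gives $\Supp(F(t))\subseteq\varphi^{-1}(\Supp(t))\subseteq\varphi^{-1}(Y)$, i.e.\ $F(t)\in\cat L_{\varphi^{-1}(Y)}$. Note that this inclusion does not use the local-to-global principle.

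For the reverse inclusion $\cat L_{\varphi^{-1}(Y)}\subseteq F_*(\cat L_Y)$, let $s\in\cat L_{\varphi^{-1}(Y)}$, so $\Supp(s)\subseteq\varphi^{-1}(Y)$. By the local-to-global principle for $\cat S$ we have $s\in\locid{g_{\cat Q}\otimes s\mid \cat Q\in\Spc(\cat S^c)}$, and since $g_{\cat Q}\otimes s=0$ whenever $\cat Q\notin\Supp(s)$, it suffices to prove $g_{\cat Q}\otimes s\in F_*(\cat L_Y)$ for each $\cat Q\in\Supp(s)$. Fix such a $\cat Q$ and set $\cat P\coloneqq\varphi(\cat Q)$, which lies in $Y$ because $\cat Q\in\varphi^{-1}(Y)$. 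Then $\Supp(g_{\cat P})=\{\cat P\}\subseteq Y$, so $g_{\cat P}\in\cat L_Y$ and therefore $F(g_{\cat P})\in\locid{F(\cat L_Y)}=F_*(\cat L_Y)$. Using the identity $g_{\cat Q}\simeq g_{\cat Q}\otimes F(g_{\cat P})$ from the previous paragraph, we obtain $g_{\cat Q}\otimes s\simeq g_{\cat Q}\otimes F(g_{\cat P})\otimes s\in\locid{F(g_{\cat P})}\subseteq F_*(\cat L_Y)$, as required; since this holds for all relevant $\cat Q$, we conclude $s\in F_*(\cat L_Y)$.

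The main obstacle is the identity $g_{\cat Q}\simeq g_{\cat Q}\otimes F(g_{\cat P})$ for $\cat P=\varphi(\cat Q)$ — equivalently, the compatibility of a geometric functor with the localization and colocalization functors attached to Thomason subsets, together with the corresponding behaviour of supports. Once this input is taken from \cite{BHS2021} and \cite{BCHS-costratification}, the remainder is a short formal argument, with the hypothesis on $\cat S$ (the local-to-global principle) entering only in the second inclusion.
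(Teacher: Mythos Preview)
Your proof is correct and follows essentially the same approach as the paper: both prove the two inclusions separately, obtaining $F_*(\cat L_Y)\subseteq\cat L_{\varphi^{-1}(Y)}$ from the half-base-change formula $\Supp(F(t))\subseteq\varphi^{-1}(\Supp(t))$ (the paper cites \cite[Remark~14.13]{BCHS-costratification}), and the reverse inclusion via the local-to-global principle together with the identity $g_{\cat Q}\simeq g_{\cat Q}\otimes F(g_{\varphi(\cat Q)})$ (the paper packages this as $\locid{F(g_{\cat P})}=\locid{g_{\cat Q}\mid \cat Q\in\varphi^{-1}(\{\cat P\})}$, referencing the proof of \cite[Proposition~15.1]{BCHS-costratification}). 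Your version simply unwinds these citations a bit more explicitly.
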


\begin{proof}
	Let $Y \subseteq \Spc(\C^c)$ be a subset so $\CP(\varphi)(Y)=\varphi^{-1}(Y)$. Going around the top we obtain $F_*(\cat L_Y)=\locid{F(t) \mid \Supp(t) \subseteq Y}$ while going around the bottom we obtain $\cat L_{\varphi^{-1}(Y)}=\SET{ s \in \cat S }{\Supp(s) \subseteq \varphi^{-1}(Y)}$. The former is contained in the latter by \cite[Remark~14.13]{BCHS-costratification}. Conversely, as explained in the proof of \cite[Proposition 15.1]{BCHS-costratification}, the local-to-global principle for $\cat S$ implies that
	\[
		\locid{F(g_{\cat P})} = \locid{g_{\cat Q} \mid \cat Q \in \varphi^{-1}(\{\cat P\})}
	\]
	for any $\cat P \in \Spc(\cat T^c)$. Thus, if $s \in \cat S$ is such that $\Supp(s) \subseteq \varphi^{-1}(Y)$, then $s \in \locid{s \otimes g_{\cat Q} \mid \cat Q \in \varphi^{-1}(Y)} \subseteq \locid{g_{\cat Q} \mid \cat Q \in \varphi^{-1}(Y)} = \locid{F(g_{\cat P}} \mid \cat P \in Y)$.
\end{proof}

We also need the following simple observation. 

\begin{lemma}\label{lem-power-set-creates-lim}
	The contravariant power set functor $\CP \colon \Set^{\mathrm{op}}\to \Set$ preserves and  detects limits.  
\end{lemma}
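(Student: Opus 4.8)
The plan is to identify $\CP$ with a representable functor and then apply standard abstract nonsense. Writing $\Omega=\{0,1\}$ for the two-element set, characteristic functions give a natural bijection $\CP(X)\cong\Hom_{\Set}(X,\Omega)$ under which, for $f\colon X\to Y$, the map $\CP(f)=f^{-1}(-)$ corresponds to precomposition with $f$. Thus $\CP\cong\Hom_{\Set}(-,\Omega)\colon\Set\op\to\Set$ is the functor represented by $\Omega$; equivalently $\CP$ is a right adjoint, its left adjoint being the functor $\Set\to\Set\op$ which is again $X\mapsto\CP(X)$, via the familiar ``double-dualization'' self-adjunction $\Hom_{\Set\op}(\CP X,Y)=\Hom_{\Set}(Y,\CP X)\cong\Hom_{\Set}(X\times Y,\Omega)\cong\Hom_{\Set}(X,\CP Y)$. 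Being a right adjoint, $\CP$ preserves all small limits; concretely, it carries a colimit in $\Set$ to the corresponding limit in $\Set$.

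For the ``detects'' half I would first record that $\CP$ is conservative: if $\CP(f)$ is a bijection for some $f\colon X\to Y$, then $f$ is a bijection. Indeed, if $f$ is not surjective, pick $y\notin\im(f)$ and note $f^{-1}(\emptyset)=\emptyset=f^{-1}(\{y\})$, so $\CP(f)$ is not injective; if $f$ is not injective, say $f(a)=f(b)$ with $a\neq b$, then $\{a\}$ lies in the image of no $\CP(f)=f^{-1}(-)$, since any preimage containing $a$ also contains $b$, so $\CP(f)$ is not surjective. The converse implication is clear.

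Finally I would invoke the standard fact that a conservative, limit-preserving functor out of a category having all small limits reflects limits: since $\Set\op$ has all small limits (as $\Set$ is cocomplete), given a cone $\lambda$ over $D\colon J\to\Set\op$ with $\CP(\lambda)$ a limit cone, one compares $\lambda$ to an actual limit cone $\mu$ of $D$ in $\Set\op$; the canonical comparison map $u$ becomes an isomorphism after applying $\CP$ (both $\CP\lambda$ and $\CP\mu$ being limit cones, as $\CP$ preserves limits), hence $u$ is an isomorphism by conservativity and $\lambda$ is a limit cone. I expect no genuine obstacle; the only point requiring a moment's care is the bookkeeping of variances (a limit in $\Set\op$ is a colimit in $\Set$), which is precisely what makes the lemma usable in the sequel, where one checks that a fork of sets is a coequalizer by verifying that its $\CP$-image is an equalizer.
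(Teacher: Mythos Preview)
Your proof is correct and follows essentially the same approach as the paper: identify $\CP$ with the representable functor $\Hom_{\Set}(-,\{0,1\})$ to get limit-preservation, verify conservativity directly, and conclude that a conservative limit-preserving functor (from a complete category) reflects limits. The only cosmetic differences are that you spell out the self-adjunction and the standard reflection argument, whereas the paper leaves these as one-line remarks.
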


\begin{proof}
	Firstly, we claim that $\CP$ is conservative. Let $f\colon X \to Y$ be a map of sets such that $\CP(f)\colon \CP(Y) \to \CP(X)$ is bijective. Since $\CP(f)(\emptyset)=\emptyset$ and $\CP(f)$ is injective, we deduce that $\CP(f)(\{y\})\not= \emptyset$ for all $y\in Y$. Thus, $f$ is surjective. For injectivity, suppose that there exists $x_0,x_1\in X$ such that $f(x_0)=f(x_1)$. Then $\CP(f)^{-1}(\{x_0\})=\{f(x_0)\}=\{f(x_1)\}=\CP(f)^{-1}(\{x_1\})$ and so $x_0=x_1$ by injectivity of $\CP(f)^{-1}$. Therefore $f$ is injective and so bijective.  We also note that the functor $\CP$ preserves all limits as we have a natural equivalence $\CP(-)\simeq\Set(-,\{0,1\})$. Because~$\CP$ is conservative, it follows that it also detects limits. 
\end{proof}

\begin{theorem}\label{thm-stratification-descent}
	Let $\cat C$ be a rigidly-compactly generated tt-$\infty$-category and let $A \in \CAlg(\cat C)$ be descendable. Let $F_A\colon\cat C \to \mod{A}(\cat C)$ denote the base change functor. Suppose that the following hold:
	\begin{enumerate}
		\item[(i)] The Balmer spectra of $\cat C^c$ and $\mod{A\otimes A}(\cat C)^c$ are weakly noetherian.
		\item[(ii)] $\mod{A \otimes A}(\cat C)$ satisfies the local-to-global principle.
		\item[(iii)] $\mod{A}(\cat C)$ satisfies the minimality condition.
	\end{enumerate}
	Then $\mod{A}(\cat C)$ is stratified and the following are equivalent:
	\begin{enumerate}
		\item[(a)] $\cat C$ is stratified.
		\item[(b)] The identity $\Supp(F_A(t))=\varphi^{-1}(\Supp(t))$ holds for all $t \in \cat C$.
		\item[(c)] The diagram induced by the base changed functors 
			\begin{equation}\label{eq:strat-coequalizer}
				\Spc(\mod{A\otimes A}(\C)^c)\rightrightarrows \Spc(\mod{A}(\C)^c) \to \Spc(\C^c)
			\end{equation}
			is a coequalizer of sets.
	\end{enumerate}
\end{theorem}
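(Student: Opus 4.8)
The plan is to reduce the whole statement to the diagram chase of \Cref{lem-diagram-chase}, applied to the localizing-ideal equalizer of \Cref{prop-equalizer} together with the support/power-set maps.

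First I would show $\mod{A}(\C)$ is stratified. Minimality is hypothesis~(iii). For the local-to-global principle, consider the algebra morphism $g\colon A\to A\otimes A$ in $\mod{A}(\C)$, which is descendable (\Cref{rem:double-is-descendable}); under \Cref{cor-double-module-category} base change along $g$ is a geometric functor $\mod{A}(\C)\to\mod{A\otimes A}(\C)$ between rigidly-compactly generated tt-categories (\Cref{lem-proj-formula-holds}). Its effect on Balmer spectra $\psi_g\colon\Spc(\mod{A\otimes A}(\C)^c)\to\Spc(\mod{A}(\C)^c)$ is surjective by \Cref{prop:descendable-surjective}, so $\Spc(\mod{A}(\C)^c)$ is weakly noetherian, being a spectral-surjective image of the weakly noetherian space $\Spc(\mod{A\otimes A}(\C)^c)$ (alternatively this can be read off from the coequalizer presentation of \Cref{cor:coeq-of-spectral} and hypothesis~(i)). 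Since $A$ is a retract of $A\otimes A$ in $\mod{A}(\C)$ we have $\unit_{\mod{A}(\C)}\in\locid{U_g(\unit)}$, so \Cref{prop:LGP-descend} together with hypothesis~(ii) transports the local-to-global principle from $\mod{A\otimes A}(\C)$ to $\mod{A}(\C)$. Hence $\mod{A}(\C)$ is stratified.

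For the equivalences I would assemble the commutative diagram
\[\resizebox{\columnwidth}{!}{$\displaystyle
\begin{tikzcd}[ampersand replacement=\&]
	\Locid(\C) \ar[r,"F_*"] \& \Locid(\mod{A}(\C)) \ar[r,shift left,"H_*"]\ar[r,shift right,"G_*"'] \& \Locid(\mod{A\otimes A}(\C)) \\
	\CP(\Spc(\C^c)) \ar[u,"\cat L_{(-)}"]\ar[r,"\CP(\varphi)"] \& \CP(\Spc(\mod{A}(\C)^c)) \ar[u,"\cat L_{(-)}"]\ar[r,shift left,"\CP(\psi_h)"]\ar[r,shift right,"\CP(\psi_g)"'] \& \CP(\Spc(\mod{A\otimes A}(\C)^c))
\end{tikzcd}
$}\]
in which $\varphi,\psi_g,\psi_h$ are the spectral maps induced by $F_A,G,H$. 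The top fork is a (split) equalizer by \Cref{prop-equalizer}; the bottom row is a fork because $\varphi\psi_g=\varphi\psi_h$; the maps $\cat L_{(-)}$ are split monomorphisms with retraction $\Supp$, so $\Supp\circ\cat L_{(-)}=\id$ (by \cite[Lemma~3.4]{BHS2021}); the middle vertical map is bijective since $\mod{A}(\C)$ is stratified; and each square commutes by \Cref{lem:commutativity} applied in turn to $F_A$, $G$ and $H$ — all geometric functors between rigidly-compactly generated tt-categories with weakly noetherian spectra whose targets satisfy the local-to-global principle. This places us in the hypotheses of \Cref{lem-diagram-chase} with $r_i=\Supp$ and $i_i=\cat L_{(-)}$, yielding the equivalence of: $(\alpha)$ the map $\Supp\colon\Locid(\C)\to\CP(\Spc(\C^c))$ is bijective; $(\beta)$ the identity $\CP(\varphi)\circ\Supp=\Supp\circ F_*$ holds; $(\gamma)$ the bottom fork is an equalizer of sets. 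It then remains to translate $(\alpha),(\beta),(\gamma)$ into (a), (b), (c): bijectivity of $\Supp$ on $\Locid(\C)$ is the definition of $\C$ being stratified, so $(\alpha)$ is equivalent to (a); condition $(\beta)$ unwinds to $\varphi^{-1}(\Supp(\cat L))=\Supp(\locid{F_A(\cat L)})$ for every localizing ideal $\cat L$, and evaluating on $\cat L=\locid{t}$ together with $\Supp(\locid{\cat E})=\bigcup_{e\in\cat E}\Supp(e)$ gives (b), the converse being a union of such identities; finally the bottom fork is the image of the fork~\eqref{eq:strat-coequalizer} under the contravariant power set functor, and since $\CP\colon\Set\op\to\Set$ detects limits (\Cref{lem-power-set-creates-lim}) it is an equalizer if and only if \eqref{eq:strat-coequalizer} is a coequalizer of sets, so $(\gamma)$ is equivalent to (c).

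The step I expect to be most delicate is the weak noetherianity of $\Spc(\mod{A}(\C)^c)$: one only has a surjective spectral map onto it from a weakly noetherian space, and a genuine argument is needed to see that this property passes to the (spectral) quotient. Everything else is essentially bookkeeping, the main point requiring attention being the matching of variances between $\Locid$, $\CP(\Spc(-))$ and the Stone-dual descriptions so that all four hypotheses of \Cref{lem-diagram-chase} — in particular commutativity of every square and bijectivity of the middle vertical map — are verified verbatim.
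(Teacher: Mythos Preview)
Your approach is essentially identical to the paper's: set up the $\Locid$/$\CP(\Spc)$ ladder, verify the hypotheses of \Cref{lem-diagram-chase}, and translate its three equivalent conditions into (a), (b), (c) using \Cref{lem-power-set-creates-lim} for the last step. The translations $(\alpha)\Leftrightarrow(a)$, $(\beta)\Leftrightarrow(b)$, $(\gamma)\Leftrightarrow(c)$ are carried out correctly.

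The one genuine gap is exactly the point you flag: weak noetherianity of $\Spc(\mod{A}(\C)^c)$. Your proposed argument via a surjective spectral map from a weakly noetherian space is not known to work in general, and your alternative via \Cref{cor:coeq-of-spectral} is unavailable because $A$ is not assumed compact. The paper's fix is cleaner and avoids both issues: extension of scalars along the multiplication map $m\colon A\otimes A\to A$ gives a geometric functor $M\colon\mod{A\otimes A}(\C)\to\mod{A}(\C)$ whose induced map on spectra splits both $\Spc(G^c_*)$ and $\Spc(H^c_*)$. Thus $\Spc(\mod{A}(\C)^c)$ is a \emph{retract} of $\Spc(\mod{A\otimes A}(\C)^c)$ in spectral spaces, and weak noetherianity passes to retracts by \cite[Remark~2.6]{BHS2021}. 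Once you replace your surjectivity argument with this retract argument, the proof is complete and matches the paper's.
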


\begin{proof}
	Extension of scalars along the multiplication map $m\colon A \otimes A \to A$ gives a functor $M\colon \mod{A \otimes A}(\C) \to \mod{A}(\C)$. This  in turn induces a map $\Spc(M_*)$ between Balmer spectra which by functoriality splits both $\Spc(H_*)$ and $\Spc(G_*)$ (in the notation of \Cref{nota-base-change}). It then follows that $\Spc(\mod{A}(\C)^c)$ is a retract of the weakly noetherian spectral space $\Spc(\mod{A\otimes A}(\C)^c)$, so is itself weakly noetherian by~\cite[Remark 2.6]{BHS2021}. Therefore all Balmer spectra considered in the theorem are weakly noetherian. 

	Recall that $F_A(A)=A \otimes A$ is descendable in $\mod{A}(\C)$ by~\cref{rem:double-is-descendable} and that $\mod{A\otimes A}(\mod{A}(\C))\simeq \mod{A\otimes A}(\C)$ by~\cref{cor-double-module-category}. Applying \Cref{prop:LGP-descend} to the descendable algebra $A\otimes A \in\CAlg(\mod{A}(\C))$ shows that the local-to-global principle holds for $\mod{A}(\C)$ since we are assuming that $\mod{A \otimes A}(\cat C)$ satisfies the local-to-global principle. Hence $\mod{A}(\C)$ is stratified because $\mod{A}(\cat C)$ satisfies the minimality condition by hypothesis. The equivalence of (a) and (b) then follows from \Cref{cor:general-strat-descent}, although an alternative proof will also be obtained below. 
 
	Now, we apply \Cref{lem-diagram-chase} to the diagram
	\[
	\begin{tikzcd}
		\Locid(\C) \arrow[r, "F_*"]\arrow[d,shift left, "\Supp"] & \Locid(\mod{A}(\C)) 
		\arrow[d,shift left, "\Supp"] \arrow[r, shift left, "G_*"] 
		\arrow[r, shift right, "H_*"'] & \Locid(\mod{A\otimes A}(\C)) 
		\arrow[d,shift left,"\Supp"]\\
		\CP(\Spc(\C^c)) \arrow[u,shift left,"\cat L_{(-)}"] \arrow[r,"\CP(\varphi)"] & \CP(\Spc(\mod{A}^c)) \arrow[u,shift left, "\cat L_{(-)}"]\arrow[r, shift left, "\CP(\psi)"] 
		\arrow[r, shift right, "\CP(\xi)"'] & \CP(\Spc(\mod{A\otimes A}^c)) \arrow[u,shift left, "\cat L_{(-)}"]
	\end{tikzcd}
	\]
	where the maps are induced by the extension of scalars functors. Condition (i) holds by \Cref{prop-equalizer}. Condition (ii) holds as discussed at the beginning of this section. Condition (iii) holds since $\mod{A}(\C)$ is stratified. Finally, condition (iv) holds by applying \Cref{lem:commutativity} to the two horizontal fork diagrams, using that both $\mod{A}(\cat C)$ and $\mod{A\otimes A}(\cat C)$ satisfy the local-to-global principle. Consider a fourth statement:
	\begin{enumerate}
		 \item[(d)] The diagram  
			\[
				\begin{tikzcd}
				\CP(\Spc(\C^c))\arrow[r,"\CP(\varphi)"] & \CP(\Spc(\mod{A}(\cat C)^c)) \arrow[r, shift left, "\CP(\psi)"] 
				\arrow[r, shift right, "\CP(\xi)"'] & \CP(\Spc(\mod{A\otimes A}(\cat C)^c)) 
				\end{tikzcd}
			\]
			is an equalizer of sets.
	\end{enumerate}
	Then \Cref{lem-diagram-chase} tells us that (a), (c) and (d) are all equivalent. The fact that (c) and (d) are equivalent then follows from \Cref{lem-power-set-creates-lim}.
\end{proof}

\begin{corollary}\label{cor-descent-strat-noetherian}
	Let $\cat C$ be a rigidly-compactly generated tt-$\infty$-category and let $A\in \CAlg(\cat C)$ be descendable. Suppose the spectrum of $\mod{A\otimes A}(\cat C)$ is noetherian and that $\mod{A}(\cat C)$ satisfies the minimality condition. Then the three conditions (a)--(c) of \Cref{thm-stratification-descent} are equivalent. Furthermore, $\Spc(\C^c)$ is noetherian. 
\end{corollary}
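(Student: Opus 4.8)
The plan is to reduce everything to \Cref{thm-stratification-descent} by checking its three hypotheses (i)--(iii), extracting the noetherianity of $\Spc(\cat C^c)$ in the process.

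The key elementary input is that a continuous surjection out of a noetherian space has noetherian target: a descending chain of closed subsets of the target pulls back to a descending chain of closed subsets of the source, which stabilises, and surjectivity transports this back down. First I would note that $\Spc(\mod{A\otimes A}(\cat C)^c)$ is noetherian by assumption. Next, the multiplication $m\colon A\otimes A\to A$ admits a section in $\CAlg(\mod{A}(\cat C))$ (see \Cref{rem:double-is-descendable}), so extension of scalars along $m$ realises $\mod{A}(\cat C)$ as a retract of $\mod{A\otimes A}(\cat C)$; the resulting retraction on Balmer spectra is a continuous surjection, whence $\Spc(\mod{A}(\cat C)^c)$ is noetherian. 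Finally, since $A$ is descendable, the comparison map $\varphi\colon\Spc(\mod{A}(\cat C)^c)\to\Spc(\cat C^c)$ is surjective by \Cref{prop:descendable-surjective}(a), so $\Spc(\cat C^c)$ is noetherian as well. This proves the last assertion of the corollary and, since noetherian spectral spaces are in particular weakly noetherian, verifies hypothesis~(i) of \Cref{thm-stratification-descent}.

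Hypothesis~(ii), that $\mod{A\otimes A}(\cat C)$ satisfies the local-to-global principle, holds because the local-to-global principle is automatic for rigidly-compactly generated tt-categories with noetherian Balmer spectrum (see \cite{BHS2021} and the references therein); hypothesis~(iii), the minimality condition for $\mod{A}(\cat C)$, is part of the hypotheses. Thus \Cref{thm-stratification-descent} applies and gives the equivalence of conditions (a), (b) and (c). I do not expect a real obstacle here: the only mildly delicate point is the bookkeeping that passes from the noetherian hypotheses to the weak-noetherianity required by \Cref{thm-stratification-descent}, via the elementary surjection argument above; the rest is a direct appeal to results already proved.
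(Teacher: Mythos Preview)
Your proposal is correct and follows essentially the same route as the paper: you verify the hypotheses of \Cref{thm-stratification-descent} by propagating noetherianity down the chain of surjections $\Spc(\mod{A\otimes A}(\cat C)^c)\to\Spc(\mod{A}(\cat C)^c)\to\Spc(\cat C^c)$ and then invoke the automatic local-to-global principle in the noetherian case. The only cosmetic difference is that the paper phrases the first surjection via the descendability of $A\to A\otimes A$ (also from \Cref{rem:double-is-descendable}) rather than the retraction induced by multiplication, but these are two sides of the same observation.
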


\begin{proof}
	As in the beginning of the proof \Cref{thm-stratification-descent}, $\mod{A\otimes A}(\cat C)$ is an extension by a descendable algebra in $\mod{A}(\cat C)$. Thus we have surjections
	\[
		\Spc(\mod{A\otimes A}(\cat C)^c) \to \Spc(\mod{A}(\cat C)^c)\to\Spc(\cat C^c).
	\]
	Since the domain space is noetherian, so are the other two spaces. Finally recall that the local-to-global principle holds if the spectrum is noetherian (\cite[Theorem~3.22]{BHS2021}) so we invoke \Cref{thm-stratification-descent}.
\end{proof}

\begin{remark}\label{rem:comp-spec-top}
	If the descendable algebra $A \in \CAlg(\cat C)$ is compact then the diagram~\eqref{eq:strat-coequalizer} is a coequalizer of spectral spaces by \Cref{cor:coeq-of-spectral} but recall that this does not necessarily mean that it is a coequalizer of topological spaces or sets. We have a commutative diagram 
	\begin{equation}\label{comparison-map-descent-diagram}
	\begin{tikzcd}
		\Spc(\mod{A \otimes A}(\C)^c)\ar[r,shift left]\ar[r,shift right]&\Spc(\mod{A}(\cat C)^c) \ar[r,"\varphi"] \ar[rd] & \Spc(\cat C^c) \\
																		& & T^{\Top} \ar[u,"p"']
	\end{tikzcd}
	\end{equation}
	which relates the spectral coequalizer $\Spc(\cat C^c)$ and the topological coequalizer $T^\Top$. Note that the map $\varphi$ is closed and surjective by \cite[Remark 13.26]{BCHS-costratification} and \cref{rem:weakly-closed-is-closed}, and it follows (\cref{rem:comparison map is closed}) that the comparison map $p$ is also closed and surjective. 
\end{remark}

\begin{corollary}
	In the situation of \cref{thm-stratification-descent}, assume that the descendable algebra $A \in \CAlg(\C)$ is compact and that $\varphi\colon \Spc(\mod{A}(\C)^c)\to \Spc(\C^c)$ has discrete fibers. Then \eqref{eq:strat-coequalizer} is a coequalizer of topological spaces.
\end{corollary}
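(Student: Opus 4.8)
The plan is to apply $\cref{cor:comparison_coequalizers}$ to the spectral coequalizer supplied by $\cref{cor:coeq-of-spectral}$. First I would fix notation. Since $A$ is compact, $\cref{cor:coeq-of-spectral}$ makes \eqref{eq:strat-coequalizer} a coequalizer of spectral spaces; write $Y\coloneqq\Spc(\mod{A}(\C)^c)$ and $Z\coloneqq\Spc(\mod{A\otimes A}(\C)^c)$, let $\varphi\coloneqq\Spc(F^c_*)\colon Y\to\Spc(\C^c)$ be the coequalizer map, and let $R$ be the equivalence relation on $Y$ generated by the parallel pair $\Spc(G^c_*),\Spc(H^c_*)\colon Z\rightrightarrows Y$. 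By $\cref{rem:coeq-top-spec}$ the topological coequalizer $T^{\Top}$ of the underlying diagram comes equipped with a comparison map $p\colon T^{\Top}\to U\Spc(\C^c)$, and the assertion of the corollary amounts to saying that $p$ is a homeomorphism (cf.~$\cref{prop-Ttop-spectral}$). It therefore suffices to verify the three hypotheses of $\cref{cor:comparison_coequalizers}$: that $\varphi$ is closed; that $\varphi^{-1}(\varphi(y))$ is a $T_1$-space for every $y\in Y$; and that $\varphi^{-1}(\varphi(y))\subseteq R^{-1}(\overline{\{y\}})$ for every $y\in Y$.

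The first two hypotheses are immediate from what is already available. The map $\varphi$ is closed (and surjective) by $\cref{rem:comp-spec-top}$; this relies on the compactness of $A$, which makes $U_A$ preserve compact objects by $\cref{rem:weakly-closed-is-closed}$, together with the weakly noetherian hypothesis built into $\cref{thm-stratification-descent}$. Each fibre $\varphi^{-1}(\varphi(y))$ is discrete by hypothesis, hence in particular $T_1$. Note that $\cref{rem:comp-spec-top}$ in fact already tells us that $p$ is closed and surjective, so the real content is the third hypothesis, which will force $p$ to be injective as well.

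The containment $\varphi^{-1}(\varphi(y))\subseteq R^{-1}(\overline{\{y\}})$ is the crux, and the step I expect to be the main obstacle. I would deduce it by showing that $R$ is the full kernel pair of $\varphi$ --- for instance by proving that the canonical map $Z\to Y\times_{\Spc(\C^c)}Y$, $z\mapsto(\Spc(G^c_*)(z),\Spc(H^c_*)(z))$, is surjective --- since then $\varphi^{-1}(\varphi(y))=R^{-1}(\{y\})\subseteq R^{-1}(\overline{\{y\}})$ and $\cref{cor:comparison_coequalizers}$ concludes. For this surjectivity I would work over a fixed point $x\in\Spc(\C^c)$: using $\mod{A\otimes A}(\C)\simeq\mod{A\otimes A}(\mod{A}(\C))$ ($\cref{cor-double-module-category}$) and the splitting $A\otimes A\simeq\Sigma F_A(W)\oplus\unit_A$ of the descendable algebra $A\otimes A$ over $\mod{A}(\C)$ ($\cref{rem:algebra-splits}$), the two projections are controlled by the ``residue category'' at $x$, and discreteness of the fibre $\varphi^{-1}(x)$ should force that residue category to split as a product indexed by $\varphi^{-1}(x)$, reducing the claim to a Künneth-type computation for Balmer spectra of (possibly infinite) products. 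Isolating the correct residue category and dealing with non-quasi-compact fibres is where the real work lies; this is the analogue, in the compact rather than finite-separable setting, of the input behind Balmer's $\cref{cor-sep-top-coequ}$ (see $\cref{rem:balmer-comparison}$).
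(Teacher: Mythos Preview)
Your setup is correct up to the point where you identify the containment $\varphi^{-1}(\varphi(y))\subseteq R^{-1}(\overline{\{y\}})$ as the crux, but from there the argument is incomplete: the sketch involving ``residue categories'' and a K\"unneth computation for Balmer spectra of products is not a proof, and you yourself flag that ``isolating the correct residue category and dealing with non-quasi-compact fibres is where the real work lies.'' That is the entire content of the statement, so this is a genuine gap rather than a routine detail. In fact there is no reason to expect the map $Z\to Y\times_{\Spc(\C^c)}Y$ to be surjective in general from compactness of $A$ alone; Balmer's analysis in the separable finite-degree case (\cref{rem:balmer-comparison}) uses separability in an essential way, and you have not indicated how the discrete-fibre hypothesis would substitute for it.

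The paper's proof avoids this obstacle entirely by using the equivalences of \cref{thm-stratification-descent} in the direction $(a)\Rightarrow(c)$ rather than trying to verify the hypotheses of \cref{cor:comparison_coequalizers} directly. The discrete-fibre hypothesis together with the surjectivity of $\varphi$ (\cref{prop:descendable-surjective}) is precisely the input to the quasi-finite descent theorem of \cite[Theorem~17.16]{BCHS-costratification}, which gives that $\cat C$ is stratified. Then condition~(c) of \cref{thm-stratification-descent} says \eqref{eq:strat-coequalizer} is already a coequalizer of \emph{sets}, so the comparison map $p\colon T^{\Top}\to\Spc(\cat C^c)$ is a continuous bijection; since $p$ is closed (\cref{rem:comp-spec-top}), it is a homeomorphism. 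You had all the pieces for this last step but missed that the phrase ``in the situation of \cref{thm-stratification-descent}'' lets you invoke the theorem itself to obtain the set-level coequalizer, short-circuiting the direct topological argument.
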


\begin{proof}
	Bearing in mind \cref{prop:descendable-surjective}, stratification descends from $\mod{A}(\cat C)$ to $\cat C$ by quasi-finite descent \cite[Theorem 17.16]{BCHS-costratification}. Hence, \cref{thm-stratification-descent} implies that \eqref{eq:strat-coequalizer} is a coequalizer of sets. It follows that the comparison map $p:T^\Top \to \Spc(\cat C^c)$ from the topological coequalizer to the spectral coequalizer is a continuous bijection, hence a homeomorphism since it is a closed map (\cref{rem:comp-spec-top}).
\end{proof}

\begin{example}
	Suppose we are in the situation of \Cref{thm-stratification-descent} but assume in addition that $A$ is compact and separable of finite tt-degree. Then condition (c) in the theorem is satisfied by \Cref{cor-sep-top-coequ} so we can descend stratification. In other words, we can descend stratification along a surjective compact separable extension (cf.~\Cref{prop:f*conservative}). This provides a different perspective on descent results along finite \'{e}tale extensions; see \cite{Sanders21pp} and \cite[Section 6]{BHS2021}.
\end{example}

\section{Faithful Galois extensions}

In this section we show that stratification descends along faithful Galois extensions. We first study faithful $G$-Galois extensions for a finite group $G$ in a general big tt-$\infty$-category $\C$, and then specialize to faithful $G$-Galois extensions of commutative ring spectra. In this second case we allow $G$ to be a compact Lie group. 

Let us start by discussing the finite group case.
\begin{definition}
	Let $\C$ be a big tt-$\infty$-category and let $G$ be a finite group. Consider a commutative algebra object with $G$-action $A \in \Fun(BG, \CAlg(\C))$. The \mbox{$\infty$-category} $\mod{A}(\C)$ inherits an action of $G$ and for any $M \in \mod{A}(\C)$ and $g\in G$, we let $g.M \in \mod{A}(\C)$ denote the image of $M$ under the isomorphism $g \colon \mod{A}(\C)\simeq \mod{A}(\C)$. Informally, $g.M$ has underlying object given by $M$ but with module structure given by 
	\[
		A \otimes M \xrightarrow{g \otimes M} A \otimes M \to M.
	\] 
	The $G$-action on $A$ induces a $G$-action on the lattices of thick, localizing and smashing ideals of $\mod{A}(\C)$. Given a localizing ideal $\cat L \subseteq \mod{A}(\C)$, we have
	\[
		g. \cat L =\locid{g.M \mid M \in \cat L}
	\] 
	and similarly for thick and smashing ideals. We say that $\cat L$ is $G$-\emph{invariant} if $g.\cat L=\cat L$ for all $g\in G$. We write 
	\[
		\Locid^G(\mod{A}(\C)) \quad \Thickid^G(\mod{A}(\C)^c)\quad \Smashid^G(\mod{A}(\C))
	\]
	for the posets of $G$-invariant localizing, thick and smashing ideals respectively.
\end{definition}

\begin{definition}\label{def-faithful-galois}
	Let $\C$ be a big tt-$\infty$-category and let $G$ be a finite group. A \emph{faithful} $G$-\emph{Galois extension} in $\C$ is a commutative algebra object with $G$-action $A\in\Fun(BG,\CAlg(\C))$ such that:
	\begin{itemize}
		\item[(a)] The natural map $\unit \to A^{hG}$ is an equivalence.
		\item[(b)] The natural map $A \otimes A \to \prod_G A$ is an equivalence (informally given by $a\otimes a' \mapsto (g \mapsto a\cdot (g a'))$).
		\item[(c)] $A\otimes -\colon \C \to \C$ is conservative. 
	\end{itemize}
\end{definition}

\begin{proposition}\label{prop-galois-desc+compact}
	Let $A$ be a faithful $G$-Galois extension in a rigidly-compactly generated tt-$\infty$-category $\C$. Then $A$ is compact and descendable in $\C$.
\end{proposition}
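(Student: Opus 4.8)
The plan is to first establish that $A$ is dualizable — equivalently, since $\C$ is rigidly-compactly generated and hence $\C^c=\C^d$, that $A$ is compact — and then to obtain descendability as a formal consequence. Conditions (a) and (b) are exactly Rognes's axioms for $A$ to be a $G$-Galois extension of $\unit$, and the dualizability of $A$ is the $\infty$-categorical incarnation of the classical fact that a finite Galois extension is a finitely generated projective module over the base. I would prove it following Rognes: since $G$ is finite there is a norm map $A_{hG}\to A^{hG}$, and precomposing it with the canonical map $A\to A_{hG}$ and the equivalence $A^{hG}\simeq\unit$ of (a) produces a trace $\mathrm{tr}\colon A\to\unit$ in $\C$; one then uses (b) to identify the pairing $A\otimes A\xrightarrow{\mu}A\xrightarrow{\mathrm{tr}}\unit$ under the equivalence $A\otimes A\simeq\prod_G A$ and to check that it is perfect, so that $A$ is self-dual and in particular dualizable. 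This argument is formal and goes through in any presentably symmetric monoidal stable $\infty$-category; see \cite{Mathew2016} and the references therein.

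With $A$ known to be compact, descendability follows without any further Galois-specific input, using the machinery already assembled in the paper. Condition (c) says that $A\otimes-=U_A\circ F_A$ is conservative, and since $U_A$ is always conservative this forces $F_A\colon\C\to\mod{A}(\C)$ to be conservative. Now $\mod{A}(\C)$ is again rigidly-compactly generated (\Cref{lem-proj-formula-holds}), $F_A$ is geometric, and a conservative geometric functor is surjective on Balmer spectra (\cite{BCHNP2023}*{Corollary~2.26}; cf.~the proof of \Cref{prop:descendable-surjective}(a)), so the induced map $\varphi\colon\Spc(\mod{A}(\C)^c)\to\Spc(\C^c)$ is surjective. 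Since $A$ is compact and $\varphi$ is surjective, \Cref{prop:descendable-surjective}(b) gives that $A$ is descendable.

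An alternative, more self-contained route to descendability would bypass dualizability: using (a) and (b) one identifies the augmented cobar object $\mathrm{CB}^\bullet_{\mathrm{aug}}(A)$ with the homotopy-fixed-point cosimplicial object of the $G$-action — in particular it is a limit diagram with totalization $A^{hG}\simeq\unit$ — while $F_A(\mathrm{CB}^\bullet_{\mathrm{aug}}(A))=A\otimes\mathrm{CB}^\bullet_{\mathrm{aug}}(A)$ acquires an extra degeneracy from $\mu$ (as in the proof of \Cref{prop-descent-tot}) and hence has pro-constant Tot-tower; one then argues, using conservativity of $A\otimes-$ together with the fact that each $\Tot_n$ is a finite limit, that the original Tot-tower is pro-constant, and concludes by \Cref{prop-characterization-descent}. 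Compactness of $A$ would then follow since $F_A(A)=A\otimes A\simeq\prod_G A$ is a finite coproduct of copies of $\unit_{\mod{A}(\C)}$, hence compact, and $F_A$ reflects compactness for the descendable algebra $A$ (the proposition following \Cref{lem-descent-dualizable-equivalent}, applicable since $\C$ is generated by dualizables).

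Either way, the real content lies in the essential use of conditions (a) and (b): conservativity of $A\otimes-$ on its own (condition (c)) implies neither compactness nor descendability, as the examples following \Cref{rem:smashing-warning} demonstrate. The main obstacle is therefore the first route's perfectness verification for the trace pairing — producing $\mathrm{tr}$ coherently as a map of $\unit$-modules, transporting the pairing $A\otimes A\to\unit$ across the equivalence of (b), and checking it is perfect there — or, equivalently, the second route's descent of pro-constancy of the Tot-tower along $A\otimes-$. Everything else is bookkeeping with the adjunction $F_A\dashv U_A$ and the results recalled above.
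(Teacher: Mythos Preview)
Your first route is correct and complete. The paper's own proof simply cites Mathew (\cite{Mathew2016}*{Proposition~6.14} for dualizability and \cite{Mathew2016}*{Theorem~3.38} for descendability), so your write-up actually supplies more detail than the paper does. For dualizability your trace/self-duality sketch is precisely the argument behind Mathew's result. For descendability, however, your path is genuinely different from Mathew's: rather than showing $\unit\in\thickid{A}$ directly, you bootstrap from compactness via conservativity of $F_A$, surjectivity on Balmer spectra, and \Cref{prop:descendable-surjective}(b). This is a pleasant reuse of the paper's own tt-geometric machinery and avoids any analysis of the Tot-tower. (A minor point: you don't need $U_A$ to be conservative to conclude that $F_A$ is---if the composite $U_AF_A=A\otimes-$ is conservative then $F_A$ is automatically conservative, since functors preserve equivalences.)

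Your alternative second route, on the other hand, has a gap at the step where you descend pro-constancy of the Tot-tower along the conservative functor $A\otimes-$. Pro-constancy of a tower is not merely a statement about certain transition maps being equivalences; it asserts the \emph{existence} of maps in the pro-category witnessing an isomorphism to a constant object. A conservative functor detects equivalences, but it does not in general reflect the existence of such witnesses (the retractions you would need upstairs need not arise as $A\otimes(\text{something})$). The extra degeneracy on $A\otimes\mathrm{CB}^\bullet(A)$ does give pro-constancy downstairs, but that alone does not push pro-constancy back up. Mathew's actual argument for \cite{Mathew2016}*{Theorem~3.38} uses dualizability of $A$ (so that $A\otimes-$ commutes with \emph{all} limits, not just finite ones) together with a more careful nilpotence analysis; your second route as written does not recover this. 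Since your first route is self-contained and valid, this does not affect the correctness of the proposal overall, but you should not present the second route as an independent proof without filling this gap.
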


\begin{proof}
	It is dualizable (and hence compact) by~\cite{Mathew2016}*{Proposition 6.14} and descendable by~\cite{Mathew2016}*{Theorem 3.38}.
\end{proof}

\begin{proposition}\label{prop-galois}
	Let $A$ be a faithful $G$-Galois extension in a rigidly-compactly generated tt-$\infty$-category $\C$. Then base change induces an isomorphism of posets
	\begin{align*}
		\Locid(\C) &\simeq \Locid^G(\mod{A}(\C)) \\ 
		\Thickid(\C^c) &\simeq \Thickid^G(\mod{A}(\C)^c) \\
		\Smashid(\C) &\simeq \Smashid^G(\mod{A}(\C)).
	\end{align*}
	If the telescope conjecture holds for $\mod{A}(\C)$, then it holds for $\C$ too.
\end{proposition}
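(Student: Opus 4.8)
The plan is to push the faithful Galois structure through the equalizer diagrams of \Cref{prop-equalizer}, \Cref{cor-smashing-eq} and \Cref{thm-equualizer-thick-ideals}. First, by \Cref{prop-galois-desc+compact} the algebra $A$ is both compact and descendable, so all three results apply and provide equalizers of posets
\[
	\Locid(\C)\to\Locid(\mod{A}(\C))\rightrightarrows\Locid(\mod{A\otimes A}(\C)),
\]
and similarly for $\Smashid$ and (using compactness of $A$) for $\Thickid$ of the compact objects; the first is even a split equalizer.

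Next I would rewrite the right-hand corner. Condition~(b) of \Cref{def-faithful-galois} gives an equivalence $A\otimes A\simeq\prod_{g\in G}A$ in $\CAlg(\C)$, and since a module over a finite product of algebras is the product of its components we obtain an equivalence of tt-$\infty$-categories $\mod{A\otimes A}(\C)\simeq\prod_{g\in G}\mod{A}(\C)$ (and the same on compact objects). Tracing the left and right $A$-module structures on $A\otimes A$ through the Galois isomorphism $a\otimes a'\mapsto(g\mapsto a\cdot(ga'))$, the two coface functors $G,H\colon\mod{A}(\C)\to\mod{A\otimes A}(\C)$ of \Cref{nota-base-change} become the ``twisted diagonal'' $M\mapsto(g.M)_{g\in G}$ and the constant diagonal $M\mapsto(M)_{g\in G}$ respectively --- this is the familiar identification of the Amitsur complex of a Galois extension with the complex computing group cohomology, and it is the only place condition~(b) enters.

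It then remains to pass to ideals and observe that a localizing (resp.\ thick, resp.\ smashing) ideal of a finite product $\prod_g\cat D_g$ is the product of its components --- one projects by tensoring with the idempotents $(\dots,0,\unit,0,\dots)$ and recovers an element as the coproduct of its pieces --- and that $g.\cat L=\locid{g.M\mid M\in\cat L}$ is already an ideal of the same type, since $g$ acts as a tt-autoequivalence. (For smashing ideals one can instead use that $\Smashid$ preserves the finite product $\prod_g$ by \Cref{cor-smashing-commutes-limits}.) Consequently $G_*$ and $H_*$ act as $\cat L\mapsto(g.\cat L)_{g\in G}$ and $\cat L\mapsto(\cat L)_{g\in G}$, so their equalizer is exactly the sub-poset of $G$-invariant ideals, and the (split) equalizers above identify base change with the asserted poset isomorphisms $\Locid(\C)\cong\Locid^G(\mod{A}(\C))$, $\Thickid(\C^c)\cong\Thickid^G(\mod{A}(\C)^c)$, $\Smashid(\C)\cong\Smashid^G(\mod{A}(\C))$. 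The final sentence about the telescope conjecture is then immediate from \Cref{thm-telescope}, whose hypotheses ($A$ compact and descendable) are supplied by \Cref{prop-galois-desc+compact}.

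Everything after the first two steps is formal bookkeeping with finite products, so the one point that needs genuine care is the identification of the coface functors $G,H$ with the diagonal and the $G$-twisted diagonal after the equivalence $\mod{A\otimes A}(\C)\simeq\prod_{g\in G}\mod{A}(\C)$: matching the two $A$-module structures on $A\otimes A$ against the Galois isomorphism is precisely what produces the $G$-action on the equalizer. I would carry this out by hand, or cite the corresponding statement from the literature on Galois descent (e.g.~\cite{MNN} or \cite{Mathew2016}).
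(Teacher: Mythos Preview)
Your proposal is correct and follows essentially the same route as the paper: invoke \Cref{prop-galois-desc+compact} to get compactness and descendability, apply the three equalizer results, use the Galois isomorphism $A\otimes A\simeq\prod_G A$ to identify the two coface functors with the constant and $G$-twisted diagonals, and read off that the equalizer consists of the $G$-invariant ideals. The paper is slightly terser in that it works directly in $\mod{\prod_G A}(\C)$ and checks the condition $\locid{\phi_0(M)\mid M\in\cat L}=\locid{\phi_1(M)\mid M\in\cat L}$ rather than first decomposing ideals of the product category into components, but this is a cosmetic difference.
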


\begin{proof}
	This is a consequence of our descent results and the fact that any faithful $G$-Galois extension is descendable and compact. Let us expand the argument for localizing ideals.  Since $A$ is Galois, we have a natural identification $A\otimes A\simeq \prod_G A$. Unwinding the definition we see that the two canonical maps ${\mod{A}(\C) \rightrightarrows \mod{A\otimes A}(\C)}$ correspond to 
	\[
		\phi_0,\phi_1 \colon \mod{A}(\C) \rightrightarrows  \mod{\prod_G A}(\C)
	\]
	given by 
	\begin{equation}\label{eq-phi}
		\phi_0(M)=\prod_G M \quad \mathrm{and}\quad \phi_1(M)=\prod_{g\in G}g.M.
	\end{equation}
	Therefore we have an equalizer 
	\[
		\Locid(\C) \to \Locid(\mod{A}(\C)) \rightrightarrows \Locid(\mod{\prod_G A}(\C))
	\]
	induced by the maps $\phi_0$ and $\phi_1$ and the usual base change functor. This means that $\cat L \in \Locid(\mod{A}(\C))$ is in $\Locid(\C)$ precisely if 
	\[
		\locid{\phi_0(M)\mid M \in \cat L}=\locid{\phi_1(M) \mid M \in \cat L}.
	\]
	This is clearly equivalent to $\cat L=\locid{g.M \mid g\in G \; M \in \cat L}$, namely to $\cat L$ being $G$-invariant. A similar proof gives the claim for thick and smashing ideals. The final claim on the telescope conjecture follows from \Cref{thm-telescope}.
\end{proof}

\begin{proposition}\label{prop-Balmer-galois}
	Let $A$ be a faithful $G$-Galois extension in a rigidly-compactly generated tt-$\infty$-category $\C$. The diagram induced by base change
	\[
		\Spc(\mod{A \otimes A}(\C)^c) \rightrightarrows \Spc(\mod{A}(\C)^c) \to \Spc(\C^c)
	\]
	is a coequalizer in the category of topological spaces. Furthermore, there is a canonical homeomorphism $\Spc(\C^c)\cong \Spc(\mod{A}(\C)^c)/G$.
\end{proposition}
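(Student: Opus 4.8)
The plan is to first establish that the diagram is a coequalizer of topological spaces, and then identify the coequalizer with the $G$-orbit space. For the first part, I would combine two inputs: by \Cref{prop-galois-desc+compact}, a faithful $G$-Galois extension $A$ is compact and descendable, so \Cref{cor:coeq-of-spectral} already gives that the displayed diagram is a coequalizer in the category of \emph{spectral} spaces (with $\supp(A) = \Spc(\C^c)$ since $A$ is descendable). To upgrade this to a coequalizer of topological spaces, I would invoke \Cref{prop-Ttop-spectral}: it suffices to show that the topological coequalizer $T^{\Top}$ of the underlying diagram is a spectral space and the structure map $\varphi_T$ is spectral. The key simplification afforded by the Galois hypothesis is that $A \otimes A \simeq \prod_G A$, so (as in the proof of \Cref{prop-galois}) the two maps $\Spc(\mod{A\otimes A}(\C)^c) \rightrightarrows \Spc(\mod{A}(\C)^c)$ become the fold map $\bigsqcup_G \Spc(\mod{A}(\C)^c) \to \Spc(\mod{A}(\C)^c)$ and the map induced by the $G$-action $g \mapsto (M \mapsto g.M)$. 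Concretely, writing $X \coloneqq \Spc(\mod{A}(\C)^c)$, the equivalence relation generated by the coequalizer is exactly the orbit equivalence relation for the $G$-action on $X$ described in \eqref{eq-phi}; hence both the spectral and topological coequalizers compute a quotient of $X$ by the finite group $G$.

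Having reduced to a finite group action, I would argue that the topological quotient $X/G$ is already a spectral space. Since $G$ acts on the spectral space $X$ through automorphisms (the action is induced by tt-auto-equivalences of $\mod{A}(\C)$, so the $\Spc(g.{-})$ are homeomorphisms, in fact spectral), the quotient map $q\colon X \to X/G$ is a finite-to-one open \emph{and} closed surjection. A standard fact (finite group quotients of spectral spaces are spectral — one can cite \cite[\S 1.7]{Fargues} or \cite[Lemma 2.9]{Scholze2017}, or verify directly: $X/G$ is quasi-compact, $T_0$ because $X$ is sober and $G$ acts by homeomorphisms, sober, and the images under $q$ of a basis of quasi-compact opens of $X$ give a basis of quasi-compact opens closed under finite intersection) shows $X/G$ is a spectral space and $q$ is a spectral map. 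Therefore the topological and spectral quotients agree, and with the identification of the spectral coequalizer from \Cref{cor:coeq-of-spectral} we conclude that the displayed diagram is a coequalizer of topological spaces with $\Spc(\C^c) \cong X/G = \Spc(\mod{A}(\C)^c)/G$.

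Alternatively — and perhaps more cleanly — I would deduce the $T_0$-ness of $T^{\Top}$ and apply \Cref{prop-Ttop-spectral} combined with \Cref{prop:T-is-T_0} in the noetherian case, or \Cref{cor:comparison_coequalizers} in general: one checks that $\varphi\colon X \to \Spc(\C^c)$ is closed (true here since $A$ is compact, see \cref{rem:weakly-closed-is-closed}, via \cite[Remark 13.26]{BCHS-costratification}) and that the fibers $\varphi^{-1}(\varphi(\cat P))$ are finite, hence $T_1$, and lie inside $R^{-1}(\overline{\{\cat P\}})$ because distinct points of a $G$-orbit are never comparable in the specialization order once the orbit equivalence relation is imposed — here the finiteness of the orbits does the work. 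Either route yields the homeomorphism $\Spc(\C^c) \cong \Spc(\mod{A}(\C)^c)/G$.

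The main obstacle I anticipate is the verification that the finite group quotient $X/G$ is genuinely a spectral space with $q$ spectral — i.e., checking quasi-compactness of the images of quasi-compact opens and sobriety of the quotient. This is where the finiteness of $G$ is essential (infinite group actions need not have spectral quotients), and it is the one step that requires a genuine (if short) argument rather than a direct appeal to the results already assembled in the paper; everything else is bookkeeping with the identifications $A \otimes A \simeq \prod_G A$ and the descent statements already proved.
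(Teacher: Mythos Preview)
Your proposal is correct and follows essentially the same route as the paper: reduce to the spectral coequalizer via \Cref{cor:coeq-of-spectral}, use the Galois identification $A\otimes A\simeq\prod_G A$ to recognize the topological coequalizer as the orbit space $\Spc(\mod{A}(\C)^c)/G$, and then invoke \Cref{prop-Ttop-spectral} together with the fact (cited from \cite{Fargues} and \cite{Scholze2017}) that a finite group quotient of a spectral space is spectral with spectral quotient map. Your anticipated ``main obstacle'' is exactly the step the paper handles by citation, and your alternative route via \Cref{cor:comparison_coequalizers} is a valid variant but not the one the paper takes.
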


\begin{proof}
	This can be deduced from work of Pauwels on quasi-Galois theory; see~\cite{Bregje}*{Theorem 9.1}. Here we give a different proof using point-set topology. Under the identification $A \otimes A\simeq \prod_G A$,  we can rewrite the above fork as
	\[
		\begin{tikzcd}[column sep=large]
		\coprod_{G}\Spc(\mod{A}(\C)^c) \arrow[r, shift left,"\Spc(\phi_1)"] \arrow[r, shift right,"\Spc(\phi_0)"'] & \Spc(\mod{A}(\C)^c) \arrow[r,"\varphi"] &\Spc(\C^c)
	\end{tikzcd}
	\]
	where $\phi_0$ and $\phi_1$ are as in (\ref{eq-phi}). By \Cref{cor:coeq-of-spectral} this fork is a coequalizer diagram in the category of spectral spaces. We note that the coequalizer in the category of topological spaces is given by $\Spc(\mod{A}(\C)^c)/G $. Therefore we have a commutative diagram 
	\[
		\begin{tikzcd}[column sep=large]
		\coprod_{G}\Spc(\mod{A}(\C)^c) \arrow[r, shift left,"\Spc(\phi_1)"] \arrow[r, shift right,"\Spc(\phi_0)"'] & \Spc(\mod{A}(\C)^c) \arrow[dr,"\varphi_{\mathrm{top}}"']\arrow[r,"\varphi"] &\Spc(\C^c)\\
                                                                                                     & & \Spc(\mod{A}(\C))/G \arrow[u,"p"'].
	\end{tikzcd}
	\]
	It follows from \Cref{prop-Ttop-spectral} that the comparison map $p$ is a homeomorphism provided that $\Spc(\mod{A}(\C)^c)/G $ is a spectral space and the map $\varphi_{\mathrm{top}}$ is spectral. This follows from~\cite{Fargues}*{Example 1.7.2} (see also~\cite{Scholze2017}*{Lemma 2.10}).
\end{proof}

\begin{theorem}\label{cor-descent-strat-fin-group}
	Let $A$ be a faithful $G$-Galois extension in a rigidly-compactly generated tt-$\infty$-category $\C$. Suppose that $\mod{A}(\C)$ is stratified with noetherian Balmer spectrum. Then $\C$ is also stratified with noetherian Balmer spectrum given by $\Spc(\mod{A}(\C)^c)/G$.
\end{theorem}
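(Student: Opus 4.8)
The plan is to assemble the descent results established above. By \Cref{prop-galois-desc+compact} the algebra $A$ is compact and descendable in $\cat C$, so we may hope to apply \Cref{cor-descent-strat-noetherian} and then verify condition~(c) of \Cref{thm-stratification-descent}.

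First I would check that $\Spc(\mod{A\otimes A}(\cat C)^c)$ is noetherian. By condition~(b) of \Cref{def-faithful-galois} we have an equivalence of commutative algebras $A\otimes A\simeq \prod_{g\in G}A$, and hence an equivalence $\mod{A\otimes A}(\cat C)\simeq \prod_{g\in G}\mod{A}(\cat C)$ (as already used implicitly in the proof of \Cref{prop-galois}). Since the Balmer spectrum of a finite product of tt-categories is the finite coproduct of the Balmer spectra, this gives $\Spc(\mod{A\otimes A}(\cat C)^c)\cong\coprod_{g\in G}\Spc(\mod{A}(\cat C)^c)$, a finite coproduct of noetherian spectral spaces, hence noetherian. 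Moreover $\mod{A}(\cat C)$, being stratified, satisfies the minimality condition. Therefore \Cref{cor-descent-strat-noetherian} applies: conditions~(a), (b) and (c) of \Cref{thm-stratification-descent} are equivalent, and $\Spc(\cat C^c)$ is noetherian.

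It then remains to verify condition~(c), namely that
\[
	\Spc(\mod{A\otimes A}(\cat C)^c)\rightrightarrows\Spc(\mod{A}(\cat C)^c)\to\Spc(\cat C^c)
\]
is a coequalizer of sets. For this I would invoke \Cref{prop-Balmer-galois}, which shows that this is a coequalizer in the category of topological spaces; since the forgetful functor $\Top\to\Set$ has a right adjoint (the indiscrete-topology functor) it preserves all colimits, so the diagram is a coequalizer of sets as well. Thus condition~(c) holds, so by \Cref{cor-descent-strat-noetherian} condition~(a) holds and $\cat C$ is stratified. Finally, \Cref{prop-Balmer-galois} identifies $\Spc(\cat C^c)$ with $\Spc(\mod{A}(\cat C)^c)/G$, which is noetherian by the first step. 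Since every ingredient is already available, no step presents a genuine obstacle; the only points needing a little care are the product decomposition $A\otimes A\simeq\prod_G A$ used to deduce noetherianity of $\Spc(\mod{A\otimes A}(\cat C)^c)$, and the transfer of the topological coequalizer of \Cref{prop-Balmer-galois} to a coequalizer of sets.
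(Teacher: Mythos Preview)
Your argument is correct and follows essentially the same route as the paper: verify the hypotheses of \Cref{cor-descent-strat-noetherian} via the decomposition $A\otimes A\simeq\prod_G A$, then invoke \Cref{prop-Balmer-galois} to establish condition~(c) of \Cref{thm-stratification-descent}. Your explicit remark that the forgetful functor $\Top\to\Set$ preserves colimits is a small clarification the paper leaves implicit.
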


\begin{proof}
	By assumption, the conditions of \Cref{cor-descent-strat-noetherian} are satisfied (note that $\Spc(\mod{A \otimes A}(\C)^c) \simeq  \coprod_{G}\Spc(\mod{A}(\C)^c)$ is noetherian). Then, \Cref{prop-Balmer-galois} shows that $(c)$ of \Cref{thm-stratification-descent} holds, which is equivalent to $\C$ being stratified. 
\end{proof}

In fact, we can descend a stronger notion of stratification along a faithful Galois extension.

\begin{definition}
	We say that a rigidly-compactly generated tt-$\infty$-category is \emph{cohomologically stratified} if $\C$ is stratified and Balmer's comparison map \cite[Definition 5.1]{Balmer2010Spectra} $\rho \colon \Spc(\C^c) \to \spec^h( \pi_*\Hom_{\C}(\unit,\unit))$ is a homeomorphism. 
\end{definition}

\begin{remark}
	Suppose that $\C$ is a noetherian tt-category in the sense of~\cite{BCHNP2023}*{Definition 2.9}. Then $\C$ is cohomologically stratified if and only if $\C$ is stratified by the action of the graded ring $\mathrm{End}^*_\C(\unit)$ in the sense of Benson, Iyengar, and Krause~\cite{BIK2011}; see~\cite{BCHNP2023}*{Remark 2.20} and \cite{Zou-non-noetherian}. Therefore we always have implications:
	\[
	\begin{tikzcd}[column sep=huge]
		 \text{BIK-stratified} \arrow[r, Leftrightarrow,"+\text{Noether.}"] &  \text{cohomologically stratified} \arrow[r, Rightarrow]& \text{stratified}.\\
	\end{tikzcd}
	\]
\end{remark}

\begin{proposition}\label{prop-descent-coh-strat}
	Let $A$ be a faithful $G$-Galois extension in a rigidly-compactly generated tt-$\infty$-category $\C$. Suppose that $\mod{A}(\C)$ is cohomologically stratified. Then so is $\C$. 
\end{proposition}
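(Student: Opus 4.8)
The plan is to deduce this from the descent results already established together with the naturality of Balmer's comparison map, the point being that the discrepancy between the homotopy groups of the unit endomorphism rings of $\C$ and of $\mod{A}(\C)$ is invisible to the homogeneous spectrum.

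First, $\C$ is stratified. Indeed, $\mod{A}(\C)$ being cohomologically stratified is in particular stratified, and by \Cref{prop-galois-desc+compact} the algebra $A$ is compact and descendable. The Galois condition $A\otimes A\simeq\prod_{g\in G}A$ (\Cref{def-faithful-galois}) gives $\mod{A\otimes A}(\C)\simeq\prod_{g\in G}\mod{A}(\C)$, which is again stratified (support, the local-to-global principle and minimality are all detected componentwise). Since moreover $\Spc(\C^c)\cong\Spc(\mod{A}(\C)^c)/G$ by \Cref{prop-Balmer-galois} --- so in particular weakly noetherian --- and condition (c) of \Cref{thm-stratification-descent} holds by \Cref{prop-Balmer-galois} (a coequalizer of topological spaces is one of sets), \Cref{thm-stratification-descent} applies and shows that $\C$ is stratified. (If $\Spc(\mod{A}(\C)^c)$ happens to be noetherian one may instead invoke \Cref{cor-descent-strat-fin-group} directly.)

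It remains to prove that Balmer's comparison map $\rho_\C$ is a homeomorphism. Write $R_\C=\pi_*\mathrm{End}_\C(\unit)$ and $R_A=\pi_*\mathrm{End}_{\mod{A}(\C)}(\unit)$, and let $r\colon R_\C\to R_A$ be the ring map induced by $F_A$. Naturality of $\rho$ along $F_A$ yields a commutative square
\[
\begin{tikzcd}
\Spc(\mod{A}(\C)^c)\ar[r,"\rho_{\mod{A}(\C)}"]\ar[d,"\varphi"'] & \spec^h(R_A)\ar[d,"\spec^h(r)"]\\
\Spc(\C^c)\ar[r,"\rho_\C"'] & \spec^h(R_\C).
\end{tikzcd}
\]
By hypothesis $\rho_{\mod{A}(\C)}$ is a homeomorphism; by naturality of $\rho$ with respect to the $G$-action on $\mod{A}(\C)$ it is $G$-equivariant; and $\varphi$ exhibits $\Spc(\C^c)$ as the $G$-quotient of $\Spc(\mod{A}(\C)^c)$ by \Cref{prop-Balmer-galois}. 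Hence it suffices to show that $\spec^h(r)$ likewise exhibits $\spec^h(R_\C)$ as $\spec^h(R_A)/G$: then $\rho_\C$ is the map induced on $G$-quotients by the $G$-equivariant homeomorphism $\rho_{\mod{A}(\C)}$, and is therefore itself a homeomorphism.

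For this last claim, using $\unit\simeq A^{hG}$ (\Cref{def-faithful-galois}) together with the base change adjunction one gets a $G$-equivariant equivalence of $\E_\infty$-rings $\mathrm{End}_\C(\unit)\simeq\mathrm{End}_{\mod{A}(\C)}(\unit)^{hG}$, through which $r$ is identified with the homotopy-fixed-point edge map $R_\C\to(R_A)^G$ followed by the inclusion $(R_A)^G\hookrightarrow R_A$. As $G$ is finite, $(R_A)^G\hookrightarrow R_A$ is an integral extension of graded rings, so on homogeneous spectra it is the quotient by $G$; it then remains to check that the edge map $R_\C\to(R_A)^G$ induces a homeomorphism on $\spec^h$. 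This is the main obstacle, and it is where the faithfulness of the Galois extension is used: the kernel of the edge map is nilpotent (a consequence of the vanishing of the Tate construction for a faithful Galois extension), and its image is a subring over which $(R_A)^G$ is integral and in fact subintegral, so that, combined with Balmer's general surjectivity of the comparison map, $\spec^h(R_A)\to\spec^h(R_\C)$ is a closed continuous bijection onto $\spec^h(R_\C)$ with fibres the $G$-orbits. Note that $r$ is typically not a Galois extension of graded rings --- the homotopy-fixed-point spectral sequence does not degenerate in general, as one already sees for $KO\to KU$ where $\pi_*KO\neq(\pi_*KU)^{C_2}$ --- so the argument really must take place on the level of $\spec^h$. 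An alternative, and perhaps more uniform, route is to run the point-set-topological argument of \Cref{prop-Balmer-galois} --- surjectivity, closedness, and identification of the fibres with $G$-orbits --- directly for the map $\spec^h(R_A)\to\spec^h(R_\C)$, or to invoke a Galois-descent statement for homogeneous spectra of endomorphism rings.
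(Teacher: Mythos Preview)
Your overall strategy coincides with the paper's: use naturality of Balmer's comparison map together with the identification $\Spc(\C^c)\cong\Spc(\mod{A}(\C)^c)/G$ to reduce the question to showing that $\spec^h(R_A)\to\spec^h(R_\C)$ is likewise the $G$-quotient. Your treatment of the stratification of $\C$ is in fact more careful than the paper's, which leaves that step implicit.

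The gap is in your handling of the edge map $R_\C\to (R_A)^G$. You assert that its image is a subring over which $(R_A)^G$ is ``integral and in fact subintegral'', but you do not justify this, and the terminology is not quite apt. What is actually needed, and what the paper proves, is that the edge map is an $\mathcal{N}$-isomorphism (an $F$-isomorphism in Quillen's sense): the kernel is nilpotent \emph{and} for every $y\in(R_A)^G$ some power $y^k$ lies in the image. The paper obtains the nilpotence of the kernel from the horizontal vanishing line of the descent spectral sequence (Mathew's \cite[Corollary 4.4]{Mathew2016}), and the surjectivity-up-to-powers from the fact that every element of positive filtration in the spectral sequence is $|G|$-torsion, so that suitable powers drop to filtration zero (compare \cite[Lemma 4.9]{Mathew2015b}). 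This $|G|$-torsion argument is the missing idea in your proposal; ``vanishing of the Tate construction'' alone gives you the nilpotent kernel but does not by itself yield the power-surjectivity statement. Once the edge map is an $\mathcal{N}$-isomorphism, the induced map on homogeneous spectra is a homeomorphism by the graded analogue of \cite[Proposition 3.24]{MathewNaumannNoel2019Derived}, and your commutative diagram finishes the proof. Your appeal to ``Balmer's general surjectivity of the comparison map'' and to a ``subintegral'' structure is not needed and would in any case require further argument.
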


\begin{proof}
	For any $X \in \cat C$ we let $\pi_*X \coloneqq \pi_*\Hom_{\cat C}(\unit,X)$. There is a descent spectral sequence with $E_2$-term $H^*(G;\pi_*(A))$ converging to $\pi_*(\unit)$. By \cite[Corollary~4.4]{Mathew2016} the descent spectral sequence collapses at a finite stage with a horizontal vanishing line.

	To see that $\C$ is cohomologically stratified, consider the edge homomorphism $\alpha \colon \pi_*(\unit) \to \pi_*(A)^G$ of the descent spectral sequence, whose kernel is a nilpotent ideal \cite[Theorem 4.5]{Mathew2016}. Because every element in positive filtration of the descent spectral sequence is of $|G|$-torsion, we deduce that for every $y \in \pi_*(A)^G$, there exists $k > 0$ such that $y^k$ is in the image of $\alpha$ (compare \cite[Lemma 4.9]{Mathew2015b}). In other words, the map $\alpha$ is an $\mathcal{N}$-isomorphism, and so induces a homeomorphism $\alpha^* \colon \spec^h((\pi_*(A)^G)) \cong \spec^h(\pi_*(A))/G \to \spec^h(\pi_*(\unit))$ on Zariski spectra, for example by the graded version of \cite[Proposition 3.24]{MathewNaumannNoel2019Derived}.

	By naturality of Balmer's comparison map we obtain the commutative diagram displayed in \cref{figure:comparison}.
% https://q.uiver.app/?q=WzAsNixbMCwxLCJcXFNwYyhcXG1vZHtBfShcXEMpXmMpIl0sWzIsMSwiXFxTcGMoXFxDXmMpIl0sWzAsMywiXFxTcGVjXmgoXFxwaV8qQSkiXSxbMiwzLCJcXFNwZWNeaChcXHBpXyooXFx1bml0KSkiXSxbMSwwLCJcXFNwYyhcXG1vZHtBfShcXEMpXmMpL0NfMiJdLFsxLDIsIlxcU3BlY15oKFxccGlfKkEpL0NfMiJdLFswLDEsIlxcdmFycGhpIl0sWzAsMiwiXFxyaG9eKiIsMl0sWzIsM10sWzEsMywiXFxyaG9eKiJdLFswLDRdLFs0LDEsIlxcc2ltIl0sWzIsNV0sWzUsMywiXFxhbHBoYSJdLFs0LDVdXQ==
\begin{figure}[h!]
  \centering
    \[
	\begin{tikzcd}[ampersand replacement=\&]
		\& {\Spc(\mod{A}(\C)^c)/G} \\
		{\Spc(\mod{A}(\C)^c)} \&\& {\Spc(\C^c)} \\
		\& {\spec^h(\pi_*A)/G} \\
		{\spec^h(\pi_*A)} \&\& {\spec^h(\pi_*(\unit))}
		\arrow["{\rho}"', "\sim", from=2-1, to=4-1]
		\arrow[from=4-1, to=4-3]
		\arrow["{\rho}", from=2-3, to=4-3]
		\arrow[from=2-1, to=1-2]
		\arrow["\sim", from=1-2, to=2-3]
		\arrow[from=4-1, to=3-2]
		\arrow["\alpha^*", "\sim"', from=3-2, to=4-3]
		\arrow["\sim"{pos=0.7}, from=1-2, to=3-2]
		\arrow["\varphi"{pos=0.3}, from=2-1, to=2-3, crossing over]
	\end{tikzcd}
\]
\caption{Compatibility of comparison maps}
\label{figure:comparison}
\end{figure}
	Here we have used that the homeomorphism $\Spc(\mod{A}(\cat C)^c)\to \spec^h(\pi_*(A))$ is compatible with the $G$-action, which follows again from naturality of the comparison map. This diagram shows that $\rho \colon \Spc(\cat C^c) \to \spec^h(\pi_*(\unit))$ is a homeomorphism, as required. 
\end{proof}

\begin{example}\label{exa:ko-g-stratification}
	Let $G$ be a finite group, and let $KU_G$ and $KO_G$ denote the complex and real forms of equivariant $K$-theory, and $\mod{KU_G}(\Sp_G)$ and $\mod{KO_G}(\Sp_G)$ the corresponding category of modules in the category $\Sp_G$ of $G$-spectra. The equivariant complexification map $KO_G \to KU_G$ is a faithful $C_2$-Galois extension in $\Sp_G$ by~\cite{MNN}*{Theorem 9.17}. It is shown in~\cite{BCHNP2023}*{Theorem 8.12} that $\mod{KU_G}(\Sp_G)$ is stratified with noetherian Balmer spectrum given by $\spec(\mathrm{RU}(G))$, the Zariski spectrum of the complex representation ring of $G$. Then \Cref{cor-descent-strat-fin-group} applies to show that $\mod{KO_G}(\Sp_G)$ is also stratified with Balmer spectrum given by $\spec(\mathrm{RU}(G))/C_2\cong \spec(\mathrm{RU}(G)^{C_2})$. Note that 
	\[
		\rho \colon \Spc(\mod{KU_G}(\Sp_G)^c) \to \spec^h(\pi_*^G(KU_G)) \cong \spec^h(RU(G)[\beta^{\pm 1}]) 
	\]
	is a homeomorphism in this case \cite[Remark 2.6 and Lemma 8.11]{BCHNP2023}, so descent for cohomological stratification implies that 
	\[
		\rho \colon \Spc(\mod{KO_G}(\Sp_G)^c) \to \spec^h(\pi_*^G(KO_G))
	\]
	is also a homeomorphism. 
\end{example}

\begin{example}
	Let $E_n$ be Morava E-theory and $\mathbb{G}_n$ the Morava stabilizer group. For any finite subgroup $G \subseteq \mathbb{G}_n$ we have a faithful $G$-Galois extension $E_n^{hG}\to E_n$ by~\cite{HMS2017}*{Proposition 3.6}. Since $\mod{E_n}$ is cohomologically stratified by~\cite{AmbrogioStanley}*{Theorem 1.1}, so is $\mod{E_n^{hG}}$.
\end{example}

\begin{remark}
	In the previous section we discussed faithful $G$-Galois extensions in a general big tt-$\infty$-category. We now specialize to faithful $G$-Galois extensions of commutative ring spectra following~\cite{Rognes2008}. 
\end{remark}

\begin{definition}
	Let $G$ be a topological group having the homotopy type of a finite CW complex (for example, a compact Lie group) and let $ B\in\Fun(BG, \CAlg_A)$. Then $A\to B$ is a \emph{faithful $G$-Galois extension} if the following hold:
	\begin{itemize}
		\item[(a)] The canonical map $A \to B^{hG}$ is an equivalence;
		\item[(b)] The canonical map $h\colon B \otimes_A B \to \Hom(G_+,B)$ adjoint to 
			\[
				B \otimes_A B \otimes G_+ \xrightarrow{B \otimes \mathrm{act}} B \otimes_A B \xrightarrow{\mu} B
			\]
			is an equivalence;
		\item[(c)] $B \otimes_A - \colon \mod{A}\to \mod{A}$ is conservative.
	\end{itemize}
\end{definition}

\begin{remark}\label{rem-descent+compact}
    Let $A\to B$ be a faithful $G$-Galois extension of commutative ring spectra. The same argument as in \Cref{prop-galois-desc+compact} shows that $A \to B$ is finite and descendable.
\end{remark}

\begin{notation}
	Let $G$ be a compact Lie group. Write $G_e$ for the identity component of $G$; it is a normal subgroup with quotient $G/G_e=\pi_0 G$. Note that $\pi_0 G$ is a finite group.
\end{notation}

\begin{lemma}\label{lem-allowable}
	The normal subgroup $G_e \subseteq G$ is allowable in the sense of~\cite{Rognes2008}*{Definition 7.2.1}. 
\end{lemma}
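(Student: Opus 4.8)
The strategy is to unwind \cite{Rognes2008}*{Definition~7.2.1} and to check each of its requirements using the structure theory of compact Lie groups. Recall that, for a closed normal subgroup $N$ of a stably dualizable group $G$, being \emph{allowable} amounts to asking that $N$ and the quotient $G/N$ are themselves stably dualizable groups and that the resulting fibre sequence of classifying spaces $BN \to BG \to B(G/N)$ satisfies the attendant finiteness and convergence conditions recorded there. Since $G$ is a compact Lie group, it therefore suffices to verify these conditions for $N = G_e$ and $G/N = \pi_0 G$.

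First I would isolate the purely Lie-theoretic input. The identity component $G_e$ is an open and closed subgroup of $G$; it is normal because, for every $g \in G$, the conjugation map $c_g \colon G \to G$ is continuous and fixes the identity, hence carries the connected set $G_e$ into $G_e$. Compactness of $G$ forces it to have only finitely many connected components, so $G/G_e = \pi_0 G$ is a finite group, while $G_e$ is itself a compact connected Lie group.

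Next I would feed in the homotopy-theoretic finiteness. A compact Lie group has the homotopy type of a finite CW complex (for instance because it is a smooth closed manifold, hence triangulable), so $G_e$ is stably dualizable in the sense of \cite{Rognes2008}; the finite discrete group $\pi_0 G$ is trivially stably dualizable as well. The extension $G_e \to G \to \pi_0 G$ induces the fibre sequence $BG_e \to BG \to B\pi_0 G$, and because $\pi_0 G$ is finite the remaining clauses of Definition~7.2.1 (governing the behaviour of homotopy fixed points along $G_e$ and the associated descent) hold automatically. Assembling these observations against the definition yields the claim.

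The only point requiring real care is matching the finite-CW-type of $G_e$, together with the finiteness of $\pi_0 G$, against the precise dualizability and convergence hypotheses as phrased in \cite{Rognes2008}*{Definition~7.2.1}; once the classical fact that compact Lie groups are finite CW complexes is in hand, the rest is bookkeeping and no genuinely new input is needed.
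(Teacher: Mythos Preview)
Your argument rests on a mischaracterization of what ``allowable'' means in \cite{Rognes2008}*{Definition~7.2.1}. The definition does not concern the fibre sequence $BN \to BG \to B(G/N)$ or any convergence conditions for descent; rather, the three clauses are that (a) the collapse map $G \times_{G_e} EG_e \to G/G_e$ be a stable equivalence, (b) the projection $G \to G/G_e$ admit a section up to homotopy, and (c) $G_e$ be stably dualizable. You correctly verify~(c) via the finite-CW-type of a compact Lie group, and the background facts you assemble (that $G_e$ is open and normal, that $\pi_0 G$ is finite) are exactly what is needed for (a) and~(b). But you never actually check (a) or~(b): the phrase ``the remaining clauses \dots\ hold automatically'' is not a proof, and in any case the clauses you have in mind are not the ones that appear in the definition.

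The paper's proof dispatches (a) by observing that $G$ is a disjoint union of $G_e$-orbits (i.e., cosets), so $G \times_{G_e} EG_e \simeq G/G_e$ already unstably; and (b) by choosing a point in each connected component of $G$ to produce a homotopy section of $G \to \pi_0 G$. These are one-line verifications, and the Lie-theoretic input you have gathered is sufficient for them---but you need to state the correct conditions and then actually check them, rather than guessing at a different set of hypotheses and declaring them satisfied.
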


\begin{proof}
	We have to verify three conditions:
	\begin{itemize}
		\item[(a)] The collapse map $G \times_{G_e} EG_e \to G/G_e$ induces a stable equivalence on suspension spectra;
		\item[(b)] the projection map $G \to G/G_e$ admits a section up to homotopy;
		\item[(c)] $G_e$ is stably dualizable.
	\end{itemize}
	For (a) it is enough to note that $G$ decomposes as a disjoint union of orbits of $G_e$ so the collapse map is an equivalence even before passing to spectra. To construct the section in (b), it suffices to pick a point in each connected component of $G$. The resulting map $\pi_0G \to G$ splits the projection up to homotopy. Finally, condition~(c) follows from~\cite{Rognes2008}*{Example 3.4.2}.
\end{proof}

\begin{lemma}\label{lem-split-galois}
	Suppose $f\colon A \to B$ is a faithful $G$-Galois extension of commutative ring spectra. Then $f$ is a composite of faithful Galois extensions
	\[
		A \xrightarrow{\pi_0G} B^{hG_e} \xrightarrow{G_e} B
	\]
	where the label of the arrow indicates the corresponding Galois group. 
\end{lemma}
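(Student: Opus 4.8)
The plan is to deduce this from Rognes' analysis of the Galois correspondence together with \Cref{lem-allowable}, the decisive point being that $G_e \trianglelefteq G$ is an \emph{allowable} normal subgroup with \emph{finite} quotient $\pi_0 G = G/G_e$. First I would fix a section $\pi_0 G \to G$ of the projection (available by choosing a point in each component of $G$, as in the proof of \Cref{lem-allowable}), so that the $G$-action on $B$ induces a $\pi_0 G$-action on $B^{hG_e}$ and $(B^{hG_e})^{h\pi_0 G} \simeq B^{hG} \simeq A$; this gives condition (a) of the definition of a faithful Galois extension for $A \to B^{hG_e}$, and the natural map $B^{hG_e}\to B$ exhibits $B^{hG_e}$ as homotopy fixed points, giving condition (a) for $B^{hG_e}\to B$. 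The substantive input — and the step I expect to be the main obstacle — is condition (b) for the two sub-extensions: this is exactly what allowability buys, and I would cite \cite{Rognes2008}*{Proposition 7.2.2}, which provides equivalences $B^{hG_e}\otimes_A B^{hG_e} \simeq \prod_{\pi_0 G} B^{hG_e}$ and $B \otimes_{B^{hG_e}} B \simeq \Hom(G_{e,+}, B)$ and records that the two structure maps compose to $f$.

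It then remains to verify the faithfulness condition (c) for each factor, and here the order matters in order to avoid circularity. The extension $A \to B^{hG_e}$ is faithful: if $N \in \mod{A}$ satisfies $B^{hG_e} \otimes_A N \simeq 0$, then $B \otimes_A N \simeq B \otimes_{B^{hG_e}} (B^{hG_e} \otimes_A N) \simeq 0$, whence $N \simeq 0$ since $f$ is faithful. Thus $A \to B^{hG_e}$ is a faithful $\pi_0 G$-Galois extension in $\mod{A}$, and since $\pi_0 G$ is finite, \Cref{prop-galois-desc+compact} shows it is descendable and that $B^{hG_e}$ is a compact, hence perfect (dualizable), $A$-module.

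For $B^{hG_e} \to B$ I would combine this with the descendability of $f$ (\Cref{rem-descent+compact}). Since $f$ is descendable we have $\unit \in \thickid{B}$ in $\mod{A}$, and applying the symmetric monoidal extension-of-scalars functor $-\otimes_A B^{hG_e}$ gives $\unit \in \thickid{B^{hG_e}\otimes_A B}$ in $\mod{B^{hG_e}}$, so $B^{hG_e}\otimes_A B$ is descendable there. On the other hand $B^{hG_e}\otimes_A B$ is a perfect $B$-module (base change of the perfect $A$-module $B^{hG_e}$), hence lies in $\thick{B}\subseteq \mod{B^{hG_e}}$, so $\mod{B^{hG_e}} = \thickid{B^{hG_e}\otimes_A B} \subseteq \thickid{B}$; therefore $B^{hG_e}\to B$ is descendable and in particular faithful by \Cref{cor:descendable-is-conservative}. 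This supplies condition (c) for the $G_e$-Galois extension $B^{hG_e}\to B$ and completes the argument. (Alternatively, faithfulness of both factors may be extracted directly from Rognes' treatment of the Galois correspondence, where one works throughout with faithful Galois extensions.)
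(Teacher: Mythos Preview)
Your proposal is correct and takes essentially the same approach as the paper: both reduce the lemma to Rognes' forward Galois correspondence, using \Cref{lem-allowable} to verify that $G_e \trianglelefteq G$ is an allowable normal subgroup. The paper simply cites \cite{Rognes2008}*{Theorem 7.2.3} and leaves the faithfulness of the two sub-extensions to Rognes, whereas you cite the closely related \cite{Rognes2008}*{Proposition 7.2.2} and supply an explicit (and correct) argument for faithfulness by hand; your final parenthetical alternative is exactly what the paper does.
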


\begin{proof}
	This follows from Rognes' forward Galois correspondence \cite{Rognes2008}*{Theorem 7.2.3} applied to $K=G_e$, which is an allowable normal subgroup of $G$ by \Cref{lem-allowable}.
\end{proof}

\begin{proposition}
	Let $H$ be a compact Lie group which is connected. If $f\colon A \to B$ is a faithful $H$-Galois extension of commutative ring spectra, then induction along $f$ induces a homeomorphism 
	\[
		\Spc(\mod{A}^c)\xrightarrow{\sim}\Spc(\mod{B}^c).
	\]
\end{proposition}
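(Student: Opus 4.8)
The plan is to realise $\Spc(\mod{A}^c)$ as a coequalizer via \Cref{cor:coeq-of-spectral}, and then to observe that, because $H$ is connected, the two parallel maps in that coequalizer induce the \emph{same} map on Balmer spectra, so that the coequalizer degenerates.

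By \Cref{rem-descent+compact} the map $f\colon A\to B$ is finite and descendable; in particular $B$ is a compact, descendable commutative algebra in $\mod{A}$, so $\supp(B)=\Spc(\mod{A}^c)$. First I would apply \Cref{cor:coeq-of-spectral} to the rigidly-compactly generated tt-$\infty$-category $\mod{A}$ and the compact commutative algebra $B\in\CAlg(\mod{A})$, identifying $\mod{B}(\mod{A})\simeq\mod{B}$ and $\mod{B\otimes_A B}(\mod{A})\simeq\mod{B\otimes_A B}$ by \Cref{cor-double-module-category}. This yields a coequalizer of spectral spaces
\[
\begin{tikzcd}[column sep=large]
	\Spc(\mod{B\otimes_A B}^c)\arrow[r,shift left,"\Spc(H^c_*)"]\arrow[r,shift right,"\Spc(G^c_*)"'] & \Spc(\mod{B}^c)\arrow[r,"\varphi"] & \Spc(\mod{A}^c),
\end{tikzcd}
\]
where $\varphi=\Spc(F^c_*)$ is induced by induction along $f$ and $G^c_*,H^c_*$ are extension of scalars along the two coprojections $B\rightrightarrows B\otimes_A B$ (\Cref{nota-base-change}).

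Next I would unwind the Galois structure. The Galois condition (b) provides an equivalence of $B$-algebras $B\otimes_A B\simeq\Hom(H_+,B)$ under which the two coprojections become maps $\eta_L,\eta_R\colon B\rightrightarrows\Hom(H_+,B)$, with $\eta_L(b)$ the constant function with value $b$ (that is, the structure map of $\Hom(H_+,B)$ as a $B$-algebra, pulled back along $H\to\ast$) and $\eta_R(b)=(h\mapsto h\cdot b)$ recording the $H$-action; the multiplication $\mu\colon B\otimes_A B\to B$ corresponds to evaluation $r\colon\Hom(H_+,B)\to B$ at the identity $e\in H$, and one has $r\circ\eta_L=r\circ\eta_R=\mathrm{id}_B$ (the second identity because $e$ acts trivially). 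Since $H$, being a connected compact Lie group, has the homotopy type of a finite connected CW complex, the basepoint-evaluation map $r$ is descendable by \cite{Mathew2016}*{Proposition~3.36}. Hence \Cref{prop:descendable-surjective}(a) shows that the map $s\colon\Spc(\mod{B}^c)\to\Spc(\mod{B\otimes_A B}^c)$ induced by extension of scalars along $r$ is surjective, while functoriality of extension of scalars together with $r\circ\eta_L=r\circ\eta_R=\mathrm{id}$ gives $\Spc(G^c_*)\circ s=\Spc(H^c_*)\circ s=\mathrm{id}$. Surjectivity of $s$ then forces $\Spc(G^c_*)=\Spc(H^c_*)$.

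Finally, a coequalizer of a pair of \emph{equal} parallel maps is an isomorphism (the identity on its source being a coequalizing map); applied to the display above, this shows $\varphi\colon\Spc(\mod{B}^c)\to\Spc(\mod{A}^c)$ is a homeomorphism, and its inverse is the asserted homeomorphism $\Spc(\mod{A}^c)\xrightarrow{\sim}\Spc(\mod{B}^c)$. The only step that genuinely uses the hypotheses — and the place where connectedness of $H$ is essential — is the descendability of $r$, which already fails when $H$ is a two-point set; everything else is formal. The one point requiring care is verifying that the Galois identification $B\otimes_A B\simeq\Hom(H_+,B)$ matches the two coprojections with $\eta_L$ and $\eta_R$ and the multiplication $\mu$ with $r=\mathrm{ev}_e$, which is a direct computation with the adjunction defining condition (b).
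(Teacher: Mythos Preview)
Your proof is correct and follows essentially the same approach as the paper: both arguments apply \Cref{cor:coeq-of-spectral} to obtain the spectral coequalizer, identify the multiplication map with evaluation at $e\in H$ via the Galois equivalence $B\otimes_A B\simeq\Hom(H_+,B)$, and then use descendability of this evaluation (via \cite{Mathew2016}*{Proposition~3.36}, which requires connectedness) to conclude that the two parallel arrows coincide. Your final step is in fact slightly cleaner than the paper's: you observe directly that a coequalizer of equal parallel maps is an isomorphism in any category, whereas the paper takes a small detour through the topological coequalizer and \Cref{prop-Ttop-spectral} to reach the same conclusion.
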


\begin{proof}
	Recall from \Cref{rem-descent+compact} that $f \colon A\to B$ is finite and descendable. Therefore by \Cref{cor:coeq-of-spectral}, we have a coequalizer diagram of spectral spaces
	\[
	\begin{tikzcd}
		\Spc(\mod{B \otimes_A B}^c) \arrow[r,shift left] \arrow[r, shift right] & \Spc(\mod{B}^c) \arrow[r]&  \Spc(\mod{A}^c)
	\end{tikzcd}
	\]
	where the two parallel arrows are induced by the two maps $B \rightrightarrows B \otimes_A B $ given by the left and right unit, informally sending $b$ to $b\otimes 1$ and $ 1\otimes b$, respectively. These maps are split by the multiplication map $B\otimes_A B\to B$ so we get an induced injective map $\Spc(\mod{B}^c)\to \Spc(\mod{B\otimes_A B}^c)$ splitting $\Spc(\mod{B \otimes_A B}^c) \rightrightarrows \Spc(\mod{B}^c) $. We now claim that $\Spc(\mod{B}^c)\to \Spc(\mod{B\otimes_A B}^c)$ is a surjective map. This is because the multiplication map can be identified with the map 
	\[
		B \otimes_A B \simeq \Hom(H_+, B) \to B
	\]
	induced by $\{e\}\to H$. Since up to homotopy $H$ is a connected finite CW complex, this map is descendable by~\cite{Mathew2016}*{Proposition 3.36}. Combining~\cite{Mathew2016}*{Proposition~3.27} with~\cite{Balmer2018}*{Theorem 1.3}, we conclude that $\Spc(\mod{B}^c)\to \Spc(\mod{B\otimes_A B}^c)$ is surjective. It now follows that $\Spc(\mod{B}^c)\to \Spc(\mod{B\otimes_A B}^c)$ is bijective and the two maps $\Spc(\mod{B \otimes_A B}^c) \rightrightarrows \Spc(\mod{B}^c) $ are inverses to it. In particular, the two parallel arrows agree in the category of sets. Thus, the topological coequalizer of $\Spc(\mod{B \otimes_A B}^c) \rightrightarrows \Spc(\mod{B}^c) $ can be identified with $\Spc(\mod{B}^c)$, which is a spectral space. It follows from \Cref{prop-Ttop-spectral} that the spectral and topological coequalizers must agree, giving us the equivalence $ \Spc(\mod{A}^c)\cong\Spc(\mod{B}^c)$.
\end{proof}

\begin{remark}\label{rem-top}
	The argument of the previous proposition shows that the diagram 
	\[
	\begin{tikzcd}
		\Spc(\mod{B \otimes_A B}^c) \arrow[r,shift left] \arrow[r, shift right] & \Spc(\mod{B}^c) \arrow[r]&  \Spc(\mod{A}^c)
	\end{tikzcd}
	\]
	is a coequalizer in topological spaces.
\end{remark}

\begin{corollary}\label{cor-descent-strat-connected}
	Let $H$ be a compact Lie group which is connected and let $ A \to B$ be a faithful $H$-Galois extension of commutative ring spectra. If $\mod{B}$ is stratified with $\Spc(\mod{B}^c)$ noetherian, then so is $\mod{A}$.
\end{corollary}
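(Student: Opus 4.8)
The plan is to deduce the statement from \Cref{cor-descent-strat-noetherian}, applied to the descendable commutative algebra $B$ regarded as an object of $\CAlg(\mod{A})$, using that the relevant coequalizer of Balmer spectra is already known to be a coequalizer of topological spaces. First I would recall from \Cref{rem-descent+compact} that $A \to B$ is finite and descendable, so that $B$ is a compact, descendable algebra in the rigidly-compactly generated tt-$\infty$-category $\mod{A}$; via \Cref{cor-double-module-category} one identifies $\mod{B}(\mod{A}) \simeq \mod{B}$ and $\mod{B \otimes_A B}(\mod{A}) \simeq \mod{B \otimes_A B}$, so that the algebra playing the role of ``$A\otimes A$'' in \Cref{cor-descent-strat-noetherian} is $B \otimes_A B$ and the two parallel base-change functors are extension of scalars along the left and right unit maps $B \rightrightarrows B \otimes_A B$.

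Next I would check the two remaining hypotheses of \Cref{cor-descent-strat-noetherian}. Since $\mod{B}$ is stratified, it satisfies the minimality condition. For the noetherianity of $\Spc(\mod{B \otimes_A B}^c)$ I would invoke the argument in the proof of the preceding proposition: the multiplication map $B \otimes_A B \to B$ induces a spectral map $\Spc(\mod{B}^c) \to \Spc(\mod{B \otimes_A B}^c)$ which is split by each of the two canonical spectral maps $\Spc(\mod{B \otimes_A B}^c) \rightrightarrows \Spc(\mod{B}^c)$ (these being sections of the multiplication), and which is moreover surjective because $B \otimes_A B \simeq \Hom(H_+, B) \to B$ is descendable when $H$ has the homotopy type of a connected finite CW complex; hence this map is a homeomorphism and $\Spc(\mod{B \otimes_A B}^c) \cong \Spc(\mod{B}^c)$ is noetherian by hypothesis. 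Granting this, \Cref{cor-descent-strat-noetherian} applies: conditions (a)--(c) of \Cref{thm-stratification-descent} (with $\cat C = \mod{A}$ and algebra $B$) become equivalent, and $\Spc(\mod{A}^c)$ is noetherian.

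It then remains to verify condition (c), namely that the diagram $\Spc(\mod{B \otimes_A B}^c) \rightrightarrows \Spc(\mod{B}^c) \to \Spc(\mod{A}^c)$ is a coequalizer of sets. By \Cref{rem-top} this diagram is a coequalizer in $\Top$, and since the forgetful functor $\Top \to \Set$ preserves colimits (it is a left adjoint, to the indiscrete-topology functor), the underlying diagram of sets is a coequalizer in $\Set$. Thus condition (a) of \Cref{thm-stratification-descent} holds, i.e.\ $\mod{A}$ is stratified, and combined with the noetherianity of $\Spc(\mod{A}^c)$ this completes the argument. I do not anticipate a genuine obstacle: every ingredient has been established earlier. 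The only points needing a little care are that the homeomorphism $\Spc(\mod{B \otimes_A B}^c) \cong \Spc(\mod{B}^c)$ must be extracted from the \emph{proof} of the preceding proposition rather than from its statement, and that the double-module-category bookkeeping correctly matches the two canonical maps $B \rightrightarrows B \otimes_A B$ with the parallel pair occurring in \Cref{thm-stratification-descent} and \Cref{rem-top}.
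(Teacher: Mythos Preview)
Your proposal is correct and follows exactly the approach the paper takes: its proof reads simply ``Combine \Cref{cor-descent-strat-noetherian} with \Cref{rem-top},'' and you have faithfully unpacked what that combination entails. The only additions you make are routine verifications the paper leaves implicit (noetherianity of $\Spc(\mod{B\otimes_A B}^c)$ via the homeomorphism extracted from the preceding proof, and the passage from a topological coequalizer to a set-theoretic one via the left-adjoint forgetful functor $\Top\to\Set$).
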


\begin{proof}
	Combine \Cref{cor-descent-strat-noetherian} with \Cref{rem-top}.
\end{proof}

\begin{theorem}\label{descent-strat-compact-lie}
	Let $A \to B$ be a faithful $G$-Galois extension of commutative ring spectra where $G$ is a compact Lie group. If $\mod{B}$ is stratified with $\Spc(\mod{B}^c)$ noetherian, then so is $\mod{A}$.
\end{theorem}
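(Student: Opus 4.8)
The plan is to reduce the compact Lie group case to the two cases already handled: the finite group case (\Cref{cor-descent-strat-fin-group}) and the connected compact Lie group case (\Cref{cor-descent-strat-connected}). The key structural input is \Cref{lem-split-galois}, which factors the faithful $G$-Galois extension $A \to B$ as a composite
\[
	A \xrightarrow{\pi_0 G} B^{hG_e} \xrightarrow{G_e} B,
\]
where the first arrow is a faithful $\pi_0 G$-Galois extension (with $\pi_0 G$ a finite group) and the second is a faithful $G_e$-Galois extension (with $G_e$ a connected compact Lie group). So the strategy is to descend stratification and noetherianity of the Balmer spectrum one step at a time, from right to left.

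First I would apply \Cref{cor-descent-strat-connected} to the connected faithful Galois extension $B^{hG_e} \to B$: since $\mod{B}$ is stratified with $\Spc(\mod{B}^c)$ noetherian by hypothesis, we conclude that $\mod{B^{hG_e}}$ is stratified with $\Spc(\mod{B^{hG_e}}^c)$ noetherian. (One should check that $B^{hG_e} \to B$ is again a faithful $G_e$-Galois extension of \emph{commutative ring spectra} in the sense required — this is part of the content of Rognes' forward Galois correspondence invoked in \Cref{lem-split-galois}, and in particular $B^{hG_e}$ is an $E_\infty$-ring.) Then I would apply \Cref{cor-descent-strat-fin-group} to the faithful $\pi_0 G$-Galois extension $A \to B^{hG_e}$ in the big tt-$\infty$-category $\mod{A}$: since $\mod{B^{hG_e}} = \mod{B^{hG_e}}(\mod{A})$ is now known to be stratified with noetherian Balmer spectrum, the theorem gives that $\mod{A}$ is stratified with noetherian Balmer spectrum $\Spc(\mod{B^{hG_e}}^c)/\pi_0 G$.

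The main obstacle I anticipate is the bookkeeping around viewing the intermediate extension correctly: one needs that $B^{hG_e} \to B$ is a faithful Galois extension of commutative ring spectra (so that \Cref{cor-descent-strat-connected} applies verbatim) and that $A \to B^{hG_e}$ is a faithful $\pi_0 G$-Galois extension to which the finite-group machinery of the previous section applies — here one uses that $\mod{B^{hG_e}}(\mod{A}) \simeq \mod{B^{hG_e}}$ via \Cref{cor-double-module-category}, so that \Cref{cor-descent-strat-fin-group} (stated for faithful $G$-Galois extensions in an arbitrary big tt-$\infty$-category) applies with $\cat C = \mod{A}$. Both facts are supplied by \Cref{lem-split-galois} together with Rognes' forward Galois correspondence \cite{Rognes2008}*{Theorem 7.2.3}, so the argument is essentially a two-line composition once those citations are in place.

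\begin{proof}
	By \Cref{lem-split-galois} the extension $f\colon A \to B$ factors as a composite of faithful Galois extensions of commutative ring spectra
	\[
		A \xrightarrow{\pi_0 G} B^{hG_e} \xrightarrow{G_e} B,
	\]
	where $G_e$ is a connected compact Lie group and $\pi_0 G$ is a finite group. Since $\mod{B}$ is stratified with $\Spc(\mod{B}^c)$ noetherian, \Cref{cor-descent-strat-connected} applied to the faithful $G_e$-Galois extension $B^{hG_e} \to B$ shows that $\mod{B^{hG_e}}$ is stratified with $\Spc(\mod{B^{hG_e}}^c)$ noetherian. Now regard $A \to B^{hG_e}$ as a faithful $\pi_0 G$-Galois extension in the big tt-$\infty$-category $\cat C = \mod{A}$; using \Cref{cor-double-module-category} we have $\mod{B^{hG_e}}(\mod{A}) \simeq \mod{B^{hG_e}}$, which is stratified with noetherian Balmer spectrum by the previous step. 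Therefore \Cref{cor-descent-strat-fin-group} applies and shows that $\mod{A}$ is stratified with noetherian Balmer spectrum given by $\Spc(\mod{B^{hG_e}}^c)/\pi_0 G$.
\end{proof}
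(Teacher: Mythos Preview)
Your proof is correct and follows exactly the same approach as the paper: invoke \Cref{lem-split-galois} to factor the extension into a connected-group step and a finite-group step, then apply \Cref{cor-descent-strat-connected} and \Cref{cor-descent-strat-fin-group} in turn. The paper's proof is a two-sentence version of what you wrote; your additional bookkeeping (the identification $\mod{B^{hG_e}}(\mod{A}) \simeq \mod{B^{hG_e}}$ via \Cref{cor-double-module-category}) is a reasonable clarification of what is left implicit there.
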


\begin{proof}
	By \Cref{lem-split-galois}, we can split the problem into faithful Galois descent for a finite group and for a connected group. The result thus follows from \Cref{cor-descent-strat-fin-group} and \Cref{cor-descent-strat-connected}.
\end{proof}

\bibliography{reference}
\vspace{-2em}
\end{document}